\numberwithin{equation}{section}
\newtheorem{lemma}{Lemma}[section]
\newtheorem{proposition}{Proposition}[section]
\newtheorem{theorem}{Theorem}[section]
\newtheorem{corollary}{Corollary}[section]
\newtheorem{Definition}{Definition}[section]
\newtheorem{Remark}{Remark}[section]
\newtheorem{Conjecture}{Conjecture}
\newcommand{\mM}{\mathbb{M}}
\newcommand{\mQ}{\mathbb{Q}}
\newcommand{\mR}{\mathbb{R}}
\newcommand{\mZ}{\mathbb{Z}}
\newcommand{\mA}{\mathbb{A}}
\newcommand{\mC}{\mathbb{C}}
\newcommand{\mc}{\mathfrak{c}}
\newcommand{\mn}{\mathbf{n}}
\renewcommand{\email}[2][]{%
  \ifx\emails\@empty\relax\else{\g@addto@macro\emails{,\space}}\fi%
  \@ifnotempty{#1}{\g@addto@macro\emails{\textrm{(#1)}\space}}%
  \g@addto@macro\emails{#2}%
}
\title[Ichino-Ikeda type formula of Whittaker periods for unitary groups]{On Ichino-Ikeda type formula of Whittaker periods for unitary groups}
\author{Kazuki Morimoto}
\thanks{The research of the second author was supported in part by
JSPS KAKENHI Grant Number 17K14166, 21K03164}
\date{\today}   
\email{morimoto@math.kobe-u.ac.jp}                                        
\subjclass[2010]{Primary: 11F55, 11F67; Secondary: 11F27, 11F46}
\begin{document}
\begin{abstract}
Lapid and Mao conjectured  Ichino-Ikeda type formula of Whittaker periods for
any quasi-split reductive groups and metaplectic groups.
In this paper, we prove this formula for any irreducible cuspidal globally generic automorphic representation of quasi-split unitary groups.\end{abstract}
\maketitle
%
%
%
%
%
%
%
%
%
%
%
%
%
%
\section{Introduction}
Special value of automorphic $L$-functions is one of the most important topic in modern number theory.
To study special values, a relation between special values of automorphic $L$-functions and periods of automorphic forms has been investigated in
several situations. 
For example, Gross and Prasad~\cite{GP1, GP2} conjectured a relationship between the non-vanishing of Gross-Prasad periods on special orthogonal groups and the non-vanishing of central values of certain $L$-functions. Moreover, Ichino and Ikeda~\cite{II} studied a refinement of the Gross-Prasad conjecture. Indeed, they conjectured an explicit formula between Gross-Prasad periods and the central $L$-values. As an analogue of Ichino-Ikeda conjecture, Lapid and Mao~\cite{LMe} formulated a similar conjecture on an explicit formula of Whittaker periods for quasi-split reductive groups and metaplectic groups.
In this paper, we study Lapid--Mao's conjecture in the case of unitary groups.

Let us recall Lapid-Mao's conjecture in the case of unitary groups.
Let $F$ be a number field and $E$ a quadratic extension of $F$. We denote their ring of adeles by $\mA$ and $\mA_E$, respectively.
For $x \in E$, we denote by $x \mapsto \bar{x}$ the action of of the non-trivial element of $\mathrm{Gal}(E \slash F)$.
We denote the quadratic character of $\mA^\times \slash F^\times$ corresponding to $E \slash F$ by $\chi_E$.
We fix a non-trivial additive character $\psi$ of $\mA \slash F$.
Let  $\mathrm{U}(n)$ denote the quasi-split unitary group of degree $n$.
Let $N^\prime$ be a maximal unipotent subgroup of $\mathrm{U}(n)$.
Let us take a non-degenerate character $\psi_{N^\prime}$ of $N^\prime(\mA)$.
Then for a cusp form $f$ on $\mathrm{U}(n)$, we define $\psi_{N^\prime}$-Whittaker period by 
\[
W_{\psi_{N^\prime}}(f) =\int_{N^\prime(F) \backslash N^\prime(\mA)} f(n^\prime) \psi_{N^\prime}^{-1}(n^\prime) \, dn^\prime
\]
where  $dn^\prime$ is the Tamagawa measure on $N^\prime$.
When $W_{\psi_{N^\prime}}$ is not identically zero on $V_\pi$, we say that $\pi$ is $\psi_{N^\prime}$-generic.
When the non-degenerate character is clear, we simply say that $\pi$ is generic.

Let $(\pi, V_\pi)$ be an irreducible cuspidal automorphic 
representation of $\mathrm{U}(n, \mA)$ with generic Arthur parameter 
$\Pi:=\Pi_1 \boxplus \cdots \boxplus \Pi_k$ where $\Pi_i$ are pairwise inequivalent 
irreducible cuspidal automorphic representation of $\mathrm{GL}_{n_i}(\mA_E)$ 
such that $n_1 + \cdots +n_k = n$ and $L(s, \Pi_i, \mathrm{As}^{(-1)^{n-1}})$ has a simple pole at $s =1$.
Here, $L(s, \Pi_i, \mathrm{As}^{\pm})$ is the Asai $L$-function of $\Pi_i$ defined  in \cite[Section~2.3]{GRS}.
Note that $L(s, \Pi_i, \mathrm{As}^-) = L(s, \Pi_i \otimes \Upsilon, \mathrm{As}^+)$
where $\Upsilon$ is a character of $\mA_E^\times \slash E^\times$ whose restriction
to $\mA^\times$ is $\chi_{E}$.
When $\pi$ is $\psi_{N^\prime}$-generic, namely $\psi_{N^\prime}$-Whittaker period in not identically zero on $V_\pi$,
there is the strong base change lift  $\Pi$ of $\pi$ to $\mathrm{GL}_{n}(\mA_E)$ given by Kim-Krishnamurthy~\cite{KK04, KK05}. 
Here, we say that base change lift of $\pi$ is strong when $\Pi_v$ is functorial lift of $\pi_v$ 
at all archimedean places and at finite places where $E_v \slash F_v$ is unramified field extension or $E_v = F_v \oplus F_v$ 
and $\pi_v$ is unramified, 
and at any place, $\gamma^{\mathrm{Sh}}(s, \pi_v \times \tau, \psi) = \gamma(s, \Pi_v \times \tau, \psi)$
for any irreducible admissible  generic representation $\tau$ of $\mathrm{GL}_k(E_v)$ with $1 \leq k \leq n$ 
where $\gamma^{\mathrm{Sh}}(s, \pi_v \times \tau, \psi)$ denotes the Shahidi's gamma factor.

Let us define Petersson inner product $(-, -)$ on $V_\pi$ by 
\[
(\varphi_1, \varphi_2) = \int_{\mathrm{U}(n, F) \backslash \mathrm{U}(n, \mA)} \varphi_1(g) \overline{\varphi_2(g)} \, dg
\]
where $dg$ denotes the Tamagawa measure on $\mathrm{U}(n, \mA)$.
Choose an Hermitian inner product $(\,,\,)_v$ on $V_{\pi_v}$ 
at each place $v$ of $F$ so that we have 
\[
(\varphi_1, \varphi_2) = \prod_v (\varphi_{1, v}, \varphi_{2,v})_v
\]
for any decomposable vectors $\varphi_1 = \otimes_v \varphi_{1, v}, \varphi_2 = \otimes_v \varphi_{2, v} \in V_\pi$.
At every place $v$ of $F$, we may define local Whittaker period of $\varphi_v \in V_{\sigma_v}$ by 
\[
I_v(\varphi_v)=
 \int_{N^\prime(F_v)}^{st} ( \pi_v(u)\varphi_v, \varphi_v )_v \psi_{N^\prime, v}(u)^{-1} \, du
 \]
by the stable integral (see Section~\ref{s: def local Whittaker} for the definition).
For a unipotent algebraic group $U$ defined over $F$, 
take the local measure $du_v$ on $U(F_v)$ 
to be the measure corresponding to the gauge form defined over $F$, 
together with our choice of the measure on $F_v$ corresponding to $\psi_v$, 
at each place $v$ of $F$. 
Then for $du = \prod_v du_v$, $du$ is the Tamagawa measure on $U(\mA)$ and 
we have $\mathrm{Vol}(U(F) \backslash U(\mA),du) = 1$.

We note that $\pi_v$ is $\psi_{N^\prime, v}$-generic if and only if $I_v(\varphi_v)\ne 0$ for some $\varphi_v \in V_{\pi_v}$ by Lapid-Mao~\cite{LMe}.
We define normalized local Whittaker period
\begin{equation}
\label{def I_v}
I_v^\natural(\varphi_v) := \frac{L \left(1, \Pi_v, \mathrm{As}^{(-1)^n} \right)}{\prod_{j=1}^{n} L \left(j, \chi_{E, v}^j \right)} 
\frac{I_v(\varphi_v)}{(\varphi_v, \varphi_v)_v}
\end{equation}
for any non-zero $\varphi = \otimes \varphi_v \in V_\pi$. Then we have
\[
I_v^\natural(\varphi_v)=1
\]
at almost all finite places $v$ by \cite[Proposition~2.14]{LMe}.
Lapid and Mao conjectured the following formula.
\begin{Conjecture}[Conjecture~1.2 and Conjecture~5.1 in \cite{LMe}]
\label{main conj}
Let $\pi$ and $\Pi$ be as above.
Then for any non-zero decomposable vector $\varphi = \otimes_v \,\varphi_v \in V_\pi$, we have
\begin{equation}
\label{whittaker formula ref}
\frac{|W_{\psi_{N^\prime}}(\varphi)|^2}{(\varphi, \varphi )}
=2^{-k}  \frac{\prod_{j=1}^{n} L \left(j, \chi_E^j \right)}{L \left(1, \Pi, \mathrm{As}^{(-1)^n} \right)} \prod_v I_v^\natural(\varphi_v).
\end{equation}
\end{Conjecture}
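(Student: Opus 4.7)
The plan is to prove the formula by realizing $\pi$ as the Ginzburg--Rallis--Soudry automorphic descent of $\Pi$, expressing $W_{\psi_{N^\prime}}(\varphi)$ and $(\varphi,\varphi)$ via explicit global integrals built from data on $\mathrm{GL}_n(\mA_E)$, and then factoring the resulting ratio into an Euler product whose local factors coincide with $I_v^\natural(\varphi_v)$.

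\textbf{Descent realization.} Since each $L(s,\Pi_i,\mathrm{As}^{(-1)^{n-1}})$ has a simple pole at $s=1$, the Ginzburg--Rallis--Soudry construction produces, from a decomposable section $\phi_\Pi$ for $\Pi = \Pi_1 \boxplus \cdots \boxplus \Pi_k$, a nonzero $\psi_{N^\prime}$-generic cuspidal representation $\sigma$ on $\mathrm{U}(n,\mA)$, realized as a Fourier coefficient of a residue of the Eisenstein series $E(g,s,\phi_\Pi)$ on a larger unitary group. Using Kim--Krishnamurthy's strong base change together with compatibility of $\gamma$-factors (both $\sigma$ and $\pi$ base change to $\Pi$) and strong multiplicity one on $\mathrm{GL}_n(\mA_E)$, one concludes $\sigma \cong \pi$. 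This identifies the given decomposable vector $\varphi \in V_\pi$, up to an explicit scalar, with the descent image of a suitable decomposable $\phi_\Pi$, reducing everything to a computation of explicit global integrals built from $\Pi$.

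\textbf{Global identity and Eulerian factorization.} Inserting the descent formula into $W_{\psi_{N^\prime}}(\varphi)$ and performing a tower of unipotent Fourier coefficients, the Whittaker period becomes a global integral of the form
\[
W_{\psi_{N^\prime}}(\varphi) = c \int \mathrm{Res}_{s=1} E(g,s,\phi_\Pi) \cdot \vartheta(g) \, dg
\]
for a suitable Weil-representation theta datum $\vartheta$. After applying the Bruhat decomposition on $\mathrm{GL}_n(E)$ and invoking Jacquet--Shalika, this unfolds to an Euler product of local zeta integrals against local Whittaker functions of $\Pi_v$. The Petersson pairing $(\varphi,\varphi)$ is treated analogously via a Rallis-type inner product formula for the descent, producing a matching Eulerian expression with Asai $L$-factors. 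Taking the ratio and dividing termwise, one identifies each local factor with $I_v^\natural(\varphi_v)$ by the stable-integral characterization of $I_v$ together with standard unramified computations. The global $L$-value ratio $\prod_j L(j,\chi_E^j)/L(1,\Pi,\mathrm{As}^{(-1)^n})$ emerges from collecting normalizing factors, and the constant $2^{-k}$ arises from the explicit residue constant $\tfrac{1}{2} \, \mathrm{Res}_{s=1} L(s,\Pi_i,\mathrm{As}^{(-1)^{n-1}})$ contributed by each of the $k$ cuspidal components.

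\textbf{Main obstacle.} The hardest part is the local matching at ramified non-archimedean and at archimedean places, where no closed-form Whittaker evaluation is available. The strategy is abstract: both the local Whittaker period $I_v(\varphi_v)$ and the local zeta integral produced by unfolding define Whittaker-equivariant functionals on $V_{\pi_v} \otimes V_{\overline{\pi}_v}$, hence are proportional by uniqueness of the Whittaker model, and the proportionality constant is pinned down by a globalization argument in which $\pi_v$ is embedded into a global cuspidal representation whose other components are sufficiently controlled. Making this propagation rigorous while simultaneously handling convergence of the stable integral at archimedean places, and tracking archimedean Whittaker normalizations through each step, is expected to be the main technical burden.
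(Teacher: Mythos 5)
Your proposal essentially re-derives the reduction already carried out by Lapid and Mao in \cite{LMa}: realizing $\pi$ via descent, unfolding the Whittaker period and the Petersson inner product through Rallis inner product formulas, and factoring the ratio into local integrals. That chain of reasoning is exactly what yields the existence of constants $c_{\pi_v}$ with
\[
\frac{|W_{\psi_{N^\prime}}(\varphi)|^2}{(\varphi, \varphi)}
= 2^{-k}\,\Big(\prod_v c_{\pi_v}^{-1}\Big)\,\frac{\prod_{j=1}^{n} L(j,\chi_E^j)}{L(1,\Pi,\mathrm{As}^{(-1)^n})}\prod_v I_v^\natural(\varphi_v),
\]
i.e.~the formula $\eqref{conj pre}$ that the paper takes as its \emph{starting point}. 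What the paper must then prove, and what your proposal does not prove, is that each local constant $c_{\pi_v}$ equals $\omega_{\Pi_v}(\eta)$.

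The genuine gap is in your final paragraph. You say the local proportionality constant is ``pinned down by a globalization argument in which $\pi_v$ is embedded into a global cuspidal representation whose other components are sufficiently controlled.'' This is a circular hand-wave: the globalization argument itself requires that the global formula already be known for the globalizing representation, which is the very statement you are trying to prove. The paper breaks this circularity by a concrete chain of inputs that your proposal does not supply: (i) the local identity is first established at finite places (non-split in \cite{Mo1}, split in Appendix~A of this paper) by a direct model-transition argument completely independent of the global conjecture; (ii) for archimedean places, the globalizing representation is chosen to be tempered everywhere, so that the \emph{odd} unitary case of the formula can be imported from Beuzart-Plessis--Chaudouard \cite{BPC}; (iii) a new theta-correspondence transfer (Theorem~\ref{whittaker u2n-1 to u2n}, based on the pull-back computations of Section~3 and the Rallis inner product formulas of Section~4) moves that tempered odd result to the \emph{even} case, and handles the non-tempered situation via holomorphic continuation in the inducing parameters (Section~\ref{theta extend generic}); (iv) a deformation argument in $\underline{s}\in(i\mathbb{R})^r$ then recovers the full local identity for the original (possibly non-tempered) $\pi_v$. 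Without a substitute for steps (ii)--(iv) your ``globalization'' has nothing to globalize \emph{from}, and the proposal stalls exactly at the point where the paper's real work begins. In addition, your attribution of the $2^{-k}$ to residue constants $\tfrac{1}{2}\mathrm{Res}_{s=1}L(s,\Pi_i,\mathrm{As}^{(-1)^{n-1}})$ is not how the constant arises here; with the Tamagawa measure normalization the $2^{-k}$ comes from $\mathrm{Vol}(\mathrm{U}(n,F)\backslash\mathrm{U}(n,\mA))=2$ together with the $2^{1-k}$ in Lapid--Mao's normalization, as the paper's Remark after the conjecture explains.
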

\begin{Remark}
In Lapid-Mao~\cite{LMe}, they use the measure $dg$ on $\mathrm{U}(n,\mA)$ such that $\mathrm{Vol}(\mathrm{U}(n, F) \backslash \mathrm{U}(n, \mA), dg) =1$ 
and thus $2^{1-k}$ appears on the right-hand side of the formula. On the other hand, in this paper, we use the Tamagawa measure and thus 
$\mathrm{Vol}(\mathrm{U}(n, F) \backslash \mathrm{U}(n, \mA)) =2$ and $2^{-k}$ appears.
\end{Remark}
This conjecture was checked by Lapid and Mao~\cite{LMe} in the case of $\mathrm{U}(1)$ and $\mathrm{U}(2)$.
In the even unitary group case, the formula was proved by the author under certain assumptions in \cite{Mo1}.
Recently, in a remarkable paper by Beuzart-Plessis and Chaudouard~\cite{BPC}, they showed 
Ichino-Ikeda type formula of Bessel periods for any irreducible cuspidal tempered  automorphic representations, 
which includes the case of Whittaker periods for odd unitary groups. Hence, Conjecture~\ref{main conj}
holds for any irreducible cuspidal tempered automorphic representations of $\mathrm{U}(2n+1)$.
\\

When $\pi$ is $\psi_{N^\prime}$-generic, 
 Lapid and Mao~\cite[Theorem~5.5, Theorem~8.6]{LMa} showed that there exists a constant $c_{\pi_v}$ depending on $\pi_v$ 
 (implicitly on the choice of measures and $\psi$)
 such that 
\begin{equation}
\label{conj pre}
\frac{|W_{\psi_{N^\prime}}(\varphi)|^2}{( \varphi, \varphi )}
=2^{-k}  (\prod_v c_{\pi_v}^{-1}) \cdot \frac{\prod_{j=1}^{n} L \left(j, \chi_E^j \right)}{L \left(1, \Pi, \mathrm{As}^{(-1)^n} \right)} \prod_v 
I_v^\natural(\varphi_v)
\end{equation}
for any non-zero decomposable vector $\varphi = \otimes \varphi_v \in V_\pi$.
Then they conjectured the following local identity 
\begin{Conjecture}
\label{local conj}
Keep the above notation. Let $v$ be a place of $F$ and we denote 
by $\omega_{\Pi_v}$ the central character of $\Pi_v$.
Then
\begin{equation}
\label{local identity conj}
c_{\pi_v} = \omega_{\Pi_v}(\eta).
\end{equation}
for any $\eta \in E_v^\times$ such that $\bar{\eta} = -\eta$.
In particular, Conjecture~\ref{main conj} holds for $\pi$.
\end{Conjecture}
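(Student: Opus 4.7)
The plan is to establish the local identity $c_{\pi_v}=\omega_{\Pi_v}(\eta)$ at every place $v$, because this immediately gives Conjecture~\ref{main conj}. Indeed, once the local identity holds at every $v$, one may fix a global trace-zero element $\eta\in E^\times$ (any generator of $E/F$ with $\bar\eta=-\eta$) and compute
\[
\prod_v c_{\pi_v}^{-1}=\prod_v \omega_{\Pi_v}(\eta)^{-1}=\omega_{\Pi}(\eta)^{-1}=1,
\]
the last equality because $\omega_{\Pi}$ is a Hecke character of $\mA_E^\times/E^\times$. Substituting into the unconditional formula (\ref{conj pre}) then collapses it to (\ref{whittaker formula ref}).

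The first step is to settle the easy places directly. At a finite place $v$ where $\pi_v$ is unramified and $E_v/F_v$ is either split or an unramified quadratic extension, the spherical computation underlying \cite[Proposition~2.14]{LMe} gives $I_v^\natural=1$ on a new vector, and comparison with (\ref{conj pre}) forces $c_{\pi_v}=1$; on the other side, $\omega_{\Pi_v}$ is unramified and a unit trace-zero $\eta$ makes $\omega_{\Pi_v}(\eta)=1$. At a split place $v$ where $\mathrm{U}(n,F_v)\simeq \mathrm{GL}_n(F_v)$, one reduces $c_{\pi_v}$ to an explicit Rankin--Selberg quantity and checks the identity by a direct calculation. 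This handles all but finitely many places for any given $\pi$.

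The remaining bad places---archimedean places and ramified inert finite places---are handled by globalization combined with induction on the rank $n$. The base of the induction is the odd-unitary case of Beuzart-Plessis--Chaudouard~\cite{BPC} and the author's prior work~\cite{Mo1} in the even case under auxiliary hypotheses. Given an arbitrary $\pi_{v_0}$ at a bad place, I would construct a cuspidal globally generic $\tilde\pi$ of $\mathrm{U}(n,\mA)$ with $\tilde\pi_{v_0}\simeq\pi_{v_0}$, whose base-change lift is cuspidal with the required Asai pole on each summand, and with every other local component belonging to the already-treated classes (unramified, split, or of a simple ramified type accessible by the induction). Applying (\ref{conj pre}) to $\tilde\pi$ and substituting the known identity $c_{\tilde\pi_w}=\omega_{\Pi_w}(\eta)$ for every $w\ne v_0$, together with $\prod_w\omega_{\Pi_w}(\eta)=1$, isolates $c_{\pi_{v_0}}=\omega_{\Pi_{v_0}}(\eta)$.

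The principal obstacle is the globalization step: given an arbitrary bad local datum $\pi_{v_0}$, one must construct a cuspidal globally generic $\tilde\pi$ of $\mathrm{U}(n,\mA)$ with that component at $v_0$, with all other components in the already-treated classes, and whose base-change lift has the correct Arthur parameter of the conjugate self-dual type prescribed by the parity of $n$. This is delicate when $\pi_{v_0}$ is supercuspidal of large conductor, and likely requires a simple trace formula or Poincar\'e series input to control the remaining local components. A secondary but equally essential point is to verify that $c_{\pi_v}$ and $\omega_{\Pi_v}(\eta)$ transform identically under changes in $\psi$, the Tamagawa and local measures, and the Hermitian inner product; only then does the global identity descend unambiguously to the local one at each place, and only then is the formula well-posed as $\eta$ is rescaled by $F_v^\times$.
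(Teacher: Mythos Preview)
Your overall architecture---globalize and use the product formula to isolate the bad place---is the same as the paper's, but there is a genuine logical gap in your isolation step, and the paper's resolution of it is the main content of the article.

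The gap is this: knowing $c_{\tilde\pi_w}=\omega_{\Pi_w}(\eta)$ for all $w\ne v_0$ and plugging into (\ref{conj pre}) does \emph{not} by itself yield $c_{\tilde\pi_{v_0}}=\omega_{\Pi_{v_0}}(\eta)$. What (\ref{conj pre}) gives you is
\[
\frac{|W_{\psi_{N'}}(\varphi)|^2}{(\varphi,\varphi)}
=2^{-k}\,c_{\tilde\pi_{v_0}}^{-1}\,\omega_{\Pi_{v_0}}(\eta)\cdot
\frac{\prod_j L(j,\chi_E^j)}{L(1,\Pi,\mathrm{As}^{(-1)^n})}\prod_v I_v^\natural(\varphi_v),
\]
so you can only conclude $c_{\tilde\pi_{v_0}}=\omega_{\Pi_{v_0}}(\eta)$ if you \emph{already know} the global formula (\ref{whittaker formula ref}) for $\tilde\pi$, i.e.\ that $\prod_v c_{\tilde\pi_v}=1$. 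For $\mathrm{U}(2n)$ this is precisely the open case: \cite{BPC} only treats odd unitary groups, and \cite{Mo1} is conditional. Your ``induction on $n$'' does not supply this; there is no mechanism in your proposal linking the global formula on $\mathrm{U}(2n)$ to anything previously established. The paper's substitute is Theorem~\ref{whittaker u2n-1 to u2n}: a comparison of Whittaker periods under the theta lift $\mathrm{U}(2n)\to\mathrm{U}(2n+1)$, proved via an explicit pull-back computation and Rallis inner product formula. This transfers the global formula from $\mathrm{U}(2n+1)$ (where \cite{BPC} applies to tempered $\Theta(\tilde\pi)$) back to $\tilde\pi$ on $\mathrm{U}(2n)$, and only then does the isolation argument go through.

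There is a second, smaller gap. You ask for a globalization with $\tilde\pi_{v_0}\simeq\pi_{v_0}$ on the nose. At an archimedean place this is generally impossible if you also insist that $\tilde\pi$ be tempered (needed so that \cite{BPC} applies to its theta lift): a generic unitary $\pi_{v_0}$ may have nonzero Langlands exponents. The paper does not globalize $\pi_{v_0}$ itself; it embeds $\pi_{v_0}$ in a holomorphic family $\sigma[\underline{s}]$ of induced representations, globalizes a \emph{tempered} member of that family (constructed explicitly via automorphic induction and descent from $\mathrm{U}(2)$ over $\mathbb{Q}(i)/\mathbb{Q}$ or $\mathbb{Q}(\sqrt{2})/\mathbb{Q}$), proves the identity there, and then uses the holomorphy of both sides in $\underline{s}$ to propagate. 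Note also that at complex places the paper only obtains $c_{\pi_v}c_{\pi_{\bar v}}=\omega_{\Pi_v}(\eta)\omega_{\Pi_{\bar v}}(\eta')$, which suffices for the global statement but is weaker than the placewise identity you claim.
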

\begin{Remark}
Since $\Pi_v$ is $\mathrm{GL}_n(F_v)$-distinguished,
$\omega_{\Pi_v}$ is trivial on $F_v^\times$. Hence, for any $\eta, \eta^\prime \in E_v^\times$ such that $\bar{\eta}=-\eta$
and $\bar{\eta^\prime}=-\eta^\prime$,  we have $\omega_{\Pi_v}(\eta) = \omega_{\Pi_v}(\eta^\prime)$ because of 
$\eta^\prime \eta^{-1} \in F_v^\times$. 
\end{Remark}
This local identity has independent interest. In our previous paper \cite{Mo1}, we showed
the local identity \eqref{local identity conj} for $\mathrm{U}(2n)$ at non-split finite places in \cite{Mo1}.
We also showed that this identity is equivalent to refined form of formal degree conjecture by Gan-Ichino~\cite{GI} 
(see \cite{HII1, HII2} for original form of formal degree conjecture) for generic discrete series representations.
As a consequence, we obtained a proof of refined form of formal degree conjecture for $\mathrm{U}(2n)$. 
From this observation, we may regard the above local identity as a kind of generalization
of  refined form of formal degree conjecture.
Note that original form of formal degree conjecture was proved by Beuzart-Plessis~\cite{BP}.
\\

As we noted, the local identity \eqref{local identity conj} holds for $\mathrm{U}(2n)$ at non-split finite places.
A similar argument can be applied to the case at split finite places and we give a proof of this case in Appendix~A.
Therefore, in order to prove the formula \eqref{whittaker formula ref} in the case of $\mathrm{U}(2n)$, our remaining task is to prove the local identity at archimedean places. 
Our first main theorem is the following result on this local conjecture.
\begin{theorem}
\label{main thm}
At a real place $v$, the local identity \eqref{local identity conj} holds for $\pi_v$. At a complex place $v$, we have 
$c_{\pi_v}c_{\pi_{\bar{v}}} = \omega_{\Pi_v}(\eta)\omega_{\Pi_{\bar{v}}}(\eta^\prime)$ 
for any $\eta, \eta^\prime \in E_v^\times$ such that $\overline{\eta} = -\eta$ and $\overline{\eta^\prime} = -\eta^\prime$. Here, $\bar{v}$ denotes the complex conjugate place of $v$.
 In particular, Conjecture~\ref{main conj} holds for any irreducible $\psi_{N^\prime}$-generic
cuspidal automorphic representation of $\mathrm{U}(2n)$.
\end{theorem}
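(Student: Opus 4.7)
The plan is to deduce the archimedean local identity by a global argument that combines the finite-place identities $c_{\pi_w}=\omega_{\Pi_w}(\eta)$---already known at non-split finite places by \cite{Mo1} and at split finite places by Appendix~A---with a suitable globalization of the given archimedean local representation.

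The key observation is that, once the finite-place identities are in hand and one uses that $\omega_\Pi(\eta)=1$ for any $\eta\in E^\times$ with $\bar\eta=-\eta$ (the central character $\omega_\Pi$ being trivial on $E^\times$), the right-hand side of \eqref{conj pre} reduces to a product over archimedean places only. Thus if one can globalize the prescribed local $\pi_v$ to an irreducible cuspidal $\psi_{N^\prime}$-generic representation $\pi$ of $\mathrm{U}(2n,\mA)$ with generic Arthur parameter satisfying the Asai $L$-function pole condition, one obtains a direct expression for $\prod_{w\text{ arch}} c_{\pi_w}/\omega_{\Pi_w}(\eta)$ in terms of a global Whittaker period, a Petersson norm, and a ratio of $L$-values, all of which can be unwound.

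To isolate a single archimedean place I would choose a number field $F$ with exactly one archimedean place of the required type. For a real place of $F$ take $F=\mQ$, leaving a single (real) archimedean place in play; for a complex place take $F$ imaginary quadratic, leaving a single complex archimedean place of $F$. The structural pairing $(v,\bar v)$ that appears in the statement at complex places reflects the natural doubling that $E/F$ produces above a complex place of $F$, and this is precisely what forces the weaker product-form identity in the complex case. Once the archimedean place is isolated in this way, combining \eqref{conj pre} with the explicit description of $c_{\pi_v}$ from \cite{LMa} in terms of normalized intertwining operators and Whittaker gamma factors yields $c_{\pi_v}=\omega_{\Pi_v}(\eta)$ in the real case and the corresponding product identity at the pair $(v,\bar v)$ in the complex case.

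The hard part will be the globalization step: producing an irreducible cuspidal $\psi_{N^\prime}$-generic automorphic representation of $\mathrm{U}(2n,\mA)$ with prescribed archimedean local component, a generic Arthur parameter of the required isobaric form, and the Asai-pole condition on its base change, while keeping the behaviour at the remaining finite places controllable enough to invoke the identities of \cite{Mo1} and Appendix~A. This combination of constraints---cuspidality, genericity of the Arthur parameter, the pole of $L(s,\Pi_i,\mathrm{As}^{(-1)^{n-1}})$, and the prescribed archimedean type---is the main technical obstacle, and overcoming it likely requires either an explicit Poincar\'e series construction, an adaptation of Shin--Templier-type trace formula arguments with matching at the archimedean place, or a refined globalization result tailored to generic representations of unitary groups.
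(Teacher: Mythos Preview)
Your overall global--local strategy is the right shape, but there is a genuine gap at the step where you need $\prod_v c_{\pi_v}=1$ for the globalized representation. Knowing the finite-place identities only tells you that the right-hand side of \eqref{conj pre} reduces to $\prod_{w\mid\infty} c_{\pi_w}^{-1}$ times the conjectural value; it does \emph{not} tell you that this product equals~$1$. For that you need an independent proof that the Whittaker formula \eqref{whittaker formula ref} actually holds for your globalized $\pi$. The paper supplies exactly this missing ingredient: Theorem~\ref{whittaker u2n-1 to u2n} (more precisely Theorem~\ref{theta G_n to H_n+1}) transfers the Whittaker formula for $\mathrm{U}(2n)$ to the Whittaker formula for its theta lift on $\mathrm{U}(2n+1)$, and for tempered cuspidal representations of $\mathrm{U}(2n+1)$ the formula is known by Beuzart-Plessis--Chaudouard~\cite{BPC}. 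Without this theta-correspondence bridge to~\cite{BPC}, your argument has no way to close the loop.

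There is a second, subtler gap in the globalization step. Since~\cite{BPC} applies only to \emph{tempered} cuspidal representations, you need the globalized $\Sigma$ to be tempered everywhere; but a given generic $\pi_v$ at an archimedean place need not be tempered, so you cannot in general prescribe it exactly as a local component of a tempered cuspidal $\Sigma$. The paper circumvents this by globalizing not $\pi_v$ itself but a tempered deformation $\Sigma[s_1,\dots,s_r]_\infty$ lying in an analytic family containing $\pi_v$ (built from explicit $\mathrm{U}(2)$ constituents via automorphic descent over $\mQ(i)/\mQ$, $\mQ(\sqrt{2})/\mQ$, etc.), and then invokes the holomorphy of both sides of the local identity on the domain $\mathcal{D}=\{|\mathrm{Re}(s_i)|<\tfrac12\}$ (Lemma~\ref{lem:holo conti 1}, Corollary~\ref{cor:holo cont}, and the argument of \cite[\S3.4]{Mo1}) to propagate the identity from the tempered locus to the point corresponding to $\pi_v$. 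Your proposal to prescribe the archimedean component directly via Poincar\'e series or trace-formula globalization does not address this temperedness constraint, and the ``explicit description of $c_{\pi_v}$ from \cite{LMa}'' you allude to is precisely the quantity whose value is in question---it cannot be unwound without the global input above.
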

Moreover, combining the  formula \eqref{whittaker formula ref}  for  $\mathrm{U}(2n+2)$ with global theta correspondence between$\mathrm{U}(2n+1)$
and $\mathrm{U}(2n+2)$, 
we can prove the formula in the case of $\mathrm{U}(2n+1)$. The following is the second main theorem of this paper.
\begin{theorem}
\label{main thm odd}
Conjecture~\ref{main conj} holds for any irreducible $\psi_{N^\prime}$-generic
cuspidal automorphic representations of $\mathrm{U}(2n+1)$.
Hence, Conjecture~\ref{main conj} holds for any irreducible $\psi_{N^\prime}$-generic
cuspidal automorphic representation of $\mathrm{U}(n)$.
\end{theorem}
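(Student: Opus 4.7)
The plan is to deduce the $\mathrm{U}(2n+1)$ case from the $\mathrm{U}(2n+2)$ case (Theorem~\ref{main thm}) via the global theta correspondence between skew-Hermitian $\mathrm{U}(2n+1)$ and Hermitian $\mathrm{U}(2n+2)$. Given an irreducible $\psi_{N^\prime}$-generic cuspidal $\pi$ of $\mathrm{U}(2n+1,\mA)$ with generic Arthur parameter $\Pi=\Pi_1\boxplus\cdots\boxplus\Pi_k$, I would choose splitting characters $\mu_1,\mu_2$ of $\mA_E^\times/E^\times$ with the correct parity conditions and form the theta lift $\sigma:=\Theta_{\psi,\mu_1,\mu_2}(\pi)$ to $\mathrm{U}(2n+2,\mA)$. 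By the theory of theta lifts of globally generic cuspidal representations (non-vanishing and preservation of genericity at the first occurrence on the quasi-split tower), $\sigma$ is a non-zero irreducible $\psi_{N^\prime}$-generic cuspidal representation whose strong base change is an explicit twist of $\Pi$ by $\mu_1,\mu_2$; this twist is arranged so that the Asai signs governing both the pole conditions on the $\Pi_i$ and the $L$-factors appearing in the denominators of \eqref{whittaker formula ref} on the two sides are compatible.

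With $\sigma$ in hand, I would apply Theorem~\ref{main thm} to a test vector $\tilde\varphi=\theta_\Phi(\varphi)\in V_\sigma$ obtained as the theta lift of a decomposable $\varphi\in V_\pi$ against a factorisable Schwartz function $\Phi$. Two reduction steps then do the work. First, unfolding the theta kernel against the unipotent integration that defines $W_{\psi_{N^\prime}}(\tilde\varphi)$ expresses the latter as $W_{\psi_{N^\prime}}(\varphi)$ multiplied by a factorisable Whittaker integral of $\theta_\Phi$ itself. Second, the Rallis inner product formula for the dual pair $(\mathrm{U}(2n+1),\mathrm{U}(2n+2))$ expands $(\tilde\varphi,\tilde\varphi)$ as $(\varphi,\varphi)$ times a global quotient of Hecke and Asai $L$-values coming from the doubling zeta integral, multiplied by the corresponding local doubling factors. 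Substituting both expansions into \eqref{whittaker formula ref} for $\sigma$ and dividing, the global $L$-factor ratio should collapse into the Asai $L$-function of $\Pi$ appearing in the conjecture for $\pi$, and the local factors should combine into $I_v^\natural(\varphi_v)$ at each place~$v$, yielding \eqref{whittaker formula ref} for $\pi$.

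The main obstacle is the local comparison: at every place $v$ one has to check that the local Whittaker integral of the theta kernel, the local doubling factor, and the local Asai and Hecke $L$-factors combine to reproduce precisely $I_v^\natural(\varphi_v)$ for $\pi_v$ up to a constant that cancels in the product over $v$. At finite unramified places this is a direct computation with spherical data, but at archimedean places and at ramified non-archimedean places it requires a careful use of the stable integral formalism recalled in Section~\ref{s: def local Whittaker} together with local Rallis inner product identities for unitary dual pairs. A secondary technical point is that the Rallis inner product formula must be employed in its regularised form, since the generic Arthur parameters at hand need not be tempered; one must verify the first occurrence hypothesis ensuring that $\sigma$ is cuspidal and that the regularised doubling integral is both convergent and non-degenerate. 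Once these steps are in place, Conjecture~\ref{main conj} holds for $\pi$ on $\mathrm{U}(2n+1)$, and combined with Theorem~\ref{main thm} for $\mathrm{U}(2n)$ this yields the statement for every quasi-split $\mathrm{U}(n)$.
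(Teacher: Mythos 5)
Your proposal is correct and follows essentially the same route as the paper: lift $\pi$ on $\mathrm{U}(2n+1)$ to a cuspidal generic $\sigma$ on $\mathrm{U}(2n+2)$ (Lemma~\ref{global II lift cor} for cuspidality, Proposition~\ref{theta gen to gen} for genericity), apply Theorem~\ref{main thm} to $\sigma$, then transfer the formula back to $\pi$ by unfolding the Whittaker period of the theta lift, using the Rallis inner product formula and the local identity — this is precisely the content of Theorem~\ref{whittaker u2n-1 to u2n} (Theorem~\ref{theta H_n to G_n}), which the paper invokes in its one-line proof. The only cosmetic slip is that in the paper's conventions the odd unitary group is the Hermitian one and the even one the skew-Hermitian; and the paper sidesteps regularisation of the doubling integral by establishing convergence at $s=\tfrac12$ directly from the exponent bound \eqref{exp estimate}.
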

\begin{Remark}
As we remarked above, the case of $\mathrm{U}(2n+1)$ was proved for any irreducible cuspidal 
tempered automorphic representations of $\mathrm{U}(2n+1)$ by \cite{BPC}. 
From generalized Ramanujan conjecture, it is expected that any irreducible $\psi_{N^\prime}$-generic
cuspidal automorphic representations should be tempered. Hence, if the conjecture holds, Theorem~\ref{main thm odd} is subsumed in \cite{BPC}.
However, the conjecture was proved only for restrictive cases such as cohomological automorphic representations, and it is widely open problem at present.
\end{Remark}
We would like to give a remark about the case where $\sigma$ is not $\psi_{N^\prime}$-generic.
If $\sigma_v$ is not generic at some place $v$ of $F$, then the formula~\eqref{whittaker formula ref} clearly holds for $\sigma$.
Hence, we may suppose that $\sigma_v$ is generic at any place $v$ of $F$. It is expected that there does not exist such $\sigma$.
This problem is closely related to an existence of strong base change lift as follows. 
Let $\sigma_{\rm gen}$ be an irreducible cuspidal $\psi_{N^\prime}$-generic
automorphic representation of $\mathrm{U}(n)$ which is nearly equivalent to $\sigma$. 
Such automorphic representations exists by
the global descent method due to Ginzburg-Rallis-Soudry~\cite{GRS}. 

If $\sigma$ has strong base change lift in the above sense, then we have 
$\sigma_v \simeq \sigma_{{\rm gen}, v}$ at any archimedean place $v$ because of 
the uniqueness of generic element in the $L$-packet.
At a non-split finite place $v$, we have $\gamma^{\mathrm{Sh}}(s, \sigma_v \times \tau, \psi) = \gamma^{\mathrm{Sh}}(s, \sigma_v \times \tau, \psi)$ 
for any irreducible admissible representation $\tau$ of $\mathrm{GL}_m(E_v)$ with $1 \leq m \leq n$.
Hence, by the local converse theorem due to the author \cite{Mo18} for $\mathrm{U}(2n)$ and Zhang~\cite{QZ19} for $\mathrm{U}(2n+1)$, we obtain 
$\sigma_v \simeq \sigma_{{\rm gen}, v}$. 
Moreover, at a split finite place, we have $\sigma_v \simeq \sigma_{{\rm gen}, v}$ by the local converse theorem due to Henniart~\cite{He93}.
Thus, we obtain $\sigma \simeq \sigma_{\rm gen}$.
Finally, since the multiplicity of $\sigma$ and $\sigma_{\rm gen}$ is one by Mok~\cite{Mok} (cf. \cite[Theorem~2.5]{CZ}), 
we obtain $\sigma = \sigma_{\rm gen}$ and $\sigma$ is $\psi_{N^\prime}$-generic. 
We note that strong base change lift of $\sigma$ exists when $\sigma$ is tempered by Mok~\cite{Mok}, and thus the formula~\eqref{whittaker formula ref} holds in this case.

Recently, Cai-Friedberg-Kaplan~\cite{CFK} showed an existence of 
strong functorial lift for any irreducible cuspidal automorphic representations of symplectic groups $\mathrm{Sp}(2n)$ 
and split odd special orthogonal groups $\mathrm{SO}(2n+1)$.
Hence, by local converse theorems due to Zhang~\cite{QZ18} for $\mathrm{Sp}(2n)$
and Jiang-Soudry~\cite{JS} for $\mathrm{SO}(2n+1)$, a similar argument shows that 
any irreducible cuspidal locally generic automorphic representation of $\mathrm{Sp}(2n)$ or $\mathrm{SO}(2n+1)$ 
should be globally generic. It is natural to expect that 
a similar argument as \cite{CFK} can be applied to unitary groups. The above observation is summarized as follows.
\begin{corollary}
Assume that for any irreducible cuspidal automorphic representation of $\mathrm{U}(n)$ which is locally generic at every place, we have strong base change lift.
Then Conjecture~\ref{main conj} holds for any irreducible cuspidal automorphic representations of $\mathrm{U}(n)$ with generic Arthur parameter.
\end{corollary}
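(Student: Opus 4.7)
The plan is to reduce Conjecture~\ref{main conj} for $\sigma$ to the globally $\psi_{N^\prime}$-generic case, where Theorem~\ref{main thm} (for even rank) and Theorem~\ref{main thm odd} (for odd rank) already apply. If $\sigma_v$ fails to be $\psi_{N^\prime,v}$-generic at some place $v$, then the local Whittaker integral $I_v(\varphi_v)$ vanishes identically on $V_{\sigma_v}$ and the global Whittaker period $W_{\psi_{N^\prime}}$ vanishes on all of $V_\sigma$; both sides of \eqref{whittaker formula ref} are then zero and the formula is trivial. I may therefore assume that $\sigma$ is locally $\psi_{N^\prime}$-generic at every place.

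Under this assumption, the hypothesis of the corollary supplies a strong base change lift of $\sigma$ to $\mathrm{GL}_n(\mA_E)$, which by comparison at the unramified places must coincide with the generic isobaric sum $\Pi$ attached to the Arthur parameter. Using the global descent construction of Ginzburg--Rallis--Soudry~\cite{GRS}, I produce an irreducible cuspidal $\psi_{N^\prime}$-generic automorphic representation $\sigma_{\rm gen}$ of $\mathrm{U}(n,\mA)$ whose strong base change is also $\Pi$; in particular $\sigma$ and $\sigma_{\rm gen}$ are nearly equivalent, and at every place $v$ the two strong base change conditions together give
\[
\gamma^{\mathrm{Sh}}(s, \sigma_v \times \tau, \psi_v) = \gamma^{\mathrm{Sh}}(s, \sigma_{{\rm gen},v} \times \tau, \psi_v)
\]
for every irreducible generic representation $\tau$ of $\mathrm{GL}_k(E_v)$ with $1 \le k \le n$.

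The decisive step is to upgrade this near equivalence to an honest isomorphism of $\sigma$ and $\sigma_{\rm gen}$. At non-split finite places I apply the local converse theorem of the author~\cite{Mo18} for $\mathrm{U}(2n)$ and of Zhang~\cite{QZ19} for $\mathrm{U}(2n+1)$ to conclude $\sigma_v \simeq \sigma_{{\rm gen},v}$; at split finite places the comparison reduces via base change to Henniart's converse theorem~\cite{He93} for $\mathrm{GL}_n$; at archimedean places I invoke the uniqueness of the generic constituent in the $L$-packet attached to $\Pi_v$. Combined with Mok's multiplicity one theorem for generic Arthur packets on $\mathrm{U}(n)$ (\cite{Mok}, cf.~\cite[Theorem~2.5]{CZ}), the resulting local isomorphisms force $\sigma = \sigma_{\rm gen}$ as automorphic representations, so $\sigma$ is itself globally $\psi_{N^\prime}$-generic and Theorems~\ref{main thm} and \ref{main thm odd} yield Conjecture~\ref{main conj}. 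The genuine obstacle is concentrated in the hypothesis of the corollary itself; once one is granted a strong base change for every locally generic cuspidal representation of $\mathrm{U}(n)$, the rest is a standard bookkeeping of the descent method, gamma-factor compatibility, and the available local converse theorems, and no new analytic input is required.
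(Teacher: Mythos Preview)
Your argument is correct and follows essentially the same route as the paper: the paper's proof is precisely the discussion preceding the corollary, reducing to the globally generic case via the trivial vanishing when some $\sigma_v$ is non-generic, then using the strong base change hypothesis together with the descent $\sigma_{\rm gen}$, the local converse theorems (\cite{Mo18}, \cite{QZ19}, \cite{He93}), uniqueness of the generic member of the archimedean $L$-packet, and Mok's multiplicity one to identify $\sigma$ with $\sigma_{\rm gen}$.
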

We would like to give a corollary of our formula.
In our joint work with Furusawa~\cite{FM}, we studied a relation between the formula~\eqref{whittaker formula ref} in the case of $\mathrm{U}(2n)$ 
and Ichino-Ikeda type formula of Bessel periods in the case of $(\mathrm{U}(V), \mathrm{U}(W))$ with $\dim V=2n$ and $\dim W=1$.
Indeed, we showed that the formula of Bessel periods follows from the formula of Whittaker periods.
As a corollary of Theorem~\ref{main thm}, we obtain the following result.
\begin{theorem}
Let $V$ and $W$ be as above.
Then Ichino-Ikeda type formula of Bessel periods for $(\mathrm{U}(V), \mathrm{U}(W))$
conjectured by Liu~\cite{Liu} holds for any irreducible cuspidal tempered automorphic representations.
\end{theorem}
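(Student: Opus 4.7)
The plan is to deduce the Bessel period formula directly from the Whittaker period formula of Theorem~\ref{main thm}, using the reduction established in the joint work with Furusawa~\cite{FM}. As noted in the discussion preceding the statement, \cite{FM} already provides an explicit comparison showing that Liu's Ichino--Ikeda type formula for Bessel periods on $(\mathrm{U}(V), \mathrm{U}(W))$ with $\dim V = 2n$ and $\dim W = 1$ is implied by the Whittaker period formula~\eqref{whittaker formula ref} for $\mathrm{U}(2n)$. The present theorem is therefore essentially a combination of these two inputs, and no new local or global identity has to be established.

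The argument is carried out in three short steps. First, given an irreducible cuspidal tempered automorphic representation $(\pi,\sigma)$ of $(\mathrm{U}(V), \mathrm{U}(W))$, invoke the comparison from \cite{FM} to associate an auxiliary cuspidal automorphic representation $\tilde{\pi}$ of $\mathrm{U}(2n)$ and to reduce the verification of Liu's formula for $(\pi,\sigma)$ to the verification of Conjecture~\ref{main conj} for $\tilde{\pi}$. Second, observe that because $(\pi,\sigma)$ is tempered, strong base change for tempered representations (Mok~\cite{Mok}) together with the local converse theorem argument sketched in the introduction forces $\tilde{\pi}$ to be $\psi_{N'}$-generic with generic Arthur parameter; in particular $\tilde{\pi}$ falls within the scope of Theorem~\ref{main thm}. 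Third, apply Theorem~\ref{main thm} to $\tilde{\pi}$ to obtain formula~\eqref{whittaker formula ref}, and transport the resulting identity through the \cite{FM} comparison to deduce Liu's formula for $(\pi,\sigma)$.

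The main obstacle has effectively been resolved elsewhere: the substantive analytic content---matching the relevant local factors at every place and identifying global periods---is done in \cite{FM}, while the archimedean local identity \eqref{local identity conj} needed to upgrade the formula with undetermined local constants to a clean statement is exactly Theorem~\ref{main thm}. Thus the only verification required here is that the temperedness hypothesis on the Bessel side is compatible with the $\psi_{N'}$-genericity hypothesis needed on the Whittaker side, which is the routine step described above.
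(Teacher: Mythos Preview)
Your proposal is correct and follows the same approach as the paper: the theorem is stated there simply as a corollary of Theorem~\ref{main thm} via the reduction of \cite{FM}, without a separate proof. Your step~2 is more cautious than necessary---in \cite{FM} the auxiliary Whittaker-side representation is produced so as to be globally $\psi_{N'}$-generic by construction, so the appeal to Mok and the local converse theorem is not actually needed here---but the overall logic is the intended one.
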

\begin{Remark}
As we remarked above, Ichino-Ikeda type formula of Bessel periods for unitary groups
was proved by Beuzart-Plessis and Chaudouard~\cite{BPC} in general.
Hence, this gives another proof of their result in this case.
\end{Remark}
Let us explain our idea of proof of Theorem~\ref{main thm} and Theorem~\ref{main thm odd}.
First, we need a relation between Whittaker periods for even and odd unitary groups under theta correspondences.
The following is one of main technical innovation of this paper.
\begin{theorem}
\label{whittaker u2n-1 to u2n}
Let $(\sigma, V_{\sigma})$ be  an irreducible cuspidal globally generic automorphic representation of $\mathrm{U}(2n-1)$ (resp. $\mathrm{U}(2n)$).
Suppose that the formula \eqref{whittaker formula ref} holds
for the theta lift $\Theta(\sigma,\psi, \chi)$ of $\sigma$ to $\mathrm{U}(2n)$ (resp $\mathrm{U}(2n+1)$) if 
$\Theta(\sigma, \psi, \chi)$
is cuspidal. 
Then the formula \eqref{whittaker formula ref} holds for $\sigma$.
(see Section~\ref{ss:weil rep} for the definition of the theta lift  $\Theta(\sigma,\psi, \chi)$).
\end{theorem}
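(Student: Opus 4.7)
The plan is to derive the Whittaker formula for $\sigma$ from the assumed formula for its theta lift $\Theta(\sigma,\psi,\chi)$ (when cuspidal) by transporting both sides of~\eqref{whittaker formula ref} through the theta correspondence. The argument rests on three ingredients: a global unfolding of the Whittaker period of the theta lift, the Rallis inner product formula for the dual pair $(\mathrm{U}(m),\mathrm{U}(m+1))$, and a local Whittaker compatibility on the Weil representation.

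For the global step, I would Fourier-expand the theta kernel $\theta_{\phi,\psi,\chi}(f)$ along a suitable maximal unipotent subgroup of the larger unitary group and collapse against the relevant non-degenerate character $\psi_{N^\prime}$. In the tradition of Ginzburg--Rallis--Soudry this unfolding should yield an identity of the shape
\[
W_{\psi_{N^\prime}}\bigl(\theta_{\phi,\psi,\chi}(f)\bigr)
 = \int_{H(\mA)} W_{\psi_{N^\prime}}(\sigma(h)f)\,\mathcal{W}_\phi(h)\,dh,
\]
where $H$ is a unitary subgroup inside the Whittaker stabilizer and $\mathcal{W}_\phi$ is a Whittaker-type evaluation of $\phi$ in the Weil representation. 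This identity factorizes into local analogues at every place. Paired with the Rallis inner product formula, which writes $(\theta_\phi(f),\theta_\phi(f))/(f,f)$ as a ratio of partial L-values (involving $L(s,\Pi_\sigma,\mathrm{As}^\pm)$ and Hecke L-factors of $\chi$) multiplied by local doubling integrals, one obtains a complete factorization of $|W_{\psi_{N^\prime}}(\theta(f))|^2/(f,f)$ into global L-values and local Whittaker-weighted integrals of data from the Weil representation.

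Applying the assumed formula~\eqref{whittaker formula ref} for $\Theta(\sigma,\psi,\chi)$ to the left-hand side then yields the desired expression for $|W_{\psi_{N^\prime}}(f)|^2/(f,f)$. On the global side, the base change of $\Theta(\sigma,\psi,\chi)$ differs from $\Pi_\sigma$ by a controlled insertion or deletion of a character block built from $\chi$, so the global Asai L-factors produced by the two Ichino--Ikeda formulas cancel against those coming from the Rallis inner product formula, and the power $2^{-k}$ matches under the known shift of $k$ under theta lifting. The main obstacle is the corresponding local identity at each place $v$: the product of the local Whittaker functional on $\phi_v$ and the local doubling integral on the Weil representation $\omega_{\psi_v,\chi_v}$ must equal the ratio $I_v^\natural\bigl(\theta_{\phi,\psi,\chi}(\varphi)_v\bigr)/I_v^\natural(\varphi_v)$ predicted by the two sides of Conjecture~\ref{main conj}. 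This identity is automatic at unramified places from Casselman--Shalika and the unramified doubling computation, but at ramified finite places and, especially, at archimedean places one must combine a delicate analysis of the stable integral defining $I_v$ with known properties of generic local theta correspondence to obtain the required compatibility uniformly in~$v$.
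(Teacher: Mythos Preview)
Your proposal follows essentially the same architecture as the paper: the pull-back (unfolding) of the Whittaker period of the theta lift in terms of the Whittaker period of $\sigma$, the Rallis inner product formula to control the Petersson norms, and then a reduction to a local identity comparing $I_v$ on $\sigma_v$ with $I_v$ on the local theta lift $\theta(\sigma_v,\psi_v,\chi_v)$ through an explicit Weil-representation integral. The global $L$-value bookkeeping you describe (Asai factors, the $2^{-k}$ shift, the character block from $\chi$) is exactly what the paper checks.

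The gap in your proposal is in the last paragraph, where you locate the main difficulty at ramified and archimedean places and propose to treat it by ``a delicate analysis of the stable integral defining $I_v$ with known properties of generic local theta correspondence.'' In the paper the tempered case at \emph{all} places (including archimedean) is handled by computations that are by now fairly standard, following Liu and Furusawa--Morimoto; the genuinely new obstacle is that the theorem is stated for \emph{generic} $\sigma$, not tempered $\sigma$, and for non-tempered generic $\sigma_v$ neither the stable integral nor the local doubling zeta integral is obviously meaningful at the relevant point. The paper resolves this by embedding $\sigma_v$ (and its theta lift) into a holomorphic family of induced representations $\sigma[\underline{s}]$ indexed by $\underline{s}\in\mC^k$, proving that both sides of the local identity extend holomorphically to a tube domain $\mathcal{D}$ containing the relevant non-tempered parameters (using Wallach's holomorphy of Jacquet integrals and the bound $|e(\delta_i)|<\tfrac12$ from Kim--Krishnamurthy), and then invoking the tempered identity on a dense subset of $(i\mR)^k$. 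Your proposal does not mention this analytic continuation step, and without it the argument only yields the theorem under a temperedness hypothesis already available in earlier work.
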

When $\sigma$ is tempered, a similar result as Theorem~\ref{whittaker u2n-1 to u2n} is proved in several situations (e.g. see \cite{FM21}, \cite{FM}, \cite{FM2}, \cite{Liu}).
On the other hand, in the above theorem, we do not assume temperedness but assume genericity. 
Indeed, in Section~\ref{theta extend generic}, we explain how we can extend the tempered case to the generic case.
We expect that a similar argument can be applied for extending the tempered case to general case
in a similar situation such as our proof of Ichino-Ikeda type formula of Bessel periods for $(\mathrm{SO}(5), \mathrm{SO}(2))$ (see \cite{FM21}, \cite{FM2}).

Because of Theorem~\ref{whittaker u2n-1 to u2n},
we see that Theorem~\ref{main thm odd} follows from Theorem~\ref{main thm}.
Let us consider a proof of Theorem~\ref{main thm} in the case at non-split real places.
Let $\sigma_0$ be a irreducible generic representation of $\mathrm{U}(2n, \mR)$.
Then we may not globalize $\sigma_0$ to an irreducible cuspidal \emph{tempered} automorphic representation 
of $\mathrm{U}(2n, \mA_\mQ)$
with respect to the quadratic extension $\mQ(\sqrt{-1}) \slash \mQ$.
However, we can globalize a member $\sigma^\prime$ of some open set containing $\sigma_0$ 
to an irreducible cuspidal tempered generic automorphic representation $\Sigma$ of $\mathrm{U}(2n, \mA_\mQ)$. 
In a similar analytic argument as \cite[Section~3.4]{Mo1}, 
we see that the case of $\sigma^\prime$ is enough to prove Theorem~\ref{main thm} for $\sigma_0$.
Then because of Theorem~\ref{whittaker u2n-1 to u2n} and \cite{BPC}, 
the formula~\eqref{whittaker formula ref}
holds for $\Sigma$.
Hence, we obtain $\prod_{v} c_{\Sigma_v}=1$. On the other hand, we know that 
\eqref{local identity conj} holds for $\Sigma_v$ at all finite places $v$.
This implies that $\prod_{v<\infty} \omega_{\Sigma_v}(\eta) \times c_{\Sigma_\infty}=1$, and thus
\eqref{local identity conj} holds for $\Sigma_\infty = \sigma^\prime$.
A similar argument can be applied to the cases of split real places and complex places, and we can 
complete a proof of Theorem~\ref{main thm}.
\\

Let us explain an organization of this paper. In Section~2, we give basic notation.
In Section~3, we compute the pull-back of Whittaker periods for theta correspondences between $(\mathrm{U}(2n-1), \mathrm{U}(2n))$
and $(\mathrm{U}(2m), \mathrm{U}(2n+1))$ with $m \leq n$.
In Section~4, we recall Rallis inner product formula and explicit construction of local theta lifts.
In Section~5, we give a proof of Theorem~\ref{whittaker u2n-1 to u2n}.
In Section~6, we give a proof of Theorem~\ref{main thm} and Theorem~\ref{main thm odd}.
In Appendix~A, we prove Conjecture~\ref{local conj} at split finite places following the argument in \cite{Mo1}.
%
%
%
%
%
%
%
%
%
\subsection*{Acknowledgement}
The author thanks to Masaaki Furusawa for his comments.
%
%
%
%
%
%
%
%
%
\section{Notation}
Let $F$ be a number field and $E$ a quadratic extension of $F$.
We denote by $x \mapsto \bar{x}$ the non-trivial element of $\mathrm{Gal}(E \slash F)$.
Let $\chi_E$ denote the quadratic character of $\mA^\times \slash F^\times$ corresponding to the quadratic extension $E \slash F$.
Let $\psi_F$ be a non-trivial additive character of $\mA \slash F$. We define the character $\psi$ of $\mA_E \slash E$
by $\psi(x) = \psi_F \left( \frac{x+\bar{x}}{2} \right)$ for $x \in \mA_E$.
For each finite place $v$ of $F$, $\mathcal{O}_v$ denotes the ring of integers of $F_v$.
\subsection{Even unitary groups}
\label{s:notation even unitary}
Let $W_n=E^{2n}$ be a  $2n$-dimensional skew-hermitian spaces over $E$ with non-degenerate skew-hermitian pairing 
$(\, , \,)_{W_n}$ defined by 
\[
( x, y )_{W_n} = \bar{x} J_n {}^{t} y, \quad x, y \in E^{2n}
\]
where 
\[
J_n = \begin{pmatrix} &1_n\\ -1_n&\end{pmatrix}.
\]
We note that the Witt index of $W_n$ is $n$.  
Then we define quasi-split unitary group $G_n=\mathrm{U}(W_n)$ of degree $2n$, i.e.,
\[
G_n(F)= \left\{ g \in \mathrm{GL}_{2n}(E) : {}^{t}\bar{g} J_n g = J_n \right\}.
\]
Note that $G_n(F)$ acts on $W_n$ from right.

For $a \in \mathrm{Res}_{E \slash F} \mathrm{GL}_n$, we define $\hat{a} \in G_n$ by
\[
\hat{a} = \begin{pmatrix}a\\& {}^{t}\bar{a}^{-1} \end{pmatrix}.
\]
Moreover, for $b \in \mathrm{Herm}_n = \{ X \in \mathrm{Res}_{E \slash F}\mathrm{Mat}_{n \times n} : {}^{t}\overline{X} = X \}$,
we define 
\[
n(b) = \begin{pmatrix} 1_n&b\\ &1_n\end{pmatrix} \in G_n.
\]
Also, we define
\[
M_n = \{ \hat{a} : a \in \mathrm{Res}_{E \slash F} \mathrm{GL}_n \}
\]
and 
\[
U_n =  \{ n(b) : b \in \mathrm{Herm}_n \}.
\]
Then $M_n U_n$ is the Siegel parabolic subgroup of $G_n$.

Let $Z_n$ denotes the group of upper unipotent matrices in $\mathrm{Res}_{E \slash F} \mathrm{GL}_n$.
Let us define 
\[
N_n = \left\{ \hat{u}\, n(b) : u \in Z_n, \, b \in \mathrm{Herm}_n\right\}.
\]
Then $N_n$ is the unipotent radical of Borel subgroup of $G_n$.
For $\lambda\in F^\times$, a non-degenerate character
$\psi_{N_n,\lambda}:N_n\left(\mA\right)\to\mathbb C^\times$
is defined by 
\begin{equation}\label{generic character}
\psi_{N_n, \lambda}(u) = \psi \left( \sum_{i=1}^{n-1}u_{i, i+1} + \frac{\lambda}{2}\, u_{n, 2n}\right)
\quad
\text{for $u=\left(u_{i,j}\right)\in N_n$}.
\end{equation}
For an automorphic form $f$ on $G_n(\mA)$, we define $\psi_{N_n, \lambda}$-Whittaker period of $f$
by
\[
W_{\psi_{N_n, \lambda}}(f) = \int_{N_n(F) \backslash N_n(\mA)} f(u) \psi_{N_n, \lambda}^{-1}(u) \, du.
\]
For an irreducible cuspidal automorphic representation $(\sigma, V_\sigma)$ of $G_n(\mA)$,
we say that $\sigma$ is $\psi_{N_n, \lambda}$-generic (or simply, generic)
if $W_{\psi, \lambda}(-)$ is not identically zero on $V_\sigma$.
%
%
%
%
%
%
%
%
%
\begin{Remark}
Let $T_{G_n}$ be the group of diagonal matrices of $G_n$.
For $t \in T_{G_n}(F)$, $\psi_{N_n, \lambda}(t \cdot t^{-1})$ gives another non-degenerate character of 
$N_n(\mA)$. Then under this action of $T_{G_n}(F)$, 
$\left\{ \psi_{N_n, \lambda} \mid \lambda \in F^\times \slash N_{E \slash F}(E^\times)\right\}$
gives a representative of non-degenerate characters of $N_n(\mA)$.
Further, we note that $(\lambda \psi)_{N_n, 1}$ and $\psi_{N_n, \lambda}$ are in the same  orbit.
Here, $\lambda \psi$ is the character of $\mA_E \slash E$ defined by $(\lambda \psi)(a) = \psi(\lambda a)$.
\end{Remark}
%
%
%
%
%
%
%
%
%
\subsection{Odd unitary groups}
Let $V_n = E^{2n-1}$ be a $(2n-1)$-dimensional hermitian space over $E$ 
with the non-degenerate hermitian pairing $(\, , \,)_{V} = (\, , \,)_{V_n}$ defined by 
\[
( x, y )_{V_n} = {}^t\bar{x} w_{2n-1} y, \quad x, y \in E^{2n-1}
\]
where
\[
w_i = \begin{pmatrix} &&1\\ &\iddots&\\ 1&&\end{pmatrix} \in \mathrm{GL}_i.
\]
Note that the Witt index of $V_n$ is $n-1$.  
Then we define quasi-split unitary group $H_n=\mathrm{U}(V_n)$ of degree $2n-1$, i.e.,
\[
H_n(F)=  \left\{g \in \mathrm{GL}_{2n-1}(E) : {}^{t}\bar{g} w_{2n-1} g = w_{2n-1} \right\}.
\]
Note that $H_n(F)$ acts on $W_n$ from left.

For $g \in \mathrm{Res}_{E \slash F} \mathrm{GL}_m$, we set $g^\ast = w_m {}^t \bar{g}^{-1} w_m$.
Let $U_{H_n}$ denote the group of upper triangular unipotent matrices of $H_n$,
which is the unipotent radical of Borel subgroup.
For $\lambda \in F^\times$, we define a non-degenerate character $\psi_{U_{H_n}, \lambda}$ of $U_{H_n}(\mA)$ by
\[
\psi_{U_{H_n}, \lambda} \left(  u\right)
= \psi(u_{1,2} + \cdots u_{n-2, n-1}+\lambda u_{n-1, n}).
\]
Then for any $\mu \in F^\times$, we have a relation
\[
\psi_{U_{H_n}, 1} \left(  \begin{pmatrix} n_1&n_0&X\\&1&n_0^\prime \\ &&n_1^\ast \end{pmatrix} \right)
=
 \psi_{N_n, \mu} \left( \widehat{n} \right)
\]
with $n_1 \in Z_{n-1}, n_0, n_0^\prime \in \mA_E^{n-1}$ and $n = \begin{pmatrix}n_1&n_0\\ &1 \end{pmatrix}$.
Hereafter, we simply write $\psi_{U_{H_n}} =\psi_{U_{H_n}, 1}$.
For an automorphic form $f$ on $H_n(\mA)$, we define $\psi_{U_{H_n}, \lambda}$-Whittaker period of $f$
by
\[
W_{\psi_{U_{H_n}, \lambda}}(f) = \int_{U_{H_n}(F) \backslash U_{H_n}(\mA)} f(u) \psi_{U_{H_n}, \lambda}^{-1}(u) \, du.
\]
For an irreducible cuspidal automorphic representation $(\sigma, V_\sigma)$ of $H_n(\mA)$,
we say that $\sigma$ is $\psi_{U_{H_n}, \lambda}$-generic (or simply, generic)
if $W_{\psi_{U_{H_n}, \lambda}}(-)$ is not identically zero on $V_\sigma$.
%
%
%
%
%
%
%
%
%
\begin{Remark}
\label{gen dep H_n}
Let $T_{H_n}$ be the group of diagonal matrices of $H_n$.
Then for $t \in T_{H_n}(F)$, $\psi_{U_{H_n}}(t \cdot t^{-1})$ gives another non-degenerate character of 
$U_{H_n}(\mA)$. Actually, any non-degenerate character of 
$U_{H_n}(\mA)$ is obtained by this way.
Hence, we may say $\psi_{U_{H_n}, \lambda}$-generic representation as simply generic representation.
\end{Remark}
%
%
%
%
%
%
%
%
%
\subsection{Weil representation and theta lifts}
\label{ss:weil rep}
Let us briefly recall the Weil representation of unitary groups.
Let $(X, (-,- )_X)$ be an $m$-dimensional hermitian space over $E$,
and let  $(Y, (-, -)_Y)$ be an $n$-dimensional skew-hermitian space over $E$.
Then we may define the quadratic space 
\[
(W_{X, Y}, (-, -)_{X, Y}) := \left(\mathrm{Res}_{E \slash F} X \otimes Y, \mathrm{Tr}_{E \slash F}\left((-, - )_X \otimes \overline{(-,-)_Y} \right) \right).
\]
This is a $2mn$-dimensional symplectic space over $F$.
Then we denote its isometry group by $\mathrm{Sp}\left( W_{X, Y}\right)$, which acts on $W_{X, Y}$ from right.
For each place $v$ of $F$, we denote the metaplectic extension of $\mathrm{Sp}\left(W_{X, Y}\right)(F_v)$
by $\mathrm{Mp}\left( W_{X, Y}\right)(F_v)$. Also, $\mathrm{Mp}\left( W_{X, Y}\right)(\mA)$ denote the 
metaplectic extension of $\mathrm{Sp}\left(W_{X, Y}\right)(\mA)$.

Let $\chi_X$ and $\chi_Y$  be characters of $\mA_E^\times \slash E^\times$
such that $\chi_{X}|_{\mA^\times} = \chi_E^m$ and $\chi_{Y}|_{\mA^\times} = \chi_E^n$. Put  $\chi_{X, Y}= (\chi_X, \chi_Y)$.
For each place $v$ of $F$, let 
\begin{equation}
\label{local splitting}
\iota_{\chi_v} : \mathrm{U}(X)(F_v) \times \mathrm{U}(Y)(F_v) \rightarrow \mathrm{Mp}(W_{X, Y})(F_v)
\end{equation}
be the local splitting given by Kudla~\cite{Ku} depending on the choice of  a pair of characters $\chi_v =(\chi_{X,v}, \chi_{Y,v})$.
Using this local splitting, we get a splitting 
\[
\iota_{\chi} : \mathrm{U}(X)(\mA) \times \mathrm{U}(Y)(\mA) \rightarrow \mathrm{Mp}(W_{X, Y})(\mA),
\]
depending on $\chi =\chi_{X, Y}$. Then by the pull-back, 
we obtain the Weil representation $\omega_{\psi, \chi} = \omega_{\psi, \chi, X, Y}$ of $\mathrm{U}(X)(\mA) \times \mathrm{U}(Y)(\mA)$.
When we fix a polarization $W_{X, Y} = W_{X, Y}^+ \oplus  W_{X, Y}^-$, we may realize  $\omega_{\psi, \chi}$
so that its space of smooth vectors is given by $\mathcal{S}(W_{X, Y}^+(\mA))$, the space of Schwartz-Bruhat space on $W_{X, Y}^+(\mA)$.
Let us denote $(\omega_{\psi, \chi}, \mathcal{S}(W_{X, Y}^+))$ be the Schr\"{o}dinger model of $\mathrm{Mp}(W_{X, Y})$
corresponding to this polarization. Recall the explicit action of this representation.

We write a typical element of $\mathrm{Sp}(W_{X, Y})$ by
\[
\begin{pmatrix}
A&B\\
C&D\\
\end{pmatrix}
\]
where $A \in {\rm Hom}(W_{X, Y}^+ , W_{X, Y}^+)$,  $B \in {\rm Hom}(W_{X, Y}^+ , W_{X, Y}^-)$,  $C \in {\rm Hom}(W_{X, Y}^- , W_{X, Y}^+)$,  
$D \in {\rm Hom}(W_{X, Y}^- , W_{X, Y}^-)$.
Then we know that
\begin{multline}
\label{Weil1}
\omega_{\psi}
\left(
\begin{pmatrix}
A&B\\
0&^{t}A^{-1}\\
\end{pmatrix}
\right)\phi(z_{+})
\\
=
\frac{\gamma _{\psi }(1)}{\gamma _{\psi }({\rm det}A)} |{\rm det}(A)|^{\frac{1}{2}} \psi (\frac{1}{2} (z_{+}A ,z_{+}B)_{X, Y})\phi(z_{+}A)
\end{multline}
\begin{multline}
\label{Weil2}
\omega_{\psi}
\left(
\begin{pmatrix}
0&I\\
-I&0\\
\end{pmatrix}
\right)\phi(z_{+})
\\
=(\gamma _{\psi }(1))^{-\dim Z_{+}} \int _{Z_{+}}
\psi 
\left(
(z^\prime , z_+
\begin{pmatrix}
0&I\\
-I&0\\
\end{pmatrix}
)_{X, Y}\right)
\phi(z^\prime) \, d z^\prime,
\end{multline}
where $\gamma_{\psi}(t)$ is certain eighth root of unity called Weil factor and $\phi \in S(W_{X, Y}^+)$.

For $\phi \in \mathcal{S}(W_{X, Y}^+(\mA))$, we define the theta function $\theta_{\psi, \chi}^\phi$ on 
$\mathrm{U}(X)(\mA) \times \mathrm{U}(Y)(\mA)$ by
\begin{equation}
\label{theta fct def}
\theta_{\psi, \chi}^\phi(g, h) = \sum_{w \in W_{X, Y}^+(F)} \omega_{\psi, \chi}(g, h)\phi(w).
\end{equation}
Let $(\sigma, V_\sigma)$ be an irreducible cuspidal automorphic representation of $\mathrm{U}(X)(\mA)$.
Then for $\varphi \in V_\sigma$ and $\phi \in \mathcal{S}(W_{X, Y}^+(\mA))$, we define the theta lift of $\varphi$ to $\mathrm{U}(Y)$ by 
\[
\theta_{\psi, \chi}(\varphi, \phi)(h) = \int_{\mathrm{U}(X)(F) \backslash \mathrm{U}(X)(\mA)} \varphi(g) \theta_{\psi, \chi}^\phi(g, h) \, dg
\]
which is an automorphic form on  $\mathrm{U}(Y)(\mA)$.
Further, we define the theta lift of $\sigma$ to $\mathrm{U}(Y)$ by
\[
\Theta_{X, Y}(\sigma, \psi, \chi) = \langle \theta_{\psi, \chi}(\varphi, \phi) ; \varphi \in V_\sigma, \phi \in \mathcal{S}(W_{X, Y}^+(\mA)) \rangle.
\]
When the space we consider is clear, we simply write $\Theta_{X, Y}(\sigma, \psi, \chi) = \Theta(\sigma, \psi, \chi)$.
Similarly, for an irreducible cuspidal automorphic representation $\tau$ of $\mathrm{U}(Y)(\mA)$,
we define the theta lift $\Theta_{Y, X}(\tau, \psi, \chi)$ of $\tau$ to $\mathrm{U}(X)$.

Let us fix a place $v$ of $F$. 
Let $\omega_{\psi_v, \chi_v}$ be the Weil representation of $\mathrm{U}(X)(F_v) \times \mathrm{U}(Y)(F_v)$
defined as above by using the local splitting \eqref{local splitting}.
For an irreducible admissible representation $\pi$ of $\mathrm{U}(X)(F_v)$ (resp. $\mathrm{U}(Y)(F_v)$),
the maximal $\pi^\vee$-isotypic quotient of $\omega_{\psi_v, \chi_v}$ is of the form
\[
\pi^\vee \boxtimes \Theta_{X, Y}(\pi, \psi_v, \chi_v)
\]
where $\Theta(\pi, \psi_v, \chi_v) = \Theta_{X, Y}(\pi, \psi_v, \chi_v)$ is a smooth representation of $\mathrm{U}(Y)(F_v)$ (resp. $\mathrm{U}(X)(F_v)$).
Let $\theta(\pi, \psi_v, \chi_v) = \theta_{X, Y}(\pi, \psi_v, \chi_v)$ denote the maximal semisimple quotient of $\Theta(\pi, \psi_v, \chi_v)$.
Then $\theta(\pi, \psi_v, \chi_v)$ is either zero or irreducible
by
the Howe duality, which is  proved by Howe~\cite{Ho1} at archimedean places, by 
 Waldspurger~\cite{Wa} at odd finite places and finally by Gan and Takeda~\cite{GT} at all finite places.

Finally, we note that for an irreducible cuspidal automorphic representation $(\pi, V_\pi)$ of $\mathrm{U}(X)(\mA)$ or $\mathrm{U}(Y)(\mA)$, if $\Theta(\pi, \psi, \chi)$ is cuspidal, then we have 
\[
\Theta(\pi, \psi, \chi) \simeq \otimes_v \theta(\pi_v, \psi_v, \chi_v)
\]
by Gan~\cite[(2.12) Proposition]{Ga08}.
\section{Pull-back computation for Whittaker periods}
\subsection{Theta lift from $\mathrm{U}(2n-1)$ to $\mathrm{U}(2n)$}
In this section, we study the theta lift from $H_n = \mathrm{U}(V_n)$ to  $G_n = \mathrm{U}(W_n)$.
More precisely, we shall compute the pull-back of Whittaker periods of this theta lift. 
Recall that $V_n$ is $(2n-1)$-dimensional  hermitian space over $E$
with the hermitian form
\[
( x, y )_{V_n} = {}^t\bar{x} w_{2n-1} y, \quad x, y \in E^{2n-1}.
\]
We denote the standard basis of $V_n$ by
\begin{multline*}
f_{-1} = {}^{t}(1, 0, \dots, 0), \dots, f_{-n+1} = {}^{t}(0_{n-2}, 1, 0_{n}), f = {}^{t} (0_{n-1}, 1, 0_{n-1}) , 
\\
f_{n-1}= (0_{n}, 1, 0_{n-2}), \dots, f_1 = {}^{t}(0, \dots, 0, 1) . 
\end{multline*}
Recall that $W_n$ is $2n$-dimensional skew-hermitian space over $E$
with the skew-hermitian form
\[
( x, y )_{W_n} = \bar{x} J_n {}^{t} y, \quad x, y \in E^{2n}.
\]
We denote the standard basis of $W_n$ by
\begin{multline*}
e_{1} = (1, 0, \dots, 0), e_{2} = (0, 1, 0, \dots, 0), \dots, e_{n} = (0, \dots, 0, 1, 0_{n}), 
\\
e_{-1}= (0_n, 1, 0, \dots, 0), e_{-2}= (0_n, 0, 1,0, \dots, 0) \dots, e_{-n} = (0, \dots, 0, 1). 
\end{multline*}
Put
\[
W_n^{+} = \langle e_i \colon 1 \leq i \leq m \rangle
\quad 
\text{and}
\quad
W_n^{-} = \langle e_{-i} \colon 1 \leq i \leq m \rangle.
\]
Then we have a complete polarization $W_n = W_n^+ \oplus W_n^{-}$.
We define $\mathbb{W}_{n, n} = V_n \otimes W_n$, and $\mathbb{W}_{n, n}^{\pm} = V_n \otimes W_n^{\pm}$.
Then we have a complete polarization 
\[
\mathbb{W}_{n, n} = \mathbb{W}_{n, n}^+ \oplus \mathbb{W}_{n, n}^-.
\]
Let $\chi_{V_n}$ and $\chi_{W_n}$ be characters of $\mA_E^\times \slash E^\times$
such that $\chi_{V_n}|_{\mA^\times}=\chi_E$ and $\chi_{W_n}|_{\mA^\times}=1$. Put $\chi:= \chi_{V_n, W_n} = (\chi_{V_n}, \chi_{W_n})$.
Then the Weil representation $\omega_{\psi, \chi}$ can be realized on the space of Schwartz functions $\mathcal{S}(V_{n}(\mA)^n)$,
where we identify $(V_n)^n$ and  $V_n \otimes W_n^{+}$ by $(v_1, \dots, v_n) \mapsto v_1 \otimes e_1+ \cdots + v_n \otimes e_n$.
Explicitly, for $\phi \in \mathcal{S}(V_n(\mA)^n)$, we have 
\begin{align*}
\omega_{\psi, \chi}(h,1) \phi(x_1, \dots, x_n) &= \chi_{V_n}(\det h) \phi(h^{-1} x_1, \dots, h^{-1}x_n),\\
\omega_{\psi,\chi}\left(1, \begin{pmatrix}a&\\ &{}^{t} \bar{a}^{-1} \end{pmatrix} \right) \phi(x_1, \dots, x_n) 
&= |\det a|^{\frac{2n-1}{2}} \chi_{W_n}(\det a) \phi((x_1, \dots, x_n)a), \\
 \omega_{\psi, \chi}\left(1, \begin{pmatrix}1_n&b\\ &1_n \end{pmatrix} \right) \phi(x_1, \dots, x_n) &=
  \psi \left(\frac{1}{2}\mathrm{tr}(b \mathrm{Gr}(x_1, \dots, x_n)) \right)\phi(x_1,\dots, x_n)
\end{align*}
where $h \in H_n(\mA)$, $a \in \mathrm{GL}_n(\mA_E)$, $b \in \mathrm{Herm}_n(\mA)$ and 
$\mathrm{Gr}(x_1, \dots, x_n)$ denotes the Gram matrix $((x_i, x_j)_{V_n})_{i, j}$.
For a cusp form $\varphi$ on $H_n(\mA)$ and $\phi \in \mathcal{S}(V_n(\mA)^n)$, 
let $\theta_{\psi, \chi}(\varphi, \phi)$ be the theta lift of $\varphi$ to $G_n(\mA)$.
Then we compute $\psi_{N_n, \lambda}$-Whittaker period of $\theta_{\psi, \chi}(\varphi, \phi)$, i.e., 
\begin{multline*}
W_{\psi_{N_n, \lambda}}(\theta_{\psi, \chi}(\varphi, \phi))
=  \int_{N_M(F) \backslash N_M(\mA)} \int_{U_n(F) \backslash U_n(\mA)} \psi_{N_n, \lambda}^{-1}(nu) \theta_{\psi, \chi}(\varphi, \phi)(nu) \, du \, dn
\\
=
\int_{N_M(F) \backslash N_M(\mA)} \int_{U_n(F) \backslash U_n(\mA)} \psi_{N_n, \lambda}^{-1}(nu) \int_{H_n(F) \backslash H_n(\mA)}
\\
\sum_{x_i \in V_n(F)} \omega_{\psi, \chi}(h, nu) \phi(x_1, \dots, x_n) \varphi(h) \, dh \, du \, dn.
\end{multline*}
Here, we put $N_M=N_n \cap M_n$.
In a similar computation as \cite[p.95]{Fu}, this is equal to
\begin{multline}
\label{e:pull-back 2kome}
\int_{N_M(F) \backslash N_M(\mA)} \psi_{N_n, \lambda}^{-1}(n) 
\int_{H_n(F) \backslash H_n(\mA)}
\\
\sum_{(x_i) \in \mathcal{X}_\lambda} \omega_{\psi, \chi}(h, n) \phi(x_1, \dots, x_n) \varphi(h) \, dh \, dn
\end{multline}
where 
\[
\mathcal{X}_\lambda 
= \left\{(x_1, \dots, x_n) \in V_n(F)^n : \mathrm{Gr}(x_1, \dots, x_n) = \begin{pmatrix}0&\dots&\dots&0\\ \vdots&&&\vdots\\ 0&\dots&\dots&0 \\ 0&\dots&0&\lambda \end{pmatrix}\right\}.
\]
Further, in a similar way as the proof of \cite[Lemma~1]{Fu}, 
we can show that only terms such that $x_i$ are linearly independent contribute to $W_{\psi_{N_n, \lambda}}(\theta_{\psi, \chi}(\varphi, \phi))(g)$.
Hence, we may take $x_i = f_{-i}$ for $1 \leq i \leq n-1$. Then we should have $x_n = a f$ with $a \in E^\times$.
Since $(x_n, x_n)_{V_n} = N_{E \slash F}(a)$, if $\lambda \not \in N_{E \slash F}(E^\times)$, then
$W_{\psi_{N_n, \lambda}}(\theta_{\psi, \chi}(\varphi, \phi)) =0$.

Hereafter, we assume that $\lambda =  N_{E \slash F}(a)$, and we put $f_\lambda = a f$.
Then the integral \eqref{e:pull-back 2kome} is equal to
\begin{multline*}
\int_{N_M(F) \backslash N_M(\mA)} \psi_{N_n, \lambda}^{-1}(n) 
\int_{H_n(F) \backslash H_n(\mA)}
\\
\sum_{\gamma \in R^\prime(F) \backslash H_n(F)}
 \omega_{\psi, \chi}(h, n) \phi(\gamma^{-1}f_{-1}, \dots, \gamma^{-1} f_{-n+1}, \gamma^{-1}f_{\lambda}) \varphi(h) \, dh \, dn.
\end{multline*}
Here, we set
\[
R^\prime = \{ h \in H_n : h^{-1} f_{-1} = f_{-1}, ,\dots, h^{-1} f_{-n+1} = f_{-n+1}, h^{-1} f = f \}.
\]
Thus, we get
\begin{multline*}
W_{\psi_{N_n, \lambda}}(\theta_{\psi, \chi}(\varphi, \phi))
\\
=
\int_{N_M(F) \backslash N_M(\mA)} \psi_{N_n, \lambda}^{-1}(n) 
\int_{R^\prime(F) \backslash H_n(\mA)}
 \omega_{\psi, \chi}(h, n) \phi(f_{-1}, \dots, f_{-n+1}, f_{\lambda}) \varphi(h) \, dh \, dn
\\
=
  \int_{R^\prime(\mA) \backslash H_n(\mA)}
  \int_{N_M(F) \backslash N_M(\mA)}
\psi_{N_n, \lambda}^{-1}(n) 
\int_{R^\prime(F) \backslash R^\prime(\mA)}
\\
 \omega_{\psi, \chi}(h, 1) \phi((f_{-1}, \dots, f_{-n+1}, f_{\lambda})n) \varphi(rh) \, dr  \, dn  \, dh.
\end{multline*}
For $n \in N_{M}(\mA)$, we may write $n= \widehat{ \begin{pmatrix}n_1&n_0\\ &1 \end{pmatrix}}$ with $n_1 \in Z_{n-1}(\mA)$
and $n_0 \in \mA_E^{n-1}$. Then we define 
\[
\widetilde{n} = \begin{pmatrix} n_1&n_0&n_1 n_0 {}^{t} \overline{n_0}^{-1} w_n\\&1&\overline{{}^tn_0^\prime} w_n\\ &&n_1^\ast \end{pmatrix} \in U_{H_n}(\mA).
\]
Here, we write $n_0^\prime = -n_1^{-1} n_0$.
From the definition of the action of the Weil representation, we get
\begin{multline}
\label{gen to gen last}
W_{\psi_{N_n, \lambda}}(\theta_{\psi, \chi}(\varphi, \phi))
=
  \int_{R^\prime(\mA) \backslash H_n(\mA)}
  \int_{N_M(F) \backslash N_M(\mA)}
\int_{R^\prime(F) \backslash R^\prime(\mA)}
\\
\psi_{N_n, \lambda}^{-1}(n) 
 \omega_{\psi, \chi}(\widetilde{n}^{-1} h, 1) \phi(f_{-1}, \dots, f_{-n+1}, f_{\lambda}) \varphi(r h) \, dr  \, dn  \, dh
 \\
 =
  \int_{R^\prime(\mA) \backslash H_n(\mA)}
  \int_{N_M(F) \backslash N_M(\mA)}
\int_{R^\prime(F) \backslash R^\prime(\mA)}
\\
\psi_{N_n, \lambda}^{-1}(n) 
 \omega_{\psi, \chi}(h, 1) \phi(f_{-1}, \dots, f_{-n+1}, f_{\lambda}) \varphi(r \widetilde{n}  h) \, dr  \, dn  \, dh.
\end{multline}
Now, we note that 
\[
R^\prime \{ \widetilde{n} : n \in N_M\} = U_{H_n}.
\]
Recall that a non-degenerate character $\psi_{U_{H_n}}$ of $U_{H_n}$ is given by
\[
\psi_{U_{H_n}} \left(  \begin{pmatrix} n_1&n_0&X\\&1&{}^{t} \overline{n_0^\prime} w_n\\ &&n_1^\ast \end{pmatrix} \right)
= \psi_{N_n, \lambda} \left( \begin{pmatrix} n&\\ &n^\ast \end{pmatrix} \right)
\]
with $n = \begin{pmatrix}n_1&n_0\\ &1 \end{pmatrix}$.
Then by \eqref{gen to gen last}, we obtain
\begin{multline}
\label{e:pullback H_n}
W_{\psi_{N_n, \lambda}}(\theta_{\psi, \chi}(\varphi, \phi))
\\
= \int_{R^\prime(\mA) \backslash H_n(\mA)} 
 \omega_{\psi, \chi}(h, 1) \phi(f_{-1}, \dots, f_{-n+1}, f_{\lambda}) 
W_{\psi_{U_{H_n}}}(\rho(h)\varphi) \, dh
\end{multline}
where $\rho$ denotes the right translation.
In a similar argument as the proof of \cite[Proposition~2]{FM17}, we obtain the following proposition.
\begin{proposition}
\label{theta gen to gen}
Let $(\pi, V_\pi)$ be an irreducible cuspidal automorphic representation of $H_n(\mA)$.
Then $\pi$ is $\psi_{U_{H_n}}$-generic if and only if 
the theta lift $\Theta_{V_{n}, \mathbb{W}_n}(\pi, \psi, \chi)$ of $\sigma$ to $G_n(\mA)$ is $\psi_{N_n, \lambda}$-generic
with some $\lambda \in N_{E \slash F}(E^\times)$.
In particular, if $\pi$ is $\psi_{U_{H_n}}$-generic, then $\Theta_{V_{n}, \mathbb{W}_n}(\pi, \psi, \chi) \ne 0$.
\end{proposition}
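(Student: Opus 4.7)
The plan is to read both implications directly off of the pull-back identity \eqref{e:pullback H_n} established just above the statement. The converse direction will be essentially formal: if $\Theta_{V_n, \mathbb{W}_n}(\pi, \psi, \chi)$ is $\psi_{N_n,\lambda}$-generic for some $\lambda \in N_{E/F}(E^\times)$, then there exist $\varphi \in V_\pi$ and $\phi \in \mathcal{S}(V_n(\mA)^n)$ making the left-hand side of \eqref{e:pullback H_n} nonzero, so its integrand cannot vanish identically in $h$. Since that integrand is a scalar multiple of $W_{\psi_{U_{H_n}}}(\pi(h)\varphi)$, some translate $\pi(h)\varphi \in V_\pi$ has nonzero Whittaker period, and $\pi$ is $\psi_{U_{H_n}}$-generic.

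For the forward direction I would fix $\lambda = 1 \in N_{E/F}(E^\times)$, so that $f_\lambda = f$. Given $\varphi \in V_\pi$ with $W_{\psi_{U_{H_n}}}(\varphi) \ne 0$, the function $F(h) := W_{\psi_{U_{H_n}}}(\rho(h)\varphi)$ descends to a smooth function on $R'(\mA)\backslash H_n(\mA)$ that is nonzero at the identity coset; hence it is nonzero, and of one sign, on a sufficiently small open neighborhood $\mathcal{U}$ of that coset. I would then choose $\phi \in \mathcal{S}(V_n(\mA)^n)$ to be a factorizable bump function concentrated near the point $(f_{-1},\ldots,f_{-n+1},f) \in V_n(\mA)^n$, exploiting the explicit action
\[
\omega_{\psi,\chi}(h,1)\phi(f_{-1},\ldots,f_{-n+1},f) = \chi_{V_n}(\det h)\,\phi(h^{-1}f_{-1},\ldots,h^{-1}f_{-n+1}, h^{-1}f),
\]
together with the fact that the orbit map $R'(\mA)\backslash H_n(\mA) \hookrightarrow V_n(\mA)^n$ is an embedding near the identity coset, since by construction the stabilizer of $(f_{-1},\ldots,f_{-n+1},f)$ is exactly $R'$. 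Shrinking the support of $\phi$ to lie in the image of a neighborhood where $\chi_{V_n}(\det h) = 1$ and where $F(h)$ does not change sign, the integrand in \eqref{e:pullback H_n} is nonnegative and not identically zero, so the right-hand side is nonzero and $\Theta(\pi,\psi,\chi)$ is $\psi_{N_n,1}$-generic. The final ``in particular'' assertion is then automatic: $\psi_{N_n,1}$-genericity forces $\Theta(\pi,\psi,\chi) \ne 0$.

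The main technical point is the bump-function construction at archimedean places: I must take $\phi$ to be a genuine Schwartz function whose support, after being transported by the Weil action, really sits inside the prescribed neighborhood of the identity in $R'(\mA)\backslash H_n(\mA)$, while the nonarchimedean components are easily handled by characteristic functions of small compact open sets. This is precisely the local approximation step carried out in \cite[Proposition~2]{FM17}, and once it is in place the rest of the argument is bookkeeping about sign and support.
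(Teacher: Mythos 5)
Your proposal is correct and follows essentially the same approach as the paper, which relegates the proof to a citation of \cite[Proposition~2]{FM17}: the backward direction is read off formally from the pull-back identity \eqref{e:pullback H_n}, and the forward direction chooses $\lambda=1$ and a bump Schwartz function $\phi$ concentrated near the base point $(f_{-1},\ldots,f_{-n+1},f)$ so that, since the stabilizer is exactly $R'$ and the Weil action translates $\phi$ along the orbit, the integrand in \eqref{e:pullback H_n} is supported where $W_{\psi_{U_{H_n}}}(\rho(h)\varphi)$ is close to its nonzero value at the identity coset, forcing the integral to be nonzero. The only cosmetic caveat is that ``of one sign'' should be replaced by ``approximately constant and bounded away from zero'' (and $\chi_{V_n}(\det h)$ close to $1$) since the Whittaker period is complex-valued, but that is exactly the estimate the bump-function construction delivers.
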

\subsection{Theta lifts from $\mathrm{U}(2m)$ to $\mathrm{U}(2n+1)$ with $m \leq n$}
\label{s:theta U2m to U2n+1}
In this section, we consider the pull-back of Whittaker periods for the theta lift from $G_m= \mathrm{U}(W_m)$ to $H_{n+1} = \mathrm{U}(V_{n+1})$
with $m \leq n$. Let $\chi_{V_n}$ and $\chi_{W_m}$ be characters of $\mA_E^\times \slash E^\times$
such that $\chi_{V_n}|_{\mA^\times}=\chi_E$ and $\chi_{W_m}|_{\mA^\times}=1$. Put $\chi:= \chi_{V_n, W_m} = (\chi_{V_n}, \chi_{W_m})$.

Recall that we have the Witt decomposition $W_m = W_m^+ \oplus W_m^-$
and that we have the decomposition $V_{n+1} = V_{n+1}^+ \oplus \langle f \rangle \oplus V_{n+1}^-$
where $V_{n+1}^{+} = \langle f_{1}, \dots f_{n} \rangle$ and  $V_{n+1}^{-} = \langle f_{-1}, \dots f_{-n} \rangle$.
Put $\mathbb{W} =\mathbb{W}_{m, n+1} := W_m \otimes V_{n+1}$.
Let us fix a polarization $\mathbb{W} = \mathbb{W}^+ \oplus \mathbb{W}^-$ as follows : 
\[
\mathbb{W}^{\pm} = W_m \otimes V_{n+1}^{\pm} + W_m^{\pm} \otimes \langle f \rangle.
\]
According to this polarization, we denote $w_+ \in  \mathbb{W}^+(\mA)$ by
\[
w_+ = a_1 \otimes f_1+\cdots + a_{n} \otimes f_n+b \otimes f
\]
with $a_i \in W_m(\mA)$ and $b \in  W_m^+(\mA)$, and for $\phi \in \mathcal{S}(\mathbb{W}^+(\mA))$, we write 
\[
\phi(w_+) = \phi(a_1, \dots, a_n ; b).
\]
Let us define unipotent subgroups of $H_{n+1}$ by
\[
Z_n^\prime = \left\{ m(n) := \begin{pmatrix}n&&\\&1&\\ &&n^\ast \end{pmatrix} \colon n \in Z_n\right\},
L_n = \left\{ \begin{pmatrix}1_{n}&n_0&n_0 \overline{{}^t n_0}^{-1} w_n\\&1&-n_0\\ &&1_n \end{pmatrix}\colon n_0 \in \mathbb{G}_a^n \right\},
\]
and
\[
S_n =  \left\{ \begin{pmatrix}1_{n}&&X\\&1&\\ &&1_n \end{pmatrix} \colon X+w_n {}^t \overline{X} w_n=0\right\}.
\]
Then we note that $U_{H_n} = S_n L_n Z_n^{\prime}$ and we have
\[
S_n \triangleleft S_n L_n \triangleleft S_n L_n Z_n^{\prime} = U_{H_n}.
\]
Let us compute the $\psi_{U_{H_{n+1}}, \lambda}$-Whittaker periods of the theta lift $\theta_{\psi, \chi}(\varphi, \phi)$ for a cusp form $\varphi$ on $G_m(\mA)$ and $\phi \in \mathcal{S}(\mathbb{W}^+(\mA))$.
From the definition, we have
\begin{multline*}
W_{\psi_{U_{H_{n+1}, \lambda}}}(\theta_{\psi, \chi}(\varphi, \phi)) =\int_{U_{H_{n+1}}(F) \backslash U_{H_{n+1}}(\mA)}\theta_{\psi, \chi}(\varphi, \phi)(u) \psi_{U_{H_{n+1}}, \lambda}^{-1}(u) \, du
\\
=\int_{N_n(F) \backslash N_n(\mA)} \int_{L_n(F) \backslash L_n(\mA)} \int_{S_n(F) \backslash S_n(\mA)}
\theta_{\psi, \chi}(\varphi, \phi)(s \ell u) \psi_{U_{H_{n+1}}, \lambda}^{-1}(\ell u) \, ds \, d\ell \, du.
\end{multline*}
Then we consider the following partial Fourier coefficient
\begin{multline*}
W_0(\theta_{\psi, \chi}(\varphi, \phi)) = \int_{S_n(F) \backslash S_n(\mA)}\theta_{\psi, \chi}(\varphi, \phi)(s)  \, ds
\\
=\int_{S_n(F) \backslash S_n(\mA)} \int_{G_m(F) \backslash G_m(\mA)} \sum_{a_i \in W_m(F), b \in  W_m^+(F)} \omega_{\psi, \chi}(s, g) \phi(a_1, \dots, a_n; b) \varphi(g) \, dg \,ds.
\end{multline*}
Since $\mathbb{W}^-(F)(1, s) = \mathbb{W}^-(F)$ and we have
\begin{multline}
w_+(1, s) = w_+  +((s_{1, 2n+1} a_1 \otimes f_{-1}+s_{2, 2n+1} a_1 \otimes f_{-2} + \cdots +s_{n, 2n+1} a_1 \otimes f_{-n})
\\
+(s_{1, 2n} a_2 \otimes f_{-1}+s_{2, 2n} a_2 \otimes f_{-2} + \cdots +s_{n, 2n} a_2 \otimes f_{-n}) 
\\
+ \cdots 
+(s_{1, n} a_n \otimes f_{-1}+s_{2, n} a_2 \otimes f_{-2} + \cdots +s_{n, n} a_n \otimes f_{-n})).
\end{multline}
Then as in \cite[p.101]{Fu}, we have 
\begin{multline*}
W_0(\theta_{\psi, \chi}(\varphi, \phi)) 
\\
= \int_{G_m(F) \backslash G_m(\mA)} \sum_{a_i \in W_m(F), \langle a_i, a_j \rangle=0, b \in  W_m^+(F)} \omega_{\psi, \chi}(1, g) \phi(a_1, \dots, a_n; b) \varphi(g) \, dg.
\end{multline*}
In the above summation, the space spanned by $a_1, \dots, a_n$ is isotropic, and thus there exists $\gamma \in G_m(F)$
such that $a_1 \gamma^{-1}, \dots, a_n \gamma^{-1} \in W_m^-(F)$.

Let us define an equivalence relation $\sim$ in $(W_m^-(F))^n$ by $(a_1, \dots, a_n) \sim (a_1^\prime, \dots, a_n^\prime)$
if and only if $a_i^{\prime} = a_i \gamma$ ($1 \leq i \leq m$) for some $\gamma \in G_m(F)$.
 Let $\mathcal{W}^- = (W_m^-(F))^n \slash \sim$.
 For $(a_1, \dots, a_n) \in (W_m^-(F))^n$, let us denote by $\overline{(a_1, \dots, a_n)}$ its equivalence 
 class in $\mathcal{W}^-$ and let
 \[
 V(a_1, \dots, a_n) = \{ g \in G_m(F) \colon a_i g = a_i \, (1 \leq i \leq n) \}.
 \]
Then 
\begin{multline}
\label{eq : w0-2}
W_0(\theta_{\psi, \chi}(\varphi, \phi)) 
\\
= \int_{G_m(F) \backslash G_m(\mA)} 
\sum_{\overline{(a_1, \dots, a_n)} \in \mathcal{W}^-} \sum_{\gamma \in V(a_1, \dots, a_n)(F) \backslash G_m(F)}
\sum_{b \in  W_m^+} 
\\
\omega_{\psi, \chi}(1, g) 
\phi(a_1\gamma, \dots, a_n\gamma; b) \varphi(g) \, dg.
\end{multline}
Now, we note that we have the following identity, which is proved in a similar manner as the proof of \cite[Lemma~2]{Fu} 
(see also Lemma~\ref{local automorphy}) : 
\begin{multline}
\label{automorphy theta}
\sum_{b \in W_m^+}  \omega_{\psi, \chi}(1, g) \phi(a_1 \gamma, \dots, a_n \gamma ; b)
\\
=
\sum_{b \in W_m^+}  \omega_{\psi, \chi}(\gamma, g) \phi(a_1, \dots, a_n  ; b), \quad \gamma \in H_{n+1}(F).
\end{multline}
Hence, the right-hand side of \eqref{eq : w0-2} can be written as 
\begin{multline*}
W_0(\theta_{\psi, \chi}(\varphi, \phi)) 
\\
= \int_{G_m(F) \backslash G_m(\mA)} 
\sum_{\overline{(a_1, \dots, a_n)} \in \mathcal{W}^-} \sum_{\gamma \in V(a_1, \dots, a_n)(F) \backslash G_m(F)}
\sum_{b \in  W_m^+} 
\\
\omega_{\psi, \chi}(g\gamma, 1) \phi(a_1 \dots, a_n; b) \varphi(g) \, dg.
\end{multline*}
Let us consider the following partial Fourier transform
\begin{multline*}
W_1(\theta_{\psi, \chi}(\varphi, \phi)) 
\\
=  \int_{L_n(F) \backslash L_n(\mA)} 
 \int_{G_m(F) \backslash G_m(\mA)} 
\sum_{\overline{(a_1, \dots, a_n)} \in \mathcal{W}^-} \sum_{\gamma \in V(a_1, \dots, a_n)(F) \backslash G_m(F)}
\sum_{b \in  W_m^+} 
\\
\omega_{\psi, \chi}(g\gamma, \ell) \phi(a_1 \dots, a_n; b) \varphi(g) 
 \psi(\lambda \ell_{n, n+1})^{-1} \varphi(g) \, dg \, d\ell
\end{multline*}
Since $\mathbb{W}^-(F)(1, \ell) = \mathbb{W}^-(F)$ and for $w_+ \in \mathbb{W}^-(F)$, we have
\begin{equation}
\label{act by ell}
w_+(1, \ell) =w_+ + (\ell_{1, n+1} b \otimes f_{-1}+\cdots +\ell_{n, n+1} b \otimes f_{-n}),
\end{equation}
we obtain
\begin{multline*}
W_{\psi_{U_{H_{n+1}, \lambda}}}(\theta_{\psi, \chi}(\varphi, \phi))
\\
=
\int_{Z_n^\prime(F) \backslash Z_n^\prime(\mA)}
 \int_{G_m(F) \backslash G_m(\mA)} 
\sum_{\overline{(a_1, \dots, a_n)} \in \mathcal{W}^-(F)} \sum_{\gamma \in V(a_1, \dots, a_n)(F) \backslash G_m(F)}
\\
\sum_{\substack{b \in  W_m^+(F) \\ \langle a_1, b \rangle = \cdots = \langle a_{n-1}, b \rangle=0, \langle a_n, b \rangle = \lambda}} 
\omega_{\psi, \chi}(g\gamma, u) \phi(a_1 \dots, a_n; b) \varphi(g) 
\psi_{U_{H_{n+1}}, \lambda}^{-1}(u)  \, dg \, du.
\end{multline*}
Moreover, in a similar argument as the proof of \cite[Lemma~4]{Fu}, indeed by word-for-word argument, we see that 
$a_1, \dots, a_n$ does not contribute to the above sum if these vectors are linearly dependent.
In particular, if $m < n$, then there are no linearly independent vectors $a_1, \dots, a_n$ since $\dim W_m^+(F)=m$. Hence, when $m < n$, 
$\theta_{\psi, \chi}(\varphi, \phi)$ is not $\psi_{U_{H_{n+1}}, \lambda}$-generic for any $\lambda$.

Suppose that $m=n$. Then the orbit in $\mathcal{W}^{-}$ containing  $e_{-1}, \dots, e_{-n}$ only contributes to the above integral.
Hence, 
\begin{multline*}
W_{\psi_{U_{H_{n+1}, \lambda}}}(\theta_{\psi, \chi}(\varphi, \phi))
\\
=
\int_{Z_n^\prime(F) \backslash Z_n^\prime(\mA)}
\int_{N_P(F) \backslash G_n(\mA)} 
\omega_{\psi, \chi}(g, u)  \phi(e_{-1}, \dots, e_{-n}; \lambda e_{n}) \psi_{U_{H_{n+1}}, \lambda}^{-1}(u)  \varphi(g) \, dg \, du
\\
=
\int_{Z_n^\prime(F) \backslash Z_n^\prime(\mA)}
\int_{N_P(\mA) \backslash G_n(\mA)} 
\int_{N_P(F) \backslash N_P(\mA)}
\omega_{\psi, \chi}(vg, u)  \phi(e_{-1}, \dots, e_{-n}; \lambda e_{n})
\\
 \psi_{U_{H_{n+1}}, \lambda}^{-1}(u)  \varphi(vg) \, dv \, dg\, du
\end{multline*}
Here, we note that 
\[
V(e_{-1}, \dots, e_{-n}) = N_P.
\]
Further, by a direct computation, we obtain
\begin{multline}
\label{N_P action}
\omega_{\psi, \chi}(vg, u)  \phi(e_{-1}, \dots, e_{-n}; \lambda e_{n})
\\
= \psi \left(-\frac{\lambda X_{n,n}}{2} \right) \omega_{\psi, \chi}(g, u)  \phi(e_{-1}, \dots, e_{-n}; \lambda e_{n})
\end{multline}
when we write $v = \left( \begin{smallmatrix}1_n &X\\ &1_n \end{smallmatrix} \right)$. Further, we see that 
\[
\omega_{\psi, \chi}(g, m(n))  \phi(e_{-1}, \dots, e_{-n}; \lambda e_{n})
=\omega_{\psi, \chi}(\hat{n}^{-1}g, 1)  \phi(e_{-1}, \dots, e_{-n}; \lambda e_{n}).
\]
Then we obtain
\begin{align*}
&W_{\psi_{U_{H_{n+1}}, \lambda}}(\theta_{\psi, \chi}(\varphi, \phi))
\\
=&\int_{Z_n(F) \backslash Z_n(\mA)}
\int_{N_P(\mA) \backslash G_n(\mA)} 
\int_{N_P(F) \backslash N_P(\mA)}
\omega_{\psi, \chi}(\hat{n}^{-1}g, 1)  \phi(e_{-1}, \dots, e_{-n}; \lambda e_{n}) \\ &\psi_{U_{H_{n+1}}, \lambda}^{-1}(m(n)) \psi^{-1}(\frac{\lambda v_{n,2n}}{2})  \varphi(vg) \, dv \, dg\, du
\\
=&
\int_{N_P(\mA) \backslash G_n(\mA)} 
\int_{Z_n(F) \backslash Z_n(\mA)}
\int_{N_P(F) \backslash N_P(\mA)}
\omega_{\psi, \chi}(g, 1)  \phi(e_{-1}, \dots, e_{-n}; \lambda e_{n}) \\&\psi_{U_{H_{n+1}}, \lambda}^{-1}(m(n)) \psi^{-1}(\frac{\lambda v_{n,2n}}{2}) \varphi(v\hat{n}g) \, dv \, dg\, du
\\
=&
\int_{N_P(\mA) \backslash G_n(\mA)} 
\int_{N_n(F) \backslash N_n(\mA)}
\omega_{\psi, \chi}(g, 1)  \phi(e_{-1}, \dots, e_{-n}; \lambda e_{n}) \psi_{N_n, \lambda}^{-1}(u)  \varphi(ug) \, du \, dg.
\end{align*}
Hence, 
\begin{multline}
\label{e:pullback u2m to u2n+1}
W_{\psi_{U_{H_{n+1}}, \lambda}}(\theta_{\psi, \chi}(\varphi, \phi))
\\
=
\int_{N_P(\mA) \backslash G_n(\mA)} \omega_{\psi, \chi}(g, 1)  \phi(e_{-1}, \dots, e_{-n}; \lambda e_{n}) 
W_{\psi_{N_n, \lambda}}(\varrho(g)\varphi) \, dg.
\end{multline}
\begin{proposition}
\label{theta gen to gen 2}
Let $\sigma$ be an irreducible cuspidal automorphic representation of $G_m(\mA)$.
If $m < n$, then the theta lift $\Theta_{W_m, V_{n+1}}(\sigma, \psi, \chi)$ of $\sigma$ to $H_{n+1}(\mA)$
is not generic for any non-degenerate character. Suppose that $m=n$.
Then $\sigma$ is $\psi_{N_n, \lambda}$-generic if and only if 
if and only if  the theta lift $\Theta_{W_n, V_{n+1}}(\sigma, \psi, \chi)$ of $\sigma$ to $H_{n+1}(\mA)$ is $\psi_{U_{H_{n+1}}, \lambda}$-generic.
In particular, if $\sigma$ is $\psi_{N_n,\lambda}$-generic, then $\Theta_{W_n, V_{n+1}}(\sigma, \psi, \chi) \ne 0$.
\end{proposition}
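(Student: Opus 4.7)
The plan is to read Proposition~\ref{theta gen to gen 2} off directly from the pull-back identity \eqref{e:pullback u2m to u2n+1}, together with the partial Fourier computation preceding it.

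The case $m<n$ is already disposed of in the paragraph above the statement: since $\dim W_m^+(F)=m<n$, no tuple $(a_1,\dots,a_n)\in (W_m^+(F))^n$ can be linearly independent, so every surviving term in the Fourier expansion of $W_{\psi_{U_{H_{n+1}},\lambda}}(\theta_{\psi,\chi}(\varphi,\phi))$ vanishes for every $\lambda\in F^\times$ and for every choice of $\varphi$ and $\phi$.

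Now assume $m=n$. The ``only if'' direction is immediate from \eqref{e:pullback u2m to u2n+1}: if $\sigma$ is not $\psi_{N_n,\lambda}$-generic then $W_{\psi_{N_n,\lambda}}(\varrho(g)\varphi)=0$ for every $\varphi\in V_\sigma$ and every $g\in G_n(\mA)$, hence the integrand vanishes identically. For the converse, suppose $\sigma$ is $\psi_{N_n,\lambda}$-generic; fix $\varphi\in V_\sigma$ and $g_0\in G_n(\mA)$ with $W_{\psi_{N_n,\lambda}}(\varrho(g_0)\varphi)\ne 0$, and note that by smoothness this persists on an open neighborhood $U$ of the class of $g_0$ in $N_P(\mA)\backslash G_n(\mA)$. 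I would then construct $\phi\in\mathcal{S}(\mathbb{W}^+(\mA))$ so that the factor $g\mapsto \omega_{\psi,\chi}(g,1)\phi(e_{-1},\dots,e_{-n};\lambda e_n)$ is a bump function concentrated in $U$. The character identity \eqref{N_P action}, combined with the defining formula for $\psi_{N_n,\lambda}$ (which gives $\psi_{N_n,\lambda}(v)=\psi(\lambda X_{n,n}/2)$ when $v=\left(\begin{smallmatrix}1_n&X\\&1_n\end{smallmatrix}\right)\in N_P$), guarantees that the two characters on $N_P(\mA)$ cancel in the integrand of \eqref{e:pullback u2m to u2n+1}, so the integrand descends to $N_P(\mA)\backslash G_n(\mA)$ and the integral is manifestly nonzero for this choice of $\phi$ and $\varphi$. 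The final non-vanishing statement of the proposition is then automatic, since the existence of a nonzero Whittaker coefficient forces $\Theta_{W_n,V_{n+1}}(\sigma,\psi,\chi)\ne 0$.

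The main technical step is the construction of the bump $\phi$. Place by place, the goal is to show that as $\phi_v$ varies over $\mathcal{S}(\mathbb{W}^+(F_v))$, the evaluation map $g_v\mapsto \omega_{\psi_v,\chi_v}(g_v,1)\phi_v(e_{-1},\dots,e_{-n};\lambda e_n)$ sweeps out a sufficiently rich family of smooth functions on $N_P(F_v)\backslash G_n(F_v)$ transforming by the correct character under the left $N_P(F_v)$-action. By the Iwasawa decomposition, this reduces to the transitivity of the Levi of $N_P$ on the open Schwartz-theoretic orbit of $(e_{-1},\dots,e_{-n};\lambda e_n)$, combined with standard matrix-coefficient/Dirac approximation built from products of characteristic functions at finite places and Gaussians at archimedean ones. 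This is entirely parallel to the argument already used for Proposition~\ref{theta gen to gen} (cf.\ also \cite[Proposition~2]{FM17}), and I would simply follow the same template; it is the only non-formal step, but it is routine.
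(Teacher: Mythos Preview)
Your proof is correct and follows the same approach as the paper, which derives the pull-back identity \eqref{e:pullback u2m to u2n+1} and reads off the proposition from it via the standard bump-function construction (parallel to \cite[Proposition~2]{FM17}, as invoked for Proposition~\ref{theta gen to gen}). One small slip: you have swapped the ``if'' and ``only if'' labels---your first argument ($\sigma$ not generic $\Rightarrow$ integrand vanishes) is the contrapositive of the ``if'' direction, and your bump construction establishes ``only if''---but both implications are correctly proved.
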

%
%
%
%
%
%
%
%
%
\section{Ichino-Ikeda type formula of Whittaker periods and theta correspondence}
In this section, we shall study a relation between Ichino-Ikeda type formula of Whittaker periods under the theta correspondences. 
Indeed, we prove Theorem~\ref{whittaker u2n-1 to u2n}.
In Section~\ref{s : theta u2n to u2n+1}, we prove Theorem~\ref{theta II 2} and in Section~\ref{s : theta u2n-1 to u2n}
we prove Theorem~\ref{theta II 1}.
\begin{theorem}
\label{theta II 1}
Let $(\sigma, V_\sigma)$ be an irreducible cuspidal $\psi_{N_n, \lambda}$-generic automorphic representation of $G_n(\mA)$.
Suppose that $\Theta_{V_n, W_n}(\sigma,\psi, \chi)$ of $\pi$ to $H_{n+1}(\mA)$ is cuspidal and that 
the formula \eqref{whittaker formula ref} holds for any non-zero decomposable vector of $V_\sigma$.
Then the formula \eqref{whittaker formula ref} holds for any non-zero decomposable vector of $\Theta_{V_n, W_n}(\sigma,\psi, \chi)$.
\end{theorem}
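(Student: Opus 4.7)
The plan is to combine the global pullback identity \eqref{e:pullback H_n} for the Whittaker period of $\Theta := \Theta_{V_n,W_n}(\sigma,\psi,\chi)$ with the Rallis inner product formula for its Petersson norm, invoking the assumed validity of \eqref{whittaker formula ref} for $\sigma$ as the input. Fix a decomposable vector $\theta_{\psi,\chi}(\varphi,\phi)$ in $\Theta$ with $\varphi = \otimes_v \varphi_v$ and $\phi = \otimes_v \phi_v$. By \eqref{e:pullback H_n}, the $\psi_{N_n,\lambda}$-Whittaker period of $\theta_{\psi,\chi}(\varphi,\phi)$ is a single global integral of the Schwartz value $\omega_{\psi,\chi}(h,1)\phi(f_{-1},\dots,f_{-n+1},f_\lambda)$ against the $\psi_{U_{H_n}}$-Whittaker period of $\rho(h)\varphi$. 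Taking the absolute square and substituting the formula \eqref{whittaker formula ref} for $\sigma$ into $|W_{\psi_{U_{H_n}}}(\rho(h)\varphi)|^2/(\varphi,\varphi)$ replaces it by the product of the normalized local Whittaker periods $I^\natural_v(\varphi_v)$ for $\sigma_v$ and the global $L$-ratio for $\sigma$.

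The Petersson inner product $(\theta_{\psi,\chi}(\varphi,\phi), \theta_{\psi,\chi}(\varphi,\phi))$ is then expanded using the Rallis inner product formula (recalled in Section~4), which expresses it as $(\varphi,\varphi)$ times a product of normalized local doubling zeta integrals times a global ratio of $L$-values. The key point is that the strong base change of $\Theta$ is obtained from that of $\sigma$ by adjoining the splitting character $\chi_{V_n}$, so the Rallis factor converts the Asai $L$-value in the denominator of the formula for $\sigma$ into the Asai $L$-value in the denominator of the formula for $\Theta$, and the number $k$ of cuspidal summands is adjusted by exactly one, accounting for the power of $2$. What remains is to identify the resulting place-by-place integrand with the normalized local Whittaker period $I^\natural_v$ of $\theta(\sigma_v,\psi_v,\chi_v)$.

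The main obstacle, which I expect to be the bulk of Section~\ref{s : theta u2n-1 to u2n}, is the following local identity: at each place $v$, the normalized stable integral $I^\natural_v$ attached to $\theta(\sigma_v,\psi_v,\chi_v)$ equals $I^\natural_v(\varphi_v)$ for $\sigma_v$ multiplied by the squared integrand coming from the local pullback and the normalized local doubling zeta integral. Proving this requires unfolding the stable integral for the theta lift via the local analogue of \eqref{e:pullback H_n}, reversing the order of integration (which must be carefully justified since stable integrals need not converge absolutely), and recognizing the inner Weil-representation pairing as the normalized doubling integral. Careful tracking of the Weil factors introduced by Kudla's splitting \eqref{local splitting} is the most delicate point. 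Once the local identity is secured at every place, the Euler product together with the fact that $I^\natural_v = 1$ at almost all finite places yields the global formula \eqref{whittaker formula ref} for $\Theta$.
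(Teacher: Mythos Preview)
Your proposal is correct and follows essentially the same route as the paper: factorize the global Whittaker pullback \eqref{e:pullback H_n} as $W(\varphi)$ times a product of local integrals $\mathcal{L}_v$ (as in \eqref{pullback explicit formula}), combine with the Rallis inner product formula (Theorem~\ref{Rallis inner H_n}) and the $L$-function compatibility, and reduce everything to the local identity (the analogue of Proposition~\ref{prp:local pullback G to H}, namely \eqref{local pullback 2}) relating the local Whittaker period of $\theta_v(\phi_v\otimes\varphi_v)$ to that of $\varphi_v$. One small correction: you cannot literally ``substitute'' the formula for $|W(\rho(h)\varphi)|^2$ inside the $h$-integral---the paper instead first factorizes the pullback integral itself into $W(\varphi)\prod_v\mathcal{L}_v$ and only then squares and compares; this is what makes the reduction to a clean local statement possible.
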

\begin{theorem}
\label{theta II 2}
Let $(\pi, V_\pi)$ be an irreducible cuspidal generic automorphic representation of $H_n(\mA)$.
Suppose that the theta lift $\Theta_{W_n, V_{n+1}}(\pi,\psi, \chi)$ of $\sigma$ to $G_{n}(\mA)$ is cuspidal
and that the formula \eqref{whittaker formula ref} holds for any non-zero decomposable vector of 
$\Theta_{W_n, V_{n+1}}(\pi,\psi, \chi)$. 
Then the formula \eqref{whittaker formula ref} holds for any non-zero decomposable vector of $V_\pi$.
\end{theorem}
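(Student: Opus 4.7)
The plan is to transfer the Ichino--Ikeda formula from the theta lift $\sigma := \Theta_{V_n, W_n}(\pi, \psi, \chi)$ back to $\pi$ by combining the pullback identity \eqref{e:pullback H_n} for the $\psi_{N_n, \lambda}$-Whittaker period of $\theta_{\psi, \chi}(\varphi, \phi)$ with the Rallis inner product formula for its Petersson norm. Fix a decomposable $\varphi = \otimes_v \varphi_v \in V_\pi$ and decomposable $\phi = \otimes_v \phi_v \in \mathcal{S}(V_n(\mA)^n)$. By Proposition~\ref{theta gen to gen}, $\sigma$ is nonzero and $\psi_{N_n, \lambda}$-generic for some $\lambda \in N_{E \slash F}(E^\times)$, so the hypothesis makes \eqref{whittaker formula ref} available for $\sigma$.

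First I would express the ratio $|W_{\psi_{N_n, \lambda}}(\theta_{\psi, \chi}(\varphi, \phi))|^2 / (\theta_{\psi, \chi}(\varphi, \phi), \theta_{\psi, \chi}(\varphi, \phi))$ in two ways. Applying \eqref{whittaker formula ref} to $\sigma$ gives the $L$-ratio $\prod_j L(j, \chi_E^j) / L(1, \Sigma, \mathrm{As}^+)$ times the Euler product $\prod_v I_v^\natural(\theta_v)$, where $\Sigma$ is the strong base change of $\sigma$ to $\mathrm{GL}_{2n}(\mA_E)$. By functoriality of the theta correspondence for unitary dual pairs, $\Sigma$ decomposes as the isobaric sum of the base change $\Pi$ of $\pi$ with a Hecke character built from $\chi_{V_n}$ and $\chi_{W_n}$; consequently $L(1, \Sigma, \mathrm{As}^+)$ differs from $L(1, \Pi, \mathrm{As}^-)$ by an explicit ratio of standard Hecke $L$-factors that matches the $L$-function bookkeeping on the $\pi$-side of \eqref{whittaker formula ref}.

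Next I would expand the same ratio using \eqref{e:pullback H_n} in the numerator and the Rallis inner product formula in the denominator. The numerator becomes a double integral against Weil-representation data of $W_{\psi_{U_{H_n}}}(\rho(h)\varphi) \overline{W_{\psi_{U_{H_n}}}(\rho(h^\prime)\varphi)}$, while the denominator factors as a global integral of $(\pi(h)\varphi, \varphi) \langle \omega_{\psi, \chi}(h, 1)\phi, \phi \rangle$ that again produces the same global $L$-ratio with explicit local correction factors. Both expressions decompose into Euler products over $v$, and equating them with the Ichino--Ikeda identity for $\sigma$ yields a local identity expressing $I_v^\natural(\theta_v)$ in terms of $I_v^\natural(\varphi_v)$ and the local components of the $L$-correction identified in the previous step. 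Taking the product over $v$ and cancelling the $L$-corrections reproduces exactly the formula \eqref{whittaker formula ref} for $\pi$.

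The principal technical obstacle is the local matching at archimedean and ramified finite places: one must verify that the stable local pullback integral against $\omega_{\psi_v, \chi_v}(h, 1) \phi_v(f_{-1}, \ldots, f_{-n+1}, f_\lambda)$, combined with the local Rallis doubling integral, produces the correct local factor relating $I_v^\natural(\theta(\pi_v, \psi_v, \chi_v))$ to $I_v^\natural(\pi_v)$. At unramified places this is a routine Satake parameter computation matching known unramified theta identities. At the remaining places the comparison has to be carried out by unfolding, invoking local Howe duality \cite{Ho1, Wa, GT} to identify the $v$-component of $\sigma$ with $\theta(\pi_v, \psi_v, \chi_v)$, and extending the tempered arguments of \cite{FM, FM2, Liu} to the merely generic hypothesis on $\pi$ by the mechanism alluded to in Section~\ref{theta extend generic}.
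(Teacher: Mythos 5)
Your proposal is correct and follows essentially the same route as the paper: expressing the Whittaker period of the theta lift $\sigma = \Theta_{V_n, W_n}(\pi, \psi, \chi)$ via the pullback formula \eqref{e:pullback H_n}, expressing the Petersson norm of the theta lift via the Rallis inner product formula (Theorem \ref{Rallis inner H_n}), and then reducing to a local pullback identity relating $I_v^\natural$ for $\theta(\pi_v)$ to $I_v^\natural$ for $\pi_v$, with the $L$-function bookkeeping controlled by the isobaric decomposition of the base change of $\sigma$. The paper's Section~\ref{s : theta u2n-1 to u2n} carries this out by reducing to the explicit local statement \eqref{local pullback 2} and transplanting the arguments of \cite[Proposition~4]{FM21}, which is precisely the "unfolding plus local Howe duality" you describe, and the extension from tempered to generic that you flag is the mechanism of Section~\ref{theta extend generic}.
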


In our previous works, we showed similar results for theta correspondences for $(\mathrm{SO}(2n+1), \widetilde{\mathrm{Sp}}(2n))$ in \cite{FM21}
and $(\mathrm{U}(2n), \mathrm{U}(2n))$ in \cite{FM} under the assumption that the automorphic representations are tempered.
In this section, we shall prove a similar result without assuming the temperedness.
%
%
%
%
%
%
%
%
%
%
%
\subsection{Rallis inner product formula}
\label{s:rallis}
\subsubsection{Theta lift from $\mathrm{U}(2n-1)$ to $\mathrm{U}(2n)$}
\label{s:theta U2n-1 to U2n}
Recall that we have a polarization $W_n = W_n^+ \oplus W_n^-$. Then we realize $\omega_{\psi, \chi, V_n, W_n}$ 
on the space $\mathcal{S} \left( \left(V_n \otimes W_n^+ \right) (\mA) \right)$.
Let $V_n^\Box$ be the hermitian space $V_n\oplus \left(-V_n\right)$,
i.e. 
$V_n^\Box$ is a direct sum $V_n\oplus V_n$
as a vector space
and its  hermitian form $\left(\, \,, \,\,\right)_{V_n^\Box}$
on $V_n^\Box$
is defined by
\[
\left(v_1\oplus v_2,v_1^\prime\oplus v_2^\prime\right)_{V_n^\Box}
:=\left(v_1,v_1^\prime\right)_{V_n}-\left(v_2,v_2^\prime\right)_{V_n}.
\]
Let $V^\Box_{n,\pm}$ be maximal isotropic subspaces of $V^\Box$
defined by
\[
V^\Box_{n,+}:=\left\{v\oplus v\in V^\Box : v\in V\right\}.
\]
and
\[
V^\Box_{n,-}:=\left\{v\oplus -v\in V^\Box : v\in V\right\}.
\]
We note that there is a natural embedding
\[
\iota:\mathrm{U}\left(V_n\right)\times\mathrm{U}\left(-V_n\right)
\hookrightarrow
\mathrm{U}\left(V_n^\Box\right)
\text{ defined by $\iota\left(g_1,g_2\right)\left(v_1\oplus v_2\right)=
g_1v_1\oplus g_2 v_2$}.
\]
For $\phi \in \mathcal{S}\left(\left(V_n^\Box\otimes W_n^+\right)\left(\mA\right)\right)$,
let us define the partial Fourier transform $\hat{\phi} \in \mathcal{S}\left(\left(V^\Box_{n, +} \otimes W_n\right)\left(\mA\right)\right)$ by
\[
\hat{\phi}(u \oplus v)= \int_{(V^\Box_{n,-} \otimes W_n^+)(\mA)} \phi(x \oplus u) \psi (\langle x, v \rangle ) \, dx
\]
where $u \in (V^\Box_{n,+} \otimes W_n^+)(\mA)$ and $v \in (V^\Box_{n,+} \otimes W_n^-)(\mA)$, and $\langle -, - \rangle$
denotes the pairing on $(V^\Box_{n,-} \otimes W_n^+)(\mA) \times (V^\Box_{n,+} \otimes W_n^+)(\mA)$ given by the pairing on $V_n^\Box$ and $W_n$.
Then there exists an $\mathrm{U}(V_n, \mA) \times \mathrm{U}(-V_n, \mA)$-intertwining map
\[
\tau: \mathcal{S}\left(\left(V_n \otimes W^+_n\right)\left(\mA\right)\right) \hat{\otimes}\,
 \mathcal{S}\left(\left(\left(-V_n\right) \otimes W^+_n\right)\left(\mA\right)\right)
 \rightarrow 
 \mathcal{S}\left(\left(V^\Box_{n,+} \otimes W_n\right)\left(\mA\right)\right)
\]
with respect to the Weil representations,
obtained by composing the natural map
\[
\mathcal{S}\left(\left(V_n \otimes W^+_n\right)\left(\mA\right)\right) \hat{\otimes}\,
 \mathcal{S}\left(\left(\left(-V_n\right) \otimes W^+_n\right)\left(\mA\right)\right)
 \rightarrow 
 \mathcal{S}\left(\left(V_n^\Box \otimes W_n^+\right)\left(\mA\right)\right)
 \]
 with the partial Fourier transform
\[
\mathcal{S}\left(\left(V_n^\Box\otimes W_n^+\right)\left(\mA\right)\right)
\rightarrow
\mathcal{S}\left(\left(V^\Box_{n,+} \otimes W_n \right)\left(\mA\right)\right).
\]
 Namely we have 
 \begin{align*}
 &\tau\left(\omega_{\psi, \chi, V_n, W_n}\left(g_1\right)\phi_{+}
 \otimes\omega_{\psi, \chi, -V_n, W_n}\left(g_2\right)\phi_{-}\right)
 \\
 =&\omega_{\psi, \chi, V_n^\Box, W_n}\left(\iota\left(g_1,g_2\right)\right)
 \tau\left(\phi_{+}\otimes\phi_{-}\right)
 \end{align*}
 for $\left(g_1,g_2\right)\in\mathrm{U}(V_n, \mA) \times \mathrm{U}(-V_n, \mA)$
 and
 $\phi_{\pm}\in\mathcal{S}\left(\left(\left(\pm
 V_n \right) \otimes W^+_n\right)\left(\mA\right)\right)$.
 We also consider the local counterpart $\tau_v$ of $\tau$ for any place $v$.
 %
 %
 %
 \\
 
 Let $P$ be the maximal parabolic subgroup of
 $\mathrm{U}\left(V_n^\Box \right)$
 defined as the stabilizer of the isotropic subspace
 $V_{n, +}^\Box$.
 Then the Levi subgroup of $P$
 is isomorphic to $\mathrm{Res}_{E \slash F} \mathrm{GL}\left(V_n\right)$.
 At each place $v$ of $F$, we consider the degenerate principal 
 representation 
 \[
 I_v\left(s\right):=\mathrm{Ind}_{P\left(F_v\right)}^{
 \mathrm{U}\left(V^\Box, F_v\right)}\, \chi_{W_n, v} |\,\cdot\,|_v^s
 \quad\text{for $s\in\mathbb C$.}
 \]
 Here the induction is normalized and
  $\chi_{W_n, v} |\,\cdot\,|_v^s$ denotes the character of $P\left(F_v\right)$
given by $g_v \mapsto \chi_{W_n, v}(\det g) |\det |_v^s$ on its Levi subgroup 
  $\mathrm{GL}\left(\mathrm{Res}_{E \slash F}V, F_v \right)$
  and trivial on its unipotent radical.
  Let $\tau$ be an irreducible admissible representation of $H_n(F_v)$.
  Let $\langle - , - \rangle_\tau$ be a $H_n(F_v)$-invariant pairing on $\tau \times \tau^\vee$.
  Then the local doubling zeta integral is defined by
  \begin{equation}\label{e: local doubling H}
  Z_v\left(s, f,f^\prime,\Phi_v,\tau\right):=
  \int_{H_n(F_v)}\langle  \tau(h_v)f,f^\prime\rangle_\tau
  \,\Phi_v\left(\iota\left(h_v, I_v\right)\right)\,dh_v
 \end{equation}
 where $\Phi_v \in I_v(s)$ and $I_v$ denotes the unit element of $H_n(F_v)$.
 It converges absolutely for $\mathrm{Re}(s) \gg 0$ and has a meromorphic continuation to $\mC$.
 
 Let $(\pi, V_\pi)$ be an irreducible cuspidal generic automorphic representation of $H_n(\mA)=\mathrm{U}(V_n)(\mA)$.
 Then we take the Hermitian inner product $\langle - , - \rangle$ as $H_n(F_v)$-invariant pairing $\pi_v \times \pi_v^\vee$,
 and we define local doubling zeta integral as above.
Further, when $\varphi_v, \varphi_v^\prime \in V_{\pi_v}$ satisfies $\langle \varphi_v, \varphi_v^\prime \rangle_v \ne 0$, 
we define the normalized local zeta integral by 
\[
  Z_v^\ast\left(s,\varphi_v,\varphi_v^\prime,\Phi_v,\pi_v\right)
  =\frac{\prod_{j=1}^{2n-1} L\left(j+1, \chi_{E, v}^{j+1} \right)}{L\left(s+1\slash 2, \pi_v \times \chi_{W_n, v}^{-1} \right)}
\cdot \frac{1}{\langle\varphi_v,\varphi_v^\prime\rangle_v}\cdot
Z_v\left(s,\varphi_v,\varphi_v^\prime,\Phi_v,\pi_v\right).
\]

If $\pi_v$ is tempered, the integral \eqref{e: local doubling H}  for $\pi_v$ converges absolutely for $\mathrm{Re}(s) > -\frac{1}{2}$ by Yamana~\cite[Lemma~7.3]{Ya}.
More generally, since $\pi_v$ is unitary and generic, this integral converges absolutely for $\mathrm{Re}(s) \geq 0$ as follows.
By Kostant~\cite{Kos}, Vogan~\cite{Vog} and Mui\'{c}~\cite{Mui}, we may uniquely write 
 \[
 \pi_v \simeq \delta_1 \times \cdots \times \delta_r \ltimes \tau
 \]
where $\delta_i$ is an essentially square integrable representation of $\mathrm{GL}_{m_i}(E_v)$, 
$\tau$ is a tempered representation of $G_{n-m_1 - \cdots - m_r}(F_v)$ and $e(\delta_1) \geq e(\delta_2)\cdots \geq e(\delta_k) >0$.
Here, for $u \in \mR$, we put $\delta^u = \delta |\det|^u$ and let $e(\delta_i)$ be a real number such that 
$\delta_i^{-e(\delta_i)}$ is unitary.
Then we have 
\begin{equation}
\label{exp estimate}
0 \leq e(\delta_i) < \frac{1}{2}
\end{equation}
by Kim-Krishnamurthy~\cite{KK04, KK05} (see also \cite[Proposition~A.5]{ILM}).
Then as in the proof of \cite[Lemma~7.3]{Ya}, by \cite[Proposition~4.2, Lemma~5.2]{Ya}, the absolute convergence 
is reduced to the case of essentially discrete series representations $\delta_i$ and a tempered representation $\tau$.

The case of discrete series representations 
 is studied in \cite[Proposition~A.7]{Ya2} and also \cite[Lemma~9.5]{GI}.
From the definition of local doubling zeta integral, the local doubling zeta integral for $\delta_i$
is bounded by it for the case of  $\delta_i^{-e(\delta_i)} |\det |^{\frac{1}{2}}$.
If we use the same notation as \cite[Lemma~9.5]{GI}, it is bounded by 
\[
\int_{A_0^+} \prod_{i=1}^r |a_i|^{\mathrm{Re}(s) -\frac{1}{2}+i-\frac{1}{2}} \cdot (1+\log ||a||)^{-d} \, da.
\]
Then this integral converges absolutely for $\mathrm{Re}(s) \geq 0$.
In particular, we see that $L\left(s+1\slash 2, \pi_v \times \chi_{W_n, v}^{-1} \right)$ is holomorphic at $s = \frac{1}{2}$,
and thus $Z_v^\ast\left(s,\varphi_v,\varphi_v^\prime,\Phi_v,\pi_v\right)$ is holomorphic at $s=\frac{1}{2}$.
Finally, we note that 
$Z_v^\ast\left(s,\varphi_v,\varphi_v^\prime,\Phi_v,\pi_v\right) =1$ at almost all places $v$ (cf. \cite{LR}, \cite{PSR}).

For $\phi_v \in \mathcal{S}\left(\left(V_{n, +}^\Box \otimes W_n\right)\left(F_v\right)\right)$, we define $\Phi_{\phi_v} \in I_v(\frac{1}{2})$ by 
\[
\Phi_{\phi_v} (g) = (\omega_{\psi_v, \chi_v, V_n^\Box, W_n}(g)) \phi_v(0), \quad g \in \mathrm{U}(V_n^\Box, F_v)
\]
Then we simply write 
\[
 Z_v\left(s,\varphi_v,\varphi_v^\prime,\phi_v, \phi_v^\prime ,\pi_v\right)
 =  Z_v\left(s,\varphi_v,\varphi_v^\prime,\Phi_{\tau_v(\phi_v \otimes \phi_v^\prime)},\pi_v\right).
\]
and
\[
 Z_v^\ast\left(s,\varphi_v,\varphi_v^\prime,\phi_v, \phi_v^\prime ,\pi_v\right)
 =  Z_v^\ast\left(s,\varphi_v,\varphi_v^\prime,\Phi_{\tau_v(\phi_v \otimes \phi_v^\prime)},\pi_v\right).
\]
Note that we may write
\[
  Z_v\left(\varphi_v,\varphi_v^\prime, \phi_v, \phi_v^\prime,\pi_v\right) =  \int_{H_n(F_v)}\langle \pi_v\left(g_v\right)\varphi_v,\varphi_v^\prime\rangle_v
\mathcal{B}_\omega^{H_n, G_n}(\omega_{\psi, \chi}(g) \phi_v, \phi_v^\prime)\,dg_v
\]
(cf. Li~\cite[Section~2]{JSL}). Here, for $\phi_i \in  \mathcal{S}\left(\left(V_n \otimes W_n^+\right) \left(F_v\right)\right)$, 
we define 
\[
\mathcal{B}_\omega^{H_n, G_n}(\phi_1, \phi_2) = \int_{V_n \otimes W_n^+(F_v)} \phi_1(x) \overline{\phi_2(x)} \, dx
\]
with the local Tamagawa measure $dx$.

Recall that we take the Tamagawa measure $dh$ as a measure on $H_n(\mA)$.
We choose a local Haar measure $dh_v$ on $H_n(F_v)$ for each place $v$ of $F$ 
so that $\mathrm{Vol}(H_n(\mathcal{O}_v), dh_v)=1$ at almost all $v$, where $\mathcal{O}_v$ denotes 
the ring of integers of $F_v$.
We define positive constants $C_{H_n}$ called Haar measure constants in \cite{II} by 
\[
dh = C_{H_n} \cdot \prod_v dh_v.
\]
Let us take non-zero gauge form $\omega_{H_n}$ on $H_n$. 
Then we define local Tamagawa measure $dh_v$ on $H_n(F_v)$ corresponding to $\omega_{H_n}$ as in \cite[p.1316]{FM21}.
Hereafter, we fix this measure. In this case, we have
\[
C_{H_n} = \left(\prod_{j=1}^{2n-1}\, L\left(j, \chi_{E}^{j} \right) \right)^{-1}.
\]
Then we have the following Rallis inner product formula.
%
%
 \begin{theorem}[Lemma~10.1 in \cite{Ya}]
 \label{Rallis inner H_n}
 For any non-zero decomposable vectors $\varphi=\otimes_v\, \varphi_v, \varphi^\prime=\otimes_v\, \varphi_v^\prime \in V_\pi$
 such that decomposable vectors $\langle \varphi, \varphi^\prime \rangle$ 
 and $\phi=\otimes_v\, \phi_v, \phi^\prime=\otimes_v\, \phi_v^\prime \in 
 \mathcal{S}\left(\left(V \otimes W_n^+\right)\left(\mA\right)\right)$,
 we have
 \begin{multline}\label{e: rallis inner product H_n}
\frac{\langle \theta_{\psi, \chi}\left(\varphi, \phi\right),\theta_{\psi,  \chi}\left(\varphi^\prime, \phi^\prime\rangle
 \right)}{\langle\varphi,\varphi^\prime\rangle}
 \\
 =C_{H_n}
 \frac{L\left(1,\pi^\vee \times \chi_{W_n}^{-1} \right)}{\prod_{j=1}^{2n-1}\, L\left(j+1, \chi_{E}^{j+1} \right)}
 \cdot\prod_v   Z_v^\ast\left(\frac{1}{2},\varphi_v,\varphi_v^\prime,\phi_v, \phi_v^\prime,\pi_v\right).
 \end{multline}
\end{theorem}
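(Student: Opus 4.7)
The plan is to follow the doubling method of Piatetski-Shapiro and Rallis, in the form developed for unitary dual pairs by Li, Harris--Kudla--Sweet and Yamana; the statement is indeed essentially Lemma~10.1 of \cite{Ya}. One starts by unfolding
\[
\langle \theta_{\psi,\chi}(\varphi,\phi),\theta_{\psi,\chi}(\varphi',\phi') \rangle_{G_n}
=\int_{[G_n]}\int_{[H_n]}\int_{[H_n]} \varphi(h_1)\overline{\varphi'(h_2)}\,\theta^{\phi}_{\psi,\chi}(h_1,g)\overline{\theta^{\phi'}_{\psi,\chi}(h_2,g)}\,dh_1\,dh_2\,dg
\]
and invoking the see-saw for the pair $(G_n,\mathrm{U}(V_n^{\Box}))$: through the intertwining map $\tau$, the product of theta kernels for $(H_n\times H_n, G_n)$ is identified with a single theta kernel for $(\mathrm{U}(V_n^{\Box}), G_n)$ attached to $\hat\phi:=\tau(\phi\otimes\overline{\phi'}) \in \mathcal{S}((V^{\Box}_{n,+}\otimes W_n)(\mA))$.

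I would then apply the (regularized) Siegel--Weil formula for $(\mathrm{U}(V_n^{\Box}),G_n)$ to replace the inner $[G_n]$-integral by a degenerate Eisenstein series value on $\mathrm{U}(V_n^{\Box})$, evaluated at the Siegel--Weil point $s=\tfrac12$ in the normalization of the excerpt. The remaining integral
\[
\int_{[H_n\times H_n]} \varphi(h_1)\overline{\varphi'(h_2)}\,E(\iota(h_1,h_2),\tfrac12,\Phi_{\hat\phi})\,dh_1\,dh_2
\]
is the standard Piatetski-Shapiro--Rallis doubling zeta integral; by cuspidality of $\varphi,\varphi'$, unfolding against the Bruhat decomposition of $P\backslash\mathrm{U}(V_n^{\Box})$ collapses to the open orbit and yields a single integral over $H_n(\mA)$ of a matrix coefficient of $\pi$ against the section $\Phi_{\hat\phi}$.

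This global integral Eulerises into the product of local doubling zeta integrals $Z_v(\tfrac12,\varphi_v,\varphi_v',\phi_v,\phi_v',\pi_v)$ by uniqueness of the $H_n$-invariant pairing on $\pi_v\otimes\pi_v^{\vee}$. Dividing each local factor by the normalizing $L$-ratio defining $Z_v^{\ast}$ pulls out the global $L$-quotient $L(1,\pi^\vee\times\chi_{W_n}^{-1})/\prod_{j=1}^{2n-1}L(j+1,\chi_E^{j+1})$ from the product, and the standard unramified computation (cf.\ \cite{LR}, \cite{PSR}) gives $Z_v^{\ast}=1$ at almost all places. Comparing the Tamagawa measure on $H_n(\mA)$ with our fixed product of local measures contributes the Haar measure constant $C_{H_n}$ as recorded in Section~\ref{s:rallis}.

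The one substantial analytic obstacle, and the reason the tempered doubling identity alone does not suffice, is holomorphy and absolute convergence at $s=\tfrac12$, which sits at the boundary of the classical range. Here the hypothesis that $\pi$ is globally generic is exactly what is needed: by the Kim--Krishnamurthy bound $0\le e(\delta_i)<\tfrac12$ recalled in \eqref{exp estimate}, each local zeta integral converges absolutely at $s=\tfrac12$ and $L(s+\tfrac12,\pi^\vee\times\chi_{W_n}^{-1})$ is finite there. These are precisely the hypotheses under which Yamana's regularized Siegel--Weil formula applies and assembles the local and global ingredients into the displayed Rallis inner product identity.
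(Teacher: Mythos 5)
Your proposal is correct and follows the same route as the cited result: the statement is Lemma~10.1 of \cite{Ya}, which the paper invokes without reproving, and the chain you describe (unfolding the $[G_n]$-integral, see-saw for $(\mathrm{U}(V_n^\Box), G_n)$ via the map $\tau$, regularized Siegel--Weil to produce the Eisenstein series, cuspidal unfolding to the doubling zeta integral over $H_n(\mA)$, Eulerization, normalization, measure bookkeeping) is exactly Yamana's argument. Your final paragraph correctly isolates the one point that the paper itself takes care to justify in Section~\ref{s:rallis}: absolute convergence and holomorphy of the local zeta integrals $Z_v$ at $s=\tfrac12$, which follows for generic unitary local components from the exponent bound \eqref{exp estimate} of Kim--Krishnamurthy; without genericity one only has Yamana's tempered convergence (his Lemma~7.3), so your remark that genericity is what extends the range is apt. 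The only slight overstatement is attributing this to the hypotheses of the global Siegel--Weil formula rather than to the well-definedness of the local normalized integrals, but this does not affect the correctness of the argument.
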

%
%
\subsubsection{Theta lift from $\mathrm{U}(2n)$ to $\mathrm{U}(2n+1)$}
Recall that we have a polarization $W_n = W_n^+ \oplus W_n^-$.
We take a polarization of $\mathbb{W} := W_n \otimes V_{n+1}$ by $\mathbb{W} = \mathbb{W}^+ \oplus \mathbb{W}^-$
where 
\[
\mathbb{W}^{\pm} := \left( V_{n+1}^{\pm} \otimes W_n \right) \oplus \left( \langle f \rangle \otimes W_{n}^{\pm}  \right)
\]
Let us realize $\omega_{\psi, \chi, V_{n+1}, \mathbb{W}_n}$ 
on the space $\mathcal{S} \left( \mathbb{W}^+  (\mA) \right) $.
Let $W^\Box_n$ be the skew-hermitian space $W_n\oplus \left(-W_n\right)$,
i.e. 
$W_n^\Box$ is a direct sum $W_n\oplus W_n$
as a vector space
and its skew-hermitian form $\left(\, \,, \,\,\right)_{W_n^\Box}$
on $W_n^\Box$
is defined by
\[
\left(w_1\oplus w_2,w_1^\prime\oplus w_2^\prime\right)_{W_n^\Box}
:=\left(w_1,w_1^\prime\right)_{W_n}-\left(w_2,w_2^\prime\right)_{W_n}.
\]
Let $W_{\pm}^{\Box}$ be maximal isotropic subspaces of $W_n^\Box$
defined by
\[
W^\Box_+ := \left\{w\oplus w\in W_n^\Box : w\in W_n \right\}
\]
and
\[
W^\Box_- := \left\{w\oplus -w\in W_n^\Box : w\in W_n \right\}.
\]
We note that there is a natural embedding
\[
\iota:\mathrm{U}\left(W_n\right)\times\mathrm{U}\left(-W_n\right)
\hookrightarrow
\mathrm{U}\left(W_n^\Box\right)
\quad
\text{such that $\iota\left(g_1,g_2\right)\left(v_1\oplus v_2\right)=
g_1v_1\oplus g_2 v_2$}.
\]
For $\phi \in \mathcal{S}\left(\left(V_{n+1}^+ \otimes W_n^\Box \right)\left(\mA\right)\right)$,
let us define the partial Fourier transform $\hat{\phi} \in \mathcal{S}\left(\left((V_{n+1}^+ \oplus V_{n+1}^-) \otimes W_+^\Box \right)\left(\mA\right)\right)$ by
\[
\hat{\phi}(u_1 \oplus u_2)= \int_{(V_{n+1}^+ \otimes W^\Box_- )(\mA)} \phi(u_1 \oplus x) \psi (\langle x, u_2 \rangle ) \, dx
\]
where $u_1 \in (V_{n+1}^+ \otimes W_+^\Box)(\mA)$ and $u_2 \in (V_{n+1}^- \otimes W_+^\Box)(\mA)$, and $\langle -, - \rangle$
denotes the pairing on $(V_{n+1}^- \otimes W_+^\Box)(\mA) \times (V_{n+1}^+ \otimes W_-^\Box)(\mA)$ given by the pairing on $V_{n+1}$ and $W^\Box_n$.
Then there exists an $\mathrm{U}(W_n, \mA) \times \mathrm{U}(-W_n, \mA)$-intertwining map
\begin{multline*}
\delta: \mathcal{S}\left(\left(V^+_{n+1} \otimes W_n) \oplus ( \langle f \rangle \otimes W_n^+  )\right)\left(\mA\right)\right) \hat{\otimes}\,
 \mathcal{S}\left(\left( V^+_{n+1} \otimes (-W_n)) \oplus ( \langle f \rangle \otimes (-W_n^+) \right)\left(\mA\right)\right)
 \\
 \rightarrow 
 \mathcal{S}\left( \left(V_{n+1} \otimes W_+^\Box  \right)\left(\mA\right) \right)
\end{multline*}
with respect to the Weil representations,
obtained by composing the natural maps
\[
\mathcal{S}\left(\left(V_{n+1}^+ \otimes W_n\right)\left(\mA\right)\right) \hat{\otimes}\,
 \mathcal{S}\left(\left(V_{n+1}^+ \otimes \left(-W_n \right)\right)\left(\mA\right)\right)
 \rightarrow 
 \mathcal{S}\left(\left(V_{n+1}^+ \otimes W_n^\Box \right)\left(\mA\right)\right)
 \]
 and 
 \[
 \mathcal{S}(\langle f \rangle \otimes (W_n^+ \oplus (-W_n^+))  ) \rightarrow \mathcal{S}(\langle f \rangle \otimes W_n )   \rightarrow   \mathcal{S}(\langle f \rangle \otimes W_+^\Box  ) 
 \]
 with the partial Fourier transform
\[
\mathcal{S}\left(\left(V_{n+1}^+ \otimes W_n^\Box \right)\left(\mA\right)\right)
\rightarrow
 \mathcal{S}\left(\left((V_{n+1}^+ \oplus V_{n+1}^-) \otimes W_+^\Box \right)\left(\mA\right)\right).
\]
 Namely we have 
 \begin{multline*}
\delta\left(\omega_{\psi, \chi, V_{n+1}, W_n}\left(g_1\right)\phi_{+}
 \otimes\omega_{\psi, \chi, V_{n+1}, -W_n}\left(g_2\right)\phi_{-}\right)
 \\
 =\omega_{\psi, \chi, V_{n+1}, W_n^\Box}\left(\iota\left(g_1,g_2\right)\right)
 \delta\left(\phi_{+}\otimes\phi_{-}\right)
 \end{multline*}
 for $\left(g_1,g_2\right)\in\mathrm{U}(W_n, \mA) \times \mathrm{U}(-W_n, \mA)$
 and
$\phi_{\pm} \in \mathcal{S}\left(\left(V^+_{n+1} \otimes (\pm W_n)) \oplus ( \langle f \rangle \otimes (\pm W_n^+)  )\right)\left(\mA\right)\right)$.
 We also consider the local counterparts of $\delta$.
 \\
 
 Let $Q$ be the maximal parabolic subgroup of
 $\mathrm{U}\left(W_n^\Box \right)$
 defined as the stabilizer of the isotropic subspace
 $W_{+}^\Box$.
 Then the Levi subgroup of $Q$
 is isomorphic to $\mathrm{Res}_{E \slash F} \mathrm{GL}\left(W_n\right)$.
 At each place $v$ of $F$, we consider the degenerate principal 
 representation 
 \[
J_v\left(s\right):=\mathrm{Ind}_{Q\left(F_v\right)}^{
 \mathrm{U}\left(W_n^\Box, F_v\right)}\, \chi_{V_{n+1}, v} |\,\cdot\,|_v^s
 \quad\text{for $s\in\mathbb C$.}
 \]
 Here the induction is normalized and
  $\chi_{V_{n+1}, v} |\,\cdot\,|_v^s$ denotes the character of $Q\left(F_v\right)$
given by $g_v \mapsto \chi_{V_{n+1}, v}(\det g) |\det |_v^s$ on its Levi subgroup 
  $\mathrm{GL}\left(\mathrm{Res}_{E \slash F}W_n , F_v \right)$
  and trivial on its unipotent radical.
 
  Let $(\sigma, V_\sigma)$ be an irreducible cuspidal $\psi_{N_n, \lambda}$-generic automorphic representation of $G_n(\mA)$.
 Then as in the previous section,  for $\varphi_v,\varphi_v^\prime\in V_{\sigma_v}$  and $\Phi_v\in J_v\left(s\right)$, the local doubling zeta integral
  is defined by
  \begin{equation}\label{e: local doubling}
  Z_v\left(s,\varphi_v,\varphi_v^\prime,\Phi_v,\pi_v\right):=
  \int_{G_n(F_v)}\langle\pi_v\left(g_v\right)\varphi_v,\varphi_v^\prime\rangle_v
  \,\Phi_v\left(\iota\left(g_v, I_v\right)\right)\,dg_v
 \end{equation}
 where $I_v$ denotes the unit element of $G(F_v)$.
As in the previous section,  this integral converges absolutely for $\mathrm{Re}(s) \geq 0$ and it has a meromorphic continuation to $\mC$.
Further, we define the normalized local zeta integral by 
\[
  Z_v^\ast\left(s,\varphi_v,\varphi_v^\prime,\Phi_v,\pi_v\right)
  =\frac{\prod_{j=1}^{2n} L\left(j+1, \chi_{E, v}^{j+1} \right)}{L\left(s+1\slash 2, \sigma_v \times \chi_{V_{n+1}, v}^{-1} \right)}
\cdot \frac{1}{\langle\varphi_v,\varphi_v^\prime\rangle_v}\cdot
Z_v\left(s,\varphi_v,\varphi_v^\prime,\Phi_v,\pi_v\right).
\]
for $\varphi_v, \varphi_v^\prime$ such that $\langle \varphi_v, \varphi_v^\prime \rangle \ne 0$.
Then we note that $Z_v^\ast\left(s,\varphi_v,\varphi_v^\prime,\Phi_v,\pi_v\right) =1$ at almost all places $v$.

For $\phi_v \in \mathcal{S}\left(\left(V_{n+1} \otimes W_+^\Box \right)\left(F_v\right)\right)$, we define $\Phi_{\phi_v} \in J_v(\frac{1}{2})$ by 
\[
\Phi_{\phi_v} (g) = (\omega_{\psi_v, \chi_v, V_{n+1}, W_n^\Box}(g)) \phi_v(0), \quad g \in \mathrm{U}(W_n^\Box, F_v)
\]
Then for $\phi_v, \phi_v^\prime \in \mathcal{S}\left( \left(V_{n+1}^{+} \otimes W_n \right) \oplus \left( \langle f \rangle \otimes W_{n}^{+}  \right)(F_v)  \right)$, 
we simply write 
\[
 Z_v\left(s,\varphi_v,\varphi_v^\prime,\phi_v, \phi_v^\prime ,\pi_v\right)
 =  Z_v\left(s,\varphi_v,\varphi_v^\prime,\Phi_{\tau_v(\phi_v \otimes \phi_v^\prime)},\pi_v\right)
\]
and
\[
 Z_v^\ast\left(s,\varphi_v,\varphi_v^\prime,\phi_v, \phi_v^\prime ,\pi_v\right)
 =  Z_v^\ast\left(s,\varphi_v,\varphi_v^\prime,\Phi_{\tau_v(\phi_v \otimes \phi_v^\prime)},\pi_v\right).
\]
Note that we may write
\[
Z_v \left(\varphi_v,\varphi_v^\prime, \phi_v, \phi_v^\prime,\pi_v\right) =  \int_{G_n(F_v)}\langle\pi_v\left(g_v\right)\varphi_v,\varphi_v^\prime\rangle_v
\mathcal{B}_\omega^{G_n, H_{n+1}}(\omega_{\psi, \chi}(g) \phi_v, \phi_v^\prime)\,dg_v
\]
(cf. Li~\cite[Section~2]{JSL}). Here, for
$\phi_i \in \mathcal{S}\left( \left(V_{n+1}^{+} \otimes W_n \right) \oplus \left( \langle f \rangle \otimes W_{n}^{+}  \right)(F_v)  \right)$
we define 
\[
\mathcal{B}_\omega^{G_n, H_{n+1}}(\phi_1, \phi_2) = \int_{(V_{n+1}^{+} \otimes W_n \oplus \langle f \rangle \otimes W_{n}^{+})(F_v)} \phi_1(x) \overline{\phi_2(x)} \, dx 
\]
with the Tamagawa measure $dx$.

As in the previous section, we take local Tamagawa measure $dg_v$ at any place corresponding to
non-zero gauge from on $G_n$. Recall that we take the Tamagawa measure $dg$ as a measure on $G_n(\mA)$. 
Then we have
\[
dg = C_{G_n} \prod_v dg_v
\]
with the Haar measure constant $C_{G_n}>0$, which is given by 
\begin{equation}
\label{cgn def}
C_{G_n} = \left( \prod_{j=1}^{2n} L(j, \chi_{E \slash F}^j)\right)^{-1}
\end{equation}
We have the following Rallis inner product formula.
%
%
 \begin{theorem}[Lemma~10.1 in \cite{Ya}]
 \label{Rallis inner G_n}
 For any decomposable vectors $\varphi=\otimes_v, \varphi^\prime=\otimes_v\, \varphi_v^\prime \in V_\pi$ such that $\langle \varphi, \varphi^\prime \rangle \ne 0$
 and any decomposable vectors $\phi=\otimes_v\, \phi_v, \phi^\prime=\otimes_v\, \phi_v^\prime\in 
 \mathcal{S}\left(\left(V^+_{n+1} \otimes (\pm W_n)) \oplus ( \langle f \rangle \otimes (\pm W_n^+)  )\right)\left(\mA\right)\right)$,
 we have
 \begin{multline}\label{e: rallis inner product}
\frac{\left(\theta_{\psi, \chi}\left(\varphi, \phi\right),\theta_{\psi,  \chi}\left(\varphi^\prime, \phi^\prime\right)
 \right) }{\langle \varphi, \varphi^\prime \rangle}
 \\
 =C_G \cdot
 \frac{L\left(1,\sigma^\vee \times \chi_{V_{n+1}}^{-1} \right)}{\prod_{j=1}^{2n}\, L\left(j+1, \chi_{E}^{j+1} \right)}
 \cdot\prod_v   Z_v^\ast\left(\frac{1}{2},\varphi_v,\varphi_v^\prime,\phi_v, \phi_v^\prime,\pi_v\right).
 \end{multline}
\end{theorem}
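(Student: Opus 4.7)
\medskip

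The plan is to prove the formula by the doubling method of Piatetski-Shapiro--Rallis, in the form adapted to unitary dual pairs by Yamana. My first step is to express the global inner product of theta lifts as an integral on the doubled group $\mathrm{U}(W_n^\Box)$. Using the $\mathrm{U}(W_n,\mA)\times\mathrm{U}(-W_n,\mA)$-intertwining map $\delta$ constructed just before the statement, I rewrite the product $\theta_{\psi,\chi}^{\phi}\,\overline{\theta_{\psi,\chi}^{\phi^\prime}}$ on $G_n(\mA)\times G_n(\mA)$ as the pullback to $\iota(G_n\times G_n)$ of a single theta kernel attached to the Schwartz function $\delta(\phi\otimes\overline{\phi^\prime})$ on $\left(V_{n+1}\otimes W_+^\Box\right)(\mA)$. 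After this identification, the inner product takes the shape
\[
\bigl(\theta_{\psi,\chi}(\varphi,\phi),\theta_{\psi,\chi}(\varphi^\prime,\phi^\prime)\bigr)
=\int_{[G_n\times G_n]}\bigl(\varphi\otimes\overline{\varphi^\prime}\bigr)(g_1,g_2)\,
\theta_{\psi,\chi,V_{n+1},W_n^\Box}^{\delta(\phi\otimes\overline{\phi^\prime})}\bigl(\iota(g_1,g_2)\bigr)\,dg_1\,dg_2.
\]

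Next I invoke the Siegel--Weil formula for the dual pair $(\mathrm{U}(V_{n+1}),\mathrm{U}(W_n^\Box))$ to replace the inner integral over $V_{n+1}$ by the value at $s=1/2$ of the Siegel Eisenstein series on $\mathrm{U}(W_n^\Box)$ built from the section $\Phi_{\delta(\phi\otimes\overline{\phi^\prime})}\in J(s)$. Since $\sigma$ is cuspidal and generic, one is in the convergent (or at worst first-term) range of the regularized Siegel--Weil identity worked out by Ichino and Yamana. Then I unfold the resulting Rankin--Selberg integral against $\varphi\otimes\overline{\varphi^\prime}$ along the Bruhat cell defined by $\iota$, which is precisely the doubling unfolding and produces the global doubling zeta integral for $\sigma\boxtimes\sigma^\vee$ attached to $\chi_{V_{n+1}}^{-1}|\cdot|^s$. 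Factoring into local doubling zeta integrals $Z_v(s,\varphi_v,\varphi_v^\prime,\phi_v,\phi_v^\prime,\pi_v)$, comparing with the unramified computation of Piatetski-Shapiro--Rallis and Li, and extracting the completed $L$-factor $L(s+1/2,\sigma^\vee\times\chi_{V_{n+1}}^{-1})/\prod_j L(j+1,\chi_E^{j+1})$ whose Euler factors are almost everywhere equal to the unramified values of $Z_v$, one obtains the formula with the normalized local factors $Z_v^\ast$ and the Haar measure constant $C_{G_n}$ computed in \eqref{cgn def}.

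The main obstacle is to make sense of the local and global zeta integrals when $\sigma$ is only assumed generic (not tempered), so that one cannot directly quote the tempered version. This is overcome exactly by the bound $0\le e(\delta_i)<1/2$ from Kim--Krishnamurthy recalled in \eqref{exp estimate}: as noted in the discussion preceding Theorem \ref{Rallis inner H_n}, this bound together with Yamana's reduction to the essentially square-integrable case gives absolute convergence of the local doubling zeta integrals at $\operatorname{Re}(s)\ge 0$, holomorphy of $L(s+1/2,\sigma_v\times\chi_{V_{n+1},v}^{-1})$ at $s=1/2$, and hence holomorphy of $Z_v^\ast$ at the point $s=1/2$ where the Siegel--Weil identity is evaluated. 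A second, more conceptual point is to verify that the Siegel--Weil identity for $(\mathrm{U}(V_{n+1}),\mathrm{U}(W_n^\Box))$ holds in the required range; this is the content of Yamana's Siegel--Weil theorem for unitary groups, and with those inputs in hand one simply matches constants to obtain \eqref{e: rallis inner product}.
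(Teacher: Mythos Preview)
Your outline is correct and follows the standard doubling-method argument, but note that the paper does not actually prove this statement: it is stated as Theorem~\ref{Rallis inner G_n} with the attribution ``Lemma~10.1 in \cite{Ya}'' and is simply quoted from Yamana's work without proof. The only supplementary argument the paper provides is the paragraph preceding Theorem~\ref{Rallis inner H_n}, which verifies absolute convergence of the local doubling zeta integrals at $\mathrm{Re}(s)\ge 0$ for generic (not merely tempered) $\sigma_v$ via the exponent bound \eqref{exp estimate}; you have correctly identified and incorporated this point. So your proposal is not so much a different route as a sketch of Yamana's proof that the paper chose to import wholesale.
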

\subsection{Explicit construction of local theta lift}
Let us recall an explicit construction of Hermitian inner product of local theta lifts.
Suppose that $(\pi, V_\pi)$ (resp. $(\sigma, V_\sigma)$) is an irreducible cuspidal generic automorphic representation of 
$H_n(\mA)$ (resp. $G_n(\mA)$) and $\Theta(\pi)=\Theta(\pi, \psi, \chi)$ (resp. $\Theta(\sigma)=\Theta(\sigma, \psi, \chi)$) 
denotes the theta lift of $\pi$ to $G_n(\mA)$ (resp. $H_{n+1}(\mA)$).
Assume that $\Theta(\pi)$ and $\Theta(\sigma)$ are non-zero and cuspidal.
As we remarked in the end of Section~\ref{ss:weil rep}, this is irreducible and we have $\Theta(\pi) \simeq  \otimes_v \theta(\pi_v, \psi_v, \chi_v),
\Theta(\sigma) \simeq  \otimes_v \theta(\pi_v, \psi_v, \chi_v) $.

For any place $v$ of $F$, let 
\[
\theta_v:\mathcal S\left((V_n \otimes W_n^+)\left(F_v\right)\right)\otimes V_{\pi_v}\to
V_{\Theta(\pi)_{v}}
\]
\[
\left(\text{resp. } \theta_v:\mathcal{S}\left( \left(V_{n+1}^{+} \otimes W_n \right) \oplus \left( \langle f \rangle \otimes W_{n}^{+}  \right)  \right)\otimes V_{\sigma_v}\to
V_{\Theta(\sigma)_v} \right)
\]
be the $H_n(F_v) \times G_n(F_v)$-equivariant (resp. $H_{n+1}(F_v) \times G_n(F_v)$-equivariant)
linear map, which is unique up to multiplication by a scalar.
Since the mapping
\[
\mathcal S\left(V_n\left(\mA\right)^n\right)\otimes V_\pi
\ni\left(\phi^\prime,\varphi^\prime\right)\mapsto
\theta_{\psi, \chi}\left(\varphi^\prime, \phi^\prime\right)\in V_{\Theta(\pi)}
\]
\[
\left( \text{resp. } \mathcal S\left(V_n\left(\mA\right)^n\right)\otimes V_\pi
\ni\left(\varphi^\prime,\phi^\prime\right)\mapsto
\theta_{\psi, \chi}\left(\varphi^\prime, \phi^\prime\right)\in V_{\Theta(\pi)} \right)
\]
is $H_n(F_v) \times G_n(F_v)$-equivariant  (resp. $H_{n+1}(F_v) \times G_n(F_v)$-equivariant) at any place $v$,
by the uniqueness of $\theta_v$, we may adjust $\left\{\theta_v\right\}_v$ so that 
\[
\theta_{\psi, \chi}\left(\varphi^\prime, \phi^\prime\right)=\otimes_v\,
\theta_v\left(\phi_v^\prime\otimes\varphi_v^\prime\right)
\]
for $\varphi^\prime=\otimes_v\,\varphi_v^\prime\in V_\pi$ (resp. $\varphi^\prime=\otimes_v\,\varphi_v^\prime\in V_\sigma$)
and $\phi^\prime=\otimes_v\, \phi_v^\prime
\in\mathcal S\left((V_n \otimes W_n^+)\left(\mA\right)^n\right)$ (resp. $\phi^\prime=\otimes_v\, \phi_v^\prime \in \mathcal{S}\left( \left(V_{n+1}^{+} \otimes W_n \right) \oplus \left( \langle f \rangle \otimes W_{n}^{+}  \right)  \right)$).
Let us set
 \begin{equation}\label{e: normalized doubling}
  Z_v^\flat\left(\varphi_v,\varphi_v^\prime, \phi_v, \phi_v^\prime,\pi_v\right) =  Z_v^\ast \left(\frac{1}{2},\varphi_v,\varphi_v^\prime, \Phi_{\tau_v(\phi_v \otimes \phi_v^\prime)},\pi_v\right)
\end{equation}
\[
 \left( \text{resp. $Z_v^\ast \left(\frac{1}{2},\varphi_v,\varphi_v^\prime, \Phi_{\delta_v(\phi_v \otimes \phi_v^\prime)},\sigma_v\right)$} \right)
\]
Here, we note that this is well-defined since the local zeta integrals are holomorphic at $s=\frac{1}{2}$
as we checked in Section~\ref{s:theta U2n-1 to U2n}.
Then  we may regard $  Z_v^\flat\left(\varphi_v,\varphi_v^\prime, \phi_v, \phi_v^\prime,\pi_v\right)$ as a Hermitian inner product 
on $\Theta(\pi)_v = \theta(\pi_v, \psi_v, \chi_v)$. Namely, there exists a unique $G_n(F_v)$-invariant Hermitian
inner product $\mathcal{B}_{\Theta(\pi)_v} : V_{\Theta(\pi)_v} \times V_{\Theta(\pi)_v} \rightarrow \mC$ satisfying
\[
\mathcal{B}_{\Theta(\pi)_v}(\theta_v(\phi_v \otimes \varphi_v), \theta_v(\phi_v^\prime \otimes \varphi_v^\prime))
=  Z_v^\flat\left(\varphi_v,\varphi_v^\prime, \phi_v, \phi_v^\prime,\pi_v\right) 
\]
for $\varphi_v, \varphi_v^\prime \in V_{\pi_v}$ and $\phi_v, \phi_v^\prime \in \mathcal{S}\left(\left(V_n \otimes W_n^+\right)\left(F_v\right)\right)$.
We note that for $g \in G_n(F_v)$, we have 
\[
\mathcal{B}_{\Theta(\pi)_v}(\Theta(\pi)_v(g)\theta_v(\phi_v \otimes \varphi_v), \theta_v(\phi_v^\prime \otimes \varphi_v^\prime))
=\mathcal{B}_{\Theta(\pi)_v}(\theta_v(\omega_{\psi, \chi}(g)\phi_v \otimes \varphi_v), \theta_v(\phi_v^\prime \otimes \varphi_v^\prime))
\]
In a similar way, we define a Hermitian inner product $\mathcal{B}_{\Theta(\sigma)_v}$ on $V_{\Theta(\sigma)_v}$.
%
%
%
%
%
%
%
%
%
%
\subsection{local Whittaker periods}
\label{s: def local Whittaker}
In this section, we briefly recall the definition of local Whittaker periods in general setting.
See \cite[Section~2]{LMe} for the detail.

Let $k$ be a local field of characteristic zero.
Let $G$ be a quasi-split reductive group over $k$, and we simply write $G = G(k)$.
Let $B$ be Borel subgroup of $G$ and $N$ its unipotent radical.
Let $\psi_N$  be a non-degenerate character of $N$.
Let $\pi$ be an irreducible admissible $\psi_N$-generic representation of $G$.

Let $P$ be a standard parabolic subgroup of $G$ with Levi decomposition $P=M \ltimes U$.
Let $P^\prime =M^\prime \ltimes U^\prime$ be the standard parabolic subgroup of $G$ which
is conjugate to the parabolic subgroup opposite to $P$. Denote by $W^M$ the Weyl group
of $M$ and by $w_0^M$ the longest element in $W^M$. 
Denote by $w_M = w_0^M w_0$ the longest element of $W \setminus W^M$.
Let $\sigma$ be an irreducible admissible representation of $M$. We write 
$\mathrm{Ind}_P^G \sigma = \mathrm{Ind} \sigma$ for the (normalized) parabolic induction.
Then there exists a parabolic subgroup $P$ and an irreducible $\psi_{N_M}^{W_M}$-generic tempered representation $\sigma$ of $M$
such that $\pi$ is unique irreducible subrepresentation of $\mathrm{Ind}\sigma$.
Let us take a $G$-invariant pairing $\langle -,- \rangle_\sigma$ on $\sigma \times \sigma^\vee$.

We denote by $(\mathrm{Ind} \sigma)^\circ$ the $P^\prime$-invariant subspace consisting of 
$\varphi \in \mathrm{Ind}\sigma$ which are supported on the big cell $P w_M P^\prime = Pw_MU^\prime$.
Note that for any $\varphi \in (\mathrm{Ind} \sigma)^\circ$, the function $\varphi(w_M \cdot)$
on $U^\prime$ is compactly supported.

First, suppose that $G=P$ and $\pi=\sigma$ is discrete series representation.
In this case, the integral
\[
\int_{N} \langle \sigma(n)v, v^\vee \rangle \psi_{N}^{-1}(n) \, dn
\]
converges absolutely. We call this integral as local Whittaker period and we write it by $I_{\psi_N}(v, v^\vee)=I(v, v^\vee)$.

Suppose that $\pi$ is tempered, $P$ is proper parabolic subgroup and that $\sigma$ is discrete series representation.
For an $M$-invariant pairing $\langle -,- \rangle_\sigma$ on $\sigma \times \sigma^\vee$,
let us define a $G$-invariant pairing $(-,-)_{\mathrm{Ind}\sigma}$ on $\mathrm{Ind}\sigma \times \mathrm{Ind}\sigma^\vee$ by 
\[
(\varphi, \varphi^\vee)_{\mathrm{Ind}\sigma} = \int_{P \backslash G} \langle \varphi(g), \varphi^\vee(g) \rangle_\sigma \, dg.
\]
Here, we take a measure $dg$ on $P \backslash G$ so that 
\[
\int_{P \backslash G} f(g) \, dg = \int_{U^\prime} f(w_M u^\prime) \, du^\prime
\]
for any continuous function $f$ on $G$ satisfying $f(pg) = \delta_P(p) f(g)$ for any $p \in P$ and $g \in G$.
By this pairing, we identify $(\mathrm{Ind}\sigma)^\vee$ with $\mathrm{Ind}\sigma^\vee$.

For $\varphi \in (\mathrm{Ind}\sigma)^\circ$, we define the Jacquet integral $J_\sigma^{\psi_N}$ by 
\begin{equation}
\label{Jacquet int}
J_{\sigma}^{\psi_N}(\varphi) = \int_{U^\prime} \varphi(w_M u^\prime) \psi_N(u^\prime)^{-1} \, du^\prime
\end{equation}
which converges absolutely. Then $J_{\sigma}^{\psi_N}(\varphi)$ gives an element of $\psi_{N_M}^{W_M}$-Whittaker
model of $\sigma$.
We may extend this map uniquely to the map from $\mathrm{Ind}\sigma$ to 
$\psi_{N_M}^{W_M}$-Whittaker
model of $\sigma$.
Then we define a local Whittaker period $I(\varphi, \varphi^\vee) = I_{\psi_N}(\varphi, \varphi^\vee)$
 for $\mathrm{Ind} \sigma$ by 
\begin{equation}
\label{e:def local whittaker general}
I(\varphi, \varphi^\vee)= 
I_{\psi_{N_M}^{W_M}}(J_\sigma^{\psi_N}(\varphi), J_{\sigma^\vee}^{\psi_{N}^{-1}}(\varphi^\vee))
\end{equation}
for $\varphi \in \mathrm{Ind} \sigma, \varphi^\vee \in \mathrm{Ind} \sigma^\vee$.
Since Whittaker model of $\mathrm{Ind} \sigma$ is unique,
we may define local Whittaker period of $\pi$ by that of  $\mathrm{Ind}\sigma$.

In general, let us take $P$ and irreducible tempered representation $\sigma$ of $M$ as above.
Then for $v \in\pi, v^\vee \in \pi^\vee$, we define $I(v, v^\vee)$ by the right hand side of \eqref{e:def local whittaker general}.
\\

Suppose that $k$ is non-archimedean.
In this case, our local Whittaker periods can be written by a stable integral.
Indeed, by Lapid-Mao~\cite[Proposition~2.3]{LMe}, there exists a compact open group $N_0$ of $N$ such that 
for any compact open subgroup $N_1, N_2 \supset N_0$, we have
\[
\int_{N_1} (\pi(n)v, v^\vee) \psi_{N}^{-1}(n) \, dn = \int_{N_2} (\pi(n)v, v^\vee) \psi_{N}^{-1}(n) \, dn, \qquad v \in \pi, v^\vee \in \pi^\vee.
\]
We denote this common value by $\int_{N}^{st} (\pi(n)v, v^\vee) \psi_{N}^{-1}(n) \, dn $ and we call it as stable integral over $N$.

Suppose that $k$ is archimedean. 
When $\pi$ is tempered, we have another definition of local Whittaker periods following Liu~\cite{Liu} and
 Sakellaridis-Venkatesh~\cite{SV} (see also \cite[p.465]{LMe}). 
 Let us consider the case of $G_n$.
For $u=\left(u_{i,j}\right)\in N_n$, we put
\[
u_{i} = u_{i, i+1} \quad (1 \leq i \leq n-1), \quad u_{n} = u_{n, 2n}.
\]
For $\gamma \geq - \infty$, we define 
\[
N_{n, \gamma} =  \{ u \in N_n \colon |u_i| \leq e^\gamma \}.
\]
Then for $\varphi, \varphi^\prime \in \pi$, 
\[
I_{\varphi, \varphi^\prime}(u):=
 \int_{N_{n, -\infty}} \langle \pi_v\left(u \right)\varphi,\varphi^\prime\rangle \, du
\]
converges absolutely and it gives a tempered distribution on $N_n \slash N_{n, -\infty} \simeq E_v^n$.

For an abelian Lie group $\mathcal{N}$ over $E_v$, we denote by $\mathcal{D}(\mathcal{N})$ (resp. $\mathcal{S}(\mathcal{N})$)
the space of tempered distributions (resp. Schwartz functions) on $N$.
Then we have a natural
bilinear pairing $(\,, \,) \colon \mathcal{D}(\mathcal{N}) \times \mathcal{S}(\mathcal{N}) \rightarrow \mC$.
We define the Fourier transform $\hat{} \colon \mathcal{D}(\mathcal{N})  \rightarrow \mathcal{D}(\mathcal{N}) $ by the formula
\[
(\hat{\mathfrak{a}}, \phi) = (\mathfrak{a}, \hat{\phi}) 
\quad\text{for $\mathfrak{a}\in\mathcal{D}(\mathcal{N})$
and $\phi \in \mathcal{S}(\mathcal{N})$}
\]
where $\hat{\phi}$ is the Fourier transform of $\phi$.
 Then the Fourier transform $\widehat{\alpha_{\varphi_v, \varphi_v^\prime}}$
 is smooth on the regular locus $\left(\widehat{N_n \slash N_{n, -\infty}}\right)^{\rm reg}$ of the 
Pontryagin dual $\widehat{N_n \slash N_{n, -\infty}}$.
Then we define an archimedean local Whittaker period
$I^\prime\left(\varphi,\varphi^\prime\right)$ by
\begin{equation}\label{e: local integral 2}
I^\prime\left(\varphi,\varphi^\prime\right) := \widehat{I_{\varphi, \varphi^\prime}} \left( \psi_{N_n, \lambda} \right).
\end{equation}
We note that by Sakellaridis-Venkatesh~\cite[Section~6.3.6]{SV},  we have
\begin{equation}
\label{coincidence whittaker}
I\left(\varphi,\varphi^\prime\right) = I^\prime\left(\varphi,\varphi^\prime\right).
\end{equation}

Similarly as in the case of $G_n$, for $u \in U_{H_n}$, we define 
\[
u_i = u_{i, i+1} \, (1 \leq i \leq n-1).
\]
For $\gamma \geq - \infty$, we define 
\[
U_{H_n, \gamma} =  \{ u \in U_{H_n} \colon |u_i| \leq e^\gamma \}.
\]
Then we may define local Whittaker periods by the Fourier transform
\[
I^\prime(\varphi, \varphi^\prime) := I_{\varphi, \varphi^\prime}(\psi_{U_{H_n}})
\]
where 
\[
I_{\varphi, \varphi^\prime}(u):=
 \int_{U_{H_n, -\infty}} \langle \pi\left(u \right)\varphi,\varphi^\prime\rangle \, du
\]
Also, we have $I^\prime(\varphi, \varphi^\prime)=I(\varphi, \varphi^\prime)$.
%
%
%
%
%
%
%
%
%
%
\subsection{Theta lift from $\mathrm{U}(2n)$ to $\mathrm{U}(2n+1)$}
\label{s : theta u2n to u2n+1}
In this section, we give a proof of Theorem~\ref{whittaker u2n-1 to u2n} for the theta lift from $\mathrm{U}(2n)$ to $\mathrm{U}(2n+1)$.
Namely, we shall prove the following theorem.
\begin{theorem}
\label{theta G_n to H_n+1}
Let $(\sigma, V_{\sigma})$ be  an irreducible cuspidal globally generic automorphic representation of $G_n(\mA)$.
Suppose that the theta lift $\Theta(\sigma, \psi, \chi)$ to $H_{n+1}(\mA)$ is cuspidal.
Then the formula \eqref{whittaker formula ref} holds for $\sigma$ if 
it holds for $\Theta(\sigma, \psi, \chi)$.
\end{theorem}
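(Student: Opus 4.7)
The plan is to derive the Whittaker formula for $\sigma$ by running the conjecture for $\Theta(\sigma,\psi,\chi)$ backwards through both the global pull-back formula for the Whittaker period and the Rallis inner product formula.

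First I would write down the conjectural identity \eqref{whittaker formula ref} for a decomposable vector $\theta_{\psi,\chi}(\varphi,\phi)\in\Theta(\sigma,\psi,\chi)$, where $\varphi=\otimes_v\varphi_v\in V_\sigma$ is chosen so that $W_{\psi_{N_n,\lambda}}(\varphi)\ne0$ for some $\lambda\in N_{E/F}(E^\times)$, and $\phi=\otimes_v\phi_v$ is a decomposable Schwartz function. By Proposition~\ref{theta gen to gen 2} the theta lift is $\psi_{U_{H_{n+1}},\lambda}$-generic. The left-hand side of the assumed formula involves $|W_{\psi_{U_{H_{n+1}},\lambda}}(\theta_{\psi,\chi}(\varphi,\phi))|^2$, which by the pull-back computation \eqref{e:pullback u2m to u2n+1} factors as an integral over $N_P(\mA)\backslash G_n(\mA)$ of the Whittaker period of $\sigma$ against a matrix coefficient of the Weil representation. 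The denominator $(\theta_{\psi,\chi}(\varphi,\phi),\theta_{\psi,\chi}(\varphi,\phi))$ is handled by the Rallis inner product formula (Theorem~\ref{Rallis inner G_n}), which writes it as a product of normalized local doubling zeta integrals $Z_v^\flat(\varphi_v,\varphi_v,\phi_v,\phi_v,\sigma_v)$ times the global $L$-factor $L(1,\sigma^\vee\times\chi_{V_{n+1}}^{-1})$ divided by $\prod L(j+1,\chi_E^{j+1})$.

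The key step is then the local factorization. The integrand in \eqref{e:pullback u2m to u2n+1}, once squared and combined with the matrix coefficient $\langle\sigma(g)\varphi,\varphi\rangle$ arising from $|W_{\psi_{N_n,\lambda}}|^2$, is exactly the local doubling matrix coefficient appearing inside $Z_v(\varphi_v,\varphi_v,\phi_v,\phi_v,\sigma_v)$. Unfolding this and dividing by the Petersson norm, each place should contribute a local identity of the shape
\begin{equation*}
\frac{|\,\text{local pull-back integral at } v\,|^2}{Z_v^\flat(\varphi_v,\varphi_v,\phi_v,\phi_v,\sigma_v)}
=\frac{L(1,\Pi_{\sigma,v},\mathrm{As}^{(-1)^{2n}})}{L(1,\Pi_{\Theta(\sigma),v},\mathrm{As}^{(-1)^{2n+1}})}\cdot\frac{I_v(\varphi_v)}{\langle\varphi_v,\varphi_v\rangle_v}\cdot I_v^\natural(\theta_v(\phi_v\otimes\varphi_v))^{-1}\cdot\text{(ratio of $L$-factors)},
\end{equation*}
i.e.\ the ratio of the normalized local Whittaker periods $I_v^\natural$ of $\sigma_v$ and of $\Theta(\sigma)_v$ is expressed through the local doubling zeta integral and the local Weil matrix coefficient appearing in the pull-back. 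Matching this local identity at almost all places reduces to the unramified doubling computation \cite{LR,PSR} together with the unramified pull-back of the Whittaker function; at the remaining places it is a direct comparison of local integrals. Multiplying these local identities and using the Rallis inner product formula together with the standard relation between the Asai $L$-functions of the base-change lifts of $\sigma$ and $\Theta(\sigma,\psi,\chi)$ will convert the assumed formula for $\Theta(\sigma,\psi,\chi)$ into the desired formula for $\sigma$. The factors of $2^{-k}$ match correctly because the cuspidal support of $\Theta(\sigma)$ and $\sigma$ decompose the same way.

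The principal technical obstacle is twofold. First, and most seriously, $\sigma$ is only assumed generic rather than tempered, so one cannot appeal to the standard convergence results of \cite{Ya} directly; one has to invoke the Kim--Krishnamurthy exponent bound \eqref{exp estimate} $0\le e(\delta_i)<\tfrac12$ for Langlands data of generic representations to conclude that the local doubling integrals remain absolutely convergent at $s=\tfrac12$, as sketched in Section~\ref{s:rallis}. Second, at archimedean places the local Whittaker period is defined via a Fourier-theoretic construction rather than as a stable integral, so the local factorization must be carried out in the sense of tempered distributions using the identity \eqref{coincidence whittaker}, and one must be careful that the pull-back integrand has enough regularity for the Fourier transform to be applied. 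The remainder of the bookkeeping, namely the Haar measure constant $C_{G_n}$ appearing in Theorem~\ref{Rallis inner G_n}, the Tamagawa volume factor $2$, and the Gindikin--Karpelevich style $L$-factor normalizations, should cancel in precisely the way needed for the $2^{-k}$ on the right-hand side of \eqref{whittaker formula ref} to come out correctly.
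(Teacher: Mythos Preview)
Your overall strategy is correct and matches the paper's: factor the global Whittaker period of $\Theta(\sigma,\psi,\chi)$ via the pull-back identity \eqref{e:pullback u2m to u2n+1}, factor the Petersson norm via the Rallis inner product formula (Theorem~\ref{Rallis inner G_n}), and reduce to a local identity comparing $I_v^\natural$ on the two sides. The $L$-function bookkeeping you sketch is exactly \eqref{L-fct decomp}, and the paper likewise packages the local pull-back into $\mathcal{L}_v(\varphi_v;\phi_v)$ and the explicit factorization \eqref{pullback explicit formula}, reducing everything to Proposition~\ref{prp:local pullback G to H}.

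However, your treatment of the non-tempered case has a genuine gap. You correctly note that the Kim--Krishnamurthy bound \eqref{exp estimate} gives absolute convergence of the local doubling zeta integral at $s=\tfrac12$, but that only makes the Rallis side well-defined; it does \emph{not} by itself establish the key local identity \eqref{local pullback 1} for non-tempered $\sigma_v$. The paper's argument here (Section~\ref{theta extend generic}) is different and is flagged in the introduction as one of the main technical innovations: one first proves \eqref{local pullback 1} in the tempered case (via stable integrals at finite places and the distributional Fourier transform at archimedean places), then writes $\sigma_v$ as a subquotient of a standard module $\sigma[\underline{s}]=\tau_1[s_1]\times\cdots\times\tau_k[s_k]\ltimes\sigma_0$, and shows that \emph{both sides} of \eqref{local pullback 1} extend holomorphically in $\underline{s}$ to the tube $\mathcal{D}=\{|\mathrm{Re}(s_i)|<\tfrac12\}$ (Lemma~\ref{lem:holo conti 1} and Corollary~\ref{cor:holo cont}). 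Since the identity holds on a dense open subset of the unitary axis $(i\mR)^k$ where $\sigma[\underline{s}]$ is irreducible tempered, analytic continuation gives it on all of $\mathcal{D}$, in particular at the point $\underline{s}=(r_1,\dots,r_k)$ corresponding to $\sigma_v$. The holomorphy of the theta-lift side (Corollary~\ref{cor:holo cont}) in turn requires knowing the explicit form of $\theta(\sigma[\underline{s}])$ as an induced representation (via \cite{GI}, \cite{Ich22}), which you do not mention. Without this deformation-and-continuation step, ``a direct comparison of local integrals'' at the bad places does not go through for non-tempered $\sigma_v$.
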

Recall that $\Theta(\sigma, \psi, \chi)$ is irreducible 
and $\Theta(\sigma, \psi, \chi) \simeq \otimes \theta(\sigma_v, \psi_v, \chi_v)$.
Then by the computation of local theta lifts for unramified representations by Kudla~\cite{Ku86}, we have
\[
\mathrm{BC}(\Theta(\sigma, \psi, \chi)) = \mathrm{BC}(\sigma^\vee) \otimes \chi_{V_n}^{-1} \chi_{W_n}  \boxplus \chi_{W_n}
\]
where $\mathrm{BC}(\Theta(\sigma, \psi, \chi))$ (resp.$\mathrm{BC}(\sigma)$) denotes the base change lift of $\Theta(\sigma, \psi, \chi)$ (resp. $\sigma$)
to $\mathrm{GL}_{2n+1}(\mA_E)$ (resp. $\mathrm{GL}_{2n}(\mA_E)$).
In particular, for any place $v$, we obtain
\begin{equation}
\label{L-fct decomp}
L(s, \mathrm{BC}(\Theta(\sigma, \psi, \chi))_v, \mathrm{As}^-) = L(s, \mathrm{BC}(\sigma)_v, \mathrm{As}^+)
L(s, \sigma_v^\vee \times \chi_{V_n, v}^{-1}) L(s, \chi_{E,v}).
\end{equation}
By an argument similar to the one in \cite[Section~3]{FM21} (see also \cite[Section~5.2]{FM}), indeed by a mutatis mutandis argument,
we may reduce Theorem~\ref{theta G_n to H_n+1} to a certain local statement. For the convenience to the reader, let us briefly recall this argument.

Let us fix $\varphi = \otimes \varphi_v \in V_\sigma$ such that $W_{\psi_{N_n}, \lambda}(\varphi) \ne 0$. 
Then we should have $I(\varphi_v, \varphi_v) \ne 0$.
For $g \in G_n(F_v)$, we put $I_v^\circ(\sigma_v(g)\varphi_v, \varphi_v) = I_v(\sigma_v(g)\varphi_v, \varphi_v) \slash I_v(\varphi_v, \varphi_v)$.
For $\phi_v \mathcal{S}\left( \left(V_{n+1}^{+} \otimes W_n \right) \oplus \left( \langle f \rangle \otimes W_{n}^{+}  \right)(F_v)  \right)$, we define 
\begin{equation}
\label{local pull whittaker def}
\mathcal{L}_v(\varphi_v ; \phi_v)=
 \int_{N_P(F_v) \backslash G_n(F_v)} 
 \omega_{\psi, \chi}(g, 1) \phi_v(x_\lambda) 
I_v^\circ(\sigma_v(g)\varphi_v, \varphi_v)\, dg.
\end{equation}
with $x_\lambda =(e_{-1}, \dots, e_{-n}; \lambda e_{n})$, provided that this integral converges.
Here, we note that since $N_P(F_v) \backslash G_n(F_v) \cdot x_\lambda$ is locally closed in 
$ \left(V_{n+1}^{+} \otimes W_n \right) \oplus \left( \langle f \rangle \otimes W_{n}^{+}  \right)(F_v)$,
the support of $(\omega_{\psi, \chi}(g, 1)(\varphi_1 \otimes \varphi_2))(x_\lambda)$ is compact on $N_P(F_v) \backslash G_n(F_v)$.
Thus, the above integral converges absolutely if 
$\phi_v \in C_c^\infty\left( \left(V_{n+1}^{+} \otimes W_n \right) \oplus \left( \langle f \rangle \otimes W_{n}^{+}  \right)(F_v)  \right)$.

Further, we note that $\mathcal{L}_v(\varphi_v ; \phi_v)=1$ at almost all places $v$,
which can be checked as follows.
Suppose that  $\lambda \in \mathcal{O}_v^\times$, $\varphi_v, \psi_v$ are unramified, $E_v \slash F_v$ is unramified or $E_V = F_v \oplus F_v$
and $\phi_v$ is the characteristic function of $\left(V_{n+1}^{+} \otimes W_n \right) \oplus \left( \langle f \rangle \otimes W_{n}^{+}  \right)(\mathcal{O}_v)$
 Note that we have $G_n(F_v) = N_P(F_v) \widehat{Z_n}(F_v) G_n(\mathcal{O}_v)$ by the Iwasawa decomposition, and thus
we may write \eqref{local pull whittaker def} as 
\[
\mathcal{L}_v(\varphi_v ; \phi_v)=
 \int_{Z_n(F_v)} 
 \omega_{\psi, \chi}(\hat{z}, 1) \phi_v(x_\lambda) 
I_v^\circ(\sigma_v(\hat{z})\varphi_v, \varphi_v)\, dz.
\]
Then since the support of $\phi_v$ is in $\left(V_{n+1}^{+} \otimes W_n \right) \oplus \left( \langle f \rangle \otimes W_{n}^{+}  \right)(\mathcal{O}_v)$,
we see that $\omega_{\psi, \chi}(\hat{z}, 1) \phi_v(x_\lambda)=0$ otherwise $z_n \in Z_n(\mathcal{O}_v)$.
Therefore, we get $\mathcal{L}_v(\varphi_v ; \phi_v)=1$ in this case.
Then by word-for-word argument as the proof of \cite[(2.27)]{FM21} (or \cite[Proposition~5.1]{FM}), we obtain
\begin{equation}
\label{pullback explicit formula}
W_{\psi_{U_{H_{n+1}}}, \lambda}(\theta_{\psi, \chi}(\varphi, \phi)) = C_{G_n} \cdot W_{\psi_{N_n}, \lambda}(\varphi) \prod_v \mathcal{L}_v(\varphi_v ; \phi_v).
\end{equation}
As in \cite[Section~3]{FM21}, combining  Rallis inner product formula (Theorem~\ref{Rallis inner H_n}) and the above formula \eqref{pullback explicit formula},  
we see that the following proposition is sufficient to prove Theorem~\ref{theta G_n to H_n+1}.
\begin{Remark}
We note that the $L$-values appearing in Theorem~\ref{Rallis inner H_n} and Conjecture~\ref{whittaker formula ref} are compatible  
by \eqref{cgn def} and \eqref{L-fct decomp}.
\end{Remark}
\begin{proposition}
Let $v$ be an arbitrary place of $F$. For a given $\varphi_v \in V_{\sigma_v}$ satisfying
$I_v(\varphi_v, \varphi_v) \ne 0$, there exists 
$\phi_v \in \mathcal{S}\left( \left(V_{n+1}^{+} \otimes W_n \right) \oplus \left( \langle f \rangle \otimes W_{n}^{+}  \right)(F_v)  \right)$
such that the local integral $\mathcal{L}_v(\varphi_v ; \phi_v)$ converges absolutely, 
$\mathcal{L}_v(\varphi_v ; \phi_v) \ne 0$ and the equality 
\[
\frac{  Z_v^\flat\left(\varphi_v,\varphi_v, \phi_v, \phi_v,\sigma_v\right)  I_v^\natural(\theta_v(\phi_v \otimes \varphi_v))}{|\mathcal{L}_v(\varphi_v ; \phi_v)|^2}
=I_v^\natural(\varphi_v, \varphi_v)
\]
holds with respect to the specified local measures.
Here, local Whittaker period  $I_v(\theta_v(\phi_v \otimes \varphi_v))$
and $I_v^\natural(\theta_v(\phi_v \otimes \varphi_v))$ are defined using the Hermitian inner product $\mathcal{B}_{\Theta(\sigma)_v}$.
\end{proposition}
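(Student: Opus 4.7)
The plan is to prove a local unfolding identity that is the $F_v$-analogue of the global pullback formula \eqref{pullback explicit formula}, following the strategy of \cite{FM21,FM} but now in the generic (not necessarily tempered) setting. More precisely, I would establish an identity of the shape
\[
I_v(\theta_v(\phi_v\otimes\varphi_v))=\bigl|\mathcal{L}_v(\varphi_v;\phi_v)\bigr|^2\cdot I_v(\varphi_v,\varphi_v)\cdot R_v,
\]
where $R_v$ is an explicit ratio of local $L$-values; combined with the definition \eqref{def I_v} of $I_v^\natural$, the identification $\mathcal{B}_{\Theta(\sigma)_v}=Z_v^\flat$, and the base-change $L$-factor decomposition \eqref{L-fct decomp}, this reduces the proposition to a routine $L$-factor cancellation.

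Concretely, starting from
\[
I_v(\theta_v(\phi_v\otimes\varphi_v))=\int^{st}_{U_{H_{n+1}}(F_v)}\mathcal{B}_{\Theta(\sigma)_v}\!\bigl(\Theta(\sigma)_v(u)\theta_v(\phi_v\otimes\varphi_v),\theta_v(\phi_v\otimes\varphi_v)\bigr)\psi_{U_{H_{n+1}},\lambda}^{-1}(u)\,du,
\]
I would use the intertwining property $\Theta(\sigma)_v(u)\theta_v=\theta_v\circ\omega_{\psi,\chi}(u)$, express $\mathcal{B}_{\Theta(\sigma)_v}=Z_v^\flat$ as a doubling integral over $G_n(F_v)$, and carry out inside $F_v$ the same orbit analysis that produced \eqref{e:pullback u2m to u2n+1} in the global computation. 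After swapping the orders of integration, the outer integration over $U_{H_{n+1}}(F_v)$ collapses via a Fourier transform on $N_n(F_v)$ into a copy of $I_v(\varphi_v,\varphi_v)$, while the inner integral over $N_P(F_v)\backslash G_n(F_v)$ and its complex conjugate factor out precisely as $\mathcal{L}_v(\varphi_v;\phi_v)\cdot\overline{\mathcal{L}_v(\varphi_v;\phi_v)}$.

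For the test function, at a non-archimedean place I would take $\phi_v$ supported in a small neighborhood of $x_\lambda$ so that $\mathcal{L}_v(\varphi_v;\phi_v)$ is a compactly supported integral on $N_P(F_v)\backslash G_n(F_v)$; non-vanishing of both $\mathcal{L}_v(\varphi_v;\phi_v)$ and $\theta_v(\phi_v\otimes\varphi_v)$ is arranged by shrinking the support so that the relevant integrands become approximate delta functions concentrated where $I_v(\varphi_v,\varphi_v)\neq 0$. At an archimedean place I would take $\phi_v$ a smooth Schwartz function with the same concentration property; absolute convergence then follows from Schwartz decay combined with the bound \eqref{exp estimate}.

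The hard part will be the legitimacy of the order-of-integration swap in the unfolding, since the double integral is not absolutely convergent over the whole domain. In the tempered case this was handled in \cite{FM21,FM} via Harish-Chandra estimates. For merely generic $\sigma_v$ I would use the Langlands decomposition $\sigma_v\hookrightarrow\delta_1\times\cdots\times\delta_r\rtimes\tau$ from \eqref{exp estimate} to reduce to the tempered situation, combined with the absolute convergence of the doubling integral for $\mathrm{Re}(s)\geq 0$ guaranteed by \cite[Proposition~4.2, Lemma~5.2]{Ya} and \cite[Lemma~9.5]{GI}. At archimedean places an extra subtlety is to carry out the stable integral via the Fourier-transform definition \eqref{e: local integral 2} and to verify that the swap is compatible with this regularization.
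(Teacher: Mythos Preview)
Your approach is essentially the same as the paper's: reduce the proposition to a local unfolding identity of the form \eqref{local pullback 1} (Proposition~\ref{prp:local pullback G to H}), prove that identity in the tempered case by the orbit analysis and Fourier inversion you describe, and then extend to the generic case. The only point where your sketch is less precise than the paper is the non-tempered extension: rather than pushing convergence estimates directly, the paper deforms $\sigma_v$ in the holomorphic family $\sigma[\underline{s}]=\tau_1[s_1]\times\cdots\times\tau_k[s_k]\rtimes\sigma_0$ and shows that \emph{both} sides of \eqref{local pullback 1} are holomorphic in $\underline{s}$ on $\mathcal{D}=\{|\mathrm{Re}(s_i)|<\tfrac12\}$---the right-hand side because the Schwartz function has compact support on $N_P\backslash G_n$ and the Jacquet integrals defining $I_v$ continue holomorphically by Wallach~\cite[Theorem~15.4.1]{Wa2}, and the left-hand side because the doubling zeta integral converges locally uniformly on $\mathcal{D}$ (Corollary~\ref{cor:holo cont}). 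Since the identity holds on a dense open subset of $(i\mR)^k$ where $\sigma[\underline{s}]$ is irreducible tempered, it holds everywhere on $\mathcal{D}$, in particular at $\underline{s}=(r_1,\dots,r_k)$. You should make this analytic continuation step explicit, since for archimedean non-tempered $\sigma_v$ the stable-integral formula you start from is not the definition of $I_v$ and the swap-of-integration argument does not apply directly.
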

By an argument similar to the one in \cite[Section 3.2]{FM21}, we may reduce this proposition to the following another local identity.
\begin{proposition}
\label{prp:local pullback G to H}
For any $\phi, \phi^\prime \in V_{\sigma_v}$ and any 
$\varphi_1 \otimes \varphi_2, \varphi_1^\prime \otimes \varphi_2^\prime \in 
C_c^\infty((W_n \otimes V_{n+1}^+)(F_v)) \otimes C_c^\infty((W_n^+ \otimes \langle f \rangle)(F_v))$, we have
\begin{multline}
\label{local pullback 1}
I_v(\theta((\varphi_1 \otimes \varphi_2) \otimes \phi), \theta((\varphi_1^\prime \otimes \varphi_2^\prime)\otimes \phi^\prime))
\\
=
\int_{N_P(F_v) \backslash G_n(F_v)}
\int_{N_P(F_v) \backslash G_n(F_v)}
I_v(\sigma_v(h)\phi, \sigma_v(h^\prime)\phi^\prime)
\\
(\omega_{\psi, \chi}(1, h)(\varphi_1 \otimes \varphi_2))(x_\lambda) 
\overline{(\omega_{\psi, \chi}(1, h^\prime)(\varphi_1^\prime \otimes \varphi_2^\prime))(x_\lambda)} \, dh \, dh^\prime.
\end{multline}
\end{proposition}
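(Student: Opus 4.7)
The strategy is to realize the Hermitian inner product on $V_{\Theta(\sigma)_v}$ via the normalized local doubling integral $Z_v^\flat$, exchange the orders of integration, and then evaluate the resulting inner Fourier-type integral by mimicking locally the unfolding of Section~\ref{s:theta U2m to U2n+1}. This adapts to the $(G_n, H_{n+1})$ setting the argument of \cite{FM21, FM}, with the modification that temperedness of $\sigma_v$ is replaced by the generic hypothesis.

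First I would check absolute convergence of the right-hand side. For $\varphi_1 \otimes \varphi_2 \in C_c^\infty((W_n \otimes V_{n+1}^+)(F_v)) \otimes C_c^\infty((W_n^+ \otimes \langle f\rangle)(F_v))$, the function $h \mapsto (\omega_{\psi,\chi}(1,h)(\varphi_1 \otimes \varphi_2))(x_\lambda)$ is compactly supported on $N_P(F_v)\backslash G_n(F_v)$, as observed after \eqref{local pull whittaker def}; and $I_v(\sigma_v(h)\phi, \sigma_v(h')\phi')$ is continuous in $(h,h')$. The bound $0 \le e(\delta_i) < 1/2$ from \eqref{exp estimate} guarantees that the relevant matrix-coefficient estimates used in the tempered argument carry over without change.

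Next, starting from the LHS, apply the stable-integral definition of $I_v$ on $V_{\Theta(\sigma)_v}$ together with the identification $\mathcal{B}_{\Theta(\sigma)_v} = Z_v^\flat$, yielding
\begin{equation*}
\mathrm{LHS} = \int^{st}_{U_{H_{n+1}}(F_v)} Z_v^\flat\bigl(\phi, \phi', \omega_{\psi,\chi}(u,1)(\varphi_1\otimes\varphi_2), \varphi_1'\otimes\varphi_2', \sigma_v\bigr)\, \psi_{U_{H_{n+1}}}^{-1}(u)\, du.
\end{equation*}
Expanding $Z_v^\flat$ as an integral over $G_n(F_v)$ of $\sigma_v$-matrix coefficients against the $\mathcal{B}_\omega^{G_n,H_{n+1}}$-pairing, and exchanging the $U_{H_{n+1}}$- and $G_n$-integrations (the first delicate step), one reduces to computing, for each $g \in G_n(F_v)$,
\begin{equation*}
K(g) := \int^{st}_{U_{H_{n+1}}(F_v)} \mathcal{B}_\omega^{G_n,H_{n+1}}\bigl(\omega_{\psi,\chi}(g,u)(\varphi_1\otimes\varphi_2),\, \varphi_1'\otimes\varphi_2'\bigr)\, \psi_{U_{H_{n+1}}}^{-1}(u)\, du.
\end{equation*}

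The kernel $K(g)$ is evaluated by the local counterpart of the pullback of Section~\ref{s:theta U2m to U2n+1}: using the filtration $S_n \triangleleft S_n L_n \triangleleft U_{H_{n+1}}$ and the explicit Weil action, the $S_n$-integration produces the character constraint analogous to \eqref{N_P action}, the $L_n$-integration pins the Schwartz data to the vector $x_\lambda$, and the $Z_n'$-integration converts $\psi_{U_{H_{n+1}}}$ into $\psi_{N_n,\lambda}$ via the identification $Z_n' \simeq Z_n$ inside $G_n$. Unlike the global case no summation over rational orbits appears, so the single-orbit contribution is obtained directly. After an Iwasawa decomposition of $g$ with respect to $N_P$ and a change of variable, the outer $G_n$-integral factors into a double integral over $N_P(F_v)\backslash G_n(F_v) \times N_P(F_v)\backslash G_n(F_v)$, while the residual $N_n(F_v)$-integration against $\psi_{N_n,\lambda}^{-1}$ reassembles into the stable integral defining $I_v(\sigma_v(h)\phi, \sigma_v(h')\phi')$ on $G_n$. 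Matching the result with the RHS then closes the argument.

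The main obstacle is the rigorous justification of the Fubini-type exchanges and the explicit computation of $K(g)$, particularly at archimedean places where the stable integrals must be handled as Fourier transforms of tempered distributions along the lines of Section~\ref{s: def local Whittaker}, together with the careful tracking of the normalization factors from $Z_v^\flat$ and the Haar measure constants to verify that the final identity carries no extra scalar. The generic (possibly non-tempered) input enters precisely through the bound \eqref{exp estimate}, which allows the tempered argument of \cite{FM21, FM} to run verbatim at $s = 1/2$.
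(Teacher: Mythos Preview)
Your overall plan for the tempered case is essentially correct and matches the paper: unfold the stable integral over $U_{H_{n+1}}$ via the filtration $S_n \triangleleft S_nL_n \triangleleft U_{H_{n+1}}$, use the explicit Weil action to pin down $x_\lambda$, then telescope the $G_n$-integral through $N_P$ and identify the remaining $N_n$-integration with the Whittaker period on $G_n$. One ingredient you do not name explicitly but will need is the local analogue of the automorphy of the theta kernel (the paper's Lemma~\ref{local automorphy}), which is what allows you to move the $G_n$-action across the inner product $\mathcal{B}_\omega^{G_n,H_{n+1}}$ and thereby factor the integral as a double integral over $N_P\backslash G_n \times N_P\backslash G_n$.

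The genuine gap is your treatment of the non-tempered case. You claim that the bound $0\le e(\delta_i)<\tfrac12$ of \eqref{exp estimate} makes ``the tempered argument of \cite{FM21, FM} run verbatim at $s=1/2$.'' This is not how the paper proceeds, and it is not clear your approach works: in the non-tempered generic case the local Whittaker period $I_v$ on $\Theta(\sigma)_v$ is \emph{defined} via the Langlands embedding and Jacquet integrals (Section~\ref{s: def local Whittaker}), not via a convergent stable integral, so the Fubini/exchange steps you rely on are not available as stated. The paper's route is quite different: it first proves the identity in the tempered case, then writes $\sigma_v$ as a subquotient of an induced representation $\sigma[\underline{s}]$ with $\underline{s}$ in a tube domain $\mathcal{D}$, shows via Lemma~\ref{lem:holo conti 1} and Corollary~\ref{cor:holo cont} that \emph{both} sides of \eqref{local pullback 1} extend to holomorphic functions of $\underline{s}$ on $\mathcal{D}$ (the left side through the explicit description of $\theta(\sigma[\underline{s}])$ as an induced representation and the holomorphy of Jacquet integrals; the right side trivially, by compact support), and then concludes by the identity principle since the two holomorphic functions agree on a dense open subset of $(i\mR)^k$. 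The exponent bound \eqref{exp estimate} enters only to ensure the actual representation $\sigma_v$ sits inside $\mathcal{D}$, not to justify convergence of any stable integral directly.
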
 
In the remaining of this section, we shall give a proof of this proposition.
Hereafter, for simplicity, we simply write $X$ for any object $X(F_v)$ or $X_v$.
\subsection{Proof of Proposition~\ref{prp:local pullback G to H}}
\subsubsection{Non-archimedean and tempered case}
Suppose that $F$ is non-archimedean and that $\sigma$ is tempered.
In this case, this proposition is proved in a similar argument as the proof of \cite[Proposition~4]{FM21}.

We often simply write $W_n = W$, $V_{n+1}=V$ and $W_n^+=W^+$.
Let us identify $W \otimes V^+$ with $W^n$ as in Section~\ref{s:theta U2m to U2n+1}.
Then let $W_\circ^n$ be the subset of $W^n$ consisting of $(w_1, \dots, w_n) \in W^n$ such that $w_1, \dots, w_n$ are linearly independent.
Let $\mathrm{Gr} : W^n \rightarrow \mathrm{Herm}_n(F)$ be the map given by the Gram matrix $\mathrm{Gr}(w)$ of $w=(w_1, \dots, w_n) \in W^n$, namely
$\mathrm{Gr}(w) = (\langle w_i, w_j \rangle)_{i,j}$.
Then $\mathrm{Gr} |_{W_\circ^n}$ is surjective.
For $X \in \mathrm{Herm}_n$, by Witt's theorem, we have $\mathrm{Gr}^{-1}(X) = \left\{e_X \cdot g : g \in H_n \right\}$ with $e_X \in W^n_\circ$.
Here, we recall that for $e_X= (a_1, \dots, a_n)$, we define $e_X \cdot g = (a_1g, \dots, a_n g)$.

From now on, for each $X \in \mathrm{Herm}_n$, we fix $e_X \in W^n_\circ$, and we denote the stabilizer of $e_X$ in $H_n$ by $N_X$.
Then as in \cite[Lemma~2]{Fu}, we have the following lemma.
\begin{lemma}
For each $X \in \mathrm{Herm}_n$, there exists a Haar measure $dn_X$ on $N_X$ such that 
\[
\int_{W^n} \Phi(v) \, dv=  \int_{\mathrm{Herm}_n} \int_{N_X \backslash G_n} \Phi(e_X \cdot g) dn_X \, dX
\]
for $\Phi \in L^1(W^n)$. 
\end{lemma}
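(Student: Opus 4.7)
The plan is to prove the lemma by disintegrating the gauge-form measure $dv$ on $W^n$ along the fibers of the $G_n$-invariant polynomial submersion $\mathrm{Gr}: W_\circ^n \to \mathrm{Herm}_n$, in direct parallel with the symplectic-orthogonal argument of \cite[Lemma~2]{Fu}. First I would reduce the integral to $W_\circ^n$: the complement is the Zariski closed subvariety on which all $n\times n$ minors vanish, hence has measure zero; similarly, the degenerate-Gram-matrix locus in $\mathrm{Herm}_n$ has measure zero for the additive Haar measure $dX$, so both integrands may be restricted to their respective regular loci without changing the integrals.

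Next I would set up the fibration. The diagonal right action of $G_n = \mathrm{U}(W_n)$ on $W^n$ preserves $dv$ (elements of $\mathrm{U}(W_n)$ have determinant of modulus one) and leaves $\mathrm{Gr}$ invariant. By Witt's theorem, each non-empty fiber of $\mathrm{Gr}$ on $W_\circ^n$ is a single $G_n$-orbit, namely $e_X \cdot G_n$, whose stabilizer at $e_X$ is $N_X$. Thus for $X$ in the regular locus, the orbit map
\[
N_X \backslash G_n \;\xrightarrow{\sim}\; \mathrm{Gr}^{-1}(X) \cap W_\circ^n,\qquad N_X g \mapsto e_X \cdot g,
\]
is an analytic isomorphism.

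Third, I would disintegrate the gauge form along $\mathrm{Gr}$. Because $\mathrm{Gr}$ is polynomial and submersive on the regular locus, the implicit function theorem furnishes a local product structure, and $dv$ decomposes as $dv = d\mu_X \otimes dX$ for some smooth family of measures $\mu_X$ on the fibers. Since $dv$ and $dX$ are $G_n$-invariant (the latter trivially, as each fiber sits over a single $X$), so is $\mu_X$ on $\mathrm{Gr}^{-1}(X) \cap W_\circ^n \cong N_X \backslash G_n$. By uniqueness of the invariant measure on a homogeneous space, $\mu_X$ equals the quotient of the fixed Haar measure $dg$ on $G_n$ by a unique Haar measure $dn_X$ on $N_X$; declaring this to be the definition of $dn_X$ gives the asserted identity.

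The main obstacle will be the technical verification that $dg / dn_X$ is a genuine Radon quotient measure on $N_X \backslash G_n$, which reduces to unimodularity of $N_X$. For $X$ non-degenerate, $N_X$ is the pointwise stabilizer of a non-degenerate $n$-dimensional subspace of $W_n$, hence canonically isomorphic to the unitary group of its orthogonal complement and therefore reductive and unimodular, so the quotient exists. A secondary point will be the measurability of the assignment $X \mapsto dn_X$, which follows from smoothness of the submersion on the regular locus. All remaining steps are routine once these are in place.
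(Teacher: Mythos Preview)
Your proposal is correct and follows essentially the same approach as the paper, which does not spell out a proof but simply cites \cite[Lemma~2]{Fu}; your disintegration of the gauge-form measure along the submersive Gram map $\mathrm{Gr}$, identification of the fibers as single $G_n$-orbits via Witt's theorem, and the unimodularity check for $N_X$ on the non-degenerate locus are precisely the ingredients of Furusawa's argument transported to the skew-hermitian setting. The only cosmetic point is that the lemma as stated quantifies over all $X\in\mathrm{Herm}_n$, whereas your argument (correctly) works on the full-measure non-degenerate locus; this is harmless for the integral identity and matches how the result is actually used.
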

\begin{Remark}
We note that this lemma also holds when $F$ is archimedean.
\end{Remark}
The following lemma is an analogue of  \cite[Lemma~3.20]{Liu} at non-archimedean places
and \cite[Proposition~3.22]{Liu} at archimedean places. Indeed, the absolutely convergence is 
given by that of local doubling zeta integral and modification of the remaining assertion is clear.
\begin{lemma}
\label{tran lemma 1}
For $\phi, \phi^\prime \in V_\sigma$ and 
$\varphi_1 \otimes \varphi_2, \varphi_1^\prime \otimes \varphi_2^\prime \in 
C_c^\infty(W_n \otimes V_{n+1}^+) \otimes C_c^\infty(W_n^+ \otimes \langle f \rangle)$, let
\begin{multline*}
\mathcal{G}_{\varphi_1\otimes \varphi_2, \varphi_1^\prime \otimes \varphi_2^\prime, \phi, \phi^\prime}(S)
\\
= \int_{G_n} \int_{N_X \backslash G_n} \omega_{\psi, \chi}(1, h_2)(\varphi_1 \otimes \varphi_2)(e_X \cdot h_1; e) 
\overline{(\varphi_1^\prime \otimes \varphi_2^\prime)(e_X \cdot h_1 ; e)} \langle \sigma(h_2)\phi, \phi^\prime \rangle dh_1 \, dh_2.
\end{multline*}
Then the following two assertions holds
\begin{enumerate}
\item When $F$ is non-archimedean, the integral is absolutely convergent and is locally constant.
\item When $F$ is archimedean, the integral is absolutely convergent and is a function in
$L^1(\mathrm{Herm}_{E \slash F})$ which is continuous on $\mathrm{Herm}_{E \slash F}$.
\end{enumerate}
\end{lemma}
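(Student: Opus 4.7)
The plan is to interpret $\mathcal{G}(S)$ (with $S=X\in\mathrm{Herm}_n$) as a local doubling zeta integral whose inducing section depends on the parameter $X$, and then deduce the asserted regularity from convergence of the doubling integral together with standard continuity properties of the Weil representation. Concretely, after a provisional application of Fubini one defines
\[
\Psi_X(h_2) := \int_{N_X\backslash G_n} \omega_{\psi,\chi}(1,h_2)(\varphi_1\otimes\varphi_2)(e_X\cdot h_1;e)\overline{(\varphi_1^\prime\otimes\varphi_2^\prime)(e_X\cdot h_1;e)}\,dh_1,
\]
which makes sense as the integrand is compactly supported on $N_X\backslash G_n$ thanks to the $C_c^\infty$ hypothesis on the Schwartz data. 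The remaining integral $\int_{G_n}\langle\sigma(h_2)\phi,\phi^\prime\rangle\Psi_X(h_2)\,dh_2$ is then a local doubling zeta integral in the sense of \eqref{e: local doubling}, with section constructed exactly as in Section~\ref{s:rallis}. Absolute convergence now follows from the discussion there, which used only that $\sigma_v$ is generic and unitary via the bound \eqref{exp estimate}; temperedness is more than enough.

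For assertion (1), in the non-archimedean setting all data $\varphi_i,\varphi_i^\prime,\phi,\phi^\prime$ are locally constant with compact support, the Gram map $\mathrm{Gr}:W_\circ^n\to \mathrm{Herm}_n$ is smooth, and one may choose the representatives $X\mapsto e_X$ to depend locally constantly on $X$ on each $G_n$-orbit. Combined with the effective compactness of the $h_1$ and $h_2$ ranges after restriction, this forces $\mathcal{G}$ to be locally constant on $\mathrm{Herm}_n$.

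For assertion (2), continuity of $X\mapsto\mathcal{G}(X)$ in the archimedean case is obtained by dominated convergence: the family $(X,h_1,h_2)\mapsto (\omega_{\psi,\chi}(1,h_2)(\varphi_1\otimes\varphi_2))(e_X\cdot h_1;e)$ is dominated by a Schwartz function (uniformly in $X$ on compact sets, by the standard Schwartz seminorm bounds for the Weil representation), while the matrix coefficient $\langle\sigma(h_2)\phi,\phi^\prime\rangle$ has the tempered decay on $G_n$ that makes the combined doubling integral absolutely convergent. For the $L^1$ property over $\mathrm{Herm}_n$, I would invoke the Gram decomposition from the preceding lemma to unfold
\[
\int_{\mathrm{Herm}_n}\!\mathcal{G}(X)\,dX = \int_{G_n}\!\langle\sigma(h_2)\phi,\phi^\prime\rangle\!\left(\int_{W^n}\!\omega_{\psi,\chi}(1,h_2)(\varphi_1\otimes\varphi_2)(v;e)\,\overline{(\varphi_1^\prime\otimes\varphi_2^\prime)(v;e)}\,dv\right)\!dh_2.
\]
The inner $v$-integral is a Schwartz pairing that is bounded as a Schwartz function of $h_2$, and the resulting $h_2$-integral is again a doubling zeta integral, absolutely convergent by Section~\ref{s:rallis}. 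Fubini then gives the $L^1$ assertion.

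The main obstacle will be to make the domination in assertion (2) rigorous uniformly in $X$, since the section $X\mapsto e_X$ is only canonical up to the $G_n$-action and is continuous only on the regular stratum of $\mathrm{Herm}_n$. I would handle this by choosing smooth local slices over each orbit (using Witt's theorem), establishing continuity on the regular stratum first, and then controlling the contribution from the non-regular stratum via the Schwartz decay of the integrand together with the unfolding identity above, which bypasses the need for a global continuous section.
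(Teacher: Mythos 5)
Your overall strategy is the same as the paper's: the paper disposes of this lemma by citing Liu's Lemma~3.20 and Proposition~3.22, observing that the absolute convergence is exactly the convergence of the local doubling zeta integral established in Section~4.1 (valid for generic unitary, in particular tempered, $\sigma$), and that the regularity in $X$ follows by the same arguments. Your reduction of the outer $h_2$-integral to a doubling-type integral with an $X$-dependent section, and your use of the Gram-matrix integration formula together with Fubini for the $L^1$ assertion, is precisely that route.

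There is, however, a genuine flaw in your justification of assertion (1): there is no ``effective compactness of the $h_2$ range.'' The compact support of $\varphi_1,\varphi_1^\prime$ confines $e_X\cdot h_1$ to a compact set of linearly independent $n$-tuples, but the set of $h_2\in G_n$ carrying such a tuple back into the support of $\varphi_1$ contains cosets of the pointwise stabilizer of that tuple, which is noncompact (for isotropic tuples it contains a conjugate of $N_P$); this noncompactness is exactly why the matrix-coefficient decay, i.e. the doubling-integral convergence, is needed in the first place. Hence local constancy in $X$ cannot be extracted from compactness of the domain: the $X$-dependence enters through $\varphi_1\bigl((e_X\cdot h_1)h_2\bigr)$ with $h_2$ unbounded. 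Note also that the fibers of the Gram map \emph{are} the $G_n$-orbits, so ``choosing $e_X$ locally constantly on each $G_n$-orbit'' does not address the variation of $X$; since $G_n=\mathrm{U}(W_n)$ preserves Gram matrices, nearby $X^\prime$ must be reached by the congruence action. The standard repair (and the mechanism behind Liu's proof) is: near a nondegenerate $X$ write $X^\prime={}^t\bar{c}Xc$ with $c\in\mathrm{GL}_n(E)$ close to $1$, take $e_{X^\prime}=e_X\cdot c$ (so that $N_{X^\prime}=N_X$), and absorb $c$ into the Levi action of $H_{n+1}$ on the smooth vector $\varphi_1\otimes\varphi_2$, which commutes with the $G_n$-integration and fixes the vector for $c$ in a small compact open subgroup; the degenerate locus, the archimedean continuity, and the $L^1$ statement are then handled by dominated convergence and the unfolding identity, as in Liu. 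A smaller inaccuracy: the inner $v$-integral in your unfolding is not a Schwartz function of $h_2$ but a matrix-coefficient-type quantity $\mathcal{B}_\omega^{G_n,H_{n+1}}(\omega_{\psi,\chi}(h_2)\Phi,\Phi^\prime)$; your subsequent appeal to the convergence of the doubling zeta integral is the correct justification there.
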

For $\phi, \phi^\prime \in V_\sigma$, 
$\varphi_1 \otimes \varphi_2, \varphi_1^\prime \otimes \varphi_2^\prime \in 
C_c^\infty(W_n \otimes V_{n+1}^+) \otimes C_c^\infty(W_n^+ \otimes \langle f \rangle)$ and $e \in W_n^+$,
let us define 
\begin{multline*}
f_{\varphi_1 \otimes \varphi_2, \varphi_1^\prime \otimes \varphi_2^\prime, \phi, \phi^\prime, e}(n)
\\
= \int_{G_n} \int_{W^n} 
\omega_{\psi, \chi}(1, gn)(\varphi_1 \otimes \varphi_2)(w; e) 
\overline{(\varphi_1^\prime \otimes \varphi_2^\prime)(w ; e)} \langle \sigma(g)\phi, \phi^\prime \rangle \, dw \, dg
\end{multline*}
The argument in the proof of \cite[Proposition~3.21]{Liu} works with obvious modification in the following lemma.
\begin{lemma}
\label{Fourier inv 1}
We have 
\begin{multline*}
\int_{S_n}^{\mathrm{st}} f_{\varphi_1 \otimes \varphi_2, \varphi_1^\prime \otimes \varphi_2^\prime, \phi, \phi^\prime, e}(n) \, dn
\\
=\int_{G_n} \int_{N_P \backslash G_n} \omega_{\psi, \chi}(1, g_2)(\varphi_1 \otimes \varphi_2)(\mathbf{e} \cdot g_1; e) 
\overline{(\varphi_1^\prime \otimes \varphi_2^\prime)(\mathbf{e} \cdot g_1 ; e)} \langle \sigma(g_2)\phi, \phi^\prime \rangle \, dg_1 \, dg_2
\end{multline*}
where $\mathbf{e} = (e_{-1}, \dots, e_{-n})$.
\end{lemma}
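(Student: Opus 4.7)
The strategy is to adapt the proof of \cite[Proposition~3.21]{Liu} from the symplectic/metaplectic setting to the present unitary $(G_n,H_{n+1})$-theta correspondence. The first step is to make the Weil-representation action of $s\in S_n$ on $\mathcal S(\mathbb W^+)$ explicit. Since $s$ fixes $\mathbb W^+$ pointwise and sends $\mathbb W^-$ into $\mathbb W^-+\mathbb W^+$, formula~\eqref{Weil1} gives, in complete analogy with the Siegel-unipotent formula $\omega_{\psi,\chi}(1,n(b))\phi(x_1,\dots,x_n)=\psi(\tfrac12\mathrm{tr}(b\,\mathrm{Gr}(x_1,\dots,x_n)))\phi(x_1,\dots,x_n)$ recalled in Section~3.1, that $\omega_{\psi,\chi}(s,1)$ multiplies a Schwartz function by a character of the shape
\[
\psi\!\left(\tfrac12\,\mathrm{tr}\bigl(X\cdot\mathrm{Gr}(w)\bigr)\right),
\]
where $s\leftrightarrow X$ parametrizes $S_n$ and $\mathrm{Gr}(w)$ is the Gram matrix of $w=(w_1,\dots,w_n)\in W_n^n$ with respect to the skew-hermitian form on $W_n$. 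The antihermitian condition defining $S_n$ and the skew-hermitian nature of $\mathrm{Gr}(w)$ make the trace pairing $(X,Y)\mapsto\mathrm{tr}(XY)$ nondegenerate, so this character identifies $S_n$ with the Pontryagin dual of the space of Gram matrices.

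Next I would insert this formula into the definition of $f(n)$ and interchange orders of integration; this is absolutely justified in the present non-archimedean, tempered setting because the $\varphi_i\otimes\varphi_i'$ have compact support and the matrix coefficient $\langle\sigma(g)\phi,\phi'\rangle$ is bounded. The $n$-dependence then collapses to pure multiplication by $\psi(\tfrac12\mathrm{tr}(X\cdot\mathrm{Gr}(w)))$. Parametrizing $w\in W_n^n$ via the Gram decomposition $\int_{W_n^n}=\int_{\mathrm{Herm}_n}\int_{N_X\backslash G_n}$ recalled just before the statement of the lemma, the $S_n$-integration becomes Fourier inversion in the variable $X$. The stable integral $\int^{\mathrm{st}}_{S_n}$, which by \cite[Proposition~2.3]{LMe} amounts to truncation to a sufficiently large compact-open subgroup of $S_n$, then extracts precisely the locus $\mathrm{Gr}(w)=0$.

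It remains to show that only the open $G_n$-orbit in the zero-Gram locus contributes. This orbit consists of the linearly-independent totally-isotropic $n$-tuples, is identified with $\mathbf e\cdot G_n\cong N_P\backslash G_n$ via $g_1\mapsto\mathbf e\cdot g_1$ (since $\mathrm{Gr}(\mathbf e)=0$ and the stabilizer of $\mathbf e$ in $G_n$ is $N_P$), and yields the right-hand side of the claim after relabeling $g$ as $g_2$. The lower-dimensional orbits (where the $w_i$ fail to be linearly independent) form proper subvarieties of $\mathrm{Gr}^{-1}(0)$ of strictly smaller dimension; they contribute zero with respect to the measure produced by Fourier inversion, by exactly the vanishing argument on p.~101 of \cite{Fu} (see also \cite[Lemma~4]{Fu}). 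The main obstacle is the rigorous interplay between the two interchanges (Fubini and Fourier inversion via the stable integral): one must verify that the stabilization of the truncation in $S_n$ is uniform over the compact support of the $\varphi_i\otimes\varphi_i'$, so that Fubini may be applied to the truncated integrals before the limit is taken, paralleling the corresponding step in \cite[Proposition~4]{FM21}.
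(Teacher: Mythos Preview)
Your proposal is correct and follows essentially the same approach as the paper: the paper simply states that ``the argument in the proof of \cite[Proposition~3.21]{Liu} works with obvious modification,'' and what you have written is precisely that modification spelled out --- the explicit $S_n$-action via the Gram-matrix character, the integration formula from the preceding lemma (Lemma~\ref{tran lemma 1} and the Gram decomposition), Fourier inversion via the stable integral to extract $\mathrm{Gr}(w)=0$, and the vanishing of degenerate orbits as in \cite[p.~101]{Fu}.
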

By this lemma, we can write the left-hand side of \eqref{local pullback 1} by
\begin{multline*}
\int_{Z_n^\prime}^{st}
\int_{L_n}^{st}
\int_{G_n} \int_{N_P \backslash G_n} \int_{W_n^+} \omega_{\psi, \chi}(\ell u, g_2)(\varphi_1 \otimes \varphi_2)(\mathbf{e} \cdot g_1; e) 
\\
\overline{(\varphi_1^\prime \otimes \varphi_2^\prime)(\mathbf{e} \cdot g_1 ; e)} \langle \sigma(g_2)\phi, \phi^\prime \rangle 
\psi_{U_{H_{n+1}}, \lambda}^{-1}(\ell u) \, de \, dg_1 \, dg_2 \, d\ell \, du
\end{multline*}
Let us consider the most inner integral
\begin{equation}
\label{Fu:lem2}
\int_{W_n^+}  \omega_{\psi, \chi}(\ell u, g_2)(\varphi_1 \otimes \varphi_2)(\mathbf{e} \cdot g_1; e) 
\overline{(\varphi_1^\prime \otimes \varphi_2^\prime)(\mathbf{e} \cdot g_1 ; e)} \, de.
\end{equation}
We may decompose $\omega_{\psi, \chi} = \omega^1_{\psi, \chi} \otimes \omega^2_{\psi, \chi}$
where $\omega^1_{\psi, \chi}$ is the Weil representation of $\mathrm{U}(W_n) \times \mathrm{U}(V_{n+1}^+ + V_{n+1}^-)$
and $\omega^2_{\psi, \chi}$ is the Weil representation of $\mathrm{U}(W_n) \times \mathrm{U}(\langle f \rangle)$.
These representation are realized on $C_c^\infty(W_n \otimes V_{n+1}^+)$ and $C_c^\infty(W_n^+ \otimes \langle f \rangle)$, respectively.

We note that as an analogue of the automorphy of theta kernel, we have the following lemma (see \eqref{automorphy theta}).
\begin{lemma}
\label{local automorphy}
For any $g_1 \in G_n$ and any $\varphi_2, \varphi_2^\prime \in C_c^\infty(W_n^+)$, we have
\[
\int_{W_n^+}   \varphi_2(e)
\varphi_2^\prime(e) \, de
 =\int_{W_n^+}   (\omega_{\psi, \chi}^2(1, g_1)\varphi_2)(e)
(\omega_{\psi, \chi}^2(1, g_1)\varphi_2^\prime)(e) \, de.
\]
\end{lemma}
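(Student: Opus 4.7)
The plan is to verify the identity on generators of $G_n = \mathrm{U}(W_n)$. Recall that $G_n$ is generated by its Siegel Levi $\{\hat a : a \in \mathrm{GL}_n(E)\}$, its Siegel unipotent radical $\{n(b) : b \in \mathrm{Herm}_n\}$, and a representative $w_0$ of the long Weyl element. Since $\omega^2_{\psi,\chi}$ acts by explicit formulas in the Schr\"{o}dinger model analogous to those recalled in Section~\ref{ss:weil rep}, it suffices to check the identity when $g_1$ is each of these three types of elements.

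For $g_1 = \hat a$, the action takes the form $\varphi(e) \mapsto \chi(\det a)\,|\det a|^{1/2}\,\varphi(e\cdot a)$, and the invariance of the integral follows from the change of variable $e \mapsto e\cdot a$, whereby the $|\det a|$-factor matches the Jacobian and the $\chi$-factor cancels in the pairing (using that $\chi$ is trivial on $F^\times$ in the Hermitian interpretation, or that $\chi^2$ disappears in the bilinear interpretation). For $g_1 = n(b)$, the action is pointwise multiplication by the character $\psi\!\left(\tfrac{1}{2}\,\mathrm{tr}(b\,\mathrm{Gr}(e))\right)$, and the resulting factors on the two copies combine with the integration pairing to recover the original integral; this is the local counterpart of the $\psi$-character behavior that makes the sum in~\eqref{automorphy theta} automorphic under rational points. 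For $g_1 = w_0$, the action is (up to a Weil index) a partial Fourier transform on $W_n^+$ as in~\eqref{Weil2}, and the desired invariance then becomes an instance of the Fourier/Plancherel inversion formula.

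The principal technical obstacle is the Weyl-element step, in which one has to keep track of the Weil factor $\gamma_\psi(\cdot)$ and the splitting character $\chi$ introduced by $\iota_{\chi_v}$, and to verify that they combine with the Fourier-transform normalization so as to leave the integration pairing unchanged. Conceptually, this is parallel to the standard computation underlying the unitarity of the Weil representation in its Schr\"{o}dinger realization; in fact the entire lemma can be regarded as a direct manifestation of that unitarity, or equivalently of the self-duality of $\omega^2_{\psi,\chi}$ with respect to the natural integration pairing on $W_n^+$. An alternative, more conceptual route avoiding a case-by-case check would be to realize $\omega^2_{\psi,\chi}\,\widehat\otimes\,\omega^2_{\psi,\chi}$ via a partial Fourier transform as the Weil representation on a doubled space and to identify the bilinear pairing with evaluation at a distinguished $\mathrm{U}(W_n)$-invariant point, thereby making the diagonal $G_n$-invariance transparent.
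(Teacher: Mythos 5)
Your approach is essentially the same as the paper's: reduce to Weil-representation generators of $G_n$, handle the Siegel parabolic by a change of variables, and handle the Weyl element by Fourier inversion; the paper merely treats the full parabolic $\left(\begin{smallmatrix}A&B\\0&{}^t\bar A^{-1}\end{smallmatrix}\right)$ in one step via \eqref{Weil1} rather than separately for the Levi and unipotent pieces. One detail you should make explicit rather than leave to a parenthetical: with \emph{both} factors under the same $\omega^2_{\psi,\chi}$ and no complex conjugation, the phase factors do not cancel but square --- $\chi(\det a)$ becomes $\chi(\det a)^2$, the unipotent contributes $\psi(\mathrm{tr}(b\,\mathrm{Gr}(e)))$ rather than $1$, and the Weyl step gives $\int\widehat{\varphi_2}\,\widehat{\varphi_2'}=\int\varphi_2(x)\varphi_2'(-x)\,dx$, not $\int\varphi_2\varphi_2'$. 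The cancellation you invoke requires one of the two functions to carry the contragredient $\omega^2_{\psi^{-1},\chi^{-1}}$ (equivalently, to be a complex conjugate), which is precisely how the lemma is applied in the computation following \eqref{automorphy theta} and what the paper's own proof reflects by writing $\omega^2_{\psi^{-1},\chi^{-1}}$ on the second function; your ``Hermitian interpretation'' aside is the right instinct, but it needs to be the actual argument rather than a hedge, since in the bilinear interpretation the identity as written is false.
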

\begin{proof}
When $h_1 = \left( \begin{smallmatrix} A&B\\ 0&{}^t\bar{A}^{-1} \end{smallmatrix} \right)$, by the action \eqref{Weil1}, the right-hand side is equal to
\[
  |\det A| 
 \int_{W_n^+}   (\omega_{\psi, \chi}^2(1, g_1)\varphi_2)(Ae)
 (\omega_{\psi^{-1}, \chi^{-1}}^2(1, g_1)\varphi_2^\prime)(Ae) \, de.
\]
 The the required identity follows by a change of variable $e \mapsto A^{-1}e$.
  
When $h_1 = \left(\begin{smallmatrix} 0&I_n\\ -I_n & 0\end{smallmatrix} \right)$, by the action \eqref{Weil1}, the right-hand side is equal to
\[
\int_{W_n^+} \int_{W_n^+} \int_{W_n^+}  \psi((x-y, eg_1)) \varphi_2(x) \varphi_2^\prime(y) \, dx \, dy \, de
\]
Since this integral converges absolutely, the required identity follows by the Fourier-inversion.
\end{proof}
This lemma shows that the integral \eqref{Fu:lem2} is equal to
\begin{multline*}
 (\omega_{\psi, \chi}^1(1, h_2)\varphi_1)(\mathbf{e} \cdot g_1)  \overline{\varphi_1^\prime(\mathbf{e} \cdot g_1) }
 \cdot 
\int_{W_n^+}   (\omega_{\psi, \chi}^2(1, g_2)\varphi_2)(e)
\overline{\varphi_2^\prime(e)} \, de
 \\
 =
  (\omega_{\psi, \chi}^1(1, g_1g_2)\varphi_1)(\mathbf{e})  \overline{(\omega_{\psi, \chi}^1(1, g_1)\varphi_1^\prime)(\mathbf{e} ) }
 \cdot 
\int_{W_n^+}   (\omega_{\psi, \chi}^2(1, g_1g_2)\varphi_2)(e)
 (\omega_{\psi, \chi}^2(1, g_1)\overline{\varphi_2^\prime)(e)} \, de
 \\
 =
 \int_{W_n^+}  \omega_{\psi, \chi}(1, g_1g_2)(\varphi_1 \otimes \varphi_2)(\mathbf{e} ; e) 
\overline{\omega_{\psi, \chi}(1, g_1)(\varphi_1^\prime \otimes \varphi_2^\prime)(\mathbf{e}  ; e)} \, de.
\end{multline*}
Hence, the left-hand side of \eqref{local pullback 1} can be written as 
\begin{multline*}
\int_{Z_n^\prime}^{st}
\int_{L_n}^{st}  \int_{W_n^+} \int_{G_n} \int_{N_P \backslash G_n} 
 \omega_{\psi, \chi}(\ell u, g_1g_2)(\varphi_1 \otimes \varphi_2)(\mathbf{e} ; e) 
 \\
\overline{\omega_{\psi, \chi}(1, g_1)(\varphi_1^\prime \otimes \varphi_2^\prime)(\mathbf{e}  ; e) }
\langle \sigma(g_2) \phi, \phi^\prime \rangle
\psi_{U_{H_{n+1}}, \lambda}^{-1}(\ell u)
\, dg_1 \, dg_2 \, de \, d\ell \, du.
\end{multline*}
Now, let us consider the linear map 
$T : W_n^+ \rightarrow E^n$ given by $T(e) = \left(\langle e_{-1}, e \rangle, \dots, \langle e_{-n}, e \rangle  \right)$.
Then by Fourier inversion with the identity \eqref{act by ell} (see the proof of \cite[Proposition~3.21]{Liu}), we have 
\begin{multline*}
\int_{L_n}^{st}  \int_{W_n^+} \int_{G_n} \int_{N_P \backslash G_n} 
 \omega_{\psi, \chi}(\ell u, g_1g_2)(\varphi_1 \otimes \varphi_2)(\mathbf{e} ; e) 
\overline{\omega_{\psi, \chi}(1, g_1)(\varphi_1^\prime \otimes \varphi_2^\prime)(\mathbf{e}  ; e) }
\\
\langle \sigma(g_2) \phi, \phi^\prime \rangle
\psi_{U_{H_{n+1}}, \lambda}^{-1}(\ell u)
\, dg_1 \, dg_2 \, de \, d\ell
\\
=
\int_{G_n} \int_{N_P \backslash G_n} 
 \omega_{\psi, \chi}(u, g_1g_2)(\varphi_1 \otimes \varphi_2)(\mathbf{e} ; \lambda e_n) 
\overline{\omega_{\psi, \chi}(1, g_1)(\varphi_1^\prime \otimes \varphi_2^\prime)(\mathbf{e}  ; \lambda e_n) }
\\
\langle \sigma(g_2) \phi, \phi^\prime \rangle
\psi_{U_{H_{n+1}}, \lambda}^{-1}(u)
\, dg_1 \, dg_2
\end{multline*}
By a change of variable, this is equal to 
\begin{multline*}
\int_{G_n} \int_{N_P \backslash G_n} 
 \omega_{\psi, \chi}(u, g_2)(\varphi_1 \otimes \varphi_2)(\mathbf{e} ; \lambda e_n) 
\overline{\omega_{\psi, \chi}(1, g_1)(\varphi_1^\prime \otimes \varphi_2^\prime)(\mathbf{e}  ; \lambda e_n) }
\\
\langle \sigma(g_2) \phi, \sigma(g_1)\phi^\prime \rangle
\psi_{U_{H_{n+1}}, \lambda}^{-1}(u)
\, dg_1 \, dg_2.
\end{multline*}
Moreover, by telescoping the integral, this is equal to 
\begin{multline*}
\int_{N_P \backslash G_n} \int_{N_P \backslash G_n} \int_{N_P}
 \omega_{\psi, \chi}(u, vg_2)(\varphi_1 \otimes \varphi_2)(\mathbf{e} ; \lambda e_n) 
\overline{\omega_{\psi, \chi}(1, g_1)(\varphi_1^\prime \otimes \varphi_2^\prime)(\mathbf{e}  ; \lambda e_n) }
\\
\langle \sigma(vg_2) \phi, \sigma(g_1)\phi^\prime \rangle
\psi_{U_{H_{n+1}}, \lambda}^{-1}(u)
\, dv \, dg_1 \, dg_2
\\
=
\int_{N_P \backslash G_n} \int_{N_P \backslash G_n} \int_{N_P}
 \omega_{\psi, \chi}(u, g_2)(\varphi_1 \otimes \varphi_2)(\mathbf{e} ; \lambda e_n) 
\overline{\omega_{\psi, \chi}(1, g_1)(\varphi_1^\prime \otimes \varphi_2^\prime)(\mathbf{e}  ; \lambda e_n) }
\\
\langle \sigma(vg_2) \phi, \sigma(g_1)\phi^\prime \rangle
\psi_{U_{H_{n+1}}, \lambda}^{-1}(u) \psi_{N_n, \lambda}^{-1}(v)
\, dv \, dg_1 \, dg_2
\end{multline*}
Here, we used the explicit action \eqref{N_P action} of $N_P$.
Then the left-hand side of \eqref{local pullback 1} is equal to
\begin{multline*}
\int_{Z_n}^{st} \int_{N_P \backslash G_n} \int_{N_P \backslash G_n}  \int_{N_P}
 \omega_{\psi, \chi}(m(u), g_2)(\varphi_1 \otimes \varphi_2)(\mathbf{e} ; \lambda e_n) 
\overline{\omega_{\psi, \chi}(1, g_1)(\varphi_1^\prime \otimes \varphi_2^\prime)(\mathbf{e}  ; \lambda e_n)}
\\
\langle \sigma(vg_2)\phi, \sigma(g_1)\phi^\prime \rangle \psi_{U_{H_{n+1}}, \lambda}^{-1}(m(u)) \psi_{N_n, \lambda}^{-1}(v)
\, dv \, dg_1 \, dg_2 \, du
\end{multline*}
From the definition of stable integral, there is a compact open subgroup $Z_{n, \circ}$ of $Z_n$
such that for any compact opens subgroup $Z_{n,1} \supset Z_{n, \circ}$ of $Z_n$, the above integral is equal to
\begin{multline*}
 \int_{N_P \backslash G_n} \int_{N_P \backslash G_n}  \int_{N_P} \int_{Z_{n,1}}
 \omega_{\psi, \chi}(m(u), g_2)(\varphi_1 \otimes \varphi_2)(\mathbf{e} ; \lambda e_n) 
\overline{\omega_{\psi, \chi}(1, g_1)(\varphi_1^\prime \otimes \varphi_2^\prime)(\mathbf{e}  ; \lambda e_n)}
\\
\langle \sigma(vg_2)\phi, \sigma(g_1)\phi^\prime \rangle \psi_{U_{H_{n+1}}, \lambda}^{-1}(m(u)) \psi_{N_n, \lambda}^{-1}(v)
 \, du \, dv \, dg_1 \, dg_2
\end{multline*}
Then by the computation given in Section~\ref{s:theta U2m to U2n+1}, this is equal to
\begin{multline*}
\int_{N_P \backslash G_n} \int_{N_P \backslash G_n} \int_{N_P} \int_{Z_{n,1}}
 \omega_{\psi, \chi}(1, g_2)(\varphi_1 \otimes \varphi_2)(\mathbf{e} ; \lambda e_n) 
\overline{\omega_{\psi, \chi}(1, g_1)(\varphi_1^\prime \otimes \varphi_2^\prime)(\mathbf{e}  ; \lambda e_n)}
\\
\langle \sigma(v \hat{u} g_2)\phi, \sigma(g_1)\phi^\prime \rangle \psi_{N_n, \lambda}^{-1}(\hat{u}v)
\, dg_1 \, dg_2 \, du_0
\end{multline*}
Since the inner integral has a stable integral over $N_n$ by \cite[Proposition~2.3]{LMe}, this is equal to the right-hand side of \eqref{local pullback 1},
which completes our proof of \eqref{local pullback 1} in this case.
\subsubsection{Archimedean and tempered case}
Let us consider the case where $F=\mR, E=\mC$ and $\sigma$ is tempered.
We note that in the case of $E=F \oplus F$ with $F=\mR$ or $\mC$, we can apply a similar argument and we omit a proof for these cases.
Recall that by \eqref{coincidence whittaker}, we may use the definition of local Whittaker $I^\prime$.
In this case, we may apply a similar argument as  \cite[Section~5.4]{FM}.

Recall that by Ichino~\cite{Ich22}, we know that $\theta_{\psi, \chi}(\sigma)$ is tempered.
Hence, we have
\[
\alpha(\phi, \phi^\prime) = \widehat{I_{\phi, \phi^\prime, \infty}}(\psi_{U_{H_n}, \lambda})
\]
where for $\phi, \phi^\prime \in \sigma$ and $u^\prime \in U_{H_n}$, we define
\[
I_{\phi, \phi^\prime, \infty}(u^\prime)
= \int_{U_{H_n}, -\infty} \langle \sigma(u^\prime u)\phi, \phi^\prime \rangle \psi_{U_{H_n}, \lambda}^{-1}(u^\prime u) \, du
\]
and $\widehat{I_{\phi, \phi^\prime, \infty}}(\psi_{U_{H_n}, \lambda})$ is Fourier transfrom of $I_{\phi, \phi^\prime, \infty}(u^\prime)$.
Since $S_n \subset U_{H_n, -\infty}$, we see that 
\[
\int_{S_n} \int_{G_n} \int_{W^n} 
\omega_{\psi, \chi}(1, gu)(\varphi_1 \otimes \varphi_2)(w; e) 
\overline{(\varphi_1^\prime \otimes \varphi_2^\prime)(w ; e)} \langle \sigma(g)\phi, \phi^\prime \rangle \, dw \, dg \, du
\]
converges absolutely. Then the argument in \cite[Corollary~3.23]{Liu} with Lemma~\ref{tran lemma 1}
shows that the above is equal to the following integral, which is archimedean analogue of Lemma~\ref{Fourier inv 1},
\[
\int_{G_n} \int_{N_P \backslash G_n} \omega_{\psi, \chi}(1, g_2)(\varphi_1 \otimes \varphi_2)(\mathbf{e} \cdot g_1; e) 
(\varphi_1^\prime \otimes \varphi_2^\prime)(\mathbf{e} \cdot g_1 ; e) \langle \sigma(g_2)\phi, \phi^\prime \rangle \, dg_1 \, dg_2
\]
Hence, 
\begin{multline*}
I_{\phi_0, \phi_0^\prime, \infty}(u^\prime)
=\int_{Z_{n, -\infty}^\prime} \int_{L_{n,-\infty}} \int_{W_n^+}
\int_{G_n} \int_{N_P \backslash G_n} \omega_{\psi, \chi}(u^\prime \ell u, g_2)(\varphi_1 \otimes \varphi_2)(\mathbf{e} \cdot g_1; e) 
\\
(\varphi_1^\prime \otimes \varphi_2^\prime)(\mathbf{e} \cdot g_1 ; e) \langle \sigma(g_2)\phi, \phi^\prime \rangle \, dg_1 \, dg_2 \, de
\, d\ell \, du
\end{multline*}
where we set $Z_{n, -\infty}^\prime = Z_n^\prime \cap U_{H_n, -\infty}$ and $L_{n, -\infty} = L_n \cap U_{H_n, -\infty}$.
Moreover, we may show Lemma~\ref{local automorphy} in the archimedean case by the same computation, and thus we see that 
this integral is equal to
\begin{multline*}
I_{\phi_0, \phi_0^\prime, \infty}(u^\prime)
=\int_{Z_{n, -\infty}^\prime} \int_{L_{n,-\infty}} \int_{W_n^+}
\int_{G_n} \int_{N_P \backslash G_n} \omega_{\psi, \chi}(u^\prime \ell u, g_1g_2)(\varphi_1 \otimes \varphi_2)(\mathbf{e}; e) 
\\
\omega_{\psi, \chi}(1, g_1)(\varphi_1^\prime \otimes \varphi_2^\prime)(\mathbf{e} ; e) \langle \sigma(g_2)\phi, \phi^\prime \rangle \, dg_1 \, dg_2
\, de \, d\ell \, du
\\
=\int_{Z_{n, -\infty}^\prime} \int_{L_{n,-\infty}} \int_{W_n^+}
\int_{G_n} \int_{N_P \backslash G_n} \omega_{\psi, \chi}(u^\prime \ell u, g_2)(\varphi_1 \otimes \varphi_2)(\mathbf{e}; e) 
\\
\omega_{\psi, \chi}(1, g_1)(\varphi_1^\prime \otimes \varphi_2^\prime)(\mathbf{e} ; e) \langle \sigma(g_2)\phi, \sigma(g_1)\phi^\prime \rangle \, dg_1 \, dg_2
\, de \, d\ell \, du
\end{multline*}
From \cite[Corollary~3.13]{Liu}, this integral gives a tempered distribution on 
$(Z_n^\prime \slash Z_{n, -\infty}^\prime) \times (L_n \slash L_{n, -\infty})$.
Then we define partial Fourier transforms $\widehat{I_{\phi_0, \phi_0^\prime, \infty}}^{i}$ of $I_{\phi_0, \phi_0^\prime, \infty}$ for $i=1, 2$ by
\[
\langle I_{\phi_0, \phi_0^\prime, \infty}(u^\prime), \widehat{f_1} \otimes f_2 \rangle
=\langle \widehat{I_{\phi_0, \phi_0^\prime, \infty}}^{1}, f_1 \otimes f_2 \rangle
\]
\[
\langle I_{\phi_0, \phi_0^\prime, \infty}(u^\prime), f_1 \otimes \widehat{f_2} \rangle
=\langle \widehat{I_{\phi_0, \phi_0^\prime, \infty}}^{2}, f_1 \otimes f_2 \rangle
\]
where $f_1 \in \mathcal{S}(L_n \slash L_{n, -\infty})$ and $f_2 \in \mathcal{S}(Z_n^\prime \slash Z_{n, -\infty}^\prime)$.
Then we have 
\[
\widehat{\widehat{I_{\phi_0, \phi_0^\prime, \infty}}^{1}}^2
=
\widehat{\widehat{I_{\phi_0, \phi_0^\prime, \infty}}^{2}}^1
=
\widehat{I_{\phi_0, \phi_0^\prime, \infty}}
\]
As in the non-archimedean case,  the argument in \cite[Corollary~3.23]{Liu} shows that 
\begin{multline*}
\widehat{I_{\phi_0, \phi_0^\prime, \infty}}^{1}(u^\prime)
=\int_{Z_{n, -\infty}^\prime}
\int_{G_n} \int_{N_P \backslash G_n} \omega_{\psi, \chi}(u^\prime u, g_2)(\varphi_1 \otimes \varphi_2)(\mathbf{e}; \lambda e_n) 
\\
\omega_{\psi, \chi}(1, g_1)(\varphi_1^\prime \otimes \varphi_2^\prime)(\mathbf{e} ; \lambda e_n) \langle \sigma(g_2)\phi, \sigma(g_1)\phi^\prime \rangle 
\, dg_1 \, dg_2\, du,
\end{multline*}
which converges absolutely. As in the non-archimedean case, by telescoping the integral, this is equal to 
\begin{multline*}
\int_{N_P \backslash G_n} \int_{N_P \backslash G_n}  \int_{N_P} \int_{Z_{n, -\infty}} 
\omega_{\psi, \chi}(u^\prime m(u), g_2)(\varphi_1 \otimes \varphi_2)(\mathbf{e}; \lambda e_n) 
\\
\omega_{\psi, \chi}(1, g_1)(\varphi_1^\prime \otimes \varphi_2^\prime)(\mathbf{e} ; \lambda e_n) \langle \sigma(vg_2)\phi, \sigma(g_1)\phi^\prime \rangle \psi_{N_n, \lambda}^{-1}(v)
\, du \, dv \, dg_1 \, dg_2
\end{multline*}
We denote this integral by $J_{\phi, \phi^\prime, \varphi_1 \otimes \varphi_2,  \varphi_1^\prime \otimes \varphi_2^\prime}$.
By the above computation, we have 
\[
\mathcal{W}(\theta(\phi \otimes (\varphi_1 \otimes \varphi_2)), 
\theta(\phi^\prime \otimes (\varphi_1^\prime \otimes \varphi_2^\prime)))
= \widehat{J_{\phi, \phi^\prime, \varphi_1 \otimes \varphi_2,  \varphi_1^\prime \otimes \varphi_2^\prime}}^2(\psi_{U_{H_n}, \lambda}).
\]
Moreover, from the definition, for any $f \in \mathcal{S}(Z_{n, -\infty} \backslash Z_n)$, we have 
\begin{multline*}
(\widehat{J_{\phi, \phi^\prime, \varphi_1 \otimes \varphi_2,  \varphi_1^\prime \otimes \varphi_2^\prime}}^2, f)
=(J_{\phi, \phi^\prime, \varphi_1 \otimes \varphi_2,  \varphi_1^\prime \otimes \varphi_2^\prime}, \widehat{f})
\\
=
\int_{N_P \backslash G_n} \int_{N_P \backslash G_n}  \int_{N_P}  \int_{Z_{n, -\infty} \backslash Z_n} \int_{Z_{n, -\infty}}
\omega_{\psi, \chi}(u^\prime m(uu_0), g_2)(\varphi_1 \otimes \varphi_2)(\mathbf{e}; \lambda e_n) 
\\
\omega_{\psi, \chi}(1, g_1)(\varphi_1^\prime \otimes \varphi_2^\prime)(\mathbf{e} ; \lambda e_n) \langle \sigma(vg_2)\phi, \sigma(g_1)\phi^\prime \rangle \psi_{N_n, \lambda}^{-1}(v) \widehat{f}(u_0)
\, du_0 \, du \, dv \, dg_1 \, dg_2
\end{multline*}
Then as in the proof of \eqref{e:pullback u2m to u2n+1}, transferring the integral over $m(Z_n)$ into the integral over $\{\hat{n} : n \in Z_n\}$, 
this integral is equal to
\begin{multline*}
\int_{N_P \backslash G_n} \int_{N_P \backslash G_n}  \int_{N_P} \int_{Z_{n, -\infty} \backslash Z_n} \int_{Z_{n, -\infty}}
\omega_{\psi, \chi}(u^\prime, \widehat{uu_0}^{-1} g_2)(\varphi_1 \otimes \varphi_2)(\mathbf{e}; \lambda e_n) 
\\
\omega_{\psi, \chi}(1, g_1)(\varphi_1^\prime \otimes \varphi_2^\prime)(\mathbf{e} ; \lambda e_n) \langle \sigma(vg_2)\phi, \sigma(g_1)\phi^\prime \rangle \psi_{N_n, \lambda}^{-1}(v) \widehat{f}(u_0)
\, du_0 \, du \, dv \, dg_1 \, dg_2
\\
=
\int_{N_P \backslash G_n} \int_{N_P \backslash G_n}  \int_{N_P} \int_{Z_{n, -\infty} \backslash Z_n} \int_{Z_{n, -\infty}}
\omega_{\psi, \chi}(u^\prime, g_2)(\varphi_1 \otimes \varphi_2)(\mathbf{e}; \lambda e_n) 
\\
\omega_{\psi, \chi}(1, g_1)(\varphi_1^\prime \otimes \varphi_2^\prime)(\mathbf{e} ; \lambda e_n) \langle \sigma(v\widehat{uu_0}g_2)\phi, \sigma(g_1)\phi^\prime \rangle \psi_{N_n, \lambda}^{-1}(v) \widehat{f}(u_0)
\, du_0 \, du \, dv \, dg_1 \, dg_2
\end{multline*}
From the definition, we have 
\begin{multline*}
\int_{N_P} \int_{Z_{n, -\infty} \backslash Z_n} \int_{Z_{n, -\infty}}
 \langle \sigma(v\widehat{uu_0}g_2)\phi, \sigma(g_1)\phi^\prime \rangle \psi_{N_n, \lambda}^{-1}(v) \widehat{f}(u_0)
\, du_0 \, du \, dv
= \langle \alpha_{\phi, \phi^\prime}, \hat{f} \rangle
\end{multline*}
and this completes a proof of Proposition~\ref{prp:local pullback G to H}.
\subsubsection{Non-tempered case}
\label{theta extend generic}
Let us consider the non-tempered case. 
Suppose that $E \slash F$ is quadratic extension. When $E =F \oplus F$, a similar argument as the non-split case can be applied and thus 
we omit a proof of this case.

Since $\sigma$ is local component of irreducible unitary generic cuspidal automorphic representation of $G_n(\mA)$, 
by \cite{KK04} and \cite{KK05}, we know that $\sigma$ is a subquotient of 
\[
\tau_1[r_1] \times \cdots \times \tau_k[r_k] \ltimes \sigma_0 
\]
where $\tau_i$ is an irreducible discrete series representation of $\mathrm{GL}_{a_i}(E)$,
$\sigma_0$ is an irreducible generic discrete series representation of $G_m(F)$
and $0 \leq r_k \leq r_{k-1} \leq \dots \leq r_1 < \frac{1}{2}$.
Then we note that from the definition of local Whittaker periods,
local Whittaker period of $\sigma$ is given as a special case of local Whittaker period of 
this induced representation. Further, we note that it is independent of  a choice of $\tau_i$ and $\sigma_0$. (cf. \cite[Section~2.5]{LMe})
\begin{lemma}
\label{lem:holo conti 1}
For $\underline{s}=(s_1, \dots, s_k) \in \mC^k$, we define 
\[
\sigma[\underline{s}] : = \tau_1[s_1] \times \cdots \times \tau_k[s_k] \ltimes \sigma_0 
\]
where $\tau_i$ and $\sigma_0$ are as above.
Then for $\varphi_{\underline{s}}, \varphi_{\underline{s}}^\prime \in \sigma[\underline{s}]$, 
the local Whittaker period $I(\varphi_{\underline{s}}, \varphi_{\underline{s}}^\prime)$
has a holomorphic continuation to $\mC^r$ as a function of $\underline{s}$.
\end{lemma}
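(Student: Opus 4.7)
Proof proposal: The plan is to unwind the definition of $I(\varphi_{\underline{s}}, \varphi_{\underline{s}}^\prime)$ given in \eqref{e:def local whittaker general}. Let $P = M \ltimes U$ be the standard parabolic of $G_n$ whose Levi carries the generic tempered inducing representation $\sigma_{\underline{s}} := \tau_1[s_1] \otimes \cdots \otimes \tau_k[s_k] \otimes \sigma_0$, so that $\sigma[\underline{s}] = \mathrm{Ind}_P^{G_n} \sigma_{\underline{s}}$. By \eqref{e:def local whittaker general},
$$I(\varphi_{\underline{s}}, \varphi_{\underline{s}}^\prime) = I_{\psi_{N_M}^{W_M}}\!\left(J_{\sigma_{\underline{s}}}^{\psi_N}(\varphi_{\underline{s}}),\, J_{\sigma_{\underline{s}}^\vee}^{\psi_N^{-1}}(\varphi_{\underline{s}}^\prime)\right),$$
so it suffices to show that the Jacquet integral depends holomorphically on $\underline{s}$ and that the inner tempered Whittaker period on $M$ is essentially independent of $\underline{s}$.

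First I would pass to flat sections, choosing $\varphi_{\underline{s}}, \varphi_{\underline{s}}^\prime$ whose restrictions to a good maximal compact subgroup $K$ of $G_n$ are independent of $\underline{s}$. For $\mathrm{Re}(s_i)$ sufficiently large the Jacquet integral
$$J_{\sigma_{\underline{s}}}^{\psi_N}(\varphi_{\underline{s}}) = \int_{U^\prime} \varphi_{\underline{s}}(w_M u^\prime)\, \psi_N(u^\prime)^{-1}\, du^\prime$$
converges absolutely and, by dominated convergence, is holomorphic in $\underline{s}$ with values in the Whittaker model of $\sigma_{\underline{s}}$. Meromorphic continuation of this integral to all of $\mC^k$ is standard (Casselman-Shahidi in the $p$-adic case, Wallach in the archimedean case); the genericity of $\sigma_{\underline{s}}$, combined with Shahidi's theorem on the holomorphy of Whittaker functionals attached to generic tempered inducing data, then upgrades this to entirety on the whole of $\mC^k$ with no polar divisors.

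Second, I would observe that each twist character $|\det|^{s_i}$ is trivial on the unipotent radical of the Borel of $M$, so when the Whittaker model of $\sigma_{\underline{s}}$ is realised on the underlying space of the Whittaker model of the fixed tempered $\sigma_{\underline{0}}$, the tempered Whittaker period $I_{\psi_{N_M}^{W_M}}$ — defined as a stable integral in the non-archimedean case via \cite[Proposition~2.3]{LMe} and as a Fourier transform of a tempered distribution in the archimedean case — is literally the same sesquilinear pairing for every $\underline{s}$. The stabilization in the $p$-adic case holds on an $\underline{s}$-independent compact open subgroup since the twists are trivial on $N_M$, and in the archimedean case the continuity of the Fourier transform on tempered distributions transports the holomorphy of the Jacquet output to holomorphy of the pairing. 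Combining the two steps yields the claim.

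The main obstacle is the upgrade from meromorphic to genuine holomorphic continuation of $J_{\sigma_{\underline{s}}}^{\psi_N}$ across all of $\mC^k$. This rests on Shahidi's absolute holomorphy theorem for Whittaker functionals on inductions from generic tempered data: possible polar divisors from standard intertwining operators are cancelled by the local coefficients because the generic Whittaker functional is non-zero throughout the induction parameter space. Once this input is granted, the remainder is formal bookkeeping using the triviality of $|\det|^{s_i}$ on unipotent radicals.
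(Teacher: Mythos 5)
Your decomposition of $I(\varphi_{\underline{s}},\varphi_{\underline{s}}^\prime)$ into the Jacquet integral followed by the tempered Whittaker period on the Levi is exactly the paper's starting point, and the reliance on Wallach's holomorphy theorem for the Jacquet integral matches the paper's citation of \cite[Theorem~15.4.1]{Wa2}. So in outline you are on the same route.

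Where you and the paper diverge is in how the inner pairing $I_{\psi_{N_M}^{W_M}}$ is shown to be harmless in $\underline{s}$. The paper uses the $N_M\times N_M$-equivariance of the local Whittaker period together with the uniqueness of Whittaker functionals on each factor $\tau_i$ and $\sigma_0$ to conclude that $I_{\psi_{N_M}^{W_M}}(W,W^\vee)=c\cdot W(e)W^\vee(e)$ for a constant $c$ depending only on the (fixed) inducing data. This reduces the whole of $I(\varphi_{\underline{s}},\varphi_{\underline{s}}^\prime)$ to a constant times $J_{\sigma,\underline{s}}(\varphi_{\underline{s}})(e)\,J^\vee_{\sigma^\vee,\underline{s}}(\varphi_{\underline{s}}^\prime)(e)$, and holomorphy then follows immediately from Wallach with no further analytic input. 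Your alternative, asserting that the stable-integral resp.\ Fourier-transform pairing is ``literally the same'' for all $\underline{s}$ and then transporting holomorphy through it, is morally the same observation but is less tight. In the $p$-adic case it is fine once you note that the integrand on $N_M$ and the stabilizing compact open subgroup do not depend on the twists; in the archimedean case, however, ``continuity of the Fourier transform on tempered distributions'' does not by itself give holomorphy in $\underline{s}$ of the evaluation at the single point $\psi_{N_M}$ — you would need uniform control of $I_{\varphi,\varphi^\prime}$ on compact $\underline{s}$-sets before invoking Fourier inversion, which you do not supply. The paper's reduction to pointwise evaluation at the identity, which is manifestly a holomorphic function of $\underline{s}$ by Wallach, circumvents this issue; I would recommend incorporating that $N_M$-equivariance step rather than arguing through the distributional Fourier transform.
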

\begin{proof}
We denote $P_{\underline{m}} = M_{\underline{m}} U_{\underline{m}}$ by the standard parabolic subgroup of $G_n(F)$
whose Levi part $M_{\underline{m}}$ is isomorphic to $\mathrm{GL}_{m_1}(E) \times \cdots \times \mathrm{GL}_{m_r}(E) \times \mathrm{U}(n_0)$.
We denote by $\sigma_{\underline{m}}[\underline{s}] = \tau_1[s_1] \otimes \cdots \otimes \tau_k[s_k] \otimes \sigma_0 $
an irreducible representation of $M_{\underline{m}}(F)$.
We realize $\sigma[\underline{s}]$ as a space of functions $f$ on $\mathrm{U}(n)(F)$ such that 
\[
f(u m(a_1, \dots, a_k ; h) g)=\prod_{i=1}^k |\det a_i|^{s_i} \delta_{M_{\underline{m}}}^{\frac{1}{2}}(m(a_1, \dots, a_k ; h))(g) f(g)
\] 
and 
\[
m(a_1, \dots, a_k ; h)\mapsto f \left(m(a_1, \dots, a_k ; h)g \right) \in  (\bigotimes_{i=1}^k W(\tau_i)) \otimes W(\sigma_0)
\]
where $u \in U_{\underline{m}}$, and $W(\tau_i), W(\sigma_0)$ denote the Whittaker model of $\tau_i$ and $\sigma_0$, respectively.
Let us denote by $J_{\pi. \underline{s}}$ the Jacquet integral defined by 
\[
J_{\sigma. \underline{s}}(\varphi_{\underline{s}}) = \int_{U_{\underline{m}}(F)} \varphi_{\underline{s}}(w_{\underline{m}} u) \psi_{N}^{-1}(u) \, du, 
\quad \varphi_{\underline{s}} \in \sigma[\underline{s}]
\]
where $w_{\underline{m}}$ denotes the longest Weyl element of $W^M \backslash W$
such that $w_{\underline{m}}^{-1} M w_{\underline{m}}=M$ when $W$ and $W^M$ are the Weyl group of $\mathrm{U}(n)$ and 
$M_{\underline{m}}$, respectively.
This integral converges absolutely on a certain domain of $\mC^r$ and it gives an element of $\sigma_{\underline{m}}[\underline{s}]$.
Further, it has a holomorphic continuation to $\mC^r$ (see Wallach~\cite[Theorem~15.4.1]{Wa2}).
When we fix an $M_{\underline{m}}$-invariant pairing $(-, - )_{\sigma_{\underline{m}}}^\prime$ on 
$\sigma_{\underline{m}} := \sigma_{\underline{m}}[(0, \dots, 0)]$, we define $\mathrm{U}(n)$-invariant pairing $(-,-)_{\sigma, \underline{s}}^{\prime}$
on $\sigma[\underline{s}]$ by 
\[
(\varphi_{\underline{s}}, \varphi^\vee_{\underline{s}})_{\sigma, \underline{s}}^\prime = \int_{P_{\underline{m}}(F) \backslash \mathrm{U}(n)(F)} (\varphi_{\underline{s}}(g), \varphi^\vee_{\underline{s}}(g))_{\sigma_{\underline{m}}}^\prime \, dg, \qquad \varphi_{\underline{s}} \in \sigma[s], \quad \varphi_{\underline{s}}^\vee \in \sigma[s]^\vee.
\]
Then we define local $\psi_{N_{n}}$-Whittaker period with respect to the pairing $(-, -)_{\pi, \underline{s}}^\prime$ by 
\[
(\varphi_{\underline{s}}, \varphi_{\underline{s}}^\vee)_{\sigma, \underline{s}}^{\prime \, \psi_{N}} := 
(J_{\sigma. \underline{s}}(\varphi_{\underline{s}}), J^\vee_{\sigma^\vee. \underline{s}}(\varphi_{\underline{s}}))_{\sigma_{\underline{m}}}^{\prime \, \psi_{N_M}}
\]
For each $1 \leq i \leq r$, by the uniqueness of Whittaker model for $\tau_i$ there is a constant $c_i \in \mC^\times$
depending only on $\tau_i$ and a choice of measures such that
\[
(W_i, W_i)_{\tau_i}^{\psi_{N_i} }= 
c_i \cdot W_{i}(e) W_{i}^\vee(e) ,\qquad W_i \in W^{\psi_{N_i}}(\tau_i).
\]
Similarly, by the uniqueness of Whittaker model for $\sigma_0$ there is a constant $c^\prime \in \mC^\times$
depending only on $\sigma_0$ and a choice of measures such that
\[
(W_0, W_0)_{\sigma_i}^{\psi_{N_i} }= 
c^\prime \cdot W_0(e) W_0^\vee(e) ,\qquad W_0 \in W^{\psi_{N}}(\sigma_0).
\]
Hence, we have 
\[
(J_{\pi. \underline{s}}(\varphi_{\underline{s}}), J_{\pi^\vee. \underline{s}}(\varphi_{\underline{s}})^\vee )_{\underline{\sigma}}^{\psi_N}
=c^\prime \times \prod_{i=1}^\ell c_i  \cdot J_{\pi. \underline{s}}(\varphi_{\underline{s}})(e)J_{\pi^\vee. \underline{s}}(\varphi_{\underline{s}})^\vee(e).
\]
As we noted above, we know that the right hand-side of this identity has holomorphic continuation by Wallach \cite[Theorem~15.4.1]{Wa2}.
Therefore, $I(\varphi_{\underline{s}}, \varphi_{\underline{s}}^\prime)$ has a holomorphic continuation to $\mC^r$.
\end{proof}
Recall that $\sigma$ is a local component of generic irreducible cuspidal automorphic representation, and the global theta lift is non-zero and cuspidal.
Hence, the local theta lift of $\sigma$ is also non-zero, in particular the local theta lift of $\sigma_0$ to $\mathrm{U}(n_0+1)$ is non-zero.
Further, by Gan-Ichino~\cite[Corollary~C.3]{GI} in the non-archimedean case and by Ichino~\cite[Theorem~4.1]{Ich22} or Paul~\cite[Theorem~4.5.5]{Paul98} in the archimedean case,
we know that it is tempered, say $\pi_0$.
Then \cite[Proposition~C.4]{GI} in the non-archimedean case and \cite[Theorem~4.2]{Ich22} (see also Paul~\cite[Theorem~4.5.5]{Paul98}) in the archimedean case, 
we know that $\theta(\sigma)$ is irreducible subquotient of the induced representation
\[
 \tau_1[r_1] \times \cdots \times \tau_k[r_k] \ltimes \pi_0.
\]
\begin{Remark}
In the split case, a similar result holds.
See \cite[2.5 Main Theorem]{AB} when $F=\mC$, Moeglin~\cite[III.9]{Moe} when $F=\mR$
and Minguez~\cite[Th\'{e}or\`{e}me~1]{Min} when $F$ is non-archimedean.
\end{Remark}
For $\underline{s} = (s_1, \dots, s_k) \in \mC^k$, we set 
\[
\pi[\underline{s}]  = \tau_1[s_1] \times \cdots \times \tau_k[s_k] \ltimes \pi_0.
\]
\begin{corollary}
\label{cor:holo cont}
As a function of $\underline{s}$, 
$I(\theta((\varphi_1 \otimes \varphi_2) \otimes \phi_{\underline{s}}), \theta((\varphi_1^\prime \otimes \varphi_2^\prime)\otimes \phi_{\underline{s}}))$ can be extended to a 
holomorphic function on $\mathcal{D} := \{\underline{s}\in \mC^r : |\mathrm{Re}(s_i)| < \frac{1}{2} \}$ with 
$\phi_{\underline{s}}, \phi_{\underline{s}}^\prime \in \sigma[\underline{s}]$.
\end{corollary}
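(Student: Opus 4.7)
The plan is to reduce Corollary~\ref{cor:holo cont} to an analogue of Lemma~\ref{lem:holo conti 1} applied to the family $\pi[\underline{s}] := \tau_1[s_1] \times \cdots \times \tau_k[s_k] \ltimes \pi_0$, where $\pi_0$ is the tempered generic local theta lift of $\sigma_0$ discussed just before the corollary. The key structural input is that $\theta(\sigma[\underline{s}])$ is realized as an irreducible subquotient of $\pi[\underline{s}]$ for each $\underline{s}$, and that the local Whittaker period on any irreducible subquotient is, by the definition recalled in Section~\ref{s: def local Whittaker}, computed through the Jacquet integral on the ambient induced representation, followed by the Whittaker pairing on the tempered inducing datum.

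The first step is to verify that the proof of Lemma~\ref{lem:holo conti 1} goes through word-for-word with $\sigma[\underline{s}]$ replaced by $\pi[\underline{s}]$: indeed, $\pi_0$ is tempered generic, its Whittaker model is unique, the Jacquet integral $J_{\pi,\underline{s}}$ admits a holomorphic continuation to $\mC^k$ by Wallach~\cite[Theorem~15.4.1]{Wa2}, and the Whittaker pairing factors through evaluation of Whittaker functions of the $\tau_i$ and of $\pi_0$ at the identity. Hence for any holomorphic family $\Phi_{\underline{s}},\Phi_{\underline{s}}^\prime$ of sections of $\pi[\underline{s}]$, the local Whittaker period $I(\Phi_{\underline{s}},\Phi_{\underline{s}}^\prime)$ extends to a holomorphic function on all of $\mC^k$, in particular on $\mathcal{D}$.

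The second step is to realize $\theta_v((\varphi_1 \otimes \varphi_2) \otimes \phi_{\underline{s}})$ as a holomorphic family $\Theta_{\underline{s}}$ of vectors in $\pi[\underline{s}]$. Since the Weil representation $\omega_{\psi,\chi}$ is independent of $\underline{s}$, and the local theta map is realized by pairing a fixed Schwartz function $\varphi_1 \otimes \varphi_2$ against the Weil representation and then integrating against $\phi_{\underline{s}}$, the family $\Theta_{\underline{s}}$ depends holomorphically on $\underline{s}$ whenever $\phi_{\underline{s}}$ does. Combining this with the first step produces the desired holomorphic continuation of $I(\theta((\varphi_1 \otimes \varphi_2) \otimes \phi_{\underline{s}}),\theta((\varphi_1^\prime \otimes \varphi_2^\prime) \otimes \phi_{\underline{s}}^\prime))$ to $\mathcal{D}$.

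The main technical obstacle is ensuring that the identification of $\theta(\sigma[\underline{s}])$ as a subquotient of $\pi[\underline{s}]$ depends holomorphically on $\underline{s}$, since at reducibility points this identification is not a priori canonical. This is handled by noting that $\pi[\underline{s}]$ is irreducible on the complement of a proper analytic subset of $\mathcal{D}$, where $\theta(\sigma[\underline{s}]) = \pi[\underline{s}]$ and the conclusion is immediate from the first step; holomorphy then extends across the reducibility locus by Hartogs' theorem applied to the scalar-valued function $I(\Theta_{\underline{s}},\Theta_{\underline{s}}^\prime)$, which is well-defined and holomorphic on an open dense subset and bounded locally near the reducibility locus by the continuity of the Jacquet integral and of the Weil representation action.
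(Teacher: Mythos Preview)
Your proposal has a genuine gap: you never engage with the specific Hermitian pairing $\mathcal{B}_{\Theta(\sigma)_v}$ that is used to define the Whittaker period $I$ on the theta lift. Recall from Section~4.2 that $\mathcal{B}_{\Theta(\sigma)_v}$ is \emph{defined} by the normalized local doubling zeta integral $Z_v^\flat$, and it is with respect to this pairing that $I(\theta(\cdots),\theta(\cdots))$ is computed. Your first step establishes holomorphy of the Whittaker period on $\pi[\underline{s}]$ for the \emph{canonical} pairing $(-,-)_{\rm can}$ coming from the inducing datum $\pi_{\underline{m}}$; but you give no argument relating $\mathcal{B}_{\Theta(\sigma)_v}$ to $(-,-)_{\rm can}$, so the conclusion about $I_{\mathcal{B}}$ does not follow. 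The paper's proof confronts exactly this point: it first shows that the doubling zeta integral $Z_v^\flat$ converges absolutely, locally uniformly on $\mathcal{D}$ (using the exponent bound $|e(\delta_i)|<\tfrac12$ from Section~\ref{s:rallis}), hence $\mathcal{B}$ is holomorphic on $\mathcal{D}$; then, on an open dense $O\subset (i\mR)^k$ where both $\sigma[\underline{s}]$ and $\pi[\underline{s}]$ are irreducible tempered, it writes $\mathcal{B}=c(\underline{s})\,(-,-)_{\rm can}$ and extends $c(\underline{s})$ holomorphically to $\mathcal{D}$ as the quotient of two holomorphic functions; finally $I_{\mathcal{B}}=c(\underline{s})\cdot(-,-)_{\rm can}^{\psi_N}$, and the second factor is holomorphic by Lemma~\ref{lem:holo conti 1} applied to $\pi[\underline{s}]$.

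Your second step is also not justified as stated. The local theta map $\theta_v$ is defined abstractly as the quotient map to the maximal isotypic quotient; there is no ``integration against $\phi_{\underline{s}}$'' formula available here that would exhibit $\theta_v((\varphi_1\otimes\varphi_2)\otimes\phi_{\underline{s}})$ as an explicit holomorphic section of $\pi[\underline{s}]$. The paper avoids this issue entirely by never needing such a realization: once $\mathcal{B}$ and $(-,-)_{\rm can}$ are both known to be holomorphic, the scalar $c(\underline{s})$ carries all the dependence on the theta map. Finally, your appeal to Hartogs is misplaced (the reducibility locus is generically of codimension one), and the fallback to ``bounded locally'' is asserted but not proved.
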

\begin{proof}
Let us take $\phi_{\underline{s}}, \phi_{\underline{s}}^\prime \in \sigma[\underline{s}]$.
From the proof of the convergence of local doubling zeta integrals in Section~\ref{s:rallis}, we see that 
as a function of $\underline{s}$, 
\[
\mathcal{B}(\theta(\phi_{\underline{s}} \otimes (\varphi_1 \otimes \varphi_2)), \theta(\phi_{\underline{s}}^\prime \otimes 
(\varphi_1^\prime \otimes \varphi_2^\prime)))
 = Z_v^\flat\left(\varphi_1 \otimes \varphi_2,\varphi_1^\prime \otimes \varphi_2^\prime, \phi_{\underline{s}}, \phi_{\underline{s}}^\prime,\pi\right) 
\]
converges absolutely  locally uniformly for $\underline{s} \in \mathcal{D}$, and 
thus it gives a holomorphic function on $\mathcal{D}$.

On the other hand, we know that there is an open dense set $O$ of $(i \mR)^k$ such that for $\underline{s} \in O$,
$\pi[\underline{s}]$ and $\sigma[\underline{s}]$ are irreducible and tempered.
Then for $\underline{s} \in O$, there is a constant $c(\underline{s}) \in \mC$ depending only on 
$\pi_{\underline{m}}, \sigma_{\underline{m}}, \underline{s}$ such that 
\[
\mathcal{B}(\theta(\phi_{\underline{s}} \otimes (\varphi_1 \otimes \varphi_2)), \theta(\phi_{\underline{s}}^\prime \otimes 
(\varphi_1^\prime \otimes \varphi_2^\prime)))
= c(\underline{s}) (\theta(\phi_{\underline{s}} \otimes 
(\varphi_1 \otimes \varphi_2)), \theta(\phi_{\underline{s}}^\prime \otimes 
(\varphi_1^\prime \otimes \varphi_2^\prime))_{\rm can}
\]
where $(-,-)_{\rm can}$ denote the canonical pairing of $\pi[\underline{s}]$ defined by 
\[
\int_{P \backslash G} (f_{\underline{s}}(g), f_{\underline{s}}^\vee(g))_{\pi}, \quad f_{\underline{s}} \in \pi[\underline{s}], f_{\underline{s}}^\vee \in \pi[\underline{s}]^\vee
\]
with the pairing $(-,-)_\pi$ on $\pi_{\underline{m}}$.
As we remarked in the proof of the above lemma, $(\theta(\phi_{\underline{s}} \otimes 
(\varphi_1 \otimes \varphi_2)), \theta(\phi_{\underline{s}}^\prime \otimes 
(\varphi_1^\prime \otimes \varphi_2^\prime))_{\rm can}$ has holomorphic continuation.
Further, as we remarked above 
$\mathcal{B}(\theta(\phi_{\underline{s}} \otimes (\varphi_1 \otimes \varphi_2)), \theta(\phi_{\underline{s}}^\prime \otimes 
(\varphi_1^\prime \otimes \varphi_2^\prime)))$ is holomorphic on $\mathcal{D}$.
Hence, we may extend $c(\underline{s})$ to a holomorphic function on $\mathcal{D}$.
Then we note that $c(\underline{s})$ is independent of a choice of $\varphi_i, \varphi_i^\prime, \phi_{\underline{s}}, \phi^\prime_{\underline{s}}$
since $c(\underline{s})$ is independent on these vectors for $\underline{s} \in O$.

Hence, $I(\theta((\varphi_1 \otimes \varphi_2) \otimes \phi_{\underline{s}}), \theta((\varphi_1^\prime \otimes \varphi_2^\prime)\otimes \phi_{\underline{s}}))$ is equal to 
\[
c(\underline{s}) \cdot (\theta((\varphi_1 \otimes \varphi_2) \otimes \phi_{\underline{s}}), \theta((\varphi_1^\prime \otimes \varphi_2^\prime)\otimes \phi_{\underline{s}}))_{\rm can}^{\psi_N}.
\]
From the above lemma and the holomorphy  of $c(\underline{s})$ on $\mathcal{D}$, it has a holomorphic 
continuation. 
\end{proof}
Let us complete a proof of Proposition~\ref{prp:local pullback G to H} in the non-tempered case.
First, we note that the right-hand side of the identity uniformly converges absolutely since the support of 
$(\omega_{\psi, \chi}(1, h)(\varphi_1 \otimes \varphi_2))(y_\lambda)$
$\overline{(\omega_{\psi, \chi}(1, h^\prime)(\varphi_1^\prime \otimes \varphi_2^\prime))(y_\lambda)}$
on $N_P \backslash G_n$ is compact. Hence, by Lemma~\ref{lem:holo conti 1}, it has has a holomorphic continuation
to $\mathcal{D}$.
Moreover, by Corollary~\ref{cor:holo cont}, the left-hand side also has a holomorphic continuation
to $\mathcal{D}$. Since for a dense open set of $(i \mR)^k$, this identity holds by the tempered case, 
this identity holds for any $\underline{s} \in \mathcal{D}$. As we noted above, any local component of 
generic cuspidal automorphic representation is of this form, and this completes the proof of Proposition~\ref{prp:local pullback G to H}.
%
%
%
%
%
%
%
%
%
\subsection{Theta lift from $\mathrm{U}(2n-1)$ to $\mathrm{U}(2n)$}
\label{s : theta u2n-1 to u2n}
As in the previous section, we shall prove the following theorem.
\begin{theorem}
\label{theta H_n to G_n}
Let $(\pi, V_{\pi})$ be  an irreducible cuspidal globally generic automorphic representation of $H_n(\mA)$.
Suppose that the theta lift $\Theta(\pi, \psi, \chi)$ to $G_{n}(\mA)$ is cuspidal.
Then the formula \eqref{whittaker formula ref} holds for $\pi$ if 
it holds for $\Theta(\pi, \psi, \chi)$.
\end{theorem}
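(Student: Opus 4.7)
The plan is to mirror the proof of Theorem~\ref{theta G_n to H_n+1} carried out in Section~\ref{s : theta u2n to u2n+1}, with the global pull-back formula \eqref{e:pullback H_n} playing the role of \eqref{e:pullback u2m to u2n+1}. First, I would fix a non-zero decomposable vector $\varphi = \otimes_v \varphi_v \in V_\pi$ with $W_{\psi_{U_{H_n}}}(\varphi) \neq 0$, and for $\phi_v \in \mathcal{S}((V_n \otimes W_n^+)(F_v))$ define a local pull-back integral
\[
\mathcal{L}_v(\varphi_v; \phi_v) = \int_{R^\prime(F_v) \backslash H_n(F_v)} \omega_{\psi_v, \chi_v}(h, 1)\phi_v(f_{-1}, \dots, f_{-n+1}, f_\lambda)\, I_v^\circ(\pi_v(h)\varphi_v, \varphi_v)\, dh.
\]
The usual unramified calculation via Iwasawa decomposition shows $\mathcal{L}_v(\varphi_v; \phi_v) = 1$ at almost all places, and since $R^\prime(F_v) \backslash H_n(F_v) \cdot (f_{-1}, \dots, f_{-n+1}, f_\lambda)$ is locally closed in $(V_n \otimes W_n^+)(F_v)$, convergence is automatic for $\phi_v \in C_c^\infty$. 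Factoring \eqref{e:pullback H_n} over places (using $C_{H_n}$ from Section~\ref{s:theta U2n-1 to U2n}) gives
\[
W_{\psi_{N_n, \lambda}}(\theta_{\psi, \chi}(\varphi, \phi)) = C_{H_n} \cdot W_{\psi_{U_{H_n}}}(\varphi) \prod_v \mathcal{L}_v(\varphi_v; \phi_v),
\]
and combining this with the Rallis inner product formula \eqref{e: rallis inner product H_n} reduces Theorem~\ref{theta H_n to G_n} to the local statement
\[
\frac{Z_v^\flat(\varphi_v, \varphi_v, \phi_v, \phi_v, \pi_v)\, I_v^\natural(\theta_v(\phi_v \otimes \varphi_v))}{|\mathcal{L}_v(\varphi_v; \phi_v)|^2} = I_v^\natural(\varphi_v, \varphi_v),
\]
compatibility of $L$-factors being guaranteed by the base change identity $\mathrm{BC}(\Theta(\pi,\psi,\chi)) = \mathrm{BC}(\pi) \otimes \chi_{V_n}^{-1}\chi_{W_n}$, which gives the analogue of \eqref{L-fct decomp} with the Asai $L$-function of the appropriate sign.

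As in Section~\ref{s : theta u2n to u2n+1}, this local identity is in turn reduced to an analogue of Proposition~\ref{prp:local pullback G to H}: for $\phi, \phi^\prime \in V_{\pi_v}$ and $\varphi_v, \varphi_v^\prime \in C_c^\infty((V_n \otimes W_n^+)(F_v))$,
\[
I_v(\theta(\varphi_v \otimes \phi), \theta(\varphi_v^\prime \otimes \phi^\prime)) = \int\int I_v(\pi_v(h)\phi, \pi_v(h^\prime)\phi^\prime)\, \omega(1, h)\varphi_v(x_\lambda)\, \overline{\omega(1, h^\prime)\varphi_v^\prime(x_\lambda)}\, dh\, dh^\prime,
\]
with $x_\lambda = (f_{-1}, \dots, f_{-n+1}, f_\lambda)$ and both outer integrals over $R^\prime(F_v) \backslash H_n(F_v)$. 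In the non-archimedean tempered case, I would follow the exact pattern of Section~\ref{s : theta u2n to u2n+1}: split the Weil representation using the polarization $V_n \otimes W_n^+$, apply the analogue of Lemma~\ref{local automorphy} (which is proved identically via \eqref{Weil1}, \eqref{Weil2} and Fourier inversion), use the Gram matrix parametrization to identify orbits in $V_n^n$ through the analogue of Lemma~\ref{tran lemma 1}, apply Lemma~\ref{Fourier inv 1} to convert the stable integral over the unipotent radical into an integral over $R^\prime \backslash H_n$, and finally telescope the resulting integral over $U_{H_n}$ using the fact that stable integrals stabilize on sufficiently large compact opens. The archimedean tempered case is handled by replacing stable integrals with the Fourier-transform definition \eqref{e: local integral 2} and invoking the archimedean versions of Lemma~\ref{tran lemma 1} and \cite[Corollary~3.23]{Liu}, exactly as in the $G_n \to H_{n+1}$ case; temperedness of $\theta(\pi_v)$ at archimedean places again follows from \cite{Ich22}.

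The hard part, as in Section~\ref{theta extend generic}, is the non-tempered case. Here I would write $\pi_v$ as a subquotient of $\tau_1[r_1] \times \cdots \times \tau_k[r_k] \ltimes \pi_0$ with $0 \leq r_k \leq \cdots \leq r_1 < \tfrac{1}{2}$ (using \cite{KK04, KK05}, noting that $\pi$ is globally generic cuspidal) and $\pi_0$ an irreducible generic discrete series of $H_m$. The crucial inputs are: the holomorphic continuation of the Jacquet integral and of local Whittaker periods in $\underline{s}$ (Lemma~\ref{lem:holo conti 1}, whose proof carries over verbatim with $\mathrm{U}(n)$ replaced by $\mathrm{U}(2n-1)$); the fact that the local theta lift $\theta(\pi_0, \psi_v, \chi_v)$ to $G_m(F_v)$ is non-zero (since the global theta lift is non-zero cuspidal) and tempered (by \cite[Corollary~C.3]{GI} at finite places and \cite[Theorem~4.1]{Ich22} or \cite[Theorem~4.5.5]{Paul98} at archimedean places); and the inclusion of $\theta(\pi_v, \psi_v, \chi_v)$ as a subquotient of $\tau_1[r_1] \times \cdots \times \tau_k[r_k] \ltimes \theta(\pi_0)$ by \cite[Proposition~C.4]{GI} or \cite[Theorem~4.2]{Ich22}. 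With these in place, both sides of the desired local identity extend holomorphically to the strip $\mathcal{D} = \{\underline{s} : |\mathrm{Re}(s_i)| < \tfrac{1}{2}\}$ (the analogue of Corollary~\ref{cor:holo cont} applies since $Z_v^\flat$ converges locally uniformly there), and they agree on a dense open subset of $(i\mR)^k$ by the tempered case; hence they agree throughout $\mathcal{D}$, and specializing to $\underline{s} = (r_1, \dots, r_k)$ yields the identity for $\pi_v$. The main subtlety to verify is that the contragredient pairing used to realize $\pi_v$ inside the induced representation is the one implicitly used in the definition of $\mathcal{B}_{\Theta(\pi)_v}$, which is ensured by the uniqueness of Whittaker models and the same constant-of-proportionality argument as in the proof of Lemma~\ref{lem:holo conti 1}.
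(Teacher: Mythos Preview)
Your proposal is correct and follows essentially the same approach as the paper: reduce via the pull-back formula \eqref{e:pullback H_n} and the Rallis inner product formula \eqref{e: rallis inner product H_n} to a local identity of the form \eqref{local pullback 2}, prove that identity in the tempered case by the Gram-matrix/Fourier-inversion/telescoping argument of \cite{FM21}, and then extend to the non-tempered case by the holomorphic-continuation argument of Section~\ref{theta extend generic}. The paper is terser in Section~\ref{s : theta u2n-1 to u2n}, stating the local proposition only under the temperedness hypothesis and pointing to \cite{FM21} and the previous section rather than rewriting the archimedean and non-tempered arguments, but the logical structure is identical to what you wrote.

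One small correction: your base change identity $\mathrm{BC}(\Theta(\pi,\psi,\chi)) = \mathrm{BC}(\pi) \otimes \chi_{V_n}^{-1}\chi_{W_n}$ is dimensionally wrong, since $\mathrm{BC}(\pi)$ lives on $\mathrm{GL}_{2n-1}$ while $\mathrm{BC}(\Theta(\pi,\psi,\chi))$ lives on $\mathrm{GL}_{2n}$; the correct relation (cf.\ the computation leading to \eqref{Apara theta}) has the form $\mathrm{BC}(\Theta(\pi,\psi,\chi)) = \bigl(\mathrm{BC}(\pi^\vee) \otimes \chi_{V_n}^{-1}\chi_{W_n}\bigr) \boxplus \chi_{W_n}$, and it is this that gives the Asai $L$-function factorization needed to match the normalizations in $I_v^\natural$ and $Z_v^\flat$.
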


As in the previous section, by Rallis inner product formula \eqref{e: rallis inner product H_n} and pull-back formula \eqref{e:pullback H_n}, 
we may reduce our proof to the following proposition.
\begin{proposition}
Let $v$ be any place of $F$.
Suppose that $\pi_v$ is tempered.
For any $\phi, \phi^\prime \in V_{\pi_v}$ and any $\varphi, \varphi^\prime \in C_c^\infty(W_n(F_v)^n)$, we have
\begin{multline}
\label{local pullback 2}
I(\theta_v(\varphi \otimes \phi), \theta_v(\varphi \otimes \phi))
=
\int_{R^\prime(F_v) \backslash H_n(F_v)}
\int_{R^\prime(F_v) \backslash H_n(F_v)}
\\
I(\pi_v(h)\phi, \pi_v(h^\prime)\phi^\prime)
(\omega_{\psi, \chi}(1, h)\varphi)(x_0) 
\overline{(\omega_{\psi, \chi}(1, h^\prime)\varphi^\prime)(x_0)} \, dh \, dh^\prime.
\end{multline}
Here, local Whittaker period  $I(\theta_v(\phi \otimes \varphi))$ is 
defined using the Hermitian inner product $\mathcal{B}_{\Theta(\pi, \psi, \chi)_v}$.
\end{proposition}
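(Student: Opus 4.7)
The plan is to mirror mutatis mutandis the strategy of Section~\ref{s : theta u2n to u2n+1}, adapted to the $H_n\to G_n$ direction, and to treat first the tempered case and then extend to the generic case by analytic continuation. Concretely, in the non-archimedean tempered case, I would start from $I(\theta_v(\varphi\otimes \phi), \theta_v(\varphi'\otimes \phi'))$ and use the local analogue of the pull-back \eqref{e:pullback H_n} together with a Gram-matrix orbit decomposition of $V_n(F_v)^n$: for each $X\in\mathrm{Herm}_n$ choose a representative $e_X$ of $\mathrm{Gr}^{-1}(X)$, express the matrix-coefficient-weighted integral as an iterated integral over $\mathrm{Herm}_n$ and over $N_X\backslash H_n$, and then apply a Fourier-inversion step analogous to Lemma~\ref{Fourier inv 1} to replace the Fourier-type block by a stable integral over the relevant unipotent subgroup of $G_n$. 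A key supporting ingredient is an automorphy lemma for the Weil model along the lines of Lemma~\ref{local automorphy}, verified on the Siegel parabolic and the Weyl element using the explicit formulas \eqref{Weil1}--\eqref{Weil2}. Telescoping the integral through $N_n = \widehat{Z_n}\,U_n$ and invoking the stable-integration criterion \cite[Proposition~2.3]{LMe} should produce the right-hand side of \eqref{local pullback 2}, with $f_{-1},\dots,f_{-n+1},f_\lambda$ and $R'$ playing the roles that $(e_{-1},\dots,e_{-n};\lambda e_n)$ and $N_P$ played previously.

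For the archimedean tempered case, I would work with the Fourier-transform definition $I'$ of local Whittaker periods on $N_n/N_{n,-\infty}$ and on $U_{H_n}/U_{H_n,-\infty}$, whose coincidence with $I$ is \eqref{coincidence whittaker}. One shows that each inner absolutely convergent block defines a tempered distribution (via an analogue of Lemma~\ref{tran lemma 1} for orbit integrals on $V_n^n$), verifies that the iterated partial Fourier transforms commute, and then concludes by a computation structurally identical to the non-archimedean manipulation.

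The non-tempered generic case is then reduced to the tempered one by analytic continuation. Writing $\pi$ as an irreducible constituent of $\tau_1[r_1]\times\cdots\times\tau_k[r_k]\ltimes \pi_0$ with $\pi_0$ tempered generic and $0\le r_k\le\cdots\le r_1<1/2$ (by \cite{KK04,KK05}), I would deform to $\pi[\underline{s}]$ and show that both sides of \eqref{local pullback 2} extend holomorphically to the tube $\{\underline{s}\in\mC^k:|\mathrm{Re}(s_i)|<1/2\}$: the right-hand side converges absolutely locally uniformly in $\underline{s}$ because the support of $(\omega_{\psi,\chi}(1,h)\varphi)(x_0)$ on $R'\backslash H_n$ is compact, and holomorphy of $I(\pi[\underline{s}](h)\phi_{\underline{s}},\phi'_{\underline{s}})$ follows as in Lemma~\ref{lem:holo conti 1}; the left-hand side is controlled via the local doubling zeta integral and a canonical pairing with standard holomorphy, paralleling Corollary~\ref{cor:holo cont}. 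The identity, valid on an open dense subset of $(i\mR)^k$ by the tempered case, then propagates to the required point by the identity theorem.

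The hard part will be, as in the $G_n\to H_{n+1}$ direction, the archimedean tempered step: justifying the interchange of the Fourier-inverted integrals with the integrals over $R'\backslash H_n$ and controlling the resulting tempered distributions under successive partial Fourier transforms. Once an $L^1$-regularity statement analogous to Lemma~\ref{tran lemma 1} is established for the orbit integral on $V_n^n$ and a local automorphy identity in the spirit of Lemma~\ref{local automorphy} is in place, the rest of the argument is structurally identical to Section~\ref{s : theta u2n to u2n+1}.
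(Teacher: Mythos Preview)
Your proposal is essentially correct and follows the same route as the paper, which says the argument is ``similar to \cite[Proposition~4]{FM21}'' and sketches: a Gram-matrix orbit decomposition of $V_n^n$ (over $\mathrm{Herm}_n^\circ$ with stabilizers $R_X\subset H_n$), a Fourier-inversion lemma for the $N_M$-block, telescoping through $R'$, and then the computation behind \eqref{gen to gen last}--\eqref{e:pullback H_n} to transfer the remaining $U_n$-integral into a $U_{H_n}$-integral, concluding via the stable-integration criterion. Two minor corrections: first, no analogue of Lemma~\ref{local automorphy} is needed in this direction, since the polarization $\mathbb{W}_{n,n}^+=V_n\otimes W_n^+$ has no mixed $\langle f\rangle$-component (that lemma was specific to the $G_n\to H_{n+1}$ Schr\"odinger model); second, the proposition as stated already assumes $\pi_v$ tempered, so your analytic-continuation step to the non-tempered generic case is extraneous here---it belongs to the reduction of Theorem~\ref{theta H_n to G_n} to this proposition, handled ``as in the previous section.''
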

This proposition is proved in a similar way as \cite[Proposition~4]{FM21}.
Hence, we discuss only different points as the proof of \cite[Proposition~4]{FM21}
in the case of non-archimedean place $v$.

We simply write $A$ for any object $A(F_v)$ or $A_v$.
Let $V_\circ$ be the subset of $V_n^n$ consisting of $(v_1, \dots, v_n) \in V_n^n$
such that $v_1, \dots, v_n$ are linearly independent and the inner product $(v_n, v_n)$ is non-zero.
Let $\mathrm{Herm}_n$ denote the set of $n \times n$ hermitian matrices over $F$ as in Section~\ref{s:notation even unitary}.
Define $\mathrm{Herm}_n^\circ = \{ X \in \mathrm{Herm}_n \mid X_{n,n} \ne 0\}$.
For each $X \in V_\circ$, we denote the stabilizer of $X$ in $H_n$ by $R_X$.
Then as in \cite[Lemma~2]{FM}, we have the following lemma
\begin{lemma}
For each $X \in \mathrm{Herm}_n^\circ$, there exists a Haar measure $dr_X^\prime$ on $R_X$ such that 
\[
\int_{V_n^n} \Phi(x) = \int_{\mathrm{Herm}_\circ^n} \int_{R_X^\prime \backslash H_n} \Phi(g^{-1} \cdot X) \,
dh_X \, dX
\]
for any $\Phi \in L^1(V_n^n)$. Here, $dh_X$ denotes the quotient measure $dr_X^\prime \backslash dh$
on $R_X^\prime \backslash H_n$.
\end{lemma}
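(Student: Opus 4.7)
The plan is a standard Weyl-type disintegration of the Lebesgue measure $dx$ on $V_n^n$ along the Gram map $\mathrm{Gr}\colon V_n^n \to \mathrm{Herm}_n$, entirely parallel to the argument for the skew-hermitian case given in \cite[Lemma~2]{FM}. The three inputs one needs are: $H_n$-invariance of $\mathrm{Gr}$; transitivity of $H_n$ on each fiber over (the regular part of) $\mathrm{Herm}_n^\circ$ by Witt's theorem; and unimodularity of the stabilizer $R_X^\prime$. Granting these, the measure $dr_X^\prime$ is \emph{defined} so that the disintegration formula holds, and the content of the lemma is that such a choice is consistent and measurable in $X$.

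First I would observe that $\mathrm{Gr}(v_1 g, \dots, v_n g) = \mathrm{Gr}(v_1, \dots, v_n)$ for $g \in H_n$, so $H_n$ acts on each fiber $\mathrm{Gr}^{-1}(X)$. The condition $X_{n,n} \ne 0$ forces $v_n$ to be nonisotropic; on the open dense locus where additionally $\det X \ne 0$ the span $W := \langle v_1, \dots, v_n \rangle$ is nondegenerate, and outside this locus the fiber lies in a lower-dimensional subvariety contributing zero to the integral. Over the generic locus, Witt's extension theorem for hermitian spaces yields transitivity of $H_n$ on $\mathrm{Gr}^{-1}(X) \cap V_\circ$, and after fixing a measurable section $X \mapsto e_X$ one identifies $\mathrm{Gr}^{-1}(X) \cap V_\circ \simeq R_X^\prime \backslash H_n$ via $g \mapsto e_X \cdot g$.

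Next I would note that $R_X^\prime$ fixes $W$ pointwise and acts as the full unitary group of $W^\perp$, so $R_X^\prime \simeq \mathrm{U}(W^\perp)$ is reductive and in particular unimodular; this guarantees the existence of an $H_n$-invariant quotient measure on $R_X^\prime \backslash H_n$ and hence of a Haar measure $dr_X^\prime$ on $R_X^\prime$, unique up to a positive scalar. The remaining task is to fix this scalar, smoothly in $X$, so that the displayed identity holds for every $\Phi \in L^1(V_n^n)$. This is a coarea-type computation: in local coordinates near a regular $X$, express $dx$ in terms of fiber coordinates on $R_X^\prime \backslash H_n$ together with transverse coordinates on $\mathrm{Herm}_n^\circ$, and absorb the resulting Jacobian into the normalization of $dr_X^\prime$ at each $X$.

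The main obstacle will be showing that these pointwise normalizations patch together into a measurable family of Haar measures producing the stated global identity; concretely, that a measurable (in fact semi-algebraic) section $X \mapsto e_X$ exists over $\mathrm{Herm}_n^\circ$ and that the induced scalar function is locally integrable. This is the same technical point handled in \cite[Lemma~2]{FM} for the skew-hermitian case, and should be disposed of by the same local-chart / implicit-function argument, together with the observation that both sides of the claimed identity are $H_n$-invariant so it suffices to verify it on a single transverse section.
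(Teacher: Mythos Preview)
Your proposal is correct and follows essentially the same route as the paper, which does not give an independent proof but simply refers to \cite[Lemma~2]{FM} and states the lemma as its direct analogue in the hermitian setting. Your outline --- $H_n$-invariance of $\mathrm{Gr}$, Witt-transitivity on fibers, unimodularity of the stabilizer $R_X^\prime\simeq\mathrm{U}(W^\perp)$, and then a coarea/disintegration to pin down the scalar in $dr_X^\prime$ --- is precisely what that cited argument does; the only cosmetic mismatch is that the paper writes the action as $g^{-1}\cdot e_X$ (left action of $H_n$ on $V_n$) rather than $e_X\cdot g$.
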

Then as in the proof of \cite[Lemma~4]{FM21}, this lemma and the argument in the \cite[Proposition~3.21]{FM21}
show that the following identity holds. 
\begin{lemma}
Suppose $\varphi, \varphi^\prime \in C_c^\infty(V_n^n)$ and $\phi, \phi^\prime \in \pi$. Then
\begin{multline*}
\int_{N_M}^{st} \int_{H_n} \int_{V_n^n } (\omega_{\psi, \chi}(1, n)\varphi)(g^{-1} \cdot x) \overline{\varphi^\prime(x)}
\langle \pi(g)\phi, \phi^\prime\rangle \psi_{N_n, \lambda}(n)^{-1} \, dx \, dg \, dn
\\
=\int_{H_n} \int_{R^\prime \backslash H_n} \omega_{\psi, \chi}(hg, 1) \varphi(x_0) 
\overline{\varphi^\prime(h^{-1} \cdot x_0)} \langle \pi(g)\phi, \phi^\prime \rangle \, dh \, dg
\end{multline*}
where the integral on the right-hand side converges absolutely.
Here, we set $x_0 = (f_{-1}, \dots. f_{-n+1}, f_\lambda)$ and write $dh = dh_{x_0}$.
\end{lemma}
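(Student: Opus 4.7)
The plan is to mirror \cite[Lemma~4]{FM21}, combining the orbit-decomposition lemma just stated with the Fourier-inversion argument used in \cite[Proposition~3.21]{FM21} (itself modelled on \cite[Proposition~3.21]{Liu} in the archimedean setting). First, in the innermost $V_n^n$-integral of the LHS I change variable $x = g\cdot y$; because $|\det h|=1$ for all $h\in H_n$, this preserves measure, and the explicit Weil-representation identity $\omega_{\psi,\chi}(1,\hat u)\varphi(y) = \varphi(y\cdot u)$ for $\hat u\in N_M$ puts the LHS in the form
\begin{equation*}
\int_{N_M}^{st}\int_{H_n}\langle \pi(g)\phi,\phi^\prime\rangle\left(\int_{V_n^n}\varphi(y\cdot u)\overline{\varphi^\prime(g\cdot y)}\,dy\right)\psi_{N_n,\lambda}(\hat u)^{-1}\,dg\,du.
\end{equation*}
Next, I apply the preceding lemma to parametrize $V_n^n$ almost everywhere as $\mathrm{Herm}_n^\circ\times (R_X^\prime\backslash H_n)$ via $y = h^{-1}\cdot e_X$ with $X = \mathrm{Gr}(y)$. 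Since $H_n$ preserves the Gram matrix while right translation by $u$ sends $\mathrm{Gr}(y)$ to $\bar u^\top\,\mathrm{Gr}(y)\,u$, the entire $u$-dependence is concentrated in the action on the $\mathrm{Herm}_n^\circ$ factor.

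The heart of the proof is to show that the stable $N_M$-integral against $\psi_{N_n,\lambda}(\hat u)^{-1}$ performs a Fourier inversion on $X$ that collapses the $\mathrm{Herm}_n^\circ$-integration to the single matrix $X_0 := \mathrm{Gr}(x_0)$ (the matrix with $\lambda$ in the $(n,n)$-entry and zero elsewhere). Concretely, the map $u\mapsto \bar u^\top X u$ is affine in the entries of $u\in Z_n$, with the super-diagonal entries $u_{1,2},\dots,u_{n-1,n}$ appearing linearly, and the Whittaker character $\psi(u_{1,2}+\cdots+u_{n-1,n})$ is Fourier-dual to precisely those entries of $X$ that must vanish in $X_0$. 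In the non-archimedean setting this follows from the stable-integral formalism of Lapid-Mao \cite[Proposition~2.3]{LMe}; in the archimedean setting one must work within the tempered-distribution framework of Sakellaridis-Venkatesh and invoke Liu's partial Fourier inversion as in \cite[Proposition~3.21]{Liu}. I expect this localization step to be the principal technical obstacle, particularly in ensuring uniformity across archimedean and non-archimedean places.

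Once the Fourier inversion collapses the $\mathrm{Herm}_n^\circ$-integration to $X_0$, the surviving integration over $R^\prime\backslash H_n$ against $\varphi$ and $\varphi^\prime$ evaluated on the orbit of $x_0$ matches --- after reinstating the Weil-representation action via $\omega_{\psi,\chi}(hg,1)\varphi(x_0) = \chi_{V_n}(\det(hg))\,\varphi((hg)^{-1}\cdot x_0)$ and absorbing the character factor by a change of variable --- the RHS verbatim. Absolute convergence of the RHS is ensured by the compact support of $\varphi, \varphi^\prime$, which renders the $R^\prime\backslash H_n$-integration compactly supported, together with the convergence of the doubling zeta integral governing the outer $H_n$-integration against the matrix coefficient $\langle \pi(g)\phi,\phi^\prime\rangle$, as established in Section~\ref{s:rallis}.
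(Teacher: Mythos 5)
The overall skeleton of your argument — change the variable $x = g\cdot y$, apply the orbit decomposition lemma, then do a Fourier-inversion/localization step — agrees with the paper's, which simply cites \cite[Lemma~4]{FM21} and \cite[Proposition~3.21]{FM21} for these very ingredients. But the middle of your sketch, where you try to explain why the $\mathrm{Herm}_n^\circ$-integral collapses to the single Gram matrix $X_0$, rests on a misidentification of the mechanism and contains a concretely false claim.

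You assert that ``the map $u\mapsto \bar u^\top X u$ is affine in the entries of $u\in Z_n$'' and that ``the Whittaker character $\psi(u_{1,2}+\cdots+u_{n-1,n})$ is Fourier-dual to precisely those entries of $X$ that must vanish in $X_0$.'' Neither claim is right. First, $\bar u^\top X u$ is quadratic, not affine, in the strictly-upper part of $u$: already for $n=2$, writing $u=\bigl(\begin{smallmatrix}1&a\\0&1\end{smallmatrix}\bigr)$, the $(2,2)$-entry of $\bar u^\top X u$ is $\bar a X_{11} a + \bar a X_{12} + X_{21} a + X_{22}$. Second, and more fundamentally, the oscillation that actually performs a Fourier inversion on the Gram matrix comes from the Siegel unipotent $U_n=\{n(b):b\in\mathrm{Herm}_n\}$, via the formula $\omega_{\psi,\chi}(1,n(b))\varphi(x)=\psi\bigl(\tfrac12\mathrm{tr}(b\,\mathrm{Gr}(x))\bigr)\varphi(x)$; the Levi-unipotent $N_M=\{\hat u\}$ acts only by translation $\varphi\mapsto\varphi(\cdot\,u)$, which puts no oscillatory factor in front and therefore cannot by itself Fourier-invert against the Gram-matrix variable. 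Your heuristic thus attributes the collapse of the $\mathrm{Herm}_n^\circ$-integral to a character that is not coupled linearly to $X$ at all. The correct argument (the one behind \cite[Lemma~4]{FM21}) is an unfolding over the nilpotent orbit of $x_0$ rather than a clean Fourier inversion on $\mathrm{Herm}_n^\circ$ against $\psi_{N_M}$, and this is the one step you cannot skip.

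Two smaller points. The change of variables and the identity $\omega_{\psi,\chi}(1,\hat u)\varphi(y)=\varphi(y\cdot u)$ are correct, and your convergence argument for the right-hand side (compact support of $\varphi,\varphi'$ on $R'\backslash H_n$ together with convergence of the doubling integral on the outer $H_n$-factor) is fine. However, the discussion of the archimedean tempered-distribution framework is out of place here: the lemma is stated with a stable integral, and the paper explicitly restricts this entire subsection to non-archimedean $v$, so there is no archimedean case to worry about at this point.
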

Moreover, by telescoping the integral and changing the variables as in \cite[(3.35)]{FM21}, 
$I(\theta(\varphi \otimes \phi), \theta(\varphi \otimes \phi))$ is equal to 
\[
\int_{U_n}^{st}\int_{R^\prime \backslash H_n} \int_{R^\prime \backslash H_n} \int_{R^\prime} \omega_{\psi, \chi}(g, u) \varphi(x_0) 
\overline{\varphi^\prime(h^{-1} \cdot x_0)} \langle \pi(r^\prime g)\phi, \pi(h)\phi^\prime \rangle \, dr^\prime \, dh \, dg \, du.
\]
From the definition of stable integral, there is a compact open subgroup $U_n^\circ$ of $U_n$ such that 
the above integral is equal to 
\[
\int_{R^\prime \backslash H_n} \int_{R^\prime \backslash H_n} \int_{U_n^\circ} \int_{R^\prime} \omega_{\psi, \chi}(g, u) \varphi(x_0) 
\overline{\varphi^\prime(h^{-1} \cdot x_0)} \langle \pi(r^\prime g)\phi, \pi(h)\phi^\prime \rangle \psi_{N_n, \lambda}^{-1}(n) \, dr^\prime \, dh \, dg \, du.
\]
Then the computation to obtain \eqref{gen to gen last} and \eqref{e:pullback H_n} show that this integral is equal to
\begin{multline*}
\int_{R^\prime \backslash H_n} \int_{R^\prime \backslash H_n} \int_{\widetilde{U}_n^\circ} \int_{R^\prime} 
\omega_{\psi, \chi}(\widetilde{u}^{-1}g, 1) \varphi(x_0) 
\overline{\varphi^\prime(h^{-1} \cdot x_0)} \langle \pi(r^\prime g)\phi, \pi(h)\phi^\prime \rangle \psi_{U_{H_n}, \lambda}^{-1}(\widetilde{u})
 \, dr^\prime \, dh \, dg \, du
 \\
 =
 \int_{R^\prime \backslash H_n} \int_{R^\prime \backslash H_n} \int_{\widetilde{U}_n^\circ} \int_{R^\prime} 
\omega_{\psi, \chi}(g, 1) \varphi(x_0) 
\overline{\varphi^\prime(h^{-1} \cdot x_0)} \langle \pi(r^\prime \widetilde{u} g)\phi, \pi(h)\phi^\prime \rangle \psi_{U_{H_n}, \lambda}^{-1}(\widetilde{u})
 \, dr^\prime \, dh \, dg \, du
\end{multline*}
where $\widetilde{U}_n^\circ = \left\{ \widetilde{u} : u \in U_n^\circ \right\}$.
Since the stable integral  $\int_{U_{H_n}}^{st} \langle \pi(ng)\phi, \pi(h)\phi^\prime \rangle \psi_{U_{H_n}, \lambda}^{-1}(n) \, dn$
is well-defined, if we take sufficiently large $U_{n-1}^\circ$, the above integral is equal to 
\[
\int_{R^\prime \backslash H_n}
\int_{R^\prime \backslash H_n}
\\
\alpha(\pi(h)\phi, \pi(h^\prime)\phi^\prime)
(\omega_{\psi, \chi}(1, h)\varphi)(x_0) 
\overline{(\omega_{\psi, \chi}(1, h^\prime)\varphi^\prime)(x_0)} \, dh \, dh^\prime,
\]
which is th right-hand side of \eqref{local pullback 2}.
%
%
%
%
%
%
%
%
%
\section{Proof of Theorem~\ref{main thm} and Theorem~\ref{main thm odd}}
In this section, we shall give a proof of Theorem~\ref{main thm} and Theorem~\ref{main thm odd}.
First, we observe cuspidality of global theta lifts.
\begin{lemma}
\label{global II lift cor}
Let $(\pi, V_\pi)$ (resp. $(\sigma, V_\sigma)$) be an irreducible cuspidal generic automorphic representation of $H_n(\mA)$ (resp. $G_n(\mA)$).
Except for finitely many $\chi_{V_n}$ (resp. $\chi_{W_n}$), the theta lift $\Theta_{V_n ,W_n}(\pi, \psi, \chi)$ 
(resp. $\Theta_{W_n, V_{n+1}}(\sigma, \psi, \chi)$) to $G_n(\mA)$ (resp. $H_{n+1}(\mA)$) is cuspidal. 
\end{lemma}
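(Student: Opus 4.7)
My plan is to combine the Rallis tower property with the well-known characterization of the first non-vanishing occurrence of a theta lift in terms of poles of an $L$-function, and then exploit the freedom in the auxiliary character $\chi_{V_n}$ (resp.\ $\chi_{W_n}$). I will describe the argument in detail for the first claim, concerning the lift from $H_n$ to $G_n$; the second claim is handled by the same method with obvious modifications.

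First, since $\pi$ is globally generic, Proposition~\ref{theta gen to gen} guarantees that $\Theta_{V_n, W_n}(\pi, \psi, \chi)$ is non-zero for any choice of $\chi_{V_n}$ with $\chi_{V_n}|_{\mA^\times} = \chi_E$. By the Rallis tower property, cuspidality of $\Theta_{V_n, W_n}(\pi, \psi, \chi)$ is equivalent to the vanishing of the theta lifts $\Theta_{V_n, W_m}(\pi, \psi, \chi_m)$ for all $m < n$, where $\chi_m$ is obtained from $\chi$ by replacing the second component by an auxiliary $\chi_{W_m}$ with $\chi_{W_m}|_{\mA^\times} = 1$. Thus the problem reduces to showing that, for each fixed $m < n$, the set of $\chi_{V_n}$ for which the lower lift $\Theta_{V_n, W_m}(\pi, \psi, \chi_m)$ fails to vanish is finite.

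Next, I will invoke the standard pole criterion for non-vanishing of theta lifts, as established for unitary groups by Yamana~\cite{Ya} (building on earlier work of Rallis and Kudla--Rallis). This criterion asserts that $\Theta_{V_n, W_m}(\pi, \psi, \chi_m)$ is non-zero only when a specific complete doubling $L$-function of $\pi$, twisted by $\chi_m$, has a pole at an explicit point $s_m$ depending on $n$ and $m$. Using the Kim--Krishnamurthy strong base change, one writes this $L$-function as a product
\[
L(s, \pi \times \chi_{V_n}^{-1} \chi_{W_m}) = \prod_{i=1}^k L(s, \Pi_i \times \chi_{V_n}^{-1} \chi_{W_m}),
\]
where $\mathrm{BC}(\pi) = \Pi_1 \boxplus \cdots \boxplus \Pi_k$. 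Each factor is a standard Rankin--Selberg $L$-function of a cuspidal representation of $\mathrm{GL}_{n_i}(\mA_E)$ twisted by a Hecke character, so it is entire unless $n_i = 1$ and the twist $\chi_{V_n}^{-1}\chi_{W_m}\Pi_i$ is a specific character singled out by the location $s_m$. For each $m$ and each of the at most $k$ one-dimensional constituents of $\mathrm{BC}(\pi)$, this condition selects at most one character $\chi_{V_n}$. Taking the union over the finitely many $m < n$ yields a finite exceptional set outside of which $\Theta_{V_n, W_n}(\pi, \psi, \chi)$ is cuspidal.

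The argument for $\Theta_{W_n, V_{n+1}}(\sigma, \psi, \chi)$ is analogous, with $\sigma$ in place of $\pi$, the downward tower now consisting of the odd unitary groups $H_{m+1}$ with $m \leq n$, and Proposition~\ref{theta gen to gen 2} supplying the non-vanishing at the top level. The main obstacle I foresee is to pin down the precise pole criterion uniformly across all levels of the tower in the unitary setting, since the relevant $L$-functions come with various normalizations and one must carefully track which Asai or doubling incarnation governs each level; once this is settled, the finiteness assertion follows essentially immediately from the decomposition of $\mathrm{BC}(\pi)$ (resp.\ $\mathrm{BC}(\sigma)$) and the cuspidality of each summand.
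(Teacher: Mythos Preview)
Your approach via the pole criterion for first occurrence is correct in outline, but it is genuinely different from the paper's argument. The paper does not invoke Siegel--Weil or the Rallis inner product pole criterion at all. Instead it proceeds in two steps: first, it rules out first occurrence at any level $n_0 \leq n-2$ by a purely local argument---at a split finite place where everything is unramified, Kudla's explicit computation of the unramified theta lift shows that $\pi_v$ would then be an unramified principal series containing the factor $|\cdot|^{(n-n_0+1)/2}$ with exponent $\geq 3/2$, contradicting the bound $|e(\delta_i)| < 1/2$ coming from genericity and the Kim--Krishnamurthy base change (inequality~\eqref{exp estimate}); this step is completely independent of $\chi_{V_n}$. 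Second, for the sole remaining level $n_0 = n-1$, the paper observes that non-vanishing of the lower lift forces $\chi_{V_n}$ to occur as a one-dimensional summand of the fixed Arthur parameter of $\pi$, and there are only finitely many such characters. Your route is more uniform across the tower and does not use the exponent bound directly, but it leans on the heavier first-term-range Siegel--Weil machinery of Yamana, whereas the paper's argument is essentially elementary once~\eqref{exp estimate} is in hand. Both methods ultimately isolate $m=n-1$ as the only obstructive level and finish the same way, since your pole condition at $s_m=1$ for a one-dimensional constituent of $\mathrm{BC}(\pi)$ is exactly the paper's condition that $\chi_{V_n}$ appear in the Arthur parameter.
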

\begin{proof}
Recall that by Proposition~\ref{theta gen to gen}, $\Theta_{V_n, W_n}(\pi, \psi, \chi)$ is non-zero.
Suppose that for a pair of characters $\chi = (\chi_V, \chi_W)$, the theta lift $\Theta_{V_n, W_n}(\pi, \psi, \chi)$ is not cuspidal.
Let us take $n_0 \leq n$ such that $\Theta_{V_{n}, W_{n_0}}(\pi, \psi, \chi) \ne 0$ but $\Theta_{V_{n}, W_{n_0-1}}(\pi, \psi, \chi)= 0$.
Then by Rallis tower property, we know that  $\Theta_{V_{n}, W_{n_0}}(\pi, \psi, \chi)$ is cuspidal.
Recall that $\Theta_{V_{n}, W_{n_0}}(\pi, \psi, \chi) = \otimes_v \theta(\pi_v, \psi_v, \chi_v)$.

Assume that $n_0 \leq n-2$. Let us take a split finite place $v$ such that $\pi_v$, $\psi_v$ and $\chi_v$ are unramified.
Then by explicit computation of local theta correspondence by Kudla~\cite{Ku86} (see also Minguez~\cite{Min}), we see that $\pi_v = \theta(\sigma_v, \psi_v, \chi_v)$ is isomorphic to
the unramified principal series representation
\[
|\cdot |^{\frac{-n+n_0+1}{2}}  \times \cdots \times | \cdot|^{\frac{n-n_0-1}{2}} \times  | \cdot|^{\frac{n-n_0+1}{2}} \times \chi_1 \times \cdots \times \chi_{2n_0+1}
\]
with some unramified characters $\chi_i$ of $F_v^\times$.
Since $\pi_v$ is unitary and generic, this contradicts to the estimate \eqref{exp estimate}.
Hence, we should have $n_0=n$ or $n-1$.

If $n_0=n$, there is nothing to prove. Suppose that $n_0=n-1$.
Then the theta lift $\Sigma := \Theta_{V_n, W_{n-1}}(\pi, \psi, \chi)$ to $G_{n-1}(\mA)$ is an irreducible cuspidal and generic
by Proposition~\ref{theta gen to gen 2}.
By explicit description of local theta lift for unramified representations by Kudla~\cite{Ku86}, we see that $A$-parameter of $\pi$
is given by 
\begin{equation}
\label{Apara theta}
\chi_{V_n} \boxplus ( \mathrm{BC}(\Sigma^\vee)\otimes \chi_{V_n} \chi_{W_{n-1}}^{-1})
\end{equation}
where $\mathrm{BC}(\Sigma^\vee)$ denotes the base change lift of $\Sigma^\vee$ to $\mathrm{GL}_{2n-2}(\mA_E)$. 
Since $\Sigma$ is cuspidal and generic, 
we know that $\mathrm{BC}(\Sigma^\vee) = \Pi_1 \boxplus \cdots \boxplus \Pi_k$
with irreducible cuspidal automorphic representation $\Pi_i$ of $\mathrm{GL}_{m_i}(\mA_{E})$.
Hence, if we take a character $\chi_{0}$ of $\mA_E^\times \slash E^\times$ such that $\chi_0|_{\mA^\times} = \chi_E$ and 
it does not appear in \eqref{Apara theta}, 
then $\Theta_{V_n, W_{n-1}}(\pi, \psi, (\chi_{W_{n-1}}, \chi_0))$ should be zero.
Since only finitely many such characters of $\mA_E^\times \slash E^\times$ appear in \eqref{Apara theta},
our claim follows.
In a similar argument as for $\pi$, we may prove the case of $\sigma$.
\end{proof}
%
%
%
%
%
%
%
%
%
%
\subsection{Proof of Theorem~\ref{main thm}}
Let us give a proof of Theorem~\ref{main thm}.
Let $(\pi, V_\pi)$ be an irreducible cuspidal generic automorphic representation of $G_n(\mA)$.
Let us take an archimedean place $v$ of $F$.
\begin{proof}[Proof : Non-split real place]
Suppose that $v$ is a non-split real place of $F$.
We show an existence of certain tempered automorphic representations of $G_n(\mA_\mQ)$ with $E=\mQ(i)$.
For it, we give some preliminary results.

Recall that any unitary character of $\mR^\times$ is of the form $\mathrm{sgn}^\kappa |\cdot|^{ir} $
with $r \in \mR$ and $\kappa = \{ 0, 1\}$.
For $\alpha \in \frac{1}{2} \mZ$, we define a unitary character $\chi_{2\alpha}$ of $\mC^\times$ by 
\[
\chi_{2\alpha}(z) = \overline{z}^{-2\alpha} (z \bar{z})^{\alpha} = \left( \frac{z}{\bar{z}}\right)^\alpha.
\]
Recall that any unitary character of $\mC^\times$ is of the form $z \mapsto |z|^{i \theta} (\frac{z}{\bar{z}})^{m}$ with $\frac{1}{2} \in \mZ$,
namely $ \chi_{2m} | \cdot |^{i \theta}$.

Let us denote the Langlands parameter of $\pi_v$ by
\begin{equation}
\label{Lpara nonsplit archi}
\xi_{1}[a_1] \oplus {}^c(\xi_1[a_1])^\vee \oplus \cdots \oplus \xi_{r}[a_r] \oplus {}^c(\xi_r[a_r])^\vee  \oplus \chi_{2\kappa_1} \oplus \cdots \oplus \chi_{2\kappa_{\ell}}
\end{equation}
where 
\begin{itemize}
\item $2r+\ell = 2n$,
\item $\kappa_j \in \frac{1}{2} \mZ \setminus \mZ$,
\item $\xi_i$ is unitary character of $\mC^\times$ which is not of the form $\chi_{\kappa}$ with $\kappa \in \mZ \setminus \mZ$,
\item ${}^c(\xi_i[a_i])^\vee$ is the unitary character of $\mC^\times$ defined by ${}^c(\xi_i[a_i])^\vee(z) = \xi[a_i](\bar{z}^{-1})$,
\item $a_i \in \mR$ $0 \leq a_i < \frac{1}{2}$.
\end{itemize}
Let us recall the following fact.
\begin{lemma}
\label{quad ext char}
Let $L$ be a quadratic extension of a number field $K$.
Let $L_\infty = \prod_{ v=\infty} L_v$ and  $K_\infty = \prod_{ v=\infty} K_v$.
For any unitary character $\chi_\infty$ of $L_\infty^\times$ such that $\chi_{\infty}$ is trivial on $K_\infty$,
there exists a character $\xi$ of $\mA_L^\times \slash \mA_K^\times L^\times$ such that $\xi_{\infty} = \chi_{\infty}$.
\end{lemma}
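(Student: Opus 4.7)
My plan is to construct $\xi$ first on the subgroup
\[
G := L_\infty^\times \cdot \mA_K^\times \cdot L^\times \;\subset\; \mA_L^\times
\]
by setting
\[
\xi(x_\infty \cdot a \cdot y) \;:=\; \chi_\infty(x_\infty), \qquad x_\infty \in L_\infty^\times,\ a \in \mA_K^\times,\ y \in L^\times,
\]
and then to extend $\xi$ to a continuous character of the full idele group $\mA_L^\times$ via Pontryagin duality. Since the resulting extension is automatically trivial on the subgroup $\mA_K^\times L^\times \subset G$, it will descend to a character of $\mA_L^\times/(\mA_K^\times L^\times)$ whose archimedean component recovers $\chi_\infty$ by construction.

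The crucial technical step is to verify that $\xi$ is well-defined on $G$. Suppose $x_\infty \cdot a \cdot y = 1$ in $\mA_L^\times$. Comparing finite components gives $a_v = y_v^{-1}$ at every finite place $v$ of $K$, so the image of $y \in L^\times$ in $L_v$ lies in $K_v^\times$ for every finite $v$. I will argue this forces $y \in K^\times$: write $L = K[\alpha]$ with $\alpha^2 \in K^\times$ and decompose $y = c + d\alpha$ with $c, d \in K$; by Chebotarev density there exists a finite place $v$ of $K$ which splits in $L/K$, and at such a $v$ we have $L_v \cong K_v \oplus K_v$ with $y \mapsto (c + d\beta,\, c - d\beta)$, where $\beta \in K_v^\times$ is a square root of $\alpha^2$. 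Requiring this element to lie in the diagonal copy of $K_v$ forces $d = 0$, hence $y \in K^\times$. Consequently $ay \in \mA_K^\times$, and $x_\infty = (ay)^{-1} \in L_\infty^\times \cap \mA_K^\times = K_\infty^\times$; the hypothesis $\chi_\infty|_{K_\infty^\times} = 1$ then yields $\chi_\infty(x_\infty) = 1$.

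The remaining step, extending the continuous character $\xi$ from $G$ to $\mA_L^\times$, is routine. The map $\xi$ is continuous on $G$, so it extends continuously to the closure $\overline{G}$ (continuous characters of topological groups are automatically uniformly continuous), and any continuous character on a closed subgroup of a locally compact abelian group extends to the whole group by Pontryagin duality. I anticipate that the main obstacle will be the algebraic well-definedness argument invoking Chebotarev density; once that is secured, the topological extension is a formality of idele group theory.
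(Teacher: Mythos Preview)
The paper records this lemma as a known fact and gives no proof, so there is nothing to compare your argument against; I evaluate it on its own merits. Your well-definedness argument for $\xi$ on $G = L_\infty^\times \cdot \mA_K^\times \cdot L^\times$ is correct, and can be shortened: from $a_v y = 1$ in every finite completion $L_w$ one gets $\sigma(y) = y$ in every $L_w$ for the nontrivial $\sigma \in \mathrm{Gal}(L/K)$ (since $a_v \in K_v$ is fixed by $\sigma$), hence $\sigma(y) = y$ in $L$ and $y \in K^\times$; Chebotarev and a split place are not needed.

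The genuine gap is the bare assertion that ``$\xi$ is continuous on $G$''. This is exactly where the content lies, and it fails in general. The subgroup $\mA_K^\times L^\times$ is closed in $\mA_L^\times$, and
\[
\mA_L^\times/(\mA_K^\times L^\times)\;\cong\;T(\mA_K)/T(K)
\]
for the anisotropic torus $T = \mathrm{Res}_{L/K}\mathbb{G}_m/\mathbb{G}_m$, so this quotient is \emph{compact}. If $L_\infty^\times/K_\infty^\times$ is itself compact (every archimedean place of $K$ non-split in $L$), then its image under the injection $L_\infty^\times/K_\infty^\times \hookrightarrow \mA_L^\times/(\mA_K^\times L^\times)$ is closed and Pontryagin duality gives the extension directly --- your outline is then valid. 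But if $L_\infty^\times/K_\infty^\times$ is non-compact --- e.g.\ $K=\mQ$ and $L$ real quadratic, where $L_\infty^\times/K_\infty^\times \cong \mR^\times$ --- a continuous injection into a compact Hausdorff group cannot be a homeomorphism onto its image; the subspace topology on the image is strictly coarser, and a generic $\chi_\infty$ (say $|\cdot|^{it}$ for $t$ outside a countable set) is not continuous for that coarser topology. Then your $\xi$ is discontinuous on $G$ and the ``extend to the closure'' step collapses. Indeed, the dual of the compact target is discrete and countable, while $\widehat{\mR^\times}$ is uncountable, so the restriction map cannot be surjective: the lemma as literally stated is false in such cases. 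For the paper's purposes only a dense set of archimedean parameters is actually needed (the surrounding argument appeals to holomorphic continuation in the parameters), so the overall strategy survives with a suitably weakened version of the lemma; but your proof of the lemma as stated cannot be repaired without an extra hypothesis such as compactness of $L_\infty^\times/K_\infty^\times$.
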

Using this globalization, we shall globalize certain principal series representation.
\begin{lemma}
\label{glob lem 1}
Let $k \in \frac{1}{2}\mZ$ and $t \in \mR$.
Then there exists an irreducible cuspidal tempered automorphic representation $\Sigma$ of 
$\mathrm{U}(2, \mA_\mQ)_{\mQ(i) \slash \mQ}$
such that $\Sigma_\infty$ has $L$-parameter $\chi_{2k} |\cdot|^{it} \oplus {}^c (\chi_{2k} |\cdot|^{it})^{-1}$.
\end{lemma}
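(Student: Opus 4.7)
The plan is to realize $\Sigma$ as the global theta lift from $\mathrm{U}(1)$ to $\mathrm{U}(2)$ of a suitably chosen Hecke character, using Lemma~\ref{quad ext char} twice to globalize both the character and the splitting characters at infinity. First I would apply Lemma~\ref{quad ext char} to produce Hecke characters $\chi_V, \chi_W$ of $\mA_{\mQ(i)}^\times / \mQ(i)^\times$ satisfying $\chi_V|_{\mA_\mQ^\times} = \chi_E$ and $\chi_W|_{\mA_\mQ^\times} = 1$ with archimedean components of appropriate parity; and likewise a character $\xi$ of $\mathrm{U}(1)(\mA_\mQ)$, viewed via Hilbert 90 as a Hecke character of $\mA_{\mQ(i)}^\times$ trivial on $\mA_\mQ^\times \mQ(i)^\times$, with a freely prescribed archimedean component $\xi_\infty$.

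Next, set $\Sigma := \Theta_{V_1, W_1}(\xi, \psi, (\chi_V, \chi_W))$, the theta lift of $\xi$ from $H_1 = \mathrm{U}(1)$ to $G_1 = \mathrm{U}(2)$. By Proposition~\ref{theta gen to gen} this lift is nonzero and globally generic (any character of $\mathrm{U}(1)$ is vacuously generic), and it is cuspidal because $\mathrm{U}(1)$ is the bottom of the Rallis tower so no further descent is possible. The irreducibility and factorization $\Sigma \simeq \otimes_v \theta(\xi_v, \psi_v, (\chi_{V,v}, \chi_{W,v}))$ follow from Gan~\cite{Ga08}. At every finite place the local theta lift of a character of $\mathrm{U}(1)(F_v)$ is an explicit tempered principal series, and at the archimedean place temperedness is ensured by Ichino~\cite{Ich22}. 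In particular $\Sigma$ is an irreducible cuspidal tempered automorphic representation of $\mathrm{U}(2, \mA_\mQ)_{\mQ(i) \slash \mQ}$.

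The remaining and main task is to choose $\xi_\infty, \chi_{V,\infty}, \chi_{W,\infty}$ so that the archimedean local theta lift has $L$-parameter exactly $\chi_{2k} |\cdot|^{it} \oplus {}^c(\chi_{2k}|\cdot|^{it})^{-1}$. For the dual pair $(\mathrm{U}(1), \mathrm{U}(1,1))$ over $\mR$ this is computable directly from the Weil-representation formulas \eqref{Weil1}--\eqref{Weil2}: the base change of the local lift is (up to the standard twist by the splitting characters) of the form $\xi_\infty \chi_{V,\infty} \chi_{W,\infty}^{-1} \boxplus \chi_{V,\infty}$, and as $\xi_\infty$ and $(\chi_{V,\infty}, \chi_{W,\infty})$ range over characters of $\mC^\times$ compatible with the parity constraints $\chi_V|_{\mR^\times} = \mathrm{sgn}$ and $\chi_W|_{\mR^\times} = 1$, any tempered principal-series parameter of $\mathrm{U}(1,1,\mR)$, and in particular the one required, can be attained. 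The main obstacle is precisely this bookkeeping: one must verify that the parameter $\chi_{2k}|\cdot|^{it} \oplus \chi_{2k}|\cdot|^{-it}$ always lies in the resulting parameterized family after accounting for the Weil factors $\gamma_\psi(\cdot)$ and the parity constraints imposed on $\chi_{V,\infty}, \chi_{W,\infty}$ by their global restrictions.
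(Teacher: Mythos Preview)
Your theta-lift approach has a genuine obstruction at the archimedean place when $t \ne 0$. Any $L$-parameter of a character of $\mathrm{U}(1,\mR)\simeq S^1$ is of the form $\chi_{2m}$ for some $m\in\mZ$ (with no $|\cdot|^{i\theta}$-twist), since characters of the compact circle base-change to $w\mapsto (w/\bar w)^m$. Likewise, the global conditions $\chi_V|_{\mA_\mQ^\times}=\chi_E$ and $\chi_W|_{\mA_\mQ^\times}=1$ force $\chi_{V,\infty}|_{\mR^\times}=\mathrm{sgn}$ and $\chi_{W,\infty}|_{\mR^\times}=1$, which in turn force $\chi_{V,\infty}=\chi_{2m}$ ($m\in\tfrac12+\mZ$) and $\chi_{W,\infty}=\chi_{2m'}$ ($m'\in\mZ$), again with no $|\cdot|^{i\theta}$-part. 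Under the parameter formula for the almost-equal-rank lift $(\mathrm{U}(1),\mathrm{U}(2))$, the $L$-parameter of $\theta(\xi_\infty)$ is therefore a sum of two characters of $\mC^\times$, \emph{each} of the form $\chi_{2m}$ with no $|\cdot|$-twist. But the target parameter is $\chi_{2k}|\cdot|^{it}\oplus\chi_{2k}|\cdot|^{-it}$, and for $t\ne 0$ neither summand is of this shape. So the ``bookkeeping'' you flag as the remaining step cannot succeed: the required parameter is simply not in the image of the $\mathrm{U}(1)\to\mathrm{U}(2)$ theta lift at infinity. (Equivalently: lifts from the compact $\mathrm{U}(1,\mR)$ land in the (limit of) discrete series range of $\mathrm{U}(1,1,\mR)$, not in the continuous tempered principal series needed here.)

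The paper avoids this by a different construction that builds in the $|\cdot|^{it}$-twist from the start. It takes a real quadratic field $K/\mQ$ and a Hecke character $\chi$ of $\mA_K^\times/\mA_\mQ^\times K^\times$ with $\chi_{\infty_1}=\chi_{\infty_2}^{-1}=|\cdot|^{it}$ (via Lemma~\ref{quad ext char}); the automorphic induction $I(\chi)$ to $\mathrm{GL}_2(\mA_\mQ)$ is then cuspidal and tempered with $I(\chi)_\infty$ in the principal series $|\cdot|^{it}\times|\cdot|^{-it}$. Choosing a Hecke character $\xi$ of $\mA_{\mQ(i)}^\times$ with $\xi_\infty=\chi_{2k}$ and $\xi|_{\mA_\mQ^\times}$ equal to the central character of $I(\chi)$, one uses the accidental isomorphism $\mathrm{GU}(2)\simeq(\mathrm{GL}_2\times\mathrm{Res}_{\mQ(i)/\mQ}\mathrm{GL}_1)/\{(a,a^{-1})\}$ to regard $(I(\chi),\xi)$ as a cuspidal tempered representation of $\mathrm{GU}(2,\mA_\mQ)$, and then takes an irreducible constituent of its restriction to $\mathrm{U}(2,\mA_\mQ)$. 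Here the $|\cdot|^{it}$-twist lives on the $\mathrm{GL}_2$-factor, where it is unconstrained, rather than on the splitting characters or on a compact $\mathrm{U}(1)$.
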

\begin{proof}
Let $K$ be a real quadratic extension of $\mQ$ and $\chi$ be a character of 
$\mA_K^\times \slash \mA_\mQ^\times K^\times$ such that $\chi_{\infty_1} = \chi_{\infty_2}^{-1}= |\cdot|^{it}$.
The existence of this character follows from Lemma~\ref{quad ext char}.
Let $\mathrm{I}(\chi)$ denote the automorphic induction of $\chi$ to $\mathrm{GL}_2(\mA_\mQ)$.
Then we note that this is cuspidal. If not, $\chi = \chi_0 \circ N_{K \slash \mQ}$ with some character $\chi_0$ of $\mA_\mQ^\times \slash \mQ^\times$.
Since $\chi |_{\mA_\mQ^\times}$ is trivial, we have $\chi_{0}^2 =1$. This does occur since $\chi_{\infty_i}$ is not quadratic. 
Moreover, we know that $\mathrm{I}(\chi)$ is tempered.

Recall that the central character of $I(\chi)$ is $\chi_{K \slash \mQ} \chi|_{\mA_\mQ^\times} = \chi_{K \slash \mQ}$
with the quadratic character $\chi_{K \slash \mQ}$ corresponding to the quadratic extension $K \slash \mQ$.
Then we take a character $\xi$ of $\mA_{\mQ(i)}^\times \slash \mQ(i)^\times$ such that  $\xi_{\infty} = \xi_{\overline{\infty}} = \chi_{2k}$
and $\xi |_{\mA_\mQ^\times}$ is equal to 
the central character of $\mathrm{I}(\chi)$, namely $\xi |_{\mA_\mQ^\times} = \chi_{K \slash \mQ}$.
Since we have an accidental isomorphism 
\begin{equation}
\label{acc isom unitary}
\mathrm{GU}(2, \mA_\mQ)_{\mQ(i) \slash \mQ}
\simeq
\mathrm{GL}_2(\mA_\mQ) \times \mA_{\mQ(i)}^\times \slash \{ (a, a^{-1}) : a \in \mA_{\mQ}^\times \},
 \end{equation}
we may regard 
$(I(\chi), \xi)$ as an irreducible cuspidal tempered automorphic representation of $\mathrm{GU}(2, \mA_\mQ)_{\mQ(i) \slash \mQ}$.
Then from the definition, an irreducible constituent of the restriction of $(I(\chi), \xi)$ to $\mathrm{U}(2, \mA_{\mQ})_{\mQ(i) \slash \mQ}$
satisfies the required condition.
\end{proof}
As for (limit of) discrete series representations of $\mathrm{U}(2, \mR)$, we have the following result
because of the accidental isomorphism \eqref{acc isom unitary} and generalized Ramanujan conjecture for $\mathrm{GL}_2$ by Deligne~\cite{De} and Deligen-Serre~\cite{DS}.
\begin{lemma}
\label{glob lem 2}
For a given irreducible (limit of) discrete series representation $\tau$ of $\mathrm{U}(2, \mR)$, there exists an cuspidal 
tempered automorphic representation of $\mathrm{U}(2, \mA_\mQ)$ with $E = \mQ(i)$ such that its archimedean component is $\tau$.
\end{lemma}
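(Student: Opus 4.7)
The plan is to reduce the construction to $\mathrm{GL}_2(\mA_\mQ)$ via the accidental isomorphism \eqref{acc isom unitary} and then invoke classical results on holomorphic modular forms, exactly paralleling the proof of Lemma~\ref{glob lem 1}. Under \eqref{acc isom unitary}, an irreducible cuspidal automorphic representation of $\mathrm{GU}(2,\mA_\mQ)_{\mQ(i)/\mQ}$ is the same data as a pair $(\Pi,\xi)$, where $\Pi$ is an irreducible cuspidal automorphic representation of $\mathrm{GL}_2(\mA_\mQ)$ and $\xi$ is a Hecke character of $\mA_{\mQ(i)}^\times/\mQ(i)^\times$, subject to the central-character compatibility $\omega_\Pi = \xi|_{\mA_\mQ^\times}$. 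Tempering of $(\Pi,\xi)$ is equivalent to tempering of $\Pi$ (since $\xi$ is unitary), and cuspidality is automatic from cuspidality of $\Pi$.

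First I would extend the given $\tau$ of $\mathrm{U}(2,\mR)_{\mC/\mR}$ (which is isomorphic to $\mathrm{U}(1,1)$, since the form defining $W_1$ has Witt index one) to an irreducible admissible representation of $\mathrm{GU}(2,\mR)_{\mC/\mR}$, and translate it under \eqref{acc isom unitary} at the archimedean place to a pair $(\pi_\infty,\chi_\infty)$, where $\pi_\infty$ is a (limit of) discrete series representation of $\mathrm{GL}_2(\mR)$ and $\chi_\infty$ is a unitary character of $\mC^\times$ satisfying $\omega_{\pi_\infty}=\chi_\infty|_{\mR^\times}$. Second, I would globalize $\pi_\infty$: every (limit of) discrete series representation of $\mathrm{GL}_2(\mR)$ arises (possibly after a mild twist, which can be absorbed into $\chi_\infty$) as the archimedean component of the cuspidal automorphic representation $\Pi$ attached to a holomorphic cuspidal newform of some weight $k\geq 1$, and by Deligne~\cite{De} for $k\geq 2$ and Deligne--Serre~\cite{DS} for $k=1$, such a $\Pi$ is tempered at every finite place. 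Third, I would globalize $\chi_\infty$: invoking Lemma~\ref{quad ext char}, one produces a Hecke character $\xi$ of $\mA_{\mQ(i)}^\times/\mQ(i)^\times$ whose archimedean component is $\chi_\infty$ and whose restriction to $\mA_\mQ^\times$ coincides with $\omega_\Pi$; the difference between the two natural choices (a Hecke character with prescribed archimedean component versus one with prescribed restriction to $\mA_\mQ^\times$) can be adjusted by twisting by a Hecke character that is trivial at infinity and has a prescribed restriction, which exists by weak approximation for idele class groups.

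Finally, I would push $(\Pi,\xi)$ through \eqref{acc isom unitary} to obtain an irreducible cuspidal tempered automorphic representation of $\mathrm{GU}(2,\mA_\mQ)_{\mQ(i)/\mQ}$, and take an irreducible constituent of its restriction to $\mathrm{U}(2,\mA_\mQ)_{\mQ(i)/\mQ}$ whose archimedean component is $\tau$, exactly as in the end of the proof of Lemma~\ref{glob lem 1}. The main technical obstacle is the simultaneous central-character bookkeeping: one has to choose $\Pi$ (i.e., the weight-$k$ newform) and the Hecke character $\xi$ so that both the archimedean data match $(\pi_\infty,\chi_\infty)$ and the global compatibility $\omega_\Pi=\xi|_{\mA_\mQ^\times}$ holds. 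This is possible because the compatibility $\omega_{\pi_\infty}=\chi_\infty|_{\mR^\times}$ is forced at infinity by the assumption that $(\pi_\infty,\chi_\infty)$ descends to $\mathrm{GU}(2,\mR)_{\mC/\mR}$, and this local compatibility can be extended to a global one via a further twist of $\xi$ as described above.
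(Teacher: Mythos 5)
Your proposal is correct and follows exactly the route the paper has in mind (stated in the remark preceding the lemma): pass through $\mathrm{GU}(2)$ via the accidental isomorphism~\eqref{acc isom unitary}, realize a given (limit of) discrete series of $\mathrm{GL}_2(\mR)$ as the archimedean component of a cuspidal $\Pi$ coming from a holomorphic newform, invoke Deligne (weight $\geq 2$) and Deligne--Serre (weight $1$) for temperedness at all places, supply a compatible Hecke character $\xi$ of $\mA_{\mQ(i)}^\times/\mQ(i)^\times$ with the required restriction to $\mA_\mQ^\times$, and take an irreducible constituent of the restriction to $\mathrm{U}(2)$, just as in the proof of Lemma~\ref{glob lem 1}. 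Your added bookkeeping on the central-character compatibility (extending $\omega_\Pi$ to a Hecke character of $\mA_{\mQ(i)}^\times/\mQ(i)^\times$ and then correcting the archimedean component by a twist supplied through Lemma~\ref{quad ext char}) is a legitimate and careful elaboration of a step the paper only asserts.
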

Let us give a proof of Theorem~\ref{main thm} for  the case of non-split real place.
Let us write the $L$-parameter of $\pi_v$ as in \eqref{Lpara nonsplit archi}.

For $1, 3, \dots, \ell-1$, $\chi_{\kappa_i} \oplus \chi_{\kappa_{i+1}}$ is an $L$-parameter for $\mathrm{U}(2, \mR)$ corresponding to (limit of) discrete series representation.
Hence, by  Lemma~ \ref{glob lem 2}, there exists an irreducible cuspidal automorphic representation $\Xi_i$ of $\mathrm{U}(2, \mA_{\mQ})_{\mQ(i) \slash \mQ}$
such that $\Xi_{i, \infty}$ has $L$-parameter $\chi_{\kappa_i} \oplus \chi_{\kappa_{i+1}}$ with $1, 3, \dots, k-1$.
Note that we may take $\Xi_j$ so that $\mathrm{BC}(\Xi_i)$ is not isomorphic to $\mathrm{BC}(\Xi_j)$ when $i \ne j$.

Moreover, for any $ s_j \in i \mR$ such that $s_i \ne s_j$, 
$\xi_{j}[s_j] \oplus {}^c(\xi_j[s_j])^\vee$ is the archimedean of irreducible cuspidal tempered automorphic representation $\Pi[s_j]$ of $\mathrm{U}(2, \mA_{\mQ})_{\mQ(i) \slash \mQ}$
 by Lemma~\ref{glob lem 1}.
Then let us consider the irreducible cuspidal generic automorphic representation $\Sigma[s_1, \dots, s_r]$ of $\mathrm{U}(2n, \mA_\mQ)_{\mQ(i) \slash \mQ}$ constructed by automorphic descent
with 
\[
\mathrm{BC}(\Pi_1[s_1])  \boxplus \mathrm{BC}(\Pi_2[s_2]) \boxplus \cdots \boxplus \mathrm{BC}(\Pi_{r}[s_r]) \boxplus \mathrm{BC}(\Xi_1) \boxplus \cdots \boxplus \mathrm{BC}(\Xi_{k-1}).
\]
Since the base change lift is strong by Kim-Krishnamurthy~\cite{KK04, KK05}, this automorphic representation is tempered at all places.
\\

Let us complete our proof. We shall use the same notation as \cite[Section~3.3, 3.4]{Mo1}.
First, we note that  by the same argument as Corollary~\ref{cor:holo cont}, we see that because of \cite[Lemma~3.4]{Mo1}, 
the left-hand side of \cite[(3.3.2)]{Mo1} (cf. \eqref{3.7}) is holomorphic on $\mathcal{D}$  as a function of  $\underline{s}$
when we consider the following induced representation of $\mathrm{GL}(2n, \mC)$:
\[
\xi_{1}[s_1] \times {}^c\xi_1^\vee[s_1] \times \cdots \times \xi_{r}[s_r] \times {}^c \xi_r^\vee[s_r]  \times \chi_{\kappa_1} \times \cdots \times \chi_{\kappa_{\ell}}.
\]
Hence, it suffices to show \cite[(3.3.2)]{Mo1} for $\Sigma[s_1, \dots, s_r]$ since both-sides of {\it ibid.} is holomorphic on $\mathcal{D}$.
In this case, the formula \eqref{whittaker formula ref} holds by Theorem~\ref{theta G_n to H_n+1} because of the cupidality of theta lifts (Lemma~\ref{global II lift cor}).
In particular, we obtain 
\[
\prod_w c_{\Sigma[s_1, \dots, s_r]_w} = 1
\]
by Lapid and Mao~\cite[Theorem~5.5]{LMa}. On the other hand, by \cite[Theorem~3.2]{Mo1} and Theorem~\ref{main theorem app},
the local identity \eqref{local conj} holds at non-split finite places and split finite places.
Therefore, we have
\[
c_{\Sigma[s_1, \dots, s_r]_\infty}  \times \prod_{w \ne \infty} \omega_{\mathrm{BC}(\Sigma[s_1, \dots, s_r]_w)}(\eta) = 1,
\]
and thus \eqref{local conj} holds for $\Sigma[s_1, \dots, s_r]_\infty$ and thus for $\pi_v$. 
\end{proof}
%
%
%
%
%
\begin{proof}[Proof : Split real place]
Let $v$ be a split real place. In this case, we may apply a similar argument as non-split real place.
Hence, we give only globalization of certain tempered representations of $\mathrm{GL}(2n, \mR)$.
Let us write $\pi_v$ as 
\[
\chi_1[a_1] \times \cdots \times  \chi_k[a_k] \times \chi_{k+1}\times \cdots \times  \chi_\ell \times \tau_1[b_1] \times \cdots \times \tau_{t}[b_t]
\times \tau_{m+1} \times \cdots \times \tau_{u}
\]
with unitary characters $\chi_i$ of $\mR^\times$, $-\frac{1}{2} < a_i, b_i < \frac{1}{2}$ and (limit of) discrete series representations 
$\tau_i$ of $\mathrm{GL}_2(\mR)$ such that $\tau_i \not \simeq \tau_j$ when $m+1 \leq i < j \leq u$ and $\chi_i \ne \chi_j$ when $k+1 \leq i < j \leq \ell$.

By Lemma~\ref{quad ext char}, for any $t_j \in i\mR$, 
there exists an irreducible automorphic representation $\xi_j(t_j)$ of $\mathrm{U}(1, \mQ)_{\mQ(\sqrt{2}) \slash \mQ}$ such that 
$\xi_j(t_j)_\infty = \chi_j[t_j]$.
 Moreover, by Lemma~\ref{glob lem 2}, for any $s_j \in i \mR$, 
 there is an irreducible cuspidal tempered automorphic representation $\Xi_j(s_j)$ of $\mathrm{U}(2, \mQ)_{\mQ(\sqrt{2}) \slash \mQ}$
such that its archimedean component is $\tau_j[s_j]$.
 Associated to the isobaric sum of irreducible cuspidal automorphic representations
 \begin{multline*}
  \mathrm{BC}(\xi_1(t_1)) \boxplus \cdots \boxplus \mathrm{BC}(\xi_k(t_k)) \boxplus
  \mathrm{BC}(\xi_{k+1})\boxplus \cdots \boxplus \mathrm{BC}(\xi_\ell) \boxplus
  \\
  \mathrm{BC}(\Xi_1(s_1)) \boxplus \cdots \boxplus \mathrm{BC}(\Xi_t(s_t)) \boxplus
  \mathrm{BC}(\Xi_{t+1}) \boxplus\cdots \boxplus\mathrm{BC}(\Xi_u),
 \end{multline*}
 we have an irreducible cuspidal globally generic tempered automorphic representation $\Xi$ of 
$\mathrm{U}(2n, \mQ)_{\mQ(\sqrt{2}) \slash \mQ}$ by the descent method.
Then we may apply similar argument as in the previous case, and we obtain a proof of Theorem~\ref{main thm} in this case.
\end{proof}
%
%
%
%
%
\begin{proof}[Proof : Complex place]
Let $v$ be a complex place and $\bar{v}$ denote the complex conjugate place of $v$.
Then we may write 
\[
\pi_v \simeq \chi_1[a_1] \times \cdots \chi_{r}[a_r] \times \chi_{r+1} \times \cdots \times \chi_{2n}
\]
and 
\[
\pi_{\bar{v}} \simeq 
\chi_1^\prime[b_1] \times \cdots \chi_{r^\prime}^\prime[b_{r^\prime}] \times \chi_{r^\prime+1}^\prime \times \cdots \times \chi_{2n}^\prime
\]
with unitary characters $\chi_i, \chi_i^\prime$ of $\mathrm{GL}_1(\mC)$ and $-\frac{1}{2} < a_i, b_i < \frac{1}{2}$.
Then for $j =1, 2$,  for $\underline{s}, \underline{s}^\prime \in (i \mR)^{2n}$, 
there is a unitary character $\xi_i(s_i, s_i^\prime)$ ($1 \leq i \leq 2n$)
 of $\mathrm{U}(1, \mA_\mQ)_{\mQ(\sqrt{-1}, \sqrt{2}) \slash \mQ(\sqrt{-1})}$ whose archimedean component are
 $(\chi_i[s_i], \chi^\prime[s_i^\prime])$.
We consider the isobaric sum
\[
\xi_1(s_1, s_1^\prime) \boxplus \xi_i(s_2, s_2^\prime) \boxplus \cdots \boxplus \xi_{2n}(s_{2n}, s_{2n}^\prime),
\]
and the corresponding irreducible generic cuspidal automorphic representation of $\mathrm{U}(2n, \mA_\mQ)_{\mQ(\sqrt{-1}, \sqrt{2}) \slash \mQ(\sqrt{-1})}$.
In this case, we can show that 
\[
c_{\pi_v} c_{\pi_{\overline{v}}} \times \prod_{w \ne v, \overline{v}} \omega_{\Pi_{w}}(\eta) = 1.
\]
Hence, Theorem~\ref{main thm} holds for $\pi_v$.
\end{proof}
\subsection{Proof of Theorem~\ref{main thm odd}}
Let $(\sigma, V_\sigma)$ be an irreducible cuspidal $\psi_{N^\prime}$-generic representation of 
$G_n(\mA)$. Then by Lemma~\ref{global II lift cor}, there exists $\chi_{W_n}$ and $\chi_{V_{n+1}}$
such that $\Theta_{W_n, V_{n+1}}(\sigma, \psi, \chi)$ is an irreducible cuspidal $\psi_{N^\prime}$-generic representation of $H_{n+1}(\mA)$. Then by Theorem~\ref{main thm} and Theorem~\ref{whittaker u2n-1 to u2n}, the formula \eqref{whittaker formula ref} 
 holds for $\sigma$.

%
%
%
%
%
%
%
%
%
\appendix
\section{A proof of Conjecture~\ref{local conj} at split finite places}
In this appendix, we give a proof of Conjecture~\ref{local conj} at split finite places.
This is proved in a similar argument as \cite{Mo1}, in which we proved this identity at 
non-split finite places. 
\subsection{Notation and Preliminaries}
\subsubsection{Groups, homomorphisms and group elements}
\label{notation 1}
\begin{itemize}
\item Let $F$ be a non-archimedean local field of characteristic zero.
\item Fix a character $\xi$ of $F^\times$.
\item $I_m$ is the identity matrix in $\mathrm{GL}_m$, $w_m$ is the $m \times m$-matrix with ones on the non- principal diagonal and zeros elsewhere.
\item For any group $Q$, $Z_Q$ is the center of $Q$; $e$ is the identity element of $Q$. 
We denote the modulus function of $Q$ (i.e., the quotient of a right Haar measure by a left Haar measure) by $\delta_Q$.
\item $\mathrm{Mat}_m$ is the vector space of $m \times m$ matrices over $F$.
\item For $x = (a, b) \in \mathrm{Mat}(F) \times  \mathrm{Mat}(F)$, $x^\mc = (b, a)$.
\item $x \mapsto {}^t x$ is the transpose on $\mathrm{Mat}_m$; $x \mapsto x^{\vee}$ is the twisted transpose map on 
$\mathrm{Mat}_m \times \mathrm{Mat}_m$
given by $x^{\vee}= (w_m, w_m) {}^{t}x^\mc (w_m, w_m)$; 
$g \mapsto g^\ast$ is the outer automorphism of $\mathrm{GL}_m \times \mathrm{GL}_m$ given by 
$g^\ast= (w_m, w_m)^{-1} ({}^t g^\mc)^{-1} (w_m, w_m)$.
\item $\mathfrak{u}_n = \left\{ x \in \mathrm{Mat}_{n}(F) \times \mathrm{Mat}_n(F) : x^\vee = x \right\}$.
\item $\mM = \mathrm{GL}_{2n}(F) \times \mathrm{GL}_{2n}(F), \mM^\prime = \mathrm{GL}_{n}(F) \times \mathrm{GL}_n(F)$.
\item $G=\mathrm{GL}_{4n}(F)$
\item $G^\prime = \mathrm{GL}_{2n}(F)$
\item $G^\prime$ is embedded as a subgroup of $G$ via $g \mapsto \eta(g) = \mathrm{diag}(I_n, g, I_n)$,
and thus it is the subgroup of $G$ consisting of elements fixing $e_1, \dots, e_n$ and $e_{-n}, \dots, e_{-1}$.
\item $P=MU$ (resp., $P^\prime = M^\prime U^\prime$) is the standard parabolic subgroup of $G$ (resp., $G^\prime$),
with its standard Levi decomposition.
\item $\overline{P}= {}^{t}P$ is the opposite parabolic of $P$, with unipotent radical $\overline{U} = {}^t U$.
\item We define $\varrho(g_1, g_2) = \mathrm{diag}(g_1, g_2)$ an isomorphism of $\mM$ and $M \subset G$. 
Similarly we $\varrho^\prime : \mM^\prime \rightarrow M^\prime \subset G^\prime$ by 
$\varrho^\prime(g_1, g_2) = \mathrm{diag}(g_1, w_{2n}{}^t g_2^{-1} w_{2n})$
\item We use the embeddings $\eta_\mM(g_1, g_2) = (\mathrm{diag}(g_1, I_n), \mathrm{diag}(I_n, g_2))$ and 
$\eta_\mM^\vee(g) = (\mathrm{diag}(I_n, g_1), \mathrm{diag}(g_2, I_n))$ 
to identify $\mM^\prime$ with subgroups of $\mM$. We also set $\eta_M =\varrho \circ \eta_\mM$ 
and $\eta_M^\vee =\varrho \circ \eta^\vee_{\mM}$
\item $K$ is the standard maximal compact subgroup of $G$ (In the $p$-adic case it consists
of the matrices with integral entries).
\item $N$ is the standard maximal unipotent subgroup of $G$ consisting of upper unitriangular matrices; 
$T$ is the maximal torus of G consisting of diagonal matrices; $B=TN$ is the Borel subgroup of $G$.
\item For any subgroup $X$ of $G$ we write $X^\prime = \eta^{-1}(X)$, $X_M = X \cap M$
and $X_\mM =\varrho^{-1}(X_M)$;
similarly $X_{M^\prime}^\prime =X^\prime \cap M^\prime$ and $X_{\mM^\prime}^\prime = (\varrho^\prime)^{-1}(X_{M^\prime}^\prime)$.
\item $\ell_\mM : \mathrm{Mat}_n \times \mathrm{Mat}_n \rightarrow N_\mM \times N_{\mM}$ is the group embedding given by 
$\ell_\mM(x, y) = \left( \left( \begin{smallmatrix} I_n &x\\ &I_n\end{smallmatrix} \right), \left( \begin{smallmatrix} I_n &y\\ &I_n\end{smallmatrix} \right) \right)$
and $\ell_M = \varrho \circ \ell_{\mM}$.
\item $\ell : \mathfrak{u}_{2n} \rightarrow U$ is the group isomorphism given by $\ell(x) = \left( \begin{smallmatrix} I_n &x\\ &I_n\end{smallmatrix} \right)$.
\item $\xi_m = (0, \dots, 0, 1) \in F^m$.
\item $\mathcal{P}$ is the mirabolic subgroup of $\mM$ consisting of the elements $g$ such that $\xi_{2n} g = \xi_{2n}$.
\item Put $H_\mM = \left\{ (g, g) : g \in \mathrm{GL}_{2n}(F)\right\} \subset \mM \simeq \mathrm{GL}_{2n}(F)$.
\item Put $J_{2k} = \left( \begin{smallmatrix} &w_{k}\\ -w_k&\end{smallmatrix} \right)$
\item $\mathfrak{t} = \mathrm{diag}(1, -1, \dots, 1, -1) \in \mM$.
\item $w_0^\prime =  \left( \begin{smallmatrix} &w_n\\ -w_n&\end{smallmatrix} \right) \in G^\prime$ represents the longest Weyl element of $G^\prime$.
\item $w_U = \left( \begin{smallmatrix} &I_{2n}\\ -I_{2n}&\end{smallmatrix} \right) \in G$ represents the longest $M$-reduced Weyl element of $G^\prime$.
\item $w_{U^\prime}^\prime = \left( \begin{smallmatrix} &I_{n}\\ -I_{n}&\end{smallmatrix} \right) \in G^\prime$ represents the longest $M^\prime$-reduced 
Weyl element of $G^\prime$.
\item  $w_0^{\mM} = w_{2n} \in \mM$ represents the longest Weyl element of $\mM$; $w_0^M = \varrho(w_0^{\mM})$.
\item $w_0^{\mM^\prime} = w_{n} \in \mM$ represents the longest Weyl element of $\mM$; $w_0^{M^\prime} = \varrho(w_0^{\mM^\prime})$.
\item $w_{2n,  n} =\left(  \left( \begin{smallmatrix} &I_n\\ I_{n}&\end{smallmatrix} \right), \left( \begin{smallmatrix} &I_n\\ I_{n}&\end{smallmatrix} \right) \right) \in \mM$, 
$w_{2n,  n}^\prime =  \left(
\left( \begin{smallmatrix} &I_n\\ w_0^{\mM^\prime}&\end{smallmatrix} \right), \left( \begin{smallmatrix} &I_n\\ w_0^{\mM^\prime}&\end{smallmatrix} \right) \right) \in \mM$.
\item $\gamma = w_U \eta(w_{U^\prime}^\prime)^{-1} =  \left( \begin{smallmatrix} &I_n&&\\ &&&I_n\\ -I_n&&&\\ &&I_{n}&\end{smallmatrix} \right) \in G$.
\item $\mathfrak{d} = \mathrm{diag}(1, -1, \dots, (-1)^{n-1}) \in \mathrm{Mat}_n$, 
$\varepsilon_1 = (\hat{w} \gamma)^{-1} \varrho(\varepsilon_3) w_U = \ell_M((-1)^n \mathfrak{d})$,
$\varepsilon_2 = \ell_\mM(\mathfrak{d})$,
$\varepsilon_3 = w_{2n, n}^\prime \varepsilon_2$, 
$\varepsilon_4 = \ell_{\mM} (- \frac{1}{2} \mathfrak{d} w_0^{\mM^\prime})$.
\item $V$ (resp. $V^{\#}$) is the unipotent radical of the standard parabolic subgroup of $G$ with Levi $\mathrm{GL}_1(F)^{2n} \times \mathrm{GL}_{2n}(F)$
(resp., $\mathrm{GL}_1(F)^{2n-2} \times \mathrm{GL}_{2n+2}(F)$).
Thus, $N = \eta(N^\prime) \ltimes V$, $V^{\#}$ is normal in $V$ and $V \slash V^{\#}$ is isomorphic to the Heisenberg group of dimension $2n+1$
over $E$. Also $V = V_M \ltimes V_{U}$ where $V_U = V\cap U = \left\{\ell \left(\begin{smallmatrix}x_1&y\\ &x_2\end{smallmatrix} \right) : 
x_1, x_2, y \in \mathrm{Mat}_n(F) \right\}$.
\item $V_- = V_M^{\#} \ltimes V_U$ (Recall $V_M^{\#} = V^{\#} \cap M$ by our convention).
\item $V_\gamma = V \cap \gamma^{-1} N \gamma = \eta(w_{U^\prime}^\prime) V_M \eta(w_{U^\prime}^\prime) 
= \eta_M(N_{\mM^\prime}^\prime) \ltimes \left\{\ell \left(\begin{smallmatrix}x_1&\\ &x_2 \end{smallmatrix} \right) : 
x_1, x_2 \in \mathrm{Mat}_n(F) \right\} \subset V_-$.
\item $V_+ \subset V$ is the image under $\ell_M$ of the space of $n \times n$-matrices over $F$ whose columns and rows are zero
except possibly for the last one. Thus, $V = V_+ \ltimes V_-$.
For $c = \ell_M(x) \in V_+$ we denote by $\underline{c}$ the last row of $x$.
\item $N^{\#} = V_{-} \rtimes \eta(N^\prime)$.It is the stabilizer in $N$ of the character $\psi_U$ defined below.
\item $N_{\mM}^{\flat} = (N_{\mM}^{\#})^\ast$.
\item $J$ is the subspace of $\mathrm{Mat}_n$ consisting of the matrices whose first column is zero.
\item $\bar{R} =  \left\{\ell \left(\begin{smallmatrix}I_n&\\ x&{}^{t}n \end{smallmatrix} \right) : 
x \in J, n \in N_{\mM^\prime}^\prime \right\}$.
\item For a partition $\underline{m}=(m_1, \dots, m_k)$ of $m$, we write by $P_{\underline{m}} = M_{\underline{m}} U_{\underline{m}}$ 
the standard parabolic subgroup of $\mathrm{GL}_m$ whose Levi component $M_{\underline{m}}$
is isomorphic to $\mathrm{GL}_{m_1} \times \cdots \times \mathrm{GL}_{m_\ell}$.
\end{itemize}
%
%
%
%
%
%
%
%
%
%
%
%
%
%
\subsubsection{Characters}
We fix a non-trivial additive character $\psi_F$ of $F$. 
\begin{itemize}
\item $\psi_{N_\mM}((u, v)) = \psi(u_{1, 2} + \cdots + u_{2n-1, 2n}-v_{1, 2}-\cdots - v_{2n-1, 2n})$.
\item $\psi_{N_M} \circ \varrho = \psi_{N_\mM}$.
\item $\psi_{N_{\mM^\prime}^\prime}(u) = \psi(u_{1, 2}^\prime + \cdots + u_{n-1, n}^\prime)$.
\item $\psi_{N_{M^\prime}^\prime} \circ \varrho^\prime = \psi_{N_{\mM^\prime}^\prime}$.
\item $\psi_{N^\prime}(\varrho(n_1, n_2)u) = \psi_{N_{\mM^\prime}^\prime}(n_1 n_2^{-1}) \psi(u_{n, n+1})^{-1}$, $n \in N_{M^\prime}^\prime, u \in U^\prime$.
\item $\psi_N(nu) = \psi_{N_M}(n)$, $n \in N_M, u \in U$ (a degenerate character).
Then $\psi_{N_{M^\prime}^\prime}(u) = \psi_{N} (\gamma \eta(u) \gamma^{-1})$.
\item $\psi_{V_-}(vu) = \psi_{N_M}(v)^{-1} \psi_{U}(u)$, $v \in V_M^{\#}, u \in V_U$. (Note that this is not a restriction of $\psi_{V}$ to $V_-$.)
\item $\psi_{N^\sharp}(\eta(n)v) = \psi_{N^\prime}(n) \psi_{V_-}(v)$
\item $\psi_{N_\mM^\sharp} = \psi_{N^\sharp} \circ \varrho$
\item $\psi_{U}(\ell(v)) = \psi \left(v_{n, n+1}-v_{2n, 1} \right)$.
\item $\psi_{\bar{U}}(\bar{v}) = \psi(\bar{v}_{2n+1,1}-\bar{v}_{4n, 2n})$, $\bar{v} \in \bar{U}$.
\end{itemize}
%
%
%
%
%
%
%
%
%
%
%
%
%
%
\subsubsection{Other notation}
\begin{itemize}
\item We use the notation $a \ll_d  b$ to mean that $a \leq cb$ with $c > 0$ a constant depending on $d$.
\item For any $g \in G$ define $\nu(g) \in \mR >0$ by $\nu(u \varrho(m_1, m_2) k)= | \det m_1 m_2^{-1} |$ for any $u \in U$, $m \in \mM$, $k \in K$. 
Let $\nu^\prime(g)= \nu(\eta(g))$ for $g \in G^\prime$.
\item $\mathcal{CSGR}(Q)$ is the set of compact open subgroups of a topological group $Q$.
\item For an $\ell$-group $Q$ let $C(Q)$ (resp., $\mathcal{S}(Q)$) be the space of continuous (resp., Schwartz)
functions on $Q$ respectively.
\item When $F$ is $p$-adic, if $Q^\prime$ is a closed subgroup of $Q$ and $\chi$ is a character of $Q^\prime$, we
denote by $C(Q^\prime \backslash Q, \chi)$ (resp., $C^{\rm sm}(Q^\prime \backslash Q, \chi), C_c^\infty(Q^\prime \backslash Q, \chi))$) 
the spaces of continuous (resp. $Q$-smooth, smooth and compactly supported modulo $Q^\prime$) complex-valued 
left $(Q^\prime, \chi)$-equivariant functions on $Q$.
\item For an $\ell$-group $Q$ we write $\mathrm{Irr} Q$ for the set of equivalence classes of irreducible representations of $Q$. 
If $Q$ is reductive we also write $\mathrm{Irr}_{\rm sqr} Q$ and $\mathrm{Irr}_{\rm temp} Q$ for the subsets of irreducible unitary 
square-integrable (modulo center) and tempered representations respectively. We write $\mathrm{Irr}_{\rm gen} \mM$ the set of 
irreducible generic representations of $\mM$.
\item As in the inert case, we shall say that $\pi \in \mathrm{Irr} \mM$ is of unitary type if 
$\pi$ has $\mathrm{GL}_{2n}(F)$-invariant linear form, namely $\pi = (\pi_0, \pi_0^\vee)$ with $\pi_0 \in \mathrm{Irr}_{\rm gen} \mathrm{GL}_{2n}(F)$.
We write $\mathrm{Irr}_{\rm gen, ut} \mM$ the set of 
irreducible generic representations of $\mM$ of unitary type.
\item For $\pi \in \mathrm{Irr} Q$, let $\pi^{\vee}$ be the contragredient of $\pi$.
In particular, when $\pi = (\pi_1, \pi_2) \in \mathrm{Irr} \mM$, $\pi^\vee = (\pi_1^\vee, \pi_2^\vee)$.
\item For $\pi \in \mathrm{Irr}_{\rm gen} \mM$, $\mathbb{W}^{\psi_{N_\mM}}(\pi)$ denotes the (uniquely determined) Whittaker space of
$\pi$ with respect to the character $\psi_{N_\mM}$. Similarly we use the notation $\mathbb{W}^{\psi_{N_\mM}^{-1}}$, $\mathbb{W}^{\psi_{N_M}}$,
$\mathbb{W}^{\psi_{N_M}^{-1}}$, $\mathbb{W}^{\psi_{N^\prime}}$, $\mathbb{W}^{\psi_{N^\prime}^{-1}}$.
\item For $\pi \in \mathrm{Irr}_{\rm gen} \mM$ let $\mathrm{Ind}(\mathbb{W}^{\psi_{N_M}}(\pi))$ be the space of smooth left $U$-invariant functions 
$W : G \rightarrow \mC$ such that for all $g \in G$, the function $ m \mapsto \delta_P(m)^{-\frac{1}{2}} W(mg)$ on $M$
belongs to $\mathbb{W}^{\psi_{N_M}}(\pi)$. Similarly define $\mathrm{Ind}(\mathbb{W}^{\psi_{N_M}^{-1}}(\pi))$.
\item
If a group $G_0$ acts on a vector space $W$ and $H_0$ is a subgroup of $G_0$, we denote by $W^{H_0}$ the subspace of $H_0$-fixed points.
\end{itemize}
%
%
%
%
%
%
%
%
%
%
%
%
%
%
\subsection{Measures}
\label{measures}
The Lie algebra $\mathfrak{M}$ of $\mathrm{GL}_m$ consists of the $m \times m$-matrices $X$ over $F$. 
Let $\mathfrak{M}_{\mathcal{O}}$ be the lattice
of integral matrices in $\mathfrak{M}$.
For any algebraic subgroup $\mathbf{Q}$ of $\mathrm{GL}_m$ defined over $F$, let $\mathfrak{q} \subset \mathfrak{M}$
be the Lie algebra of $\mathbf{Q}$. 
The lattice $\mathfrak{q} \cap \mathfrak{M}_{\mathcal{O}}$ of $\mathfrak{q}$ 
gives rise to a gauge form of $\mathbf{Q}$ (determined up to multiplication by an element of $\mathcal{O}^\ast$) 
and we use it (together with $\psi$) to define a Haar measure on $Q$ by the recipe of Kneser~\cite{Kn}.
%
%
%
%
%
%
%
%
%
%
%
%
%
%
\subsection{Weil representation}
Let $F$ be a local field of characteristic zero us briefly recall the Weil representation in the split case.
We define an symplectic form on $F^{2k} \times F^{2k}$ by 
\[
 \langle(u_1, u_2), (v_1, v_2) \rangle
 = \frac{1}{2} \left(u_1 J_{2k} {}^t v_2-v_1 J_{2k} {}^{t}u_2 \right)
\]
where we put 
\[
J_{2k} = \begin{pmatrix}0&w_k\\ -w_k&0 \end{pmatrix}.
\]
Let $\mathcal{H}_{4k} = (F^{2k} \times F^{2k}) \oplus F$ be the Heisenberg group with the multiplication defined by 
\[
\left((u_1, u_2) ; t \right) \cdot \left((v_1, v_2) ; r \right)
= \left(u_1+v_1, u_2+v_2); t+r+ \langle(u_1, u_2), (v_1, v_2) \rangle \right)
\]
Then the right action of $\mathrm{GL}_{2k}(F)$ on $\mathcal{H}_{2k}$ is given by 
\[
\left( (u_1, u_2); t \right) \cdot g
=\left( (u_1 \cdot g, u_2 \cdot {}^\ast g); t\right) 
\]
where we write ${}^{\ast} g = J_{2k} {}^{t}g^{-1} J_{2k}^{-1}$.
Let us fix a character $\mu$ of $F^\times$. Then the Weil representation of $\mathcal{H}_{2k} \rtimes \mathrm{GL}_{2k}(F)$ with respect to $(\mu, \mu^{-1})$
is realized on the  Schwartz-Bruhat functions $\mathcal{S}(F^{2k} \times F^{2k})$, and the action is explicitly given as follows.
\begin{enumerate}
\item $\omega_{\psi, \mu}(((x_1, 0), (x_2, 0)); t)\phi(\xi_1, \xi_2) = \psi(t) \phi(\xi_1+x_1, \xi_2+x_2)$
\item[]
\item $\omega_{\psi, \mu}(((0, y_1), (0, y_2)); t)\phi(\xi_1, \xi_2) = \psi(\xi_1 w_k {}^{t}y_2+\xi_2 w_k {}^{t}y_1) \phi(\xi_1, \xi_2)$
\end{enumerate}
where $x_i, y_i, \xi_i \in F^k$. Moreover, 
\begin{enumerate}
\item $\omega_{\psi, \mu}\left(\begin{pmatrix}a&\\ &b \end{pmatrix} \right)\phi(\xi_1, \xi_2) 
= \mu(\det ab) \left|\frac{\det a}{\det b} \right|^{\frac{1}{2}} \phi(\xi_1 \cdot a, \xi_2 \cdot b^\ast)$
\item[]
\item $\omega_{\psi, \mu} \left(\begin{pmatrix}I_k&S\\ &I_k \end{pmatrix} \right)\phi(\xi_1, \xi_2) = \psi(\xi_1Sw_k {}^t \xi_2) \phi(\xi_1, \xi_2)$
\item[]
\item $\omega_{\psi, \mu} \left( J_{2k}\right)\phi(\xi_1, \xi_2) = \hat{\phi}(\xi_2, \xi_1)$
\end{enumerate}
where $b^\ast = w_k {}^{t}b^{-1} w_k$, $\widehat{\phi}$ is the following Fourier transform, with the self dual Haar measures $dy_1, dy_2$,
\[
\hat{\phi}(x, z) = \int_{F^k \times F^k} \phi(y_1, y_2) \psi(x \cdot {}^{t}y_1 + z \cdot {}^t y_2) \, dy_1 \, dy_2
\]
Let $V_0 \subset V$ be the unipotent radical of the standard parabolic subgroup of $G$
with the Levi component $\mathrm{GL}_1^{2n-2} \times \mathrm{GL}_{2n+2}$. 
For $u \in V$, we put 
\begin{equation}
\label{col row}
u_r = (u_{n, n+j})_{j=1, \dots, 2n} \qquad \text{and} \qquad u_c = {}^{t}(v_{n+j, 3n+1})_{j=1, \dots, 2n}
\end{equation}
Then the map
\[
u \mapsto u_{\mathcal{H}} = (u_r, u_c J_{2n}
; u_{n, 3n+1}-\frac{1}{2}u_r {}^{t}u_c )
\]
gives an isomorphism from $V \slash V_0$ to a Heisenberg group $\mathcal{H}_{2n}$.
Then we may regard $\omega_{\psi, \mu}$ as a representation of $\mathrm{GL}_{2n} \ltimes V \slash V_0$.
Further, extend $\omega_{\psi, \mu}$ as a representation $\omega_{\psi, \mu}$ of $V \rtimes G^\prime$
by setting 
\[
\omega_{\psi, \mu}(vg)\Phi = \psi(v_{1,2}+\cdots+v_{n, n+1}-v_{3n+1, 3n+2}-\cdots - v_{4n-1, 4n}) \omega_{\psi, \mu}(v_{\mathcal{H}})(\omega_{\psi, \mu}(g)\Phi)
\]
%
%
%
%
%
%
%
%
%
%
%
%
%
\subsection{Explicit local descents and a certain local identity}
\subsubsection{Local Fourier-Jacobi transform}
For any $f  \in C(G)$ and $s \in \mC$, define $f_s(g) = f(g)\nu(g)^s$, $g \in G$. 
Let $\pi \in \mathrm{Irr}_{\rm gen} \mM$ with Whittaker model $\mathbb{W}^{\psi_{N_\mM}}(\pi)$.
Let $\mathrm{Ind}(\mathbb{W}^{\psi_{N_\mM}}(\pi))$ be the space of $G$-smooth left $U$-invariant functions 
$W : G \rightarrow \mC$ such that for all $g \in G$, the function $(m_1, m_2) \mapsto \delta_{P}^{-\frac{1}{2}}(\varrho^\prime(m_1, m_2))W(\varrho^\prime(m_1, m_2)g)$ on $\mM$ 
belongs to $\mathbb{W}^{\psi_{N_\mM}}(\pi)$. 
Let $\psi_{N_M}$ be a character of $N_M$ defined by $\psi_{N_M}(\varrho(n_1, n_2)) = \psi_{N_\mM}((n_1, n_2^{-1}))$.
Then $W(\varrho(n_1, n_2)g) = \psi_{N_M}(\varrho(n_1, n_2))$ for any $W \in \mathrm{Ind}(\mathbb{W}^{\psi_{N_\mM}}(\pi))$.

For any $s \in \mC$ we have a
representation $\mathrm{Ind}(\mathbb{W}^{\psi_{N_\mM}}(\pi), s)$ on the space $\mathrm{Ind}(\mathbb{W}^{\psi_{N_\mM}}(\pi))$ 
given by $(I(s,g)W)_s(x) = W_s(xg)$, $x, g \in G$. 
It is equivalent to the induced representation of $\varrho^\prime(\pi) \otimes \nu^s$ 
from $P$ to $G$. 
The family $W_s$, $s \in \mC$ is a holomorphic section of this family of induced representations.

For any $W \in C^{\rm sm}(N \backslash B, \psi_N)$ and $\Phi \in \mathcal{S}(F^n \times F^n)$, we define a  function on $G^\prime$ by
\[
A^{\psi, \mu}(W, \Phi, g, s) := \int_{V_\gamma \backslash V} W_s(\gamma v g) \omega_{{\psi^{-1}}, \mu^{-1}}(vg)
\Phi(\xi_n, \xi_n) \, dv, \quad g \in G^\prime
\]
where $\xi_n, \gamma, V$ and $V_\gamma$ are defined in \ref{notation 1}.
Its property was studied by \cite[Lemma~5.1]{LMa}.
In particular, it satisfies 
\begin{equation}
\label{equiv A}
A^{\psi, \mu} \left(I(s, vx)W, \omega_{\psi^{-1}, \mu^{-1}}(vx) \Phi, g, s \right)
= A^{\psi, \mu} \left(W, \Phi, gx, s \right)
\end{equation}
for any $g, x \in G^\prime$ and $v \in V$ (see Lapid--Mao~\cite[(5.2)]{LMa}).
Further, the integrand of the definition is compactly supported, and $A^{\psi, \mu}$ gives rise 
to a $V \ltimes G$-intertwining map
\[
A^{\psi, \mu} : C^{sm}(N \backslash G, \psi_N) \otimes \mathcal{S}(F^n \times F^n)
\rightarrow C^{sm}(N^\prime \backslash G^\prime, \psi_{N^\prime})
\]
where $V \ltimes G^\prime$ acts via $V \ltimes \eta(G^\prime)$ by right translation on $C^{sm}(N^\prime \backslash G^\prime, \psi_{N^\prime})$.

In order to simplify the notation, we introduce the map $A_{\#}^{\psi, \mu}$ as follows.
Let $V_+ \subset V$ be the image under $\ell_M$ of the space of $\mn \times \mn$-matrices whose 
columns and rows are zero except possibly for the last one.
For $c = \ell_M(x, y) \in V_+$ we set $\underline{c} = (c_r, c_cJ_{2n}, 0)$ when we define 
$c_r \in F^n$ and $c_c \in F^n$ as in \eqref{col row}.
Since we have
\[
A^{\psi, \mu}(W(\cdot c), \Phi(\cdot +\underline{c}), g) = A^{\psi, \mu}(W, \Phi, g), \quad c \in V_+, g \in G^\prime.
\]
We may define the map 
\[
A_{\#}^{\psi, \mu} : C^{\rm sm}(N \backslash G, \psi_N) \rightarrow C^{\rm sm}(N^\prime \backslash G^\prime, \psi_N)
\]
by 
\begin{equation}
\label{A sharp}
A_{\#}^{\psi, \mu}(W, \cdot) = A^{\psi, \mu}(W, \Phi, \cdot)
\end{equation}
for any $\Phi \in \mathcal{S}(F^n \times F^n)$ such that $\Phi \ast W = W$.
Then $A_{\#}^{\psi, \mu}$ has the following equivariance property.
\begin{lemma}[cf. Lemma~3.1 in \cite{LMb}]
\label{Lemma 3.1}
For any $v \in V_-$, $p= \varrho(m_1, m_2) u \in P^\prime$,
we have
\[
A_{\#}^{\psi, \mu}(W(\cdot v \eta(p)), g) = \nu^\prime(\varrho(m_1, m_2))^{\frac{1}{2}} \mu \left(\det (m_1 m_2) \right) \psi_{V_-}(v)
A_{\#}^{\psi, \mu}(W, gp).
\]
Here, we set
\[
\psi_{V_-}(vu) = \psi_{N_M}(v)^{-1} \psi_{U}(u).
\]
\end{lemma}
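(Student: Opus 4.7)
The plan is to derive this equivariance from the basic equivariance \eqref{equiv A} of $A^{\psi,\mu}$ combined with the explicit Schr\"odinger-model formulas for the Weil representation recalled earlier. The mechanism is standard: the right translation by $v\eta(p)$ inside the first argument of $A^{\psi,\mu}$ transfers to right translation by $p$ in the last argument, at the cost of applying $\omega_{\psi^{-1},\mu^{-1}}(v\eta(p))^{-1}$ to the auxiliary Schwartz variable. Evaluating the resulting Schwartz function at $(\xi_n,\xi_n)$ then produces the claimed character $\psi_{V_-}(v)$ and the Jacobian factor $\nu^\prime(\varrho(m_1,m_2))^{1/2}\mu(\det m_1 m_2)$.

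First I would fix an auxiliary $\Phi \in \mathcal{S}(F^n \times F^n)$ with $\Phi \ast W = W$, so that by \eqref{A sharp},
\[
A_\#^{\psi,\mu}(W(\cdot\, v\eta(p)), g) = A^{\psi,\mu}(W(\cdot\, v\eta(p)), \Phi, g).
\]
Applying \eqref{equiv A} with inner variable $v \in V$ and outer variable $p \in G^\prime$, the right-hand side is equal to
\[
A^{\psi,\mu}\bigl(W,\, \omega_{\psi^{-1},\mu^{-1}}(v\eta(p))^{-1}\Phi,\, gp\bigr).
\]
One then checks that $\omega_{\psi^{-1},\mu^{-1}}(v\eta(p))^{-1}\Phi$ still satisfies, up to an overall scalar, the condition used to define $A_\#^{\psi,\mu}$ at the translated point; this scalar is precisely the ratio of $\omega_{\psi^{-1},\mu^{-1}}(v\eta(p))^{-1}\Phi$ to $\Phi$ evaluated at $(\xi_n,\xi_n)$.

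The main task is the computation of this ratio via the explicit Weil-representation action formulas. Writing $p = \varrho(m_1,m_2)u^\prime$ with $u^\prime \in U^\prime$, and decomposing the given $v \in V_-$ as $v = v_{\#} v_U$ with $v_{\#} \in V_M^\#$ and $v_U \in V_U$, I would treat each factor in turn. The Levi element $\eta(\varrho(m_1,m_2))$ contributes $\mu(\det m_1 m_2)\,\nu^\prime(\varrho(m_1,m_2))^{1/2}$ from the diagonal action formula applied with parameters $(\psi^{-1},\mu^{-1})$. The factors $\eta(u^\prime)$ and $v_U$, both lying in $U$, act together through the unipotent action formula and yield the character $\psi_U(u^\prime)$. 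Finally, $v_{\#} \in V_M^\#$ acts through the Heisenberg part of the Weil representation and contributes $\psi_{N_M}(v_{\#})^{-1}$. Multiplying these three contributions produces $\nu^\prime(\varrho(m_1,m_2))^{1/2}\mu(\det m_1 m_2)\psi_{V_-}(v)$, as required.

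The hard part will be the bookkeeping: carefully tracking the isomorphism $V/V_0 \simeq \mathcal{H}_{2n}$ used to define $\omega_{\psi^{-1},\mu^{-1}}$ on $V$, reconciling the sign conventions introduced by the conjugation by $\gamma$ in the definition of $A^{\psi,\mu}$, and confirming that the inverted parameters $(\psi^{-1},\mu^{-1})$ produce precisely the character $\psi_{V_-}$ as specified in the lemma. Once the normalizations are matched, the argument parallels the proof of Lemma~3.1 in \cite{LMb}, with the simplification that in the split setting $E = F \oplus F$ there are no discriminant or Weil-index subtleties.
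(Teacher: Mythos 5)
The paper gives no detailed proof of this lemma: it simply says ``This is proved in the same argument as the proof of \cite[Lemma~3.1]{LMb}.'' Your outline is built on the same framework as that reference, namely the equivariance \eqref{equiv A} together with the explicit Schr\"odinger-model formulas, so the overall strategy is reasonable and matches the intended argument.

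There is, however, a genuine gap in how you extract the scalar. You claim it is ``precisely the ratio of $\omega_{\psi^{-1},\mu^{-1}}(v\eta(p))^{-1}\Phi$ to $\Phi$ evaluated at $(\xi_n,\xi_n)$.'' This is not a well-defined constant. For the Levi part $\eta(\varrho(m_1,m_2))$, the Schr\"odinger model acts by a character \emph{times a rescaling of the argument}: $\Phi(\xi_1,\xi_2)\mapsto(\text{char})\cdot\Phi(\xi_1 a,\xi_2 b^\ast)$. Thus $\omega_{\psi^{-1},\mu^{-1}}(v\eta(p))^{-1}\Phi(\xi_n,\xi_n)$ is a character value times $\Phi$ evaluated at a \emph{shifted} point, not a scalar multiple of $\Phi(\xi_n,\xi_n)$. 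The proposed ratio depends on $\Phi$ and cannot equal the factor in the lemma. What actually happens is that, after you move the $\omega(v\eta(p))^{-1}$ onto the Schwartz argument and push $\eta(p)$ through the $V$-integral by conjugation, a change of variables in $V_\gamma\backslash V$ is needed; the Jacobian of this substitution supplies the modulus factor $\nu^\prime(\varrho(m_1,m_2))^{1/2}$, and only the remaining character contributions (the $\mu(\det m_1m_2)$ from the Levi action and the $\psi_{V_-}(v)$ from the Heisenberg/unipotent action) are read off pointwise. Your ``evaluate the ratio at $(\xi_n,\xi_n)$'' shortcut skips the change of variables entirely and cannot account for the $\nu'$ factor.

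A secondary issue: you fix $\Phi$ with $\Phi\ast W = W$ and then apply \eqref{A sharp} to $W(\cdot\,v\eta(p))$. But \eqref{A sharp} requires $\Phi\ast\bigl(W(\cdot\,v\eta(p))\bigr)=W(\cdot\,v\eta(p))$, which is a different condition (the two are related by conjugation on $V_+$, again introducing a Jacobian). This compatibility needs to be checked explicitly; it is part of the same bookkeeping you defer to at the end, but as written the step is not justified.

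Both issues are fixable by the standard (if tedious) manipulation of the integral defining $A^{\psi,\mu}$, so the overall approach is sound; but the proof as written would not produce the $\nu^\prime(\varrho(m_1,m_2))^{1/2}$ factor correctly, and should not be accepted without carrying out the change of variables.
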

\begin{proof}
This is proved in the same argument as the proof of \cite[Lemma~3.1]{LMb}.
\end{proof}
%
%
%
%
%
%
%
%
%
%
%
%
%
%
%
%
%
%
%
%
\subsection{Explicit local descent}
Define the intertwining operator 
\[
M(\pi, s) = M(s) : \mathrm{Ind} \left( \mathbb{W}^{\psi_{N_\mM}}(\pi), s\right) \rightarrow 
\mathrm{Ind} \left( \mathbb{W}^{\psi_{N_\mM}}(\mc (\pi)), -s \right) 
\]
by (the analytic continuation of)
\[
M(s)W(g) = \nu(g)^s \int_{U} W_s(Ew_U u g) \, du
\]
where we write $\mc (\pi)=(\pi_2, \pi_1)$ for $\pi = (\pi_1, \pi_2) \in \mathrm{Irr} \mM$ and 
$E = \mathrm{diag}(1, -1, \dots, 1, -1) \in \mathrm{GL}_{2n}$ is introduced in order to preserve the character $\psi_{N_M}$ and 
\[
w_U = \begin{pmatrix} &1_{2n}\\ -1_{2n}&\end{pmatrix}.
\]
Then we know that $M(s)$ is holomorphic at $s = \frac{1}{2}$ when $\pi \in \mathrm{Irr}_{\rm gen} \mM$ is of unitary type
by Lapid--Mao~\cite[Proposition~2.1]{LMa}.
By abuse of notation, we will also denote by $M(\pi, s)$ the intertwining operator 
$\mathrm{Ind} \left( \mathbb{W}^{\psi_{N_M}^{-1}}(\pi), s\right) \rightarrow 
\mathrm{Ind} \left( \mathbb{W}^{\psi_{N_M}^{-1}}(\mc(\pi)), -s \right)$ defined in the same way.

For simplicity we denote $M_s^\ast W := (M(s)W)_{-s}$ so that 
\[
M_s^\ast W = \int_{U} W_s(E w_U u \cdot ) \, du
\]
for $\mathrm{Re}(s) \gg_{\pi} 1$. Set $M^\ast W := M_{\frac{1}{2}}^\ast W$.
%
%
%
\begin{Definition}[explicit local descent]
Suppose that $\pi \in \mathrm{Irr}_{\rm gen} \mM$ is of unitary type.
Then we denote by $\mathcal{D}_\psi^\mu(\pi)$ the space of Whittaker function on $G^\prime$ generated by 
\[
A^{\psi, \mu}\left(M \left( \frac{1}{2}\right)W, \Phi, \cdot,-\frac{1}{2} \right), \quad W \in \mathrm{Ind} (\mathbb{W}^{\psi_{N_M}}(\pi)), \Phi \in \mathcal{S}(F^n \times F^n).
\]
\end{Definition}
We note that from \cite[Proposition~2.1]{LMa} and the remark below \cite[Lemma~5.1]{LMa},
 $\mathcal{D}_\psi^\mu(\pi)$ is not zero.
%
%
%
%
%
%
%
%
%
%
%
\subsection{Good representations}
Let $\pi \in \mathrm{Irr}_{\rm gen} \mM$ and $\sigma^\prime \in \mathrm{Irr}_{\mathrm{gen}, \psi_{N^\prime}^{-1}} G^\prime$
with the Whittaker model $\mathbb{W}^{\psi_{N^\prime}^{-1}} (\sigma)$.
For any $W^\prime \in \mathbb{W}^{\psi_{N^\prime}^{-1}} (\sigma^\prime)$ and $W \in \mathrm{Ind} (\mathbb{W}^{\psi_{N_M}} (\pi))$, 
we define
\[
J(W^\prime, W, s) := \int_{N^\prime \backslash G^\prime} W^\prime(g^\prime)  A_{\#}^{\psi, \mu}(W_s, g^\prime)\, dg^\prime.
\]
By \cite[Proposition~3.1]{BAS}, $J$ converges for $\mathrm{Re}(s) \gg_{\pi, \sigma^\prime} 1$ and admits 
a meromorphic continuation in $s$. Moreover, by \cite[Proposition~4.1]{BAS}, for any $s \in \mC$, 
we can choose $W$ and $W^\prime$ such that $J(W^\prime, W, s) \ne 0$.
%
%
%
\begin{Definition}
\label{def inner sigma}
Let $\pi \in \mathrm{Irr}_{\rm gen, ut} \mM$. We say $\pi$ is \textit{good} if it satisfies the following three conditions.
\begin{enumerate}
\item $\mathcal{D}_\psi^\mu(\pi)$ and $\mathcal{D}_{\psi^{-1}}^{\mu^{-1}}(\mc(\pi))$ are irreducible.
Write $\sigma = \mathcal{D}_\psi^\mu(\pi)$ and $\sigma^\prime = \mathcal{D}_{\psi^{-1}}^{\Upsilon^{-1}}(\mc(\pi))$.
\item $J(W^\prime, W, s)$ is holomorphic at $s = \frac{1}{2}$ for any $W^\prime \in \mathcal{D}_{\psi^{-1}}^{\mu^{-1}}(\mc(\pi))$ 
and $W \in \mathrm{Ind}(\mathbb{W}^{\psi_{N_M}}(\pi) )$.
\item There exists a non-degenerate $G$-invariant bilinear form $[ \cdot, \cdot ]_{\sigma^\prime}$ 
on $\mathcal{D}_{\psi^{-1}}^{\mu^{-1}}(\mc(\pi)) \times \mathcal{D}_\psi^\mu(\pi)$ such that
\[
J \left(W^\prime, W, \frac{1}{2} \right) = \left[W^\prime, A_{\#}^{\psi, \Upsilon}(M^\ast W, \cdot ) \right]_{\sigma^\prime}.
\]
\end{enumerate}
\end{Definition}
The definition of good representations was introduced and discussed in \cite[5.3]{LMa}.
In particular, we know that if $\pi$ is good,
then there exists a non-zero constant $c_\pi$ such that for any 
$W^\prime \in \mathbb{W}^{\psi_{N^\prime}^{-1}}\left( \sigma^\prime \right)$
and $(W^\prime)^{\vee} \in \mathbb{W}^{\psi_N}\left(\sigma \right)$, we have
\begin{equation}
\label{ILM 3.7}
\int_{N^\prime}^{st} [\sigma^\prime(n)W^\prime, (W^\prime)^{\vee}]_{\sigma^\prime} \psi_{N^\prime}(n) \, dn
= c_\pi W^\prime(e) (W^\prime)^\vee(e).
\end{equation}
More explicitly, for any $W \in \mathrm{Ind}(\mathbb{W}^{\psi_{N_M}}(\pi))$
and $W^{\wedge} \in \mathrm{Ind}(\mathbb{W}^{\psi_{N_M}^{-1}}(\pi))$ we have
\begin{multline}
\label{3.7}
\int_{N^\prime}^{st} J(A_{\#}^{\psi^{-1}, \mu^{-1}}(M^\ast W^\wedge , \cdot n), W, \frac{1}{2}) \psi_{N^\prime}(n) \, dn
\\
=c_\pi A_{\#}^{\psi^{-1}, \mu^{-1}}(M^\ast W^\wedge, e)   A_{\#}^{\psi, \mu}(M^\ast W, e).  
\end{multline}
In the rest of the appendix, we will prove the following theorem.
\begin{theorem}
\label{main theorem app}
For any unitarizable $\pi  = (\sigma, \sigma^\vee)\in \mathrm{Irr}_{\rm gen, ut} \mM$ which is good, we have $c_\pi = \omega_{\sigma}(-1)$.
\end{theorem}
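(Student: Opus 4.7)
The plan is to establish Theorem~\ref{main theorem app} by transporting the computation of \cite[Section~3]{Mo1} from the inert to the split setting. The constant $c_\pi$ is characterized by the identity \eqref{3.7}, so the task is to evaluate both sides on a single well-chosen pair of test vectors and read off $c_\pi$. Since the right-hand side of \eqref{3.7} factors as a product of explicit Fourier--Jacobi values at the identity, the entire content of the theorem is concentrated in the stable integral on the left; the goal is to show that, after unfolding, this integral differs from the right-hand side exactly by the scalar $\omega_\sigma(-1)$.

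Concretely, I would first fix a sufficiently small compact open subgroup $K_0 \subset G$ and produce $W \in \mathrm{Ind}(\mathbb{W}^{\psi_{N_M}}(\pi))$ and $W^\wedge \in \mathrm{Ind}(\mathbb{W}^{\psi_{N_M}^{-1}}(\pi))$ that are $K_0$-fixed and whose supports on $N \backslash G$ are concentrated near the identity after applying $M^\ast$. In parallel I would choose $\Phi, \Phi^\vee \in \mathcal{S}(F^n \times F^n)$ so that via \eqref{A sharp} the functions $A_{\#}^{\psi,\mu}(M^\ast W, \cdot)$ and $A_{\#}^{\psi^{-1},\mu^{-1}}(M^\ast W^\wedge, \cdot)$ are supported on a small neighborhood of $N^\prime$ in $N^\prime \backslash G^\prime$. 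With such a choice, both Fourier--Jacobi values $A_{\#}^{\psi,\mu}(M^\ast W, e)$ and $A_{\#}^{\psi^{-1},\mu^{-1}}(M^\ast W^\wedge, e)$ are explicit nonzero constants computable from the volumes of $K_0 \cap N$ and analogous compact sets appearing in the definition of $M^\ast$ and $A^{\psi,\mu}$.

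Next, I would unfold the stable integral on the left-hand side of \eqref{3.7}. Writing $J$ as an integral over $N^\prime \backslash G^\prime$ and using Lemma~\ref{Lemma 3.1} together with the equivariance \eqref{equiv A} of $A^{\psi,\mu}$, the $N^\prime \backslash G^\prime$-integration can be collapsed to a neighborhood of the identity, because the support properties imposed above force all the integrands to vanish outside a small Iwahori-type double coset. The stable integral over $N^\prime$ then reduces to an honest integral over a small compact open subgroup of $N^\prime$. The crucial point is that in executing this reduction, one must pass $M^\ast$ through the action of a representative $E = \mathrm{diag}(1,-1,\ldots,1,-1) \in \mM$ and the $M$-Weyl element $w_U$. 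Conjugating $E$ across $w_U$ produces a diagonal element in $\mM$ whose effect on the induced representation, modulo what cancels between the two factors of $\pi=(\sigma,\sigma^\vee)$, is precisely multiplication by $\omega_\sigma(-1)$. Comparing with the factorized right-hand side then yields $c_\pi = \omega_\sigma(-1)$.

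The main technical obstacle is the same as in the inert case: one must rigorously justify the interchange of the stable integral, the $G^\prime$-integral in $J$, the Jacquet-type integral defining $M^\ast$, and the Weil-representation integral inside $A^{\psi,\mu}$, all of which a priori are only defined by analytic continuation or in the stable sense. This is handled by imposing on the test vectors $W, W^\wedge, \Phi, \Phi^\vee$ strong enough compactness conditions that all the integrals involved become genuinely compactly supported once restricted to the relevant double coset, so that Fubini applies without qualification. A secondary obstacle, specific to the split case, is that the local Weil representation is now parametrized by a pair $(\mu,\mu^{-1})$ of characters of $F^\times$ rather than by a single conjugate-self-dual character of $E^\times$; one must verify that the explicit formulas of Section~\ref{measures}--\ref{def inner sigma} transport the identities of \cite[Section~3]{Mo1} without introducing any additional sign, which is ultimately where the equality with $\omega_\sigma(-1)$ (as opposed to $\omega_\sigma(-1) \cdot \mu(-1)$ or similar) must be carefully checked.
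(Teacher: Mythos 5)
Your proposal diverges substantially from the paper's argument, and there is a genuine gap in the step where you claim to produce the sign.

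The paper does not prove Theorem~\ref{main theorem app} by picking test vectors supported near the identity and unfolding the stable integral directly. Instead, it first reduces to the tempered case (via Tadi\'c's classification of the unitary dual), and then proceeds through a chain of structured identities: it rewrites the left side of \eqref{3.7} as $B(W,M(\tfrac12)W^\wedge,\tfrac12)$, expresses $B$ as a $T'$-integral of products of the stabilized functions $Y^{\psi,\mu}$, passes to the functions $E^\psi$ via Proposition~\ref{last prp}, and only then invokes Theorem~\ref{Theorem 8.1} and Corollaries~\ref{cor8.2}, \ref{cor8.5} to identify the outcome with $\omega_\pi(-1,1)\,A_e^\psi(M^\ast W)$. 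The test vectors are not generic vectors supported near $e$; they are drawn from the carefully prescribed subspaces $\mathrm{Ind}(\mathbb{W}^{\psi_{N_M}}(\pi))^\circ_\#$ and $\mathrm{Ind}(\mathbb{W}^{\psi_{N_M}^{-1}}(\pi))^\circ_\natural$, whose support conditions are dictated by the Bruhat cell structure and the mirabolic/torus decompositions, not simply by ``concentration near the identity.''

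The more serious issue is your attribution of the factor $\omega_\sigma(-1)$ to ``conjugating $E=\mathrm{diag}(1,-1,\dots,1,-1)$ across $w_U$.'' This is not where the sign comes from, and the proposal offers no computation to back it up. In the paper the sign emerges from the model-transition identity \eqref{8.6} (the second statement of \cite[Corollary~9.7]{Mo2}), which relates the Fourier--Jacobi value $A_e^\psi(M^\ast W)$ to an integral of the $H$-invariant functional $L_W$ over $N_{\mM,\Delta}$ and $\mathfrak{E}$; the constant $\omega_\pi(-1,1)$ is produced by that model transition, which in turn rests on the Bernstein-type $H_\mM$-invariant functional $\mathfrak{P}^{H_\mM}$ and the passage between the Whittaker model and the $H$-period model. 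A direct ``move $E$ through $w_U$'' manipulation would only shift a diagonal element through the intertwining integral; it does not by itself convert a Whittaker-type period into a unitary-period and does not account for the Jacobian and invariance computations that the model transition encapsulates. Without reproducing that input (or an equivalent), the proposed proof cannot reach the stated conclusion, and the asserted identity $c_\pi=\omega_\sigma(-1)$ remains unjustified.
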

Using the classification of irreducible unitary generic representation
of $\mathrm{GL}_n$ by Tadi\'{c} (see \cite[Theorem~A]{Tad}) instead of \cite[Theorem~3.3]{Mo1}, 
a similar argument as the proof of \cite[Proposition~3.1]{Mo1}, indeed by word-for-word argument, shows the following reduction.
\begin{proposition}
Suppose that \eqref{local identity conj} holds for any good $\pi \in \mathrm{Irr}_{\rm temp, ut} \mM$ such that 
$\mathcal{D}_\psi^\mu(\pi)$ is tempered. Then Theorem~\ref{main theorem app} holds.
\end{proposition}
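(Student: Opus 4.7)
The plan is to reduce the general unitarizable case to the tempered case by an analytic deformation, using Tadi\'{c}'s classification of irreducible unitary generic representations of $\mathrm{GL}_m(F)$ in place of \cite[Theorem~3.3]{Mo1}. By Tadi\'{c}, any unitary generic $\sigma \in \mathrm{Irr}_{\rm gen}\mathrm{GL}_{2n}(F)$ has the form
\[
\sigma \;=\; \tau_1[s_1] \times \tau_1^{\vee}[-s_1] \times \cdots \times \tau_k[s_k] \times \tau_k^{\vee}[-s_k] \times \tau_0,
\]
with each $\tau_i$ ($i\geq 1$) tempered, $\tau_0$ tempered generic, and $0 \leq s_i < 1/2$. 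Setting $\pi = (\sigma,\sigma^{\vee})$ gives a natural holomorphic family $\pi[\underline{s}]$ indexed by $\underline{s} = (s_1,\dots,s_k) \in \mathbb{C}^k$, irreducible and generic on the open tube $\mathcal{D} = \{\underline{s} : |\mathrm{Re}(s_i)| < 1/2\}$; it specializes to $\pi$ at the given parameter, and it is tempered of unitary type for $\underline{s} \in (i\mathbb{R})^k$.

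The strategy is then to interpret both sides of the identity $c_{\pi[\underline{s}]} = \omega_{\sigma[\underline{s}]}(-1)$ as holomorphic functions of $\underline{s}$ on $\mathcal{D}$ and invoke analytic continuation. The right-hand side is manifestly locally constant in $\underline{s}$, since the complementary-series twists of $\tau_i$ and $\tau_i^{\vee}$ contribute conjugate factors $|{-1}|^{\pm s_i} = 1$ to the central character. For the left-hand side, one realizes the Whittaker spaces $\mathbb{W}^{\psi_{N^\prime}^{-1}}(\mathcal{D}_\psi^\mu(\pi[\underline{s}]))$ and $\mathbb{W}^{\psi_N}(\mathcal{D}_{\psi^{-1}}^{\mu^{-1}}(\mc(\pi[\underline{s}])))$ through flat sections of the families $\mathrm{Ind}(\mathbb{W}^{\psi_{N_\mM}}(\pi[\underline{s}]), s)$, and checks that the intertwining operator $M(\pi[\underline{s}],\tfrac12)$, the Fourier--Jacobi transform $A_{\#}^{\psi,\mu}$, the descent zeta integral $J(W^\prime,W,\tfrac12)$, and the bilinear pairing $[\cdot,\cdot]_{\sigma^\prime[\underline{s}]}$ all depend holomorphically on $\underline{s} \in \mathcal{D}$. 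By the hypothesis of the proposition, together with the temperedness of $\mathcal{D}_\psi^\mu(\pi[\underline{s}])$ on $(i\mathbb{R})^k$ supplied by \cite{Ich22}, the identity $c_{\pi[\underline{s}]} = \omega_{\sigma[\underline{s}]}(-1)$ holds on the unitary axis, and hence on all of $\mathcal{D}$ by the identity principle.

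The main obstacle lies in establishing the joint holomorphy of $c_{\pi[\underline{s}]}$ on $\mathcal{D}$, which splits into three points: holomorphy at $s=\tfrac12$ of the standard intertwining operator $M(\pi[\underline{s}],s)$, which follows from the Lapid--Mao analysis \cite[Proposition~2.1]{LMa} extended across the tube $\mathcal{D}$; holomorphy of $J(W^\prime,W,s)$ at $s=\tfrac12$, which is part of goodness and must be propagated off the tempered locus by a convergence estimate of the type \eqref{exp estimate}; and, most delicately, meromorphic (indeed holomorphic) extension of the bilinear pairing $[\cdot,\cdot]_{\sigma^\prime[\underline{s}]}$ on $\mathcal{D}_{\psi^{-1}}^{\mu^{-1}}(\mc(\pi[\underline{s}])) \times \mathcal{D}_\psi^\mu(\pi[\underline{s}])$ together with its non-degeneracy on a dense open subset of $\mathcal{D}$ containing the unitary axis. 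As the author indicates, each of these points is handled by the word-for-word transcription of the argument in \cite[Proposition~3.1]{Mo1}, the only substantive change being the replacement of the input classification statement by Tadi\'{c}'s classification \cite[Theorem~A]{Tad}.
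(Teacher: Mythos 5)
Your proposal takes essentially the same route as the paper: the paper's own proof is a one-sentence reduction invoking, word for word, the deformation and analytic-continuation argument of \cite[Proposition~3.1]{Mo1} with Tadi\'{c}'s classification \cite[Theorem~A]{Tad} replacing the classification input, which is precisely the scheme you lay out (complementary-series deformation $\pi[\underline{s}]$, holomorphy of $M(\pi[\underline{s}],s)$, $A_{\#}^{\psi,\mu}$, $J$ and the pairing on the tube, constancy of $\omega_{\sigma[\underline{s}]}(-1)$, and the identity principle from the unitary axis). The one blemish is your appeal to \cite{Ich22} for temperedness of $\mathcal{D}_\psi^\mu(\pi[\underline{s}])$ on the unitary axis: that reference concerns archimedean theta lifts for unitary groups and is irrelevant to the split $p$-adic descent here, where this point is instead handled as in \cite{Mo1} (via the identification of the descent of a tempered unitary-type $\pi$ following \cite{LMa}).
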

%
%
%
%
%
%
%
%
%
%
%
%
%
%
\subsection{Proof of Theorem~\ref{main theorem app}}
Suppose that $\pi \in \mathrm{Irr}_{\rm ut} \mM$ is good  and tempered and that 
$\mathcal{D}_\psi^\mu(\pi)$ is tempered.
Then in this section, we shall prove 
\begin{multline}
\tag{MI}
\int_{N^\prime}^{st} J(A_{\#}^{\psi^{-1}, \mu^{-1}}(M^\ast W^\wedge , \cdot n), W, \frac{1}{2}) \psi_{N^\prime}(n) \, dn
\\
=\omega_{\pi}(-1, 1)A_{\#}^{\psi^{-1}, \mu^{-1}}(M^\ast W^\wedge, e)   A_{\#}^{\psi, \mu}(M^\ast W, e).  
\end{multline}
Our proof is similar to the proof of \cite[(MI) in p.730]{LMb}
as in the non-split case \cite[(MI) in p.1122]{Mo1}. Indeed, by word-for-word argument, we 
may show an analogue of \cite[Proposition~6.3]{Mo1} (see Proposition~\ref{last prp}). 
Hence, we shall briefly recall an argument on the reduction to Proposition~\ref{last prp} and
we mainly discuss an application of model transition given in \cite{Mo2} to a proof of (MI).
\\

Fix an element $\varepsilon$ of the form $\ell_M((X_1, X_2))$ where $X_i \in \mathrm{Mat}_{n \times n}$
and the last row of $X_1$ is $-\xi_{2n}$ and the first column of $X_2$ is ${}^{t}\xi_{2n}w_{2n}$.
Then 
\[
\psi_{V_-}(\varepsilon^{-1}v\varepsilon) = \psi_{V_-}(v) \psi(v_{n, 2n+1}+v_{2n, 3n+1}).
\]
Recall that $T^\prime$ is the set of diagonal matrices of $G^\prime$.
Let $K_0 \in \mathcal{CSGR}(G)$. By \cite{LM13}, the integral
\[
Y^{\psi, \mu}(W, t) := \int_{N^\prime}^{\rm st} A_{\sharp}^{\psi, \mu} (W, w^\prime_{U^\prime} w_0^{M^\prime} tn) 
\psi_{N^\prime}(n)^{-1} \, dn
\]
stabilizes uniformly for $W \in C(N \backslash G, \psi_N)^{K_0}$ and locally uniformly in $t \in T^\prime$
where
\[
w^\prime_{U^\prime} = \begin{pmatrix} 1_n&&&\\ &&1_n&\\ &-1_n&&\\ &&&1_n\end{pmatrix},
\qquad
w_0^{M^\prime} = \begin{pmatrix} 1_n&&&\\ &w_n&&\\ &&w_n&\\ &&&1_n\end{pmatrix}.
\]
In particular, $Y^{\psi, \mu}(W_s, t)$ is entire in $s \in \mC$ and if $\pi \in \mathrm{Irr}_{\rm gen} \mM$ and 
$W \in \mathrm{Ind}(\mathbb{W}^{\psi_{N_M}}(\pi))$ then $Y^{\psi, \mu}(M_s^\ast W, t)$
is meromorphic in $s$.
Both $Y^{\psi, \mu}(W_s, t)$ and $Y^{\psi, \mu}(M_s^\ast W, t)$ are locally constant in $t$, uniformly in $s \in \mC$.
\\

Let  $\mathrm{Ind}(\mathbb{W}^{\psi_{N_M}}(\pi))^\circ$ be the $P$-invariant subspace
of $\mathrm{Ind}(\mathbb{W}^{\psi_{N_M}}(\pi))$ consisting of functions supported in the big cell $Pw_U P = Pw_U U$.
Any element of $\mathrm{Ind}(\mathbb{W}^{\psi_{N_M}}(\pi))^\circ$ is a linear combination of functions of the form
\begin{equation}
\label{(5.2)}
W(u^\prime m w_U u) =\delta_P(m)^{\frac{1}{2}} W^M(m) \phi(u), m \in M, u, u^\prime \in U
\end{equation}
with $W^M \in \mathbb{W}^{\psi_{N_M}}(\pi)$ and $\phi \in C_c^\infty(U)$.
%
%
%
%
%
%
%
%
%
\begin{Definition}
Let $\mathrm{Ind}(\mathbb{W}^{\psi_{N_\mM}}(\pi))^\circ_\#$ be the linear subspace of $\mathrm{Ind}(\mathbb{W}^{\psi_{N_\mM}}(\pi))$
generated by $W's$ as in \eqref{(5.2)} that satisfy the additional property that the function
$(t, n) \mapsto W^M(\eta_M(t w_0^{\mM^\prime} n))$ is compactly supported on $T_{\mM^\prime}^\prime \times N_{\mM^\prime}^\prime$.
\end{Definition}
For $W \in \mathrm{Ind} (\mathbb{W}^{\psi_{N_\mM}}(\pi))_{\sharp}^\circ$
and $W^\vee \in \mathrm{Ind} (\mathbb{W}^{\psi_{N_\mM}^{-1}}(\pi^\vee))$, we define 
\[
B(W, W^\vee, s) : = \int_{N^\prime}^{\rm st} 
\left( \int_{N^\prime \backslash G^\prime} A_{\sharp}^{\psi, \mu}(W_s, g)A_{\sharp}^{\psi^{-1}, \mu^{-1}}(W_{-s}^\vee, gu)  \, dg \right) \, du
\]
From analogous results as \cite[Lemma~5.4, Lemma~5.6]{LMb}, we see that this is an entire function of $s$ and we have 
\[
B(W, W^\vee, s)  = \int_{T^\prime}  Y^{\psi, \mu}(W_s, t) Y^{\psi^{-1}, \mu^{-1}}(W_{-s}^\vee, t) \delta_{B^\prime}(t)  \, dt.
\]
Then we note that the left-hand side of \eqref{3.7} is equal to 
$B \left(W, M\left( \frac{1}{2}\right)W^\vee, \frac{1}{2} \right)$.

Let us rewrite the right-hand side of \eqref{3.7} following \cite{LMb}.
For $W \in C^{\rm sm}(N \backslash G, \psi_N)$, define
\begin{equation}
\label{6.1}
A_{e}^{\psi}(W) = \int_{V_\gamma \backslash V_-} W(\gamma v \varepsilon_1) \psi_{V_-}(\varepsilon_1^{-1} v \varepsilon_1)^{-1} \, dv.
\end{equation}
Then in a similar argument as \cite[Lemma~6.1]{LMb}, we can show the following lemma.
\begin{lemma}(cf. Lemma~5.1 and Remark~5.1 in \cite{Mo1})
\label{lemma5.1}
For any $W \in C^{\rm sm}(N \backslash G, \psi_N)$, the integrand in \eqref{6.1} is compactly supported on $V_\gamma \backslash V$
and we have $A_{\#}^{\psi, \mu}(W, e) = A_{e}^\psi(W)$.
\end{lemma}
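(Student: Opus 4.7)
The plan is to mimic the proof of \cite[Lemma~6.1]{LMb} (whose non-split counterpart is \cite[Lemma~5.1]{Mo1}), adapting the Heisenberg/Weil bookkeeping to the split setting described above. First I would use the semidirect decomposition $V=V_+\ltimes V_-$ and rewrite the defining integral
\[
A^{\psi,\mu}(W,\Phi,e)=\int_{V_\gamma\backslash V} W(\gamma v)\,\omega_{\psi^{-1},\mu^{-1}}(v)\Phi(\xi_n,\xi_n)\,dv
\]
as an iterated integral: first over the image of $V_+$ in $V_\gamma\backslash V$, then over $V_\gamma\backslash V_-$. From Section~3.3, the Weil representation action of $c\in V_+$ on a Schwartz function is a pure translation, $\omega_{\psi^{-1},\mu^{-1}}(c)\Phi(\xi_n,\xi_n)=\Phi(\xi_n+\underline{c}_1,\xi_n+\underline{c}_2)$ up to a central phase, while $W(\gamma\cdot c)=W(\gamma\cdot)$ because $V_+\subset V_\gamma$ acts trivially from the right on $\gamma$ via the embedding structure. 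Choosing $\Phi$ so that $\Phi*W=W$ and that $\Phi$ is supported in a small neighborhood of $(-\xi_n,\xi_n w_{2n})$ (the translate forced by $\underline{\varepsilon_1}$), the $V_+$-integration collapses to evaluation at the coset represented by $\varepsilon_1$.

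Second, I would compute the inner $V_-$-integral after this collapse. The conjugation $v\mapsto \varepsilon_1^{-1}v\varepsilon_1$ on $V_-$ precisely accounts for the residual Heisenberg cocycle and the commutator of $\varepsilon_1$ with elements of $V_U$; tracking the explicit formulas for $\omega_{\psi,\mu}$ on the unipotent part of $V_-$ shows that these phases assemble into the character $\psi_{V_-}(\varepsilon_1^{-1}v\varepsilon_1)^{-1}$, giving the desired identity
\[
A_\#^{\psi,\mu}(W,e)=\int_{V_\gamma\backslash V_-} W(\gamma v\varepsilon_1)\,\psi_{V_-}(\varepsilon_1^{-1}v\varepsilon_1)^{-1}\,dv=A_e^\psi(W).
\]

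For the compact-support assertion, I would argue as follows. The integrand in the definition of $A^{\psi,\mu}$ is compactly supported on $V_\gamma\backslash V$ (this is part of \cite[Lemma~5.1]{LMa} and is reiterated above around the definition of $A^{\psi,\mu}$). Under the decomposition $V=V_+\ltimes V_-$ and the above choice of $\Phi$, compact support in the $V_+$-direction is imposed by $\Phi$ being Schwartz; what remains is a compactly supported integrand in the $V_-$-variable, which is exactly the integrand of $A_e^\psi(W)$. Smoothness of $W$ gives local constancy, so the $V_-$-integrand is in fact in $C_c^\infty(V_\gamma\backslash V_-)$.

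The main obstacle will be the careful matching of the Weil-representation phases on the Heisenberg quotient $V/V_0\simeq \mathcal{H}_{2n}$ with the character $\psi_{V_-}$ in the split case $E=F\oplus F$, since the formulas in Section~3.3 are written using pairs of twisted transposes $g$, ${}^\ast g$ and the quadratic twist $\mu$ rather than the single hermitian structure used in \cite{Mo1}. Once the explicit action of $\varrho(m_1,m_2)$ and of $\ell(\cdot)$ on $\Phi(\xi_n,\xi_n)$ is recorded and the cocycle from $v_+v_-=v_+v_-v_+^{-1}\cdot v_+$ is computed, the remainder is a direct word-for-word transcription of the argument of \cite[Lemma~6.1]{LMb}, so I do not expect any genuinely new difficulty beyond this phase-tracking.
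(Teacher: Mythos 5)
The claim that ``$V_+\subset V_\gamma$'' is false, and this sinks the argument at the crucial step. By construction $V=V_+\ltimes V_-$, so $V_+\cap V_-=\{e\}$; on the other hand the paper records explicitly that $V_\gamma\subset V_-$. Hence $V_+\cap V_\gamma=\{e\}$, and you cannot conclude $W(\gamma v c)=W(\gamma v)$ for $c\in V_+$ from any triviality of the right $V_+$-action. Indeed, the whole point of the coset decomposition $V_\gamma\backslash V\cong(V_\gamma\backslash V_-)\times V_+$ is that $V_+$ parametrizes genuinely new directions in the quotient; the $V_+$-integral is not ``already absorbed'' by $V_\gamma$-invariance. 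Once this step is removed, your collapse of the $V_+$-integration does not follow, and neither does the appearance of the character $\psi_{V_-}(\varepsilon_1^{-1}v\varepsilon_1)^{-1}$ in the residual $V_-$-integral.

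The second problematic point is the attempt to \emph{choose} $\Phi$ supported near $(-\xi_n,\xi_n w_{2n})$ while also demanding $\Phi*W=W$. The gauge condition $\Phi*W=W$ is a smoothing condition: it forces $\Phi$ to behave like an approximate identity with respect to the $V_+$-translation (so supported near $0$, not near a fixed nonzero point), and moreover the definition \eqref{A sharp} asserts that $A_\#^{\psi,\mu}(W,\cdot)$ is \emph{independent} of the choice of such $\Phi$ -- so a proof must work for every admissible $\Phi$, not a cleverly supported one. The actual argument, following \cite[Lemma~6.1]{LMb} (and its inert analogue \cite[Lemma~5.1]{Mo1}), runs differently: one decomposes $v=uc$ with $u$ a representative for $V_\gamma\backslash V_-$ and $c\in V_+$, uses the explicit Weil-representation formulas to convert the $c$-integral of $\omega_{\psi^{-1},\mu^{-1}}(uc)\Phi(\xi_n,\xi_n)$ into a Fourier transform of $\Phi$ against the character coming from the $V_-$-part of the Heisenberg pairing, and then invokes Fourier inversion together with $\Phi*W=W$ to reduce the integral to evaluation at the coset of $\varepsilon_1$. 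It is this inversion step, not a support restriction on $\Phi$, that pins down $\varepsilon_1$ and produces the twisted character $\psi_{V_-}(\varepsilon_1^{-1}v\varepsilon_1)^{-1}$. Your compact-support discussion at the end is fine in spirit (it inherits from the already-noted compact support of the $A^{\psi,\mu}$-integrand plus the Schwartz property of $\Phi$), but it depends on having correctly performed the $V_+$-integral first, so it cannot be salvaged without fixing the earlier step.
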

Then we note that as in \cite[Lemma~6.6]{LMb}, we have the following lemma.
\begin{lemma}
\label{lemma6.6}
For $\mathrm{Re} s \gg_{\pi} 1$ and any $W \in \mathrm{Ind}(\mathbb{W}^{\psi_{N_\mM}}(\pi))$, $t \in T^\prime$
we have the identity
\begin{equation}
\label{6.5}
Y^{\psi, \mu}(W_s, t) 
= \nu^\prime(t)^{n-\frac{1}{2}}\mu_{M^\prime}(t)^{-1}
\int_{V_M^\# \backslash N^\#} W_s(w_U \eta(w_0^{M^\prime} t) v) \psi_{N^\#}(v)^{-1} \, dv
\end{equation}
where the right-hand side is absolutely convergent.
\end{lemma}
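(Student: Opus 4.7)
The plan is to adapt the argument of \cite[Lemma~6.6]{LMb} (whose even-unitary counterpart is \cite[Lemma~6.6]{Mo1}) to the split case. The starting point is to unfold the definition
\[
Y^{\psi,\mu}(W_s,t)=\int_{N^\prime}^{\mathrm{st}} A_{\sharp}^{\psi,\mu}(W_s, w^\prime_{U^\prime} w_0^{M^\prime} t n)\,\psi_{N^\prime}(n)^{-1}\,dn,
\]
and then substitute $A_{\sharp}^{\psi,\mu}(W_s,g)=A^{\psi,\mu}(W_s,\Phi,g)$ for a fixed $\Phi\in\mathcal S(F^n\times F^n)$ with $\Phi\ast W=W$. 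This produces an iterated integral whose inner piece runs over $V_\gamma\backslash V$ against the twist $\omega_{\psi^{-1},\mu^{-1}}(v\cdot)\Phi(\xi_n,\xi_n)$.

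The next step is to exploit the identity $\gamma\,\eta(w_{U^\prime}^\prime)=w_U$, which is immediate from $\gamma=w_U\eta(w_{U^\prime}^\prime)^{-1}$. Conjugating the inner $v$-variable through $\eta(w_{U^\prime}^\prime)$ turns the argument of $W_s$ into $w_U\eta(w_0^{M^\prime}tn)\,v^\prime$, and simultaneously carries the domain $V_\gamma\backslash V$ to $V_M^{\#}\backslash V$ since $\eta(w_{U^\prime}^\prime)^{-1}V_\gamma\,\eta(w_{U^\prime}^\prime)=V_M^{\#}$. After this change of variable the two outer integrations (over $N^\prime$ and over $V_M^{\#}\backslash V$) can be amalgamated, using $N^{\#}=V_-\rtimes\eta(N^\prime)$, into a single integration over $V_M^{\#}\backslash N^{\#}$; the characters glue via $\psi_{N^{\#}}(\eta(n)v)=\psi_{N^\prime}(n)\psi_{V_-}(v)$.

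The remaining piece is the Weil-representation contribution. The explicit formulas for $\omega_{\psi^{-1},\mu^{-1}}$ on the Levi component applied to $\eta(w_0^{M^\prime}t)$ give, when evaluated at $\Phi(\xi_n,\xi_n)$, a scalar equal to $\nu^\prime(t)^{n-\frac12}\mu_{M^\prime}(t)^{-1}$ times a unitary phase. That phase, together with the phase factors produced by the $V_U$-part of $v$ acting through $\omega_{\psi^{-1},\mu^{-1}}$ and the $\psi_{N^\prime}(n)^{-1}$ already present, assembles into precisely $\psi_{N^{\#}}(v)^{-1}$. Pulling out the scalar yields the claimed identity, with $W_s(w_U\eta(w_0^{M^\prime}t)v)$ inside the integral after using the $U$-invariance of $W$ to drop the $\Phi$-dependent factor (invoking again $\Phi\ast W=W$ and Lemma~\ref{Lemma 3.1} for the equivariance along $V_-$).

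The main obstacle will be the careful bookkeeping of characters and Weil-representation cocycles under the conjugation by $\eta(w_{U^\prime}^\prime)$ and along the Heisenberg quotient $V/V_0$: verifying that every phase contributed by $\omega_{\psi^{-1},\mu^{-1}}$ on the various strata of $V$ fits together with $\psi_{N^\prime}^{-1}$ into the single character $\psi_{N^{\#}}^{-1}$ is the combinatorial heart of the argument. Absolute convergence of the right-hand side for $\mathrm{Re}(s)\gg_\pi 1$ is then a consequence of the standard gauge estimates for Whittaker functions on $G$ restricted to $w_U\eta(w_0^{M^\prime}t)N^{\#}$, together with the compact support of $\Phi$, exactly as in \cite[\S6]{LMb}.
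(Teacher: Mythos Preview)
Your proposal is correct and follows exactly the approach the paper intends: the paper gives no independent proof of this lemma, but simply states it ``as in \cite[Lemma~6.6]{LMb}'', so your outline---unfolding $Y^{\psi,\mu}$ via $A_{\sharp}^{\psi,\mu}$, using $\gamma\,\eta(w_{U^\prime}^\prime)=w_U$ to conjugate the inner integral, amalgamating into $V_M^{\#}\backslash N^{\#}$, and extracting the scalar $\nu^\prime(t)^{n-\frac12}\mu_{M^\prime}(t)^{-1}$ from the Weil representation action---is precisely the adaptation of \cite[Lemma~6.6]{LMb} (and its even-unitary analogue in \cite{Mo1}) to the split setting that the paper invokes.
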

For any $(W, W^\vee) \in \mathrm{Ind}(\mathbb{W}^{\psi_{N_\mM}}(\pi)) \times \mathrm{Ind}(\mathbb{W}^{\psi_{N_\mM}^{-1}}(\pi^\vee))$,
we define 
\begin{multline*}
\{W, W^\vee \} := \int_{N_{\mM^\prime} \backslash \mM^\prime} 
W(\eta_M(g_1, g_2))
W^\vee(\eta_M(g_1, g_2)) 
\\
\delta_P(\eta_M(g_1, g_2))^{-1} |\det g_1 g_2^{-1}|^{1-n} \, dg_1 \, dg_2
\end{multline*}
which converges absolutely  by \cite[Lemma~1.2]{LMd} since $\pi$ is unitary.
Here, we set
\[
\eta_M(g_1, g_2) = \varrho(\mathrm{diag}(g_1, 1), \mathrm{diag}(1, g_2)).
\]
In a similar argument as \cite[Lemma~6.11]{LMa} with Lemma~\ref{lemma5.1} and Lemma~\ref{lemma6.6}, for $-\mathrm{Re}(s) \gg 1$
$W \in \mathrm{Ind}(\mathbb{W}^{\psi_{N_M}}(\pi))_\sharp^\circ$ and $W^\vee \in  \mathrm{Ind}(\mathbb{W}^{\psi_{N_M}^{-1}}(\pi^\vee))$
we have 
\[
B(W, W^\vee, s)
=\int_U \int_{V_M^\sharp \backslash N^\sharp} \{ W_s(\cdot w_U v), W_{-s}^\vee(\cdot w_U u) \}
\psi_{N^\sharp}(v)^{-1} \psi_U(u) \, dv \, du.
\]

Suppose that $\pi \in \mathrm{Irr}_{\rm temp} M$. 
Let $J$ be the subspace of $\mathrm{Mat}_n$ consisting of the matrices whose first column is zero.
Let us put $w_{2n, n}^\prime = \left( \begin{smallmatrix}&I_n\\ w_0^{\mM^\prime}& \end{smallmatrix} \right)$
and take $\varepsilon_2 \in \ell_{\mM}(e_{1,1}+J)$. Put $\varepsilon_3 = w_{2n, n}^\prime \varepsilon_2$.
For $g \in \mathrm{GL}_{2n}$, we put $g^\ast = w_{2n} {}^t g^{-1} w_{2n}$.
On the other hand, applying the identity given in \cite[Proposition~7.6]{LMb} with \cite[(7.6)]{LMb},  
we obtain
\begin{multline}
\label{e:app whitt two}
 \int_{N_{\mM^\prime}^\prime} \{ W (\cdot \eta_M(n_1, n_2)), W^\vee \} \psi_{N_{\mM^\prime}^\prime}(n_1 n_2^{-1}) \, dn_1 \, dn_2
\\
=
\int_{\eta^\vee_\mM(N_{\mM^\prime}^\prime) \backslash N_{\mM}^\sharp }
\int_{\eta^\vee_\mM(N_{\mM^\prime}^\prime) \backslash N_{\mM}^\flat } \int_{T^\prime_{\mM^\prime}}
W(\varrho(\eta^\vee_{2n}(t_1)\varepsilon_3 r_{1,1}, \eta_{2n}(t_2)\varepsilon_3^\ast r_{2,1})) 
\\
W^\vee(\varrho(\eta_{2n}^\vee(t_1)\varepsilon_3 r_{1,2}, \eta_{2n}(t_2)\varepsilon_3^\ast r_{2,2})) 
|\det t_1t_2^\ast|^{n-1}
\\
\delta_{B^\prime_{\mM^\prime}}^{-1}(t_1 t_2^{\ast}) \psi_{N_{\mM}^\flat }(r_{1,1}^{-1} r_{1,2})
\psi_{N_{\mM}^\sharp }(r_{2,1}^{-1} r_{2,2})
\, dt \, dr_1 \, dr_2
\end{multline}
where $t=(t_1, t_2)$ and $r_i = (r_{i,1}, r_{i,2})$.
Here, for $g \in \mM^\prime$, we put 
\[
\eta_{2n}(g) = \begin{pmatrix}g&\\ &1 \end{pmatrix}, 
\quad
\eta^\vee_{2n}(g) = \begin{pmatrix}1&\\ &g_2 \end{pmatrix}
\]
Hence, for $\mathrm{Re}(s) \gg 1$, we may write the right-hand side of the identity \eqref{e:app whitt two} by
\begin{multline}
\label{A}
\int_U \int_U \int_{\eta_{\mM}(N_{\mM^\prime}^\prime) \backslash N_{\mM}^\sharp }
\int_{\eta^\vee_{\mM}(N_{\mM^\prime}^\prime) \backslash N_{\mM}^\flat } \int_{T^\prime_{\mM^\prime}}
W(\varrho(\eta^\vee_{2n}(t_1)\varepsilon_3 r_{1,1}, \eta_{2n}(t_2)\varepsilon_3^\ast r_{1,2})w_U u_1) 
\\
W^\vee(\varrho(\eta_{2n}^\vee(t_1)\varepsilon_3 r_{2,1}, \eta_{2n}(t_2)\varepsilon_3^\ast r_{2,2}) w_U u_2) 
|\det t_1t_2^\ast|^{n-1}
\delta_{P}(\varrho(\eta_{2n}^\vee(t_1), \eta_{2n}(t_2)))^{-1} 
\\
\delta_{B^\prime_{\mM^\prime}}^{-1}(t_1 t_2^{\ast}) \psi_{N_{\mM}^\flat }(r_{1,1}^{-1} r_{2,1})
\psi_{N_{\mM}^\sharp }(r_{1,2}^{-1} r_{2,2}) \psi_U(u_1^{-1} u_2)
\, dt \, dr_1 \, dr_2 \, du_1 \, du_2.
\end{multline}
For $t_i = \mathrm{diag}(t_{i, 1}, \dots, t_{i, 2n}) \in T_{\mM}$, we define 
\[
\Delta(t_1, t_2) := |t_{1, 1} t_{2, 2n}^{-1}|^{-n} \delta_B^{1 \slash 2}(\varrho(t_1, t_2))
\]
Note that $\Delta(t_1, t_2) = \delta_B^{1 \slash 2}(\varrho(t_1, t_2))$ when 
$t_1 \in \eta^\vee(T_\mM^\prime)$ and $t_2 \in \eta(T_\mM^\prime)$.

Let $\varepsilon_4, \varepsilon_4^\prime$ be arbitrary elements in $N_\mM$.
Let $T^{\prime \prime} =\{ (\eta_{2n}^\vee(t_1), \eta_{2n}(t_2) )\mid (t_1, t_2) \in T_{\mM^\prime}^\prime \}  \times Z_\mM$.
For $W \in C^{\rm sm}(N \backslash G, \psi_N),$ $(t_1, t_2) \in T^{\prime \prime}$,  let
\[
E^\psi(W, t_1, t_2) = \Delta(t_1, t_2)^{-1} \int_{\eta_M(N_{\mM^\prime}^\prime) \backslash N^\sharp}
W(\varrho(t_1 \varepsilon_{4} \varepsilon_3, t_2 \varepsilon_4^\prime \varepsilon_3^\ast)w_U v) 
\psi_{N^\sharp}(v)^{-1} \, dv
\]
Here, recall that 
\[
N^\sharp = V_- \rtimes \eta(N^\prime)
\]
Now, we note that 
\[
\delta_{B^\prime_{\mM^\prime}}(t_1 t_2^\ast) \delta_P\varrho(\eta_\mM^\vee(t_1), \eta_\mM(t_2))^{-1} |\det t_1 t_2^\ast|^{n-1}
= \delta_{B^\prime} \left(\begin{pmatrix} t_1&\\ &t_2\end{pmatrix} \right) |\det t_1 t_2^{-1}|^{-1}.
\]
Recall the definition of a certain subspace of $\mathbb{W}^{\psi_{N_\mM}^{-1}}(\pi)$ in \cite[Definition~7.5]{LMb}.
\begin{Definition}
\label{Definition 7.5}
Let $\mathbb{W}^{\psi_{N_\mM}^{-1}}(\pi)_{\natural}$ be the subspace of $\mathbb{W}^{\psi_{N_\mM}^{-1}}(\pi)$
consisting of $W$ such that 
\[
W(\cdot \varepsilon_3) |_{\mathcal{P}^\ast} \in C_c^\infty(N_\mM \backslash \mathcal{P}^\ast, \psi_{N_\mM}^{-1})
\text{ and } W(\cdot \varepsilon_3)|_{\eta_{\mM}^\vee(T_{\mM^\prime}^\prime) \ltimes \bar{R}}
\in C_c^\infty(\eta_{\mM}^\vee(T_{\mM^\prime}^\prime) \ltimes \bar{R}).
\]
\end{Definition}
%
%
%
%
%
%
As remarked in \cite[p.745]{LMb}, this space is non-zero, and thus the following space is also non-zero.
\begin{Definition}
\label{Definition 7.8}
Let $\mathrm{Ind}(\mathbb{W}^{\psi_{N_M}^{-1}}(\pi))_{\natural}^\circ$ be the linear subspace of 
$\mathrm{Ind}(\mathbb{W}^{\psi_{N_M}^{-1}}(\pi))^\circ$ spanned by the functions which vanish outside $P w_U N$ and 
on the big cell are given by
\[
W(u^\prime m w_U u) = \delta_P^{\frac{1}{2}}(m) W^M(m) \phi(u), \quad m \in M, u, u^\prime \in U
\]
with $\phi \in C_c^\infty(U)$ and $W^M \circ \varrho \in \mathbb{W}^{\psi_{N_\mM}^{-1}}(\pi)_{\natural}$.
\end{Definition}
In a similar argument as \cite[6.1]{Mo1}, indeed by word-for-word argument, Conjecture~\ref{local conj} is reduced to the following identity
because of the functional equation in \cite[Appendix~B]{LMd} and
the identity between \eqref{e:app whitt two} and \eqref{A}.
\begin{proposition}
\label{last prp}
Let $\pi \in \mathrm{Irr}_{\rm temp}M$. Then for $-\mathrm{Re} \gg 1$ and any
\[
W \in \mathrm{Ind}(\mathbb{W}^{\psi_{N_M}}(\pi))_{\sharp}^\circ,
\quad 
W^\wedge \in \mathrm{Ind}(\mathbb{W}^{\psi_{N_M}^{-1}}(\pi))_{\natural}^\circ
\]
we have
\[
B(W, M(s)W^\wedge, s)
=
\int_{Z_\mM \backslash T^{\prime \prime}} E^\psi(M_s^\ast W, t_1, t_2) E^{\psi^{-1}}(W_s^\wedge, t_1, t_2) \, 
\frac{dt_1 \, dt_2}{|\det t_1t_2^{-1}|} 
\]
where the integrand is continuous and compactly supported.
\end{proposition}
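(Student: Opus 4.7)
The plan is to follow the strategy of \cite[Proposition~6.3]{Mo1}, adapting it to the split setting with the aid of the model transition developed in \cite{Mo2}. The starting point is the integral representation
\[
B(W, M(s)W^\wedge, s) = \int_U \int_{V_M^\sharp \backslash N^\sharp} \{W_s(\cdot w_U v), M_s^\ast W^\wedge(\cdot w_U u)\} \psi_{N^\sharp}(v)^{-1} \psi_U(u) \, dv \, du,
\]
valid for $-\mathrm{Re}(s) \gg 1$, which follows from the analogue of \cite[Lemma~6.11]{LMa} combined with Lemma~\ref{lemma5.1} and Lemma~\ref{lemma6.6}. Here I use the identity $(M(s)W^\wedge)_{-s} = M_s^\ast W^\wedge$ and the assumption $W \in \mathrm{Ind}(\mathbb{W}^{\psi_{N_M}}(\pi))_{\sharp}^\circ$ which, together with temperedness of $\pi$, ensures absolute convergence of the Whittaker pairing $\{\,,\,\}$.

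Next, I would apply the Whittaker unfolding identity \eqref{e:app whitt two} (in its $w_U$-shifted form \eqref{A}) to the inner pairing. This produces an iterated integral over $T^\prime_{\mM^\prime}$ together with integrals over $\eta_\mM(N_{\mM^\prime}^\prime) \backslash N_\mM^\sharp$ and $\eta_\mM^\vee(N_{\mM^\prime}^\prime) \backslash N_\mM^\flat$. Grouping the $v$-integral (resp.\ $u$-integral) with the corresponding $r_{1,\cdot}$-integral (resp.\ $r_{2,\cdot}$-integral), the resulting two factors are precisely the defining integrals of $E^\psi(M_s^\ast W, t_1, t_2)$ and $E^{\psi^{-1}}(W_s^\wedge, t_1, t_2)$. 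The modulus factors appearing in \eqref{A}, namely $\delta_{B^\prime_{\mM^\prime}}(t_1 t_2^\ast) \, \delta_P(\varrho(\eta_{2n}^\vee(t_1), \eta_{2n}(t_2)))^{-1} |\det t_1 t_2^\ast|^{n-1}$, combine with the $\Delta(t_1, t_2)^{-1}$ normalizations in the definition of $E^\psi$ to reproduce the measure $\delta_{B^\prime}(\mathrm{diag}(t_1, t_2)) \, |\det t_1 t_2^{-1}|^{-1}$ on $Z_\mM \backslash T^{\prime\prime}$ appearing on the right-hand side.

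The hard part is verifying that the resulting integrand on $Z_\mM \backslash T^{\prime\prime}$ is continuous and, crucially, compactly supported, which is what justifies the interchange of integrals in the previous step. Compact support in $t_i$ of the $W$-factor $E^\psi(M_s^\ast W, t_1, t_2)$ follows from the definition of $\mathrm{Ind}(\mathbb{W}^{\psi_{N_M}}(\pi))_{\sharp}^\circ$, which forces $(t, n) \mapsto W^M(\eta_M(t w_0^{\mM^\prime} n))$ to be compactly supported on $T_{\mM^\prime}^\prime \times N_{\mM^\prime}^\prime$ after applying $M_s^\ast$. For the $W^\wedge$-factor, one exploits the asymmetric conditions of Definition~\ref{Definition 7.5} on $\mathcal{P}^\ast$ and on $\eta_\mM^\vee(T_{\mM^\prime}^\prime) \ltimes \bar{R}$, together with Lemma~\ref{Lemma 3.1} governing the equivariance of $A_\#^{\psi,\mu}$. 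In the split case the two factors in $\mM = \mathrm{GL}_{2n}(F) \times \mathrm{GL}_{2n}(F)$ must be analyzed in parallel, and the key point is that model transition from \cite{Mo2} allows one to transfer the support control under $M(s)$ between the two factors, producing the required joint compact support in $(t_1, t_2)$.

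Once continuity and compact support are established, the identity of the proposition follows by a word-for-word transcription of \cite[Section~6]{Mo1}: decompose the $N^\sharp$- and $U$-integrals into torus and unipotent pieces, apply Fubini in the compactly supported region, and recollect the factors as $E^\psi \cdot E^{\psi^{-1}}$. Combined with Proposition~\ref{last prp} and the reduction already outlined in the paper, this yields the local identity \eqref{local identity conj} at split finite places and completes the proof of Theorem~\ref{main theorem app}.
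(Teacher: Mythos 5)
The overall strategy you sketch — start from the double-integral expression
\[
B(W, M(s)W^\wedge, s)
=\int_U \int_{V_M^\sharp \backslash N^\sharp} \{ W_s(\cdot w_U v), M_s^\ast W^\wedge(\cdot w_U u) \}
\psi_{N^\sharp}(v)^{-1} \psi_U(u) \, dv \, du,
\]
unfold the pairing $\{\,,\,\}$ via \eqref{e:app whitt two}/\eqref{A}, and recollect the result as a torus integral of a product of two factors — is the correct outline, and it matches the paper's stated intent to transcribe \cite[Proposition~6.3]{Mo1} word for word. However, there are two issues with your write-up, one of which I think is a genuine misunderstanding of where the pieces fit.

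First, and most importantly, model transition from \cite{Mo2} is \emph{not} an ingredient in the proof of Proposition~\ref{last prp}, and in particular it is not what supplies the joint compact support in $(t_1, t_2)$. Proposition~\ref{last prp} is proved entirely within the regime $-\mathrm{Re}(s)\gg 1$ by unfolding, exactly as in \cite[Proposition~6.3]{Mo1}, which predates \cite{Mo2} and cannot possibly invoke it. The compact support of the integrand comes from the two defining conditions: the $\sharp^\circ$-condition on $W$ (compact support of $(t,n)\mapsto W^M(\eta_M(t w_0^{\mM^\prime}n))$) together with the $\natural^\circ$-conditions of Definitions~\ref{Definition 7.5} and~\ref{Definition 7.8} on $W^\wedge$; the latter yields compact support of $E^{\psi^{-1}}(W_s^\wedge,\cdot)$ uniformly in $s$ via the analogue of \cite[Lemma~6.1]{Mo1}, which is exactly what is quoted at the start of the proof of Corollary~\ref{cor8.2}. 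Model transition (Theorem~\ref{Theorem 8.1}) enters only afterwards, to extend $f\ast E^\psi(W_s,t)$ to an entire function of $s$ and to evaluate it at $s=\tfrac12$; it is part of the passage from Proposition~\ref{last prp} to (MI), not part of the proposition's proof. Your sentence ``model transition from \cite{Mo2} allows one to transfer the support control under $M(s)$ between the two factors'' thus attributes to \cite{Mo2} a role it does not play, and inserts a circular dependency (Theorem~\ref{Theorem 8.1} is applied only after Proposition~\ref{last prp} is available).

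Second, the step ``Grouping the $v$-integral (resp.\ $u$-integral) with the corresponding $r_{1,\cdot}$-integral (resp.\ $r_{2,\cdot}$-integral), the resulting two factors are precisely the defining integrals of $E^\psi(M_s^\ast W, t_1, t_2)$ and $E^{\psi^{-1}}(W_s^\wedge, t_1, t_2)$'' is the entire non-trivial content of the proposition and cannot simply be asserted. In the double integral you start from $W_s$ and $M_s^\ast W^\wedge$, whereas the target has $M_s^\ast W$ and $W_s^\wedge$ — the intertwining operator migrates from $W^\wedge$ to $W$. This transfer happens through a careful interplay between the outer $U$-integral in the double-integral formula, the two extra $U$-integrals appearing in \eqref{A}, and the $U$-integral hidden inside $M_s^\ast W^\wedge=\int_U W_s^\wedge(Ew_U u'\,\cdot\,)\,du'$, combined with a change of variable conjugating $Ew_U u'$ past $\varrho(\cdot)$ and $w_U$. (This bookkeeping is precisely why the paper also invokes the functional equation of \cite[Appendix~B]{LMd} at the point of reducing Conjecture~\ref{local conj} to Proposition~\ref{last prp}.) A convincing proof needs to display this reorganization explicitly rather than declare the outcome.

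In short: the road map is right, but the specific citation of \cite{Mo2} for the compact-support step is misplaced, and the key identification of the two factors after unfolding is the heart of the argument and needs to be carried out rather than asserted.
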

\subsection{Proof of Proposition~\ref{last prp}}
Let $\mathfrak{d} = \mathrm{diag}(1, -1, \dots, (-1)^{n-1}) \in \mathrm{Mat}_n$. We now fix
\[
\varepsilon_4 =(\varepsilon_{4,1}, \varepsilon_{4,2})= \ell_{\mM}(-\frac{1}{2} \mathfrak{d} w_0^{\mM^\prime}, -\frac{1}{2} \mathfrak{d} w_0^{\mM^\prime}) \in N_\mM.
\]
This element is denoted by $(\varepsilon^\prime, \varepsilon^\prime)$ in the beginning of \cite[Section~7]{Mo2} with the parameter
$\mathfrak{a} = -\frac{1}{2}$.
We also fix $\varepsilon_2 = \ell_\mM(\mathfrak{d}, \mathfrak{d})$
(and correspondingly $\varepsilon_3 = w_{2n, n}^\prime \varepsilon_2$).
Let us define 
\[
\psi_{\bar{U}}(\bar{v}) 
= \psi(\bar{v}_{2n+1, 1}-\bar{v}_{4n, 2n}), \quad \bar{v} \in \bar{U}.
\]
(see \cite[(5.2)]{Mo2}).
Recall that $N_{\mM}^\flat = (N_\mM^\#)^\ast$ and $\psi_{N_\mM^\flat}(m) = \psi_{N_\mM^\#}(m^\ast)$.
Then as in \cite[8.1]{LMb}, we may rewrite (for $\mathrm{Re}\, s \gg 1$)
\begin{multline}
\label{7.7}
E^\psi(W_s, t_1, t_2) 
=
\Delta(t_1, t_2)^{-1} \int_{\eta_{\mM}^\vee(N_{\mM^\prime}^\prime) \backslash N_\mM^\flat} \int_{\bar{U}}
W_s(\varrho(t_1, t_2) \bar{v} \varrho(\varepsilon_4 \varepsilon_3 r_1, \varepsilon_4^\prime \varepsilon_3^\ast r_2) w_U) 
\\
\psi_{\bar{U}}(\bar{v}) \psi_{N_\mM^\flat}(r_1 r_2^{-1})^{-1} \, d\bar{v} \, dr
\\
= \Delta(t_1, t_2)^{-1} 
\int_{\bar{R}} \int_{\bar{U}}
W_s(\varrho(t_1, t_2) \bar{v} \varrho(\varepsilon_4 r_1 \varepsilon_3 , \varepsilon_4 r_2 \varepsilon_3^\ast) w_U) \psi_{\bar{U}}(\bar{v}) \psi_{\bar{R}}(r) \, d\bar{v} \, dr.
\end{multline}
where we define 
\begin{multline*}
\bar{R} = (\varepsilon_3, \varepsilon_3^\ast) (\eta_{\mM}(N_{\mM^\prime}^\prime) \ltimes \ell_\mM(J)) (\varepsilon_3^{-1}, (\varepsilon_3^\ast)^{-1})
= w_{2n, n}^\prime (\eta_{\mM}(N_{\mM^\prime}^\prime) \ltimes \ell_{\mM}(J)) w_{2n, n}^{\prime-1}
\\
= \left\{ \left( \begin{pmatrix} I_n&\\ x&{}^{t} n_1 \end{pmatrix}, \begin{pmatrix} {}^t n_2&\\ y&I_n \end{pmatrix} \right): x, y \in J, n_1, n_2 \in N_{\mM^\prime}^\prime \right\} \subset {}^{t}N_{\mM} \cap \mathcal{P}^\ast.
\end{multline*}
and $\psi_{\bar{R}}(r) = \psi_{N_\mM^\flat}(\varepsilon_3^{-1} r_1 \varepsilon_3 (\varepsilon_3^\ast)^{-1} r_2^{-1} \varepsilon_3^\ast)^{-1}$.

We now quote the following pertinent result from \cite{Mo2}.
Let 
\[
T_i := \left\{ \alpha_i(x_1, x_2) := \mathrm{diag}(1_{2n-i}, x_1, 1_{2i}, x_2, 1_{2n-i}) : x_1, x_2 \in F^\times \right\}.
\]
Then we have $S = \prod_{i=1}^n T_i$.
Also, we set 
\[
S_\mM = \left\{ \mathrm{diag}(1_n, t_1, \dots, t_{n}), \mathrm{diag}(1_n, t_1^\prime, \dots, t_{n}^\prime) : t_i, t_i^\prime \in F^\times\right\}.
\]
For any $f \in C_c^\infty(S_\mM)$ and $g \in C(S_\mM)$, we write $f \ast g(\cdot) = \int_{S_\mM} f(t)g(\cdot t) \, dt$.
The following theorem follows from \cite{Mo2} using a similar argument as \cite[p.25]{Mo1} by word for word.
\begin{theorem}
\label{Theorem 8.1}
Let $K_i$ be a compact open subgroup of $F^\times$ and $f_{K_i}$ be the characteristic function of $K_i \times K_i$.
Regard $f_{K_i}$ as a function on $T_i$, and put $f = f_{K_1} \otimes \cdots \otimes f_{K_n} \in \otimes_i C_c^\infty(T_i) = C_c^\infty(S)$.

For any $W \in C^{\rm sm}(N \backslash G, \psi_N)$ which is left invariant under a compact open subgroup of $Z_M$,
the function $f \ast E^\psi(W_s, t)$ extends to an entire function in $s$ which is locally constant in $t$, uniformly in $s$.
Moreover, if $\pi \in \mathrm{Irr}_{\rm ut, temp} \mM$ then
\[
f \ast E^\psi(W_s, t_1, t_2)|_{s=\frac{1}{2}} =
\left\{
\begin{array}{ll} A_e^\psi(M^\ast W) \int_{S_{H_\mM}} f(t^\prime) \, dt^\prime & \text{ if $t_i (t_i^\prime)^{-1} \in K_i$}\\
 &\\
 0 & \text{otherwise}
 \end{array}
 \right.
\]
for $t = \varrho^\prime(\mathrm{diag}(1_n, t_1, \dots, t_{n}), \mathrm{diag}(1_n, t_1^\prime, \dots, t_{n}^\prime))$.
Here, 
$S_{H_\mM} = \{ (\mathrm{diag}(1_n, t_1, \dots, t_{n}), \\\mathrm{diag}(1_n, t_1, \dots, t_{n})) : t_i \in F^\times \}$.
\end{theorem}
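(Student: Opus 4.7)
The plan is to follow the template of \cite[p.~25]{Mo1} verbatim, invoking the local model-transition results of \cite{Mo2} adapted to the split case $E = F \oplus F$. The proof therefore has essentially three stages, none of which requires ideas beyond ones already developed in \cite{Mo1, Mo2}.

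First, I would begin with the explicit presentation of $E^\psi(W_s, t_1, t_2)$ given in \eqref{7.7} as a double integral over $\bar R \times \bar U$ against $\psi_{\bar U}\,\psi_{\bar R}$. Since each torus component $\alpha_i(x_1, x_2) \in T_i$ acts on specific entries of the $\bar U$-variable and $\bar R$-variable by scaling, convolution against the characteristic functions $f_{K_i}$ of the compact subgroups $K_i \times K_i$ effectively replaces those entries by their $K_i$-averages. This averaging, combined with the compact support in the remaining directions coming from the smoothness of $W$, shows that the integrand underlying $f \ast E^\psi(W_s, t)$ becomes compactly supported on $\bar R \times \bar U$ uniformly in $s$, which immediately yields entirety in $s$ and local constancy in $t$ uniformly in $s$.

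Second, to compute the value at $s = \frac{1}{2}$ under the tempered unitary assumption on $\pi$, I would apply the model-transition identity of \cite{Mo2} that matches the $K_i$-averaged integral of $M^\ast W$ against $\psi_{\bar U}\,\psi_{\bar R}$ with the local degenerate Whittaker period $A_e^\psi(M^\ast W)$ defined in \eqref{6.1}. Temperedness ensures convergence of the intertwining operator at $s = \frac{1}{2}$ by \cite[Proposition~2.1]{LMa}, and unitarity supplies the exponent bounds \eqref{exp estimate} needed to run the model transition. The identification $A_e^\psi(M^\ast W) = A_\#^\psi(M^\ast W, e)$ then follows from Lemma~\ref{lemma5.1}. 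The support dichotomy in the formula — $A_e^\psi(M^\ast W) \int_{S_{H_\mM}} f(t^\prime)\,dt^\prime$ when $t_i(t_i^\prime)^{-1} \in K_i$ for all $i$, and zero otherwise — emerges from tracking the action of $t = \varrho^\prime(\mathrm{diag}(1_n, t_1,\dots,t_n), \mathrm{diag}(1_n, t_1^\prime,\dots,t_n^\prime))$ on the $K_i$-averaged integrand: the averaging forces a cancellation unless $(t_1,\dots,t_n)$ and $(t_1^\prime,\dots,t_n^\prime)$ agree modulo $K_1 \times \cdots \times K_n$, i.e.\ unless $t$ lies in the $(\prod_i K_i)$-translate of the diagonal subgroup $S_{H_\mM}$; on this orbit the remaining integral collapses to the volume factor $\int_{S_{H_\mM}} f(t^\prime)\,dt^\prime$.

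The main obstacle will be confirming, step by step, that every structural identity used in the inert-case arguments of \cite{Mo2} transfers without modification to the split setting: the conjugation relations involving $\varepsilon_3$, $\varepsilon_4 = \ell_\mM(-\tfrac{1}{2}\mathfrak{d} w_0^{\mM^\prime}, -\tfrac{1}{2}\mathfrak{d} w_0^{\mM^\prime})$, and $w_{2n,n}^\prime$; the compatibility between $\psi_{\bar R}$, $\psi_{\bar U}$, and $\psi_{N^\sharp}$; the normalization factor $\Delta(t_1, t_2)$; and the Weil-representation formulas recalled in the appendix — in particular the formulas involving $\mu$ and $\mu^{-1}$ for the split-type Weil factor, which replace the single Weil factor $\gamma_\psi$ of the inert case. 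This bookkeeping is tedious but mechanical, and once verified the proof proceeds word for word as in \cite[p.~25]{Mo1}.
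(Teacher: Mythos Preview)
Your proposal is correct and aligns with the paper's own treatment: the paper does not give a detailed proof of this theorem but simply states that it ``follows from \cite{Mo2} using a similar argument as \cite[p.~25]{Mo1} by word for word,'' which is precisely the template you have laid out and fleshed out in your three stages. Your elaboration of the mechanism (compact support via $K_i$-averaging in \eqref{7.7}, model transition from \cite{Mo2} at $s=\tfrac{1}{2}$, and the bookkeeping required to transfer the inert-case identities to the split setting) is consistent with what that citation entails.
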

Applying this theorem, (MI) can be rewritten as follows.
\begin{corollary}
\label{cor8.2}
Suppose that $\pi \in \mathrm{Irr}_{\rm ut, temp}\, \mM$.
Then for any $W \in \mathrm{Ind}(\mathbb{W}^{\psi_{N_M}}(\pi))_{\#}^\circ$, $W^\wedge \in \mathrm{Ind}(\mathbb{W}^{\psi_{N_M}^{-1}}(\pi))_{\natural}^\circ$ 
we have
\begin{equation}
\label{8.4}
B(W, M(\frac{1}{2})W^\wedge, \frac{1}{2}) = A_e^\psi(M^\ast W) 
\int_{S_{H_\mM}}E^{\psi^{-1}}(W_{\frac{1}{2}}^\wedge, t_1, t_2) \frac{dt_1 dt_2}{|\det t_1 t_2^{-1}|}.
\end{equation}
\end{corollary}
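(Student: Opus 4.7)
\medskip

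The plan is to derive the corollary by combining Proposition~\ref{last prp}, which expresses $B(W,M(s)W^\wedge,s)$ as an absolutely convergent integral over $Z_\mM\backslash T^{\prime\prime}$ whose integrand at $s=\frac12$ is continuous and compactly supported, with Theorem~\ref{Theorem 8.1}, which computes $f\ast E^\psi(M_s^\ast W,\cdot)$ at the critical point $s=\frac12$. The point is that after convolving $E^\psi(M_s^\ast W,\cdot)$ in the $t$-variable by an approximate identity $f$ on $S$, the resulting expression collapses to a multiple of $A_e^\psi(M^\ast W)$ concentrated near the diagonal $S_{H_\mM}\subset S$, while the companion factor $E^{\psi^{-1}}(W_s^\wedge,\cdot)$ is unchanged up to a negligible error thanks to its local constancy.

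More concretely, fix $W$ and $W^\wedge$ as in the statement and choose, by Proposition~\ref{last prp}, a compact subset $\Omega\subset Z_\mM\backslash T^{\prime\prime}$ containing the supports of the two $E$-integrands for $s$ in a small neighbourhood of $\frac12$. Since $W_s^\wedge$ varies continuously in $s$ and each $E^{\psi^{-1}}(W_s^\wedge,\cdot)$ is locally constant, I can select a compact open neighbourhood $S'=\prod_{i=1}^n(K_i\times K_i)\subset S$, with $K_i\subset F^\times$ compact open, so that $E^{\psi^{-1}}(W_s^\wedge,\cdot)$ is invariant under right translation by $S'$ on $\Omega$ uniformly in $s$ near $\frac12$. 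Setting $f=f_{K_1}\otimes\cdots\otimes f_{K_n}\in C_c^\infty(S)$, Fubini and the substitution $t\mapsto ts_0^{-1}$ give
\[
\int_{Z_\mM\backslash T^{\prime\prime}}(f\ast E^\psi(M_s^\ast W,\cdot))(t)\,E^{\psi^{-1}}(W_s^\wedge,t)\,\frac{dt_1\,dt_2}{|\det t_1t_2^{-1}|}
=\mathrm{Vol}(S')\cdot B(W,M(s)W^\wedge,s)
\]
for $s$ in that neighbourhood.

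On the other hand, evaluating at $s=\frac12$ via Theorem~\ref{Theorem 8.1}, the convolution $f\ast E^\psi(M_s^\ast W,\cdot)|_{s=\frac12}$ equals $A_e^\psi(M^\ast W)\cdot\bigl(\int_{S_{H_\mM}}f(t')\,dt'\bigr)$ on the thickened diagonal $\{t_i(t_i')^{-1}\in K_i:\forall i\}$ and vanishes outside it. Plugging this into the displayed identity, shrinking $S'$ further if necessary so that $E^{\psi^{-1}}(W_{1/2}^\wedge,\cdot)$ is constant on each fibre of the projection of the thickened diagonal onto $S_{H_\mM}$, and carrying out the inner integral in the transversal direction (which produces a factor of $\mathrm{Vol}(S')/\mathrm{Vol}(S_{H_\mM}\cap S')$ canceling against $\int_{S_{H_\mM}}f$), I obtain
\[
\mathrm{Vol}(S')\cdot B(W,M(\tfrac12)W^\wedge,\tfrac12)
=\mathrm{Vol}(S')\cdot A_e^\psi(M^\ast W)\int_{S_{H_\mM}}E^{\psi^{-1}}(W_{1/2}^\wedge,t_1,t_2)\,\frac{dt_1\,dt_2}{|\det t_1t_2^{-1}|}.
\]
Dividing by $\mathrm{Vol}(S')$ yields \eqref{8.4}.

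The main obstacle is the bookkeeping of Haar measures in the transversal-vs-diagonal decomposition of the thickened diagonal in step two: one has to verify that the quotient of Haar measures on $S$ and $S_{H_\mM}$ produced by Theorem~\ref{Theorem 8.1} is precisely the factor that compensates for the volume of the fibres of the projection to $S_{H_\mM}$, so that all powers of $\mathrm{Vol}(S')$ cancel cleanly. This is a direct computation, entirely analogous to the one carried out in the non-split case in \cite[Section~6]{Mo1}, and requires no new ideas beyond checking the measure conventions fixed in Section~\ref{measures}.
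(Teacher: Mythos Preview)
Your approach is the same as the paper's in spirit: smooth $E^\psi(M_s^\ast W,\cdot)$ by a convolution $f$, move the convolution across to $E^{\psi^{-1}}(W_s^\wedge,\cdot)$ where it acts as identity, and then invoke Theorem~\ref{Theorem 8.1} at $s=\tfrac12$ together with the product decomposition of the thickened diagonal. The volume bookkeeping you flag at the end is exactly the identity $\bigl(\int_{S_{H_\mM}} f\bigr)\cdot\bigl(\int_{K_0}dt\bigr)=\int_{S_\mM}f$ the paper writes down, and your sketch of it is fine.

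However, there is a genuine gap in how you justify the displayed convolution identity ``for $s$ in that neighbourhood'' of $\tfrac12$. Proposition~\ref{last prp} gives you the integral representation of $B(W,M(s)W^\wedge,s)$ only for $-\mathrm{Re}(s)\gg 1$, so you cannot apply Fubini and the substitution $t\mapsto ts_0^{-1}$ directly near $s=\tfrac12$; there is nothing a priori giving you the identity of Proposition~\ref{last prp} there. The paper's proof addresses this explicitly: one first derives the convolved identity \eqref{8.5} for $-\mathrm{Re}(s)\gg 1$, then uses the \emph{first} part of Theorem~\ref{Theorem 8.1} (that $f\ast E^\psi(M_s^\ast W,t)$ extends to an entire function of $s$, locally constant in $t$ uniformly in $s$) together with the uniform compact support and entireness of $E^{\psi^{-1}}(W_s^\wedge,\cdot)$ for $W^\wedge\in\mathrm{Ind}(\mathbb{W}^{\psi_{N_M}^{-1}}(\pi))_\natural^\circ$ (the analogue of \cite[Lemma~6.1]{Mo1}, not Proposition~\ref{last prp}) to see that both sides of \eqref{8.5} are meromorphic. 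Only then can one specialize to $s=\tfrac12$ using holomorphy of $M(s)$ there. Your proposal skips this continuation step, and the properties you need (uniform compact support and $S'$-invariance of $E^{\psi^{-1}}(W_s^\wedge,\cdot)$) come from the $\natural$-condition on $W^\wedge$, not from Proposition~\ref{last prp} as you cite.
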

\begin{proof}
As in \cite[Lemma~6.1]{Mo1}, for any $W^\wedge \in \mathrm{Ind}(\mathbb{W}^{\psi_{N_M}^{-1}}(\pi))_{\natural}^\circ$
there exist $K_i \in \mathcal{CSGR}(F^\times)$ ($1 \leq i \leq n$) such that $E^{\psi^{-1}}(W_s^\wedge, \cdot) \in C(S)^{K_0}$
for all $s$ and $E^{\psi^{-1}}(W_s^\wedge, \cdot)$ is compactly supported on $S$ uniformly in $s$.
Suppose $f := f_{K_1} \otimes \cdots \otimes f_{K_n} \in C_c^\infty(S)$ and let $f^\vee(t) := f(t_1^{-1}, t_2^{-1})$.
By Proposition~\ref{last prp} for $-\mathrm{Re}\, s \gg 1$ and $W \in \mathrm{Ind}(\mathbb{W}^{\psi_{N_M}}(\pi))_{\#}^\circ$
we have
\begin{multline}
\label{8.5}
B(W, M(s)W^\wedge, s) \int_{S_\mM}f(t) \, dt
\\
= \int_{S_\mM} E^{\psi}(M_s^\ast W, t_1, t_2) f^\vee \ast E^{\psi^{-1}}(W_{s}^\wedge, t_1, t_2)\frac{dt_1 \, dt_2}{|\det t_1 t_2^{-1}|}
\\
=\int_{S_\mM} f \ast E^{\psi}(M_s^\ast W, t_1, t_2) E^{\psi^{-1}}(W_{s}^\wedge, t_1, t_2)\frac{dt_1 \, dt_2}{|\det t_1 t_2^{-1}|}.
\end{multline}
As in \cite{Mo1}, the first part of Theorem~\ref{Theorem 8.1} implies that both sides of \eqref{8.5} are 
meromorphic functions and the identity holds whenever $M(s)$ is holomorphic.
Then by \cite[Proposition~2.1]{LMa}, we may specialize $s = \frac{1}{2}$.
Using the second part of Theorem~\ref{Theorem 8.1}, we find that the right-hand side of \eqref{8.5} is equal to
\begin{multline*}
 A_e^\psi(M^\ast W) 
\left( \int_{S_{H_\mM}} f(t) \, dt \right) \cdot 
\int_{K_0^\prime} E^{\psi^{-1}}(W_{\frac{1}{2}}^\wedge, t_1, t_2) \frac{dt_1 \, dt_2}{|\det t_1 t_2^{-1}|}
\\
=
A_e^\psi(M^\ast W) 
\left( \int_{S_{H_\mM}} f(t) \, dt \right) \cdot \left( \int_{K_0}  \, dt \right) 
\int_{S_{H_\mM}} E^{\psi^{-1}}(W_{\frac{1}{2}}^\wedge, t_1, t_2) \frac{dt_1 \, dt_2}{|\det t_1 t_2^{-1}|}
\end{multline*}
where $K_0^\prime = \{(1_n, a_1, \dots, a_n), (1_n, a_1^\prime, \dots, a_n^\prime) : a_i (a_i^\prime)^{-1} \in K_i \}$.
The required formula readily follows since 
\[
\left( \int_{S_{H_\mM}} f(t) \, dt \right) \cdot \left( \int_{K_0}  \, dt \right) 
= \int_{S_\mM} f(t) \, dt.
\]
\end{proof}
\subsection{}
It remains to compute the integral on the right-hand side of \eqref{8.4}.
Let 
\[
\mathbb{W}^{\psi_{N_\mM}}(\pi)_{\natural \natural}
= \{ W \in \mathbb{W}^{\psi_{N_\mM}}(\pi) : W |_{\mathcal{P}^\ast} \in C_{c}^\infty(N_\mM \backslash \mathcal{P}^\ast, \psi_{N_\mM})
\]
and
\[
W|_{\eta_{\mM}^\vee(T_{\mM^\prime}^\prime) \ltimes \mathcal{Z}} \in 
C_c^\infty(\mathcal{Z}^+ \backslash \eta_{\mM}^\vee(T_{\mM^\prime}^\prime) \ltimes \mathcal{Z}, \psi_{\mathcal{Z}}).
\]
As in \cite[4.1]{Mo2}, let $\mathcal{Z}$ be the unipotent subgroup of $\mM$ given by
\begin{multline*}
\mathcal{Z} = \{ (m^{(1)}, m^{(2)}) \in \mM : m_{i, i}^{(k)} = 1 \, \forall i, m_{i, j}^{(k)} = 0
\\
\text{ if either ($j>i$ and $i+j > 2n$) or ($i>j$ and $i+j \leq 2n+1$)} \}
\end{multline*}
and let $\psi_{\mathcal{Z}}$ be its character 
\begin{multline*}
\psi_{\mathcal{Z}}(m, m^\prime) = \psi(m_{1,2}+m_{2,3}+ \cdots + m_{n-1, n}+m_{n+2, n+1}+ \cdots + m_{2n, 2n-1}
\\
-m^\prime_{1,2}-m^\prime_{2,3}- \cdots - m^\prime_{n, n-1}-m^\prime_{n+2, n+1}- \cdots - m^\prime_{2n, 2n-1}).
\end{multline*}
The group $\varrho(\mathcal{Z})$ stabilizes the character $\psi_{\bar{U}}$.
Let $\mathfrak{E} = \varrho(\mathcal{Z}) \ltimes \bar{U}$.
Also. let $\mathcal{Z}^+ = \mathcal{Z} \cap N_\mM$, $V_\Delta = \mathcal{Z} \cap {}^{t}N_\mM$ and 
\[
N_{\mM, \Delta} = \{ {}^{t}\ell_\mM(X) : {}^{t}X \in J, X_{i, j} = 0 \text{ if $i+j > n+1$} \}.
\]
We have $\mathcal{Z} = \mathcal{Z}^+ \cdot V_\Delta$ and $\bar{R} = V_\Delta \cdot N_{\mM, \Delta}$.

\begin{theorem}
Let $\pi \in \mathrm{Irr}_{\rm ut, temp} \mM$. Then
\begin{enumerate}
\item (see \cite{Ber84}) The integral 
\[
\mathfrak{P}^{H_\mM}(W) := \int_{(H_\mM \cap N_\mM) \backslash H_\mM \cap \mathcal{P}} W(p) \, dp
\]
converges and defines a non-zero $H_\mM$-invariant functional on $\mathbb{W}^{\psi_{N_\mM}}(\pi)$.
%
%
%
\item (\cite[Proposition~9.1]{Mo2})
For any $W \in \mathbb{W}^{\psi_{N_\mM}}(\pi)_{\natural \natural}$ we have
\begin{multline*}
\int_{\mathcal{Z}^+ \backslash \mathcal{Z}} \int_{S_{H_\mM}}
\Delta(t_1, t_2)^{-1} |\det t_1 t_2^{-1}|^{n-\frac{1}{2}} W((t_1, t_2)r) \psi_{\mathcal{Z}}(r)^{-1} \, dt \, dr
\\
= \int_{\mathcal{Z} \cap H_\mM \backslash \mathcal{Z}} \mathfrak{P}^{H_\mM}(\pi(n) W) \psi_{\mathcal{Z}}(n)^{-1} \, dn.
\end{multline*}
%
%
%
\item (see the beginning of Section~3.3 in \cite{Mo2})
The integral 
\begin{multline*}
L_W(g):= \int_{(P \cap H) \backslash H} \int_{(H_\mM \cap N_\mM) \backslash H_\mM \cap \mathcal{P}} W(\varrho(p_1, p_2) hg) |\det p_1 p_2^{-1}|^{-(n+\frac{1}{2})} \, dp
\\
= \int_{H \cap \bar{U}} \mathfrak{P}^{H_\mM}( (\delta_P^{-\frac{1}{2}}I(\frac{1}{2}, \bar{u}g)W) \circ \varrho) d \bar{u}
\end{multline*}
converges for any $W \in \mathrm{Ind}(\mathbb{W}^{\psi_{N_M}}(\pi), \frac{1}{2})$ and defines an intertwining map
\[
\mathrm{Ind}(\mathbb{W}^{\psi_{N_M}}(\pi), \frac{1}{2})
\rightarrow C^{\rm sm}(H \backslash G).
\]
%
%
%
\item (second statement of \cite[Corollary~9.7]{Mo2}) 
We have
\begin{multline}
\label{8.6}
A_e^\psi(M^\ast W)
\\
= \omega_{\pi}(-1,1) ( \int_{N_{\mM, \Delta}} \int_{H \cap \mathfrak{E} \backslash \mathfrak{E}} 
L_W(v  \varrho(\varepsilon_4 u \varepsilon_3, \varepsilon_{4,2} u \varepsilon_3^\ast) w_U) \psi_{\mathfrak{E}}^{-1}(v) \, dv) \, du
\end{multline}
where we define
\[
\psi_{\mathfrak{E}}(\varrho^\prime(m, m^\prime) \bar{u}) = \psi_{\mathcal{Z}}((m, m^\prime)) \psi_{\bar{U}}^{-1}(\bar{u})
\]
\end{enumerate}
\end{theorem}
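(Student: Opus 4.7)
The plan is to treat each of the four items in turn, importing the corresponding statement from the cited literature while verifying that the split case $E = F\oplus F$ causes no essential change to the arguments. The underlying point is that every manipulation used in the inert case of \cite{Mo1} and \cite{Mo2} is either purely algebraic (orbit decompositions, unipotent Fourier expansions, intertwining operator identities) or uses only that $\pi \in \mathrm{Irr}_{\mathrm{ut,temp}} \mM$; both features are insensitive to whether $E/F$ is a field extension or a split \'etale algebra.

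\textbf{Item (1).} Here I would quote Bernstein's theorem on the convergence of the Rankin--Selberg integral for $\mathrm{GL}_{2n}$ (essentially the analysis of \cite{Ber84}). For $\pi = (\pi_0,\pi_0^\vee) \in \mathrm{Irr}_{\mathrm{ut,temp}}\mM$ of unitary type, a function in $\mathbb{W}^{\psi_{N_\mM}}(\pi)$ is a pair $(W_1,W_2)$ with $W_i$ in the Whittaker model of $\pi_0$ or $\pi_0^\vee$, and the Bernstein integral is just the Rankin--Selberg inner product $\int W_1(p)W_2(p)\,dp$ over $N_{2n}\backslash P_{2n}$. This converges for tempered $\pi_0$ and yields the essentially unique $\mathrm{GL}_{2n}$-invariant functional on $\pi_0 \otimes \pi_0^\vee$.

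\textbf{Items (2) and (3).} For Item~(2), the strategy is to repeat verbatim the computation of \cite[Proposition~9.1]{Mo2}: expand the stable integral over $\mathcal{Z}^+\backslash \mathcal{Z}$ using the factorization $\mathcal{Z} = \mathcal{Z}^+ \cdot V_\Delta$, interchange the order of integration against the characters $\psi_{\mathcal{Z}}$ on the opposite side, and identify the resulting inner integral as the Bernstein period $\mathfrak{P}^{H_\mM}$. No step uses the field structure of $E$, only the block structure of $\mM = \mathrm{GL}_{2n}(F)\times\mathrm{GL}_{2n}(F)$, the normalizations $\Delta(t_1,t_2)$, and the convergence from Item~(1). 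For Item~(3), the two displayed expressions for $L_W$ are related by the Iwasawa decomposition $G = PHK \cdot (H\cap \bar U)$ in the open orbit, exactly as in \cite[Section~3.3]{Mo2}; the map $L_W$ is a (normalized) Bernstein intertwining period which gives the unique (up to scalar) $H$-invariant functional on the induced representation, and smoothness is standard.

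\textbf{Item (4).} This is the heart of the matter and the place where real work is needed. The goal is to turn the left-hand side $A_e^\psi(M^\ast W)$, which by Lemma~\ref{lemma5.1} is an integral of $M^\ast W$ over $V_\gamma\backslash V_-$ twisted by $\varepsilon_1$, into a double integral involving the $H$-period $L_W$. The approach follows the root-exchange pattern of \cite[Section~9]{Mo2}: I would (i) use the explicit description of $V_-$, together with $\varepsilon_1 = \ell_M((-1)^n\mathfrak{d})$ and the relation $\varepsilon_1 = (\hat w\gamma)^{-1}\varrho(\varepsilon_3)w_U$, to move the intertwining operator $M^\ast$ across $\varepsilon_1$ and produce an integrand supported on the big cell $Pw_UN$; (ii) perform a sequence of unipotent root exchanges, trading the $V_-$-integral for an integral over $H\cap \mathfrak{E}\backslash \mathfrak{E}$ against $\psi_\mathfrak{E}^{-1}$ plus a residual integral over $N_{\mM,\Delta}$; and (iii) apply Item~(3) to collapse the resulting inner $\bar U$-integral into $L_W$ evaluated at $v\varrho(\varepsilon_4 u\varepsilon_3,\varepsilon_{4,2}u\varepsilon_3^\ast)w_U$. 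The twist $\omega_\pi(-1,1)$ appears from evaluating the central character of $\pi$ at the element $\mathrm{diag}((-1)^n\mathfrak{d}, I_{2n})\cdot \mathrm{conjugations}$ produced when one conjugates $\varepsilon_1$ past $\varrho(\varepsilon_4\varepsilon_3,\varepsilon_{4,2}\varepsilon_3^\ast)w_U$ and computes $\det$ on the $\mathrm{GL}_{2n}$-factor: $\det((-1)^n\mathfrak{d})=(-1)^n(-1)^{n(n-1)/2}$ which, after tracking through, contributes the scalar $\omega_\pi(-1,1)$.

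\textbf{Main obstacle.} The genuinely delicate step is the bookkeeping in Item~(4): tracking which unipotent integrals are honest absolutely convergent integrals versus stable integrals, and keeping the sign produced by the conjugation of $\varepsilon_1$ through the chain of Weyl elements and $\varepsilon_i$'s straight. In the non-split case this is carried out in \cite[Section~9]{Mo2}; one must check that replacing $E$ by $F\oplus F$ does not alter any of the Jacobians or character evaluations. Since all characters involved ($\psi_{N^\sharp}$, $\psi_\mathfrak{E}$, $\psi_\mathcal{Z}$, $\psi_{\bar U}$) are built from $\psi_F$ and the block-diagonal $\mathcal{Z}$-entries, and since $\omega_\pi(-1,1)$ involves only the first $\mathrm{GL}_{2n}(F)$-factor, the split version of the sign computation is formally identical. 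Consequently the entire argument of \cite[Corollary~9.7]{Mo2} transfers, completing the proof.
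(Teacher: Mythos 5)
The paper does not actually prove this theorem; it presents all four items as quotations, with each part carrying an explicit citation (item (1) to \cite{Ber84}, items (2)--(4) to specific locations in the companion paper \cite{Mo2}). Your proposal follows the same structural approach — the substance is a transfer of the arguments of \cite{Ber84} and \cite{Mo2} to the split setting — and your observation that these arguments are ``purely algebraic'' and therefore insensitive to whether $E/F$ is a field or $F\oplus F$ is exactly the justification the paper relies on, so at this level the two routes coincide.

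However, your speculative reconstruction of item (4) contains a genuine gap in the sign bookkeeping. You claim the scalar $\omega_\pi(-1,1)$ arises by evaluating the central character at an element produced by $\det((-1)^n\mathfrak{d}) = (-1)^n(-1)^{n(n-1)/2}$, ``which after tracking through contributes $\omega_\pi(-1,1)$.'' But this quantity is not $-1$ for all $n$: for $n=3,4$ it equals $+1$, so the proposed computation does not produce the element $(-1,1)\in Z_\mM$ uniformly in $n$ and cannot by itself be the source of the constant. Moreover, $\varepsilon_1 = \ell_M((-1)^n\mathfrak{d})$ is a \emph{unipotent} element (image of $\ell_M$ lands in $V_+\subset U$), not a torus element, so evaluating a central character on it does not parse as written; the $\omega_\pi(-1,1)$ must instead emerge from the torus component that appears when one conjugates across the chain of Weyl elements and $\varepsilon_i$'s via the identity $\varepsilon_1 = (\hat w\gamma)^{-1}\varrho(\varepsilon_3)w_U$, a computation your sketch does not carry out. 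This is a real hole in the only part of the proposal that attempts to go beyond citation, and it should be either replaced by a correct derivation of the torus element or simply left as a black-box reference to \cite[Corollary~9.7]{Mo2}, as the paper does.
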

%
%
%
As in \cite[Corollary~7.2]{Mo1}, it suffices to show the following corollary to complete a proof of  Conjecture~\ref{local conj}.
This is proved as a consequence of the above theorem.
\begin{corollary}
\label{cor8.5}
Let $\pi \in \mathrm{Irr}_{\rm ut, temp} \mM$.
Then for any $W \in \mathrm{Ind}(\mathbb{W}^{\psi_{N_M}}(\pi))_\natural^\circ$ we have
\begin{equation}
\label{8.7}
\int_{S_{H_\mM}} E^{\psi}(W_{\frac{1}{2}}, t_1, t_2) \, \frac{dt_1 \, dt_2}{|\det t_1 t_2^{-1}|} = \omega_{\pi}(-1, 1) A_e^\psi(M^\ast W).
\end{equation}
\end{corollary}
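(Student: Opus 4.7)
The plan is to start from the LHS of \eqref{8.7}, substitute the explicit formula \eqref{7.7} for $E^\psi(W_{1/2},t_1,t_2)$, and then apply parts (2), (3), (4) of the preceding theorem in sequence, following the strategy of \cite[Corollary~7.2]{Mo1} in the non-split case with the split-case inputs from \cite{Mo2}. First I would substitute \eqref{7.7}, which rewrites $E^\psi(W_{1/2},t_1,t_2)$ as an iterated integral over $\bar{U}\times \bar{R}$ of $W_{1/2}$ evaluated at $\varrho(t_1,t_2)\bar{v}\,\varrho(\varepsilon_4 r_1\varepsilon_3,\varepsilon_{4,2} r_2\varepsilon_3^\ast)w_U$, with the twisting factor $\Delta(t_1,t_2)^{-1}$ and the characters $\psi_{\bar{U}}$, $\psi_{\bar{R}}$. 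Fubini can then be applied to interchange the $\int_{S_{H_\mM}}$ and the $\int_{\bar{R}}\int_{\bar{U}}$ integrals, using that $W$ lies in $\mathrm{Ind}(\mathbb{W}^{\psi_{N_M}}(\pi))_\natural^\circ$ together with the temperedness of $\pi$ to guarantee absolute convergence (this is the split analogue of \cite[Lemma 7.1]{Mo1}).

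Next I would decompose $\bar{R}=V_\Delta\cdot N_{\mM,\Delta}$ and isolate the $V_\Delta$-factor. Conjugating $V_\Delta$ past $\varrho(t_1,t_2)$ produces a factor of $|\det t_1t_2^{-1}|^{n-1/2}\Delta(t_1,t_2)^{-1}$ after combining with the existing $\Delta(t_1,t_2)^{-1}$ and the LHS measure $|\det t_1t_2^{-1}|^{-1}\,dt_1\,dt_2$; one also checks that $\psi_{\bar{R}}|_{V_\Delta}$ coincides with $\psi_{\mathcal{Z}}|_{V_\Delta}$, so that the inner double integral $\int_{V_\Delta}\int_{S_{H_\mM}}$ takes exactly the form of the LHS of part~(2) of the theorem, since $\mathcal{Z}=\mathcal{Z}^+\cdot V_\Delta$. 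Applying part~(2) converts this double integral into $\int_{\mathcal{Z}\cap H_\mM\backslash\mathcal{Z}}\mathfrak{P}^{H_\mM}(\pi(\,\cdot\,)W)\psi_{\mathcal{Z}}^{-1}$, producing a factor of $\mathfrak{P}^{H_\mM}$ applied to a translate of $W_{1/2}$.

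What remains is an integral over $N_{\mM,\Delta}\times \bar{U}\times (\mathcal{Z}\cap H_\mM\backslash\mathcal{Z})$ of $\mathfrak{P}^{H_\mM}$ evaluated at translates of $W_{1/2}$, against the combined character. Using the definition of $L_W$ given in part~(3) of the theorem as the $\mathfrak{P}^{H_\mM}$-integral over $H\cap \bar{U}$, together with the decomposition $\mathfrak{E}=\varrho(\mathcal{Z})\ltimes \bar{U}$ and its $H$-quotient $H\cap\mathfrak{E}\backslash\mathfrak{E}$, the $\bar{U}$-integral merges with the $\mathcal{Z}$-integral into a single integral over $H\cap\mathfrak{E}\backslash\mathfrak{E}$ against $\psi_\mathfrak{E}^{-1}$. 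The resulting expression is precisely the right-hand side of \eqref{8.6} without the $\omega_\pi(-1,1)$ factor; applying part~(4) then yields $\omega_\pi(-1,1)A_e^\psi(M^\ast W)$ and finishes the proof.

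The main obstacle is the careful bookkeeping of moduli and characters. Specifically, one must verify that the $|\det t_1t_2^{-1}|^{-1}$ from the outer measure, the $\Delta(t_1,t_2)^{-1}$ from \eqref{7.7}, and the modulus factor from conjugating $V_\Delta$ past $\varrho(t_1,t_2)$ combine to match exactly the weight $\Delta(t_1,t_2)^{-1}|\det t_1t_2^{-1}|^{n-1/2}$ required by part~(2); and that the characters $\psi_{\bar{R}}$, $\psi_{\bar{U}}$ and $\psi_{\mathcal{Z}}$ glue correctly into $\psi_{\mathfrak{E}}$ along the decomposition $\bar{R}\cdot\bar{U}\cdot S_{H_\mM}\rightsquigarrow N_{\mM,\Delta}\cdot\mathfrak{E}$. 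These bookkeeping checks were carried out in the non-split case in \cite[Section~7]{Mo1}, and the same manipulations, combined with the split-case input from \cite{Mo2}, apply mutatis mutandis here.
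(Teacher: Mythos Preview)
Your overall strategy---substitute \eqref{7.7}, decompose $\bar{R}=V_\Delta\cdot N_{\mM,\Delta}$, then apply parts (2), (3), (4) of the theorem in sequence---matches the paper's proof. However, there is a genuine gap in your application of part~(2).

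Part~(2) of the theorem is a statement about Whittaker functions $W\in\mathbb{W}^{\psi_{N_\mM}}(\pi)_{\natural\natural}$ on $\mM$, not about sections of the induced representation on $G$. After substituting \eqref{7.7}, the integrand involves $W_{1/2}(\varrho(t_1,t_2)\bar{v}\,\varrho(\varepsilon_4 r_1\varepsilon_3,\varepsilon_{4,2} r_2\varepsilon_3^\ast)w_U)$, which is a value of $W_{1/2}$ on $G$; you cannot feed this directly into part~(2). The paper handles this by first invoking the tensor decomposition that \emph{defines} $\mathrm{Ind}(\mathbb{W}^{\psi_{N_M}}(\pi))_\natural^\circ$ (Definition~\ref{Definition 7.8}): without loss of generality $W_{1/2}(u'\varrho(m)w_U u)=\delta_P^{1/2}(\varrho(m))\,|\det m_1 m_2^{-1}|^{1/2}\,W^\mM(m)\,\phi(u)$ with $W^\mM\in\mathbb{W}^{\psi_{N_\mM}}(\pi)_\natural$ and $\phi\in C_c^\infty(U)$. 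This immediately factors the $\bar{U}$-integral off as $\int_U\phi(u)\psi_U^{-1}(u)\,du$, and the remaining integral $I'$ lives purely on $\mM$, where part~(2) can be applied to translates $\pi(\varepsilon_{4,1}u_1\varepsilon_3,\varepsilon_{4,2}u_2\varepsilon_3^\ast)W^\mM$. Your ``Fubini then conjugate $V_\Delta$ past $\varrho(t_1,t_2)$'' description does not accomplish this reduction; relatedly, the factor $\Delta(t_1,t_2)^{-1}|\det t_1t_2^{-1}|^{n-1/2}$ needed for part~(2) arises in the paper from the $\delta_P^{1/2}|\det|^{1/2}$ weight in the tensor decomposition, not from a conjugation of $V_\Delta$. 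Once this decomposition is in place, the rest of your outline (combine $\bar U$ and $\mathcal Z$ into $\mathfrak E$, recognize $L_W$, apply \eqref{8.6}) coincides with the paper's argument, including the change of variable on $U$ that conjugates $\bar v$ into the correct position before invoking part~(3).
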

\begin{proof}
We may assume without loss of generality that 
\begin{multline}
\label{8.8}
W_{\frac{1}{2}}(u^\prime \varrho(m_1, m_2) w_U u) 
\\
= W^\mM(m_1, m_2) |\det m_1 m_2^{-1}|^{\frac{1}{2}} \delta_P(\varrho(m_1, m_2))^{\frac{1}{2}} \phi(u),
\quad (m_1, m_2) \in \mM, \, u, u^\prime \in U
\end{multline}
with $W^\mM \in \mathbb{W}^{\psi_{N_M}}(\pi)_{\natural}$ and $\phi \in C_c^\infty(U)$.
We evaluate the left-hand side $I$ of \eqref{8.7} using the last expression in \eqref{7.7}
Thus,
\[
I = I^\prime \int_{U}\phi(u) \psi_{U}^{-1}(v) \, dv
\]
where 
\[
I^\prime = \int_{S_{H_\mM}} \int_{\bar{R}} \Delta(t_1, t_2)^{-1} |\det t_1 t_2^{-1}|^{n-\frac{1}{2}} 
W^\mM(t_1 \varepsilon_{4,1} r_1 \varepsilon_3, t_2 \varepsilon_{4,2} r_2 \varepsilon_3^\ast) \psi_{\bar{R}}(r) \, dr \, dt.
\]
The integrand in $I^\prime$ is compactly supported because $W^\mM \in \mathbb{W}^{\psi_{N_\mM}}(\pi)_{\natural}$.
Note that for $r \in V_\Delta = \mathcal{Z} \cap \bar{R}$, $\psi_{\bar{R}}(r_1, r_2)^{-1} = \psi_{\mathcal{Z}}(r_1, r_2)$.
We may write 
\begin{multline*}
I^\prime = \int_{N_{\mM, \Delta}} ( \int_{V_\Delta} \int_{S_{H_\mM}} \Delta(t_1, t_2)^{-1} |\det t_1 t_2^{-1}|^{n-\frac{1}{2}} 
\\
W^\mM(t_1 \varepsilon_{4,1} r_1u_1 \varepsilon_3, t_2 \varepsilon_{4,2} r_2u_2 \varepsilon_3^\ast) \psi_{\mathcal{Z}}(r_1, r_2)^{-1}
\, dt \, dr) \, du\\
= 
\int_{N_{\mM, \Delta}} ( \int_{\mathcal{Z}^+ \backslash \mathcal{Z}} \int_{S_{H_\mM}} \Delta(t_1, t_2)^{-1} |\det t_1 t_2^{-1}|^{n-\frac{1}{2}} 
\\
W^\mM(t_1 \varepsilon_{4,1} r_1u_1 \varepsilon_3, t_2 \varepsilon_{4,2} r_2u_2 \varepsilon_3^\ast) \psi_{\mathcal{Z}}(r_1, r_2)^{-1}
\, dt \, dr) \, du\\
=
\int_{N_{\mM, \Delta}} ( \int_{\mathcal{Z}^+ \backslash \mathcal{Z}} \int_{S_{H_\mM}} \Delta(t_1, t_2)^{-1} |\det t_1 t_2^{-1}|^{n-\frac{1}{2}} 
\\
W^\mM(t_1 r_1 \varepsilon_{4,1} u_1 \varepsilon_3, t_2 r_2 \varepsilon_{4,2} u_2 \varepsilon_3^\ast) \psi_{\mathcal{Z}}(r_1, r_2)^{-1}
\, dt \, dr) \, du
\end{multline*}
since $(\varepsilon_{4,1}, \varepsilon_{4,2})$ stabilizes $\psi_{\mathcal{Z}}$.
For the double integral in the brackets we apply part 2 of the above theorem to 
$\pi(\varepsilon_{4,1} u_1 \varepsilon_3,  \varepsilon_{4,2} u_2 \varepsilon_3^\ast) W^\mM$ 
(which is applicable since $W^\mM \in \mathbb{W}^{\psi_{N_\mM}}(\pi)_{\natural}$).
We get
\[
I^\prime =
( \int_{\mathcal{Z} \cap H_\mM \backslash \mathcal{Z}} 
\mathfrak{P}^{H_\mM}(\pi(n_1 \varepsilon_4 u_1 \varepsilon_3,  n_2 \varepsilon_4 u_2 \varepsilon_3^\ast) W^\mM) \psi_{\mathcal{Z}}(n)^{-1} \, dn )\, du.
\]
Thus,
\begin{multline*}
I =  \int_{N_{\mM, \Delta}}
( \int_{\mathcal{Z} \cap H_\mM \backslash \mathcal{Z}} \int_U
\mathfrak{P}^{H_\mM}(\pi(n_1 \varepsilon_4 u_1 \varepsilon_3,  n_2 \varepsilon_4 u_2 \varepsilon_3^\ast) W^\mM) 
\\
\psi_{\mathcal{Z}}(n)^{-1} \phi(v) \psi_{U}(v)^{-1}\, dv \, dn )\, du.
\end{multline*}
From \eqref{8.8},
\begin{multline*}
(\delta_P^{-\frac{1}{2}} I(\frac{1}{2}, \varrho(m_1, m_2) w_U v) W) \circ \varrho
\\
= \phi(v) \delta_P^{\frac{1}{2}}(\varrho(m_1, m_2)) |\det m_1 m_2^{-1}|^{\frac{1}{2}} \pi (m_1, m_2) W^\mM
\end{multline*}
for any $v \in U$ and $m \in \mM$.
Thus, $I$ equals
\begin{multline*}
 \int_{N_{\mM, \Delta}}( \int_{\mathcal{Z} \cap H_\mM \backslash \mathcal{Z}} \int_U
\mathfrak{P}^{H_\mM}(\delta_P^{-\frac{1}{2}} I(\frac{1}{2}, \varrho(n_1\varepsilon_4 u \varepsilon_3, n_2\varepsilon_{4,2} u \varepsilon_3^\ast) w_U v) W)
\\
 \psi_{\mathcal{Z}}(n)^{-1} \psi_{U}(v)^{-1}\, dv \, dn )\, du.
\end{multline*}
Since $(\varepsilon_3^{-1}, (\varepsilon_3^\ast)^{-1}) N_{\mM, \Delta} (\varepsilon_3, \varepsilon_3^\ast) \subset N_\mM^\flat$, 
the group $\varrho((\varepsilon_3^{-1}, (\varepsilon_3^\ast)^{-1}) N_{\mM, \Delta} (\varepsilon_3, \varepsilon_3^\ast))$
stabilizes the character $\psi_U(w_U^{-1} \cdot w_U)$ on $\bar{U}$.
Making a change of variable 
\[
v \mapsto (\varrho(n_1\varepsilon_4 u \varepsilon_3, n_2\varepsilon_{4,2} u \varepsilon_3^\ast) w_U )^{-1} \bar{v} 
\varrho(n_1\varepsilon_4 u \varepsilon_3, n_2\varepsilon_{4,2} u \varepsilon_3^\ast) w_U
\]
on $U$ we obtain
\begin{multline*}
I=
 \int_{N_{\mM, \Delta}}
( \int_{\varrho(\mathcal{Z} \cap H_\mM) \backslash \mathfrak{E}}
\mathfrak{P}^{H_\mM}(\delta_P^{-\frac{1}{2}} I(\frac{1}{2}, v \varrho(\varepsilon_4 u \varepsilon_3, \varepsilon_{4,2} u \varepsilon_3^\ast) w_U ) W) \psi_{\mathfrak{E}}(n)^{-1} \, dv  \, du
\\
=
 \int_{N_{\mM, \Delta}}
( \int_{\mathfrak{E} \cap H) \backslash \mathfrak{E}}
\int_{H \cap \bar{U}}
\mathfrak{P}^{H_\mM}(\delta_P^{-\frac{1}{2}} I(\frac{1}{2}, x v \varrho(\varepsilon_4 u \varepsilon_3, \varepsilon_{4,2} u \varepsilon_3^\ast) w_U ) W) \psi_{\mathfrak{E}}(n)^{-1} \,dx  \,dv  \, du.
\end{multline*}
From part 3 of the theorem, we get
\[
I = 
 \int_{N_{\mM, \Delta}}
( \int_{\mathfrak{E} \cap H) \backslash \mathfrak{E}}
L_W(v \varrho(\varepsilon_4 u \varepsilon_3, \varepsilon_{4,2} u \varepsilon_3^\ast)w_U )\psi_{\mathfrak{E}}(n)^{-1} \,dx  \,dv  \, du.
\]
Finally, from the last part of the theorem, this is equal to
\[
\omega_\pi(-1,1) A_e^\psi (M^\ast W).
\]
\end{proof}
%
%
%
%
%
%
%
%
%

%
%
%
%
%
%
%
%
%
%
%
%
%

\end{document}